%% file: main.tex
\documentclass{article}

\usepackage{imports/arxiv}
\input{imports/common_packages.tex}
\input{imports/caam_501_commands}

\input{imports/common_commands.tex}

\input{imports/common_colors.tex}

\usepackage{imports/fonts}
\usepackage{algorithm}
\usepackage{algpseudocode}
\usepackage{comment}
\input{imports/matti_commands}

\newcommand{\extent}{\operatornamewithlimits{ext}}

\newcommand{\mps}{\widehat{\cE\cS}}
\newcommand{\ps}{\cE\cS}

\makeatletter
\newtheorem*{rep@theorem}{\rep@title}
\newcommand{\newreptheorem}[2]{%
\newenvironment{rep#1}[1]{%
 \def\rep@title{#2 \ref{##1}}%
 \begin{rep@theorem}}%
 {\end{rep@theorem}}}
\makeatother
\newreptheorem{theorem}{Theorem}

\title{Maximal Equidistant Spacings}
\author{Michael Puthawala}
\date{\today}

\begin{document}

\maketitle

\begin{abstract}
    We call a family $\set{Y_1,\dots,Y_I}$ in Euclidean space an \emph{equidistance spacing} if $\norm{y_i - y_j} = 1$ whenever $y_i \in Y_i, y_j \in Y_j$ and $i \neq j$. In other words, choosing a representative from each set produces a complete distance graph (i.e. equilateral set). We say such a spacing is maximal if each $Y_i$ is maximal under inclusion. In this work we characterize maximal equidistant spacings in $\R^n$. For each equidistant spacing there is an associated \emph{center} (a point in $\R^n$) and \emph{radius} (a non-negative scalar) so that the centers form an orthocentric system. Using arguments from classical geometry we find that the moduli space of maximal equidistant spacings is described in terms of the movement of its center and radii. Using tools from geometric combinatorics, we develop a discrete combinatorial object called the \emph{signature}. Our classification theorem shows that maximal equidistant spacings are isometric if and only if they have the same signature. We also construct all maximal equidistant spacings in $\R^1,\R^2$ and $\R^3$, outline a procedure for constructing all maximal equidistant spacings in $\R^n$, and give an algorithm for checking if a locus of points is equidistantly spaced that is linear in the number of points, an improvement over the naive direct quadratic algorithm.

\end{abstract}

\section{Introduction}

A disjoint union of subsets $\cY = \sqcup_{i = 1}^IY_i$ of Euclidean space is  \emph{spaced equidistantly} if $\norm{y_i - y_j} = 1$ whenever $i \neq j$ for $y_i \in Y_i$ and $y_j \in Y_j$. In other words, choosing one point from each set produces a complete unit distance graph. The configuration of these points is closely related to problems in geometric combinatorics such as the study of equilateral sets/points \cite{brass2005research,danzer1962zwei,erdos1946sets} but, to our knowledge, equidistant spacings have not been studied before. In this work we characterize maximal (under subsetting) equidistant spacings in $\R^n$.

It turns out that for each $i = 1,\dots,I$ there is a unique point $c_i$ in the affine hull of ${Y_i}$ which we call the \emph{center} of $Y_i$ that is equidistant for each point in $Y_i$. The idea that drives the development of this manuscript is that many properties of equidistant spacings may be rephrased as statements about their centers and vice versa. 

By using arguments from classical geometry we find that the centers of maximal equidistant spacings either all coincide or else form an orthonormal system. That is, are the vertices of an orthocentric simplex plus its orthocenter. This has the surprising implication: the classes of a maximal equidistant spacings are \emph{not} geometrically homogeneous. Just as every (non-degenerate) orthocentric system has a unique vertex that lies inside the simplex generated by the others, one center in equidistant spacings has a center that lies inside the simplex generated by the others. By exploiting this inhomogeneity we develop \emph{inner-outer squash and stretches}, generators of non-trivial isometries of equidistant spacings. Using them, we identify a normal form for maximal equidistant spacings. The normal forms are themselves discrete objects, and so may be classified using combinatorial techniques.

To understand the combinatorics of normal forms, we introduce the space of \emph{signatures}, which are tuples of natural numbers with some ordering constraints. The point is that we may easily find all signatures that correspond to maximal equidistant spacings. Moreover, these signatures are simple to work with, which enables many follow-up results about equidistant spacings. For example, we can count the number of maximal equidistant spacings with $I$ classes in dimension $\R^n$ for any $I,n \in \bbN$. As an example, in $\R^n$ there are $\sum_{i = 0}^np(i)$ distinct maximal equidistant spacings in $\R^n$ where $p$ is the partition function from number theory. %

Our main technical result is Theorem \ref{thm:maximal-equidistant-spacings-are-isometric-iff-same-signature}, a classification theorem that relates isometry classes of maximal equidistant spacings (denoted by $\mps(I,n) / \cong$) with $I$ classes in $\R^n$ to a corresponding space of signatures (denoted by $\hat \cS(I,n)$).

\begin{reptheorem}{thm:sig-is-a-bijection-on-maximal-equidistant-spacings}
    Let $\widehat\ps_{=}(I,n)$ (resp. $\widehat\ps_{\neq}(I,n)$) denote maximal equidistant spacings with $I$ classes in $\R^n$ whose centers coincide (resp. do not coincide), $\cong$ denote isometries of equidistant spacings, $\hat \cS_{=}$ and $\hat \cS_{\neq}$ denote the subset of valid signatures given in Def. \ref{def:valid-signatures}, and $\sig$ denote the map that associates equidistant spacings with signatures. Then all of the following maps are bijections.
    \begin{align*}
        \sig\colon \frac{\mps_=(I,n)}{\cong} &\to \hat\cS_=(I,n),\\
        \sig\colon \frac{\mps_{\neq}(I,n)}{\cong} &\to \hat\cS_{\neq}(I,n),\\
        \sig\colon \bigcup_{I,n \in \bbN}\frac{\mps_=(I,n)}{\cong} &\to \bigcup_{I,n \in \bbN}\hat\cS_=(I,n),\\
        \sig\colon\bigcup_{I,n \in \bbN} \frac{\mps_{\neq}(I,n)}{\cong} &\to \bigcup_{I,n \in \bbN}\hat\cS_{\neq}(I,n).
    \end{align*}
\end{reptheorem}

\subsection{Comparison to Prior Work}
Equidistant spacing is related to other classical problems in geometric combinatorics. For example, \cite{danzer1962zwei,petty1971equilateral} characterized equilateral sets which are equidistant spacings where each set is a single element. 

 There are many coloring problems in geometric combinatorics that consider a geometric graph (where edges are determined by a geometric rule) and ask what the minimal number of colors is so that the graph may be colored without coloring two nodes that share an edge the same color. See e.g. \cite{soifer2008mathematical} and \cite[Chapt. 5]{brass2005research}. One example is the famous Hadwiger–Nelson problem \cite{hadwiger1955altes} where the vertices of the graph are the points on the plane, and nodes are connected if they are unit distance. Equidistant spacings are related these coloring problems through a sort of duality relationship. In coloring problems the nodes of the graph are fixed, and the goal is to minimize the number of colors needed. In the equidistant spacing setting, the number of colors is fixed and the number of points is maximized.

The geometry of equidistant spacings is also related to the applied field of contrastive learning in machine learning and statistics \cite[Sec. 6.3.5]{bishop2023deep}. In this setting, one has a collection of data $\cX = \set{X_i}_{i = I}$, each of which is associated with some label $i$. The task is to find a function $f$ that embeds\footnote{The word embed in the context of machine learning differs from its usual meaning in topology (i.e. a homeomorphism on its range). In the context of machine learning, an embedding is a reparameterization into a new coordinate system that is more `natural.' These embeddings are typically not required to be injective.} the $X_i$'s so that $d(f(x_i), f(x_i'))$ is small if $x_i,x_i'\in X_i$, and so that $d(f(x_i), f(x_j))$ is large for $x_i \in X_i$, $x_j \in X_j$ where $i \neq j$. Intuitively, $f$ embeds the data into a different space where semantically similar data is close together and semantically different data is far apart. If we suppose that $f(x_i)$ and $f(x_j)$ are the same distance apart whenever $i \neq j$, then $\cY \coloneqq \sqcup_{i = 1}^If(X_i)$ is equidistantly spaced, after rescaling. In other words, the range of a good contrastive embedding is nearly an equidistant spacing.%

An interesting take-away from the work in this manuscript is that there is a `local-global product structure' that describes when points are equidistantly spaced. A locus of points is equidistantly spaced if some local conditions hold (i.e. conditions on each $Y_i$), as well as some global conditions hold, which relate how $Y_i$ and $Y_j$ are positioned with respect to each other. These conditions may be checked independently of each other. If $\cY$ is obtained so that each $Y_i$ satisfies the correct local conditions, then each $Y_i$ may always be positioned in space to satisfy the correct global conditions and vice versa. This observation ultimately leads to an algorithm, Algorithm \ref{alg:verification-of-equidistant-spacing}, that can verify if a given locus of points is equidistant in linear (in the number of points) time. This is an improvement over the naive algorithm that checks $\norm{y_i - y_j} = 1$ directly.

\subsection{Manuscript Organization}

In Section \ref{sec:equidistant-spacings-center-and-radii} we establish the basic properties of equidistant spacings. We find that each class of a equidistant spacing may be summarized by two quantities, the class' center (a point in $\R^n$) and its radius (a scalar), Definition \ref{def:center-and-radius}. The general strategy for the rest of the manuscript is to understand equidistant spacings by understanding their centers and radii. 
The most important result from this section is Corollary \ref{cor:characterization-of-equidistant-spacings} which says that $\cY$ being a equidistant spacing may be deduced from its centers and radii (plus some trivial orthogonality conditions). In particular, the classes and radii of equidistant spacings satisfy a system of compatibility equations, Equation \ref{eqn:center-compatability-condition}. %

In Section \ref{sec:ortho-proj-lemma-and-conseq} we consider a trick whereby a equidistant spacing may be expanded via a orthogonal projection and reflection, Lemma \ref{lem:ortho-reflection-lemma}. Maximal equidistant spacings are invariant under this operation. By exploiting this invariance, we obtain two remarkable results. The first, Corollary \ref{cor:leave-out-rank-and-max-spacing}, says that the centers of maximal equidistant spacings are either distinct points, or else all coincide. The second, Proposition \ref{prop:centers-are-orthocentric-systems}, says that if the centers of equidistant spacings do not coincide, then any center is the orthocenter of the simplex formed by the complimentary centers. Combined, these two results heavily constrain maximal equidistant spacings.

In Section \ref{sec:gluing-equidistant-spacings-together} we consider when multiple equidistant spacings may be `glued' together in a natural way to form a larger equidistant spacing. We find that we may associate a scaler, called the extent, to each equidistant spacing, Definition \ref{def:extent}, and that two equidistant spacings glue together if and only if their extents satisfy an inequality, Proposition \ref{prop:extent-characterizes-gluablity}. Moreover, we may easily compute the extent of such gluings, Proposition \ref{prop:extent-of-glueable-equidistant-spacings}.

In Section \ref{sec:isometric-embeddings} we consider what the natural isometries are on equidistant spacings. We find that there are three natural isometries. Two (Euclidean isometries and recolorings) are trivial. There is, however, one other isometry of equidistant spacings which we call a squash and stretch, Definition \ref{def:squash-stretch}. These isometries are less trivial to describe and characterize, and typically do not extend to an isometry of the ambient space.%

In Section \ref{sec:canonical-form} we characterize the squash and stretch isometries of maximal equidistant spacings. We do this by considering a simple squash and stretch called an outer-inner squash and stretch, Proposition \ref{prop:outer-inner-squash-stretch}. Then, by deriving a canonical form, Definition \ref{def:equilateral-normal-form}, we show that squash and stretches are generated by outer-inner squash and stretches for maximal equidistant spacings, Theorem \ref{thm:maximal-equidistant-spacings-have-equilateral-normal-forms} and Proposition \ref{prop:invariance-of-equilateral-normal-form}. Importantly, the canonical forms is concrete and may be easily constructed.

In Section \ref{sec:signatures-of-equidistant-spacings} we associate to each maximal equidistant spacing an object called its signature, Definition \ref{def:signagure-of-equidistant-spacing}. We then show that maximal equidistant spacings are isometric if and only if they have the same signature, Theorem \ref{thm:maximal-equidistant-spacings-are-isometric-iff-same-signature}. We then compute all signatures in $\R^0$ $\R^1,\R^2$ and $\R^3$, and thereby compute all maximal equidistant spacings in those spaces. We conclude by counting the number of maximal equidistant spacings in $\R^n$ and find it has a simple expression, Proposition \ref{prop:counting-maximal-equidistant-spacings}.

In Section \ref{sec:algorithm-for-testing-equidistant-spacings} we give an algorithm, Algorithm \ref{alg:verification-of-equidistant-spacing}, to verify if a finite collection of points form a equidistant spacing. By analyzing the run time of this algorithm, Theorem \ref{thm:correctness-alg-1}, we find that its runtime grows linearly with the number of points.

\section{Results}

\subsection{Problem Setup and Basic Examples}

We begin by defining our central object of study.

\begin{definition}[Equidistantly Spaced, $\ps$]
    \label{def:equidistantly-spaced}
    Let $n,I \in \bbN$ and $I \geq 2$. A disjoint union $\cY =  \sqcup_{i = 1}^I Y_i$ where $Y_i \subset \R^n$ is called \emph{equidistantly spaced} if for each $i\neq j$, and $y_i \in Y_i, y_j \in Y_j$
    \begin{align}
        \label{eqn:equidistant-spacing-equality}
        \norm{y_i - y_j} = 1.
    \end{align}

    If $I = 1$, we say that $\cY$ is equidistantly spaced if there is a point $c_0 \in \R^n$ such that $\norm{y - c}\leq 1$ is constant for all $y \in \cY$.
    
    We say that such a $\cY$ is a \emph{equidistant spacing}.
    An $i \in 1,\dots,I$ is called a \emph{class} of $\cY$. We denote $\ps(I,\R^n)$ as the set of all equidistant spacings with $I$ classes in $\R^n$. We denote $\ps(I,\R^n)$ by $\ps$, when $I$ and $n$ are clear from context.
\end{definition}

In this work, we study how the set $\ps$ looks when the norm in Eqn. \ref{eqn:equidistant-spacing-equality} is the Euclidean norm. It is left to future work to consider the case for other geometries.

Before proceeding, we give some examples of equidistant spacings.

\begin{figure}
    \centering
    \begin{subfigure}{.32\linewidth}
        \centering
        \includegraphics[width=1.\linewidth]{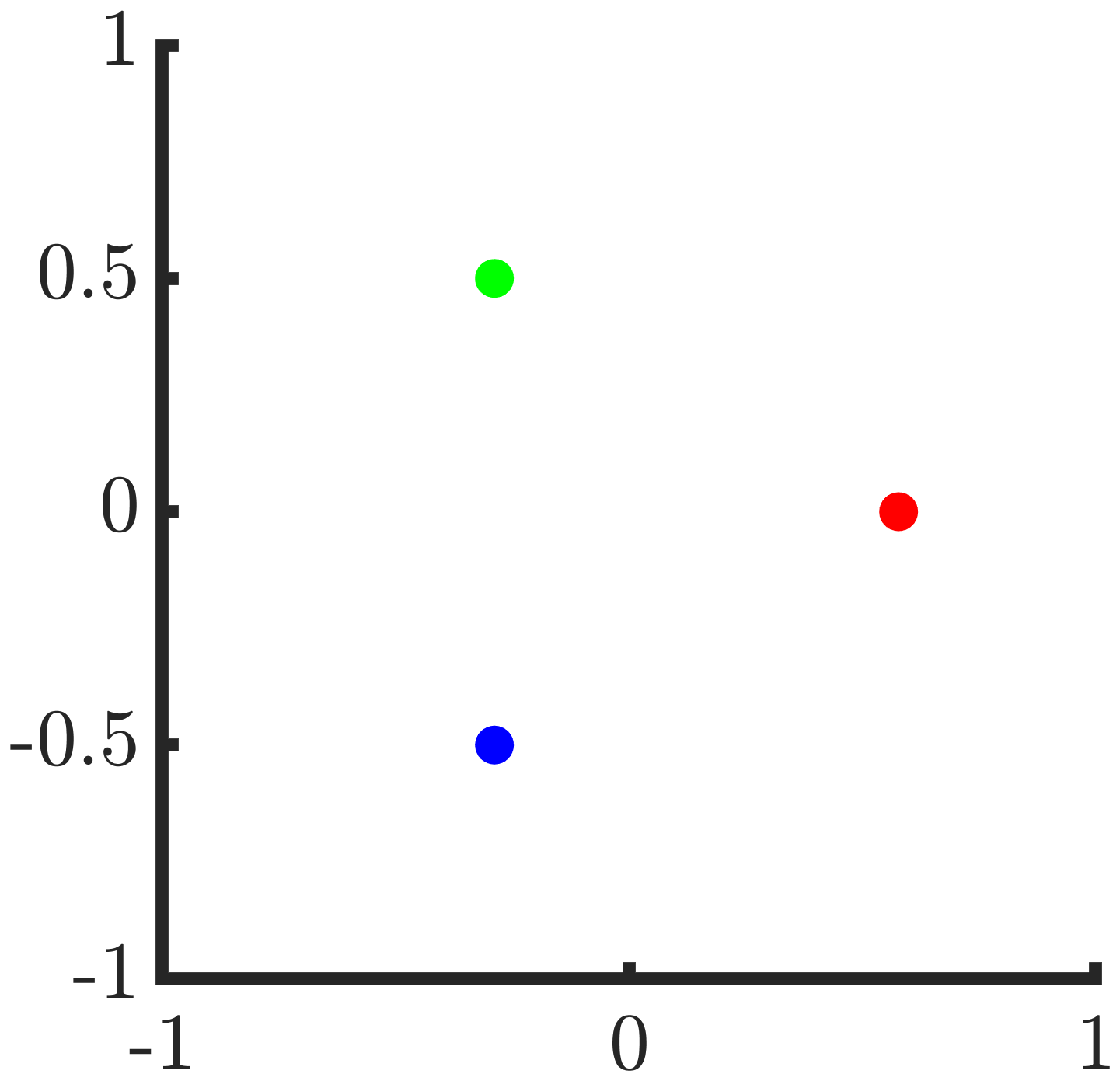}
        \subcaption{$\cY^{(1)}$}
        \label{fig:eps-first-examp:1}
    \end{subfigure}
    \begin{subfigure}{.32\linewidth}
        \centering
        \includegraphics[width=1.\linewidth]{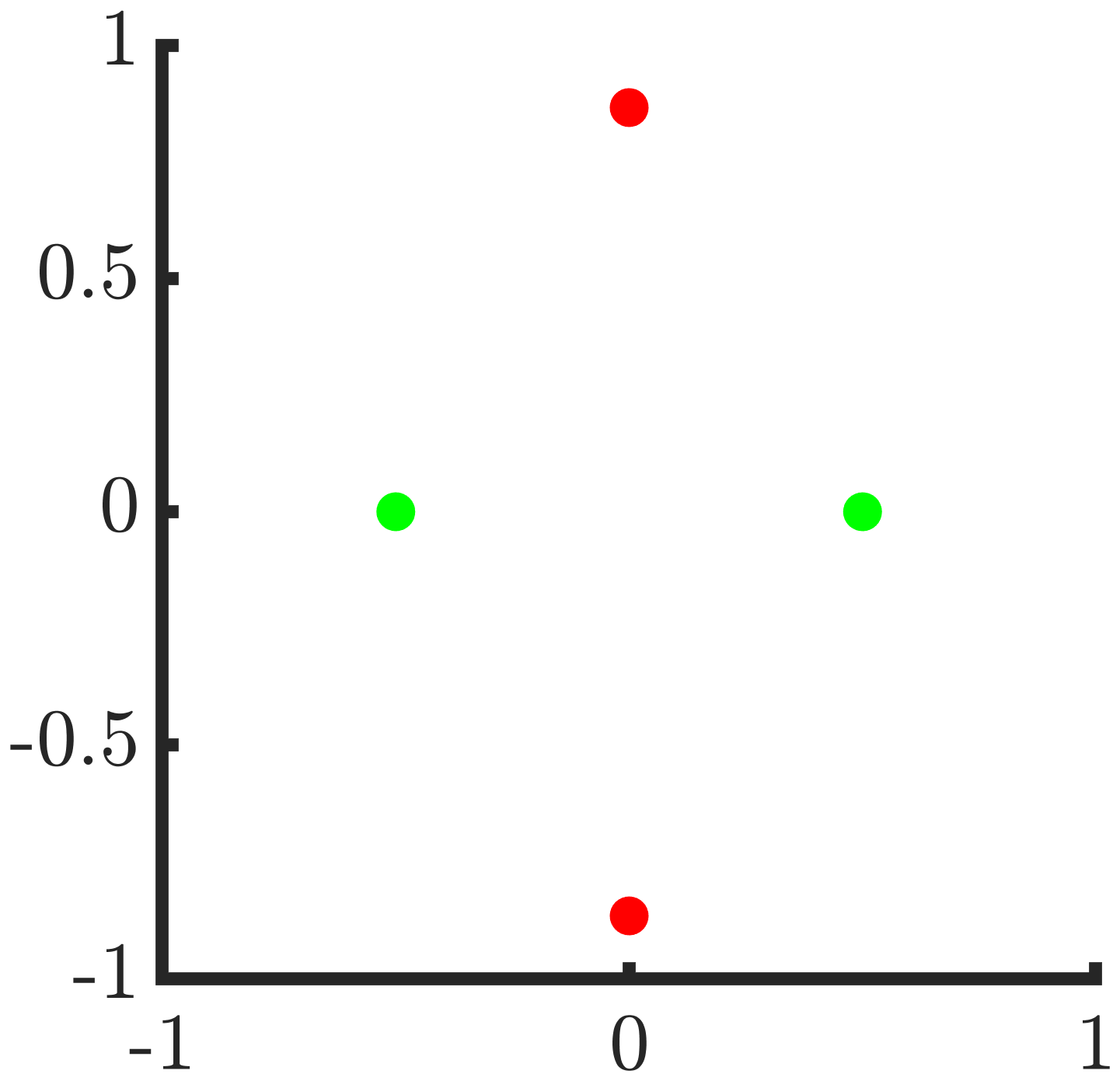}
        \subcaption{$\cY^{(2)}$}
        \label{fig:eps-first-examp:2}
    \end{subfigure}
    \begin{subfigure}{.32\linewidth}
        \centering
        \includegraphics[width=1.\linewidth]{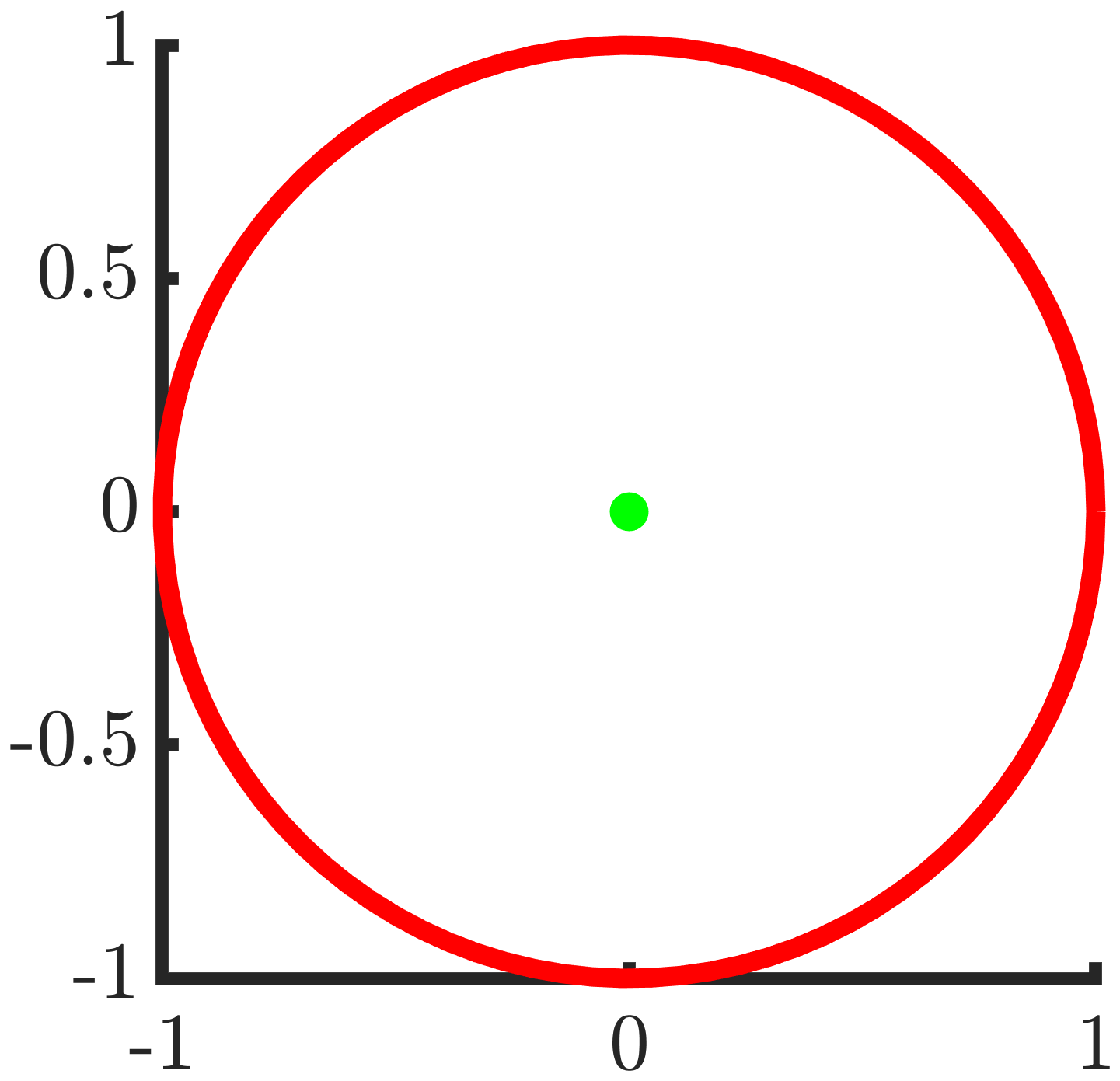}
        \subcaption{$\cY^{(3)}$}
        \label{fig:eps-first-examp:3}
    \end{subfigure}
    \caption{An example of three different equidistant spacings in $\R^2$ with different values for $I$. Details given in Example \ref{exp:perf-spacings}. In each figure, the class label is indicated by color. For example, the two red points in Figure \ref{fig:eps-first-examp:2} are $Y_1$ for that equidistant spacing. Notice how there is no requirement for the spacing of the points within one class.}
    \label{fig:eps-first-example}
\end{figure}
\begin{example}
    \label{exp:perf-spacings}
    Figure \ref{fig:eps-first-example} illustrates three different equidistant spacings, $\cY^{(1)}\in \ps(3,\R^2)$ and $\cY^{(2)},\cY^{(3)}  \in \ps(2,\R^2)$. In each example, class labels are indicated with color. That is, points of the same color have the same class. 
    
    The first example is $\cY^{(1)}$, illustrated in Figure \ref{fig:eps-first-examp:1}. It shows three points that lie on an equilateral triangle with side-length 1. The analogous construction, each vertex of an equilateral simplex is a class, works in any dimension.

    The second example is $\cY^{(2)}$, illustrated in Figure \ref{fig:eps-first-examp:2}. It shows two classes each of which are arranged as four points on a rhombus with side-length 1. This construction generalizes to higher dimensions in the obvious way.%

    The final example we show is $\cY^{(3)}$, illustrated in Figure \ref{fig:eps-first-examp:3}. This example has one class as the boundary of $S^1_1(0)$ and a one point class at the origin. In this example, a continuum of points belongs to the red class, and one point belongs to the green class. 
\end{example}

Consider the following operations that transform one equidistant spacing into another.
\begin{figure}
    \centering
    \begin{subfigure}{.24\linewidth}
        \centering
        \includegraphics[width=1.\linewidth]{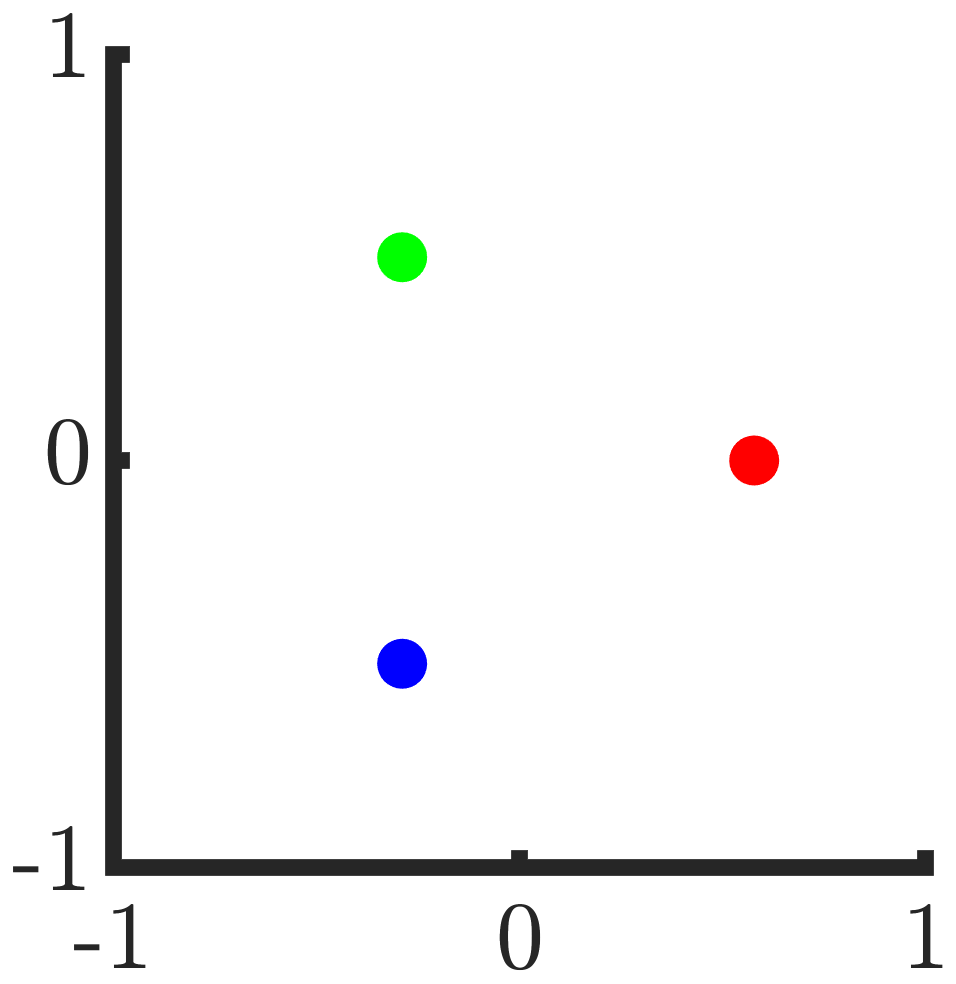}
        \subcaption{$\cY^{(1)}$}
        \label{fig:iso-unitary:1}
    \end{subfigure}
    \begin{subfigure}{.24\linewidth}
        \centering
        \includegraphics[width=1.\linewidth]{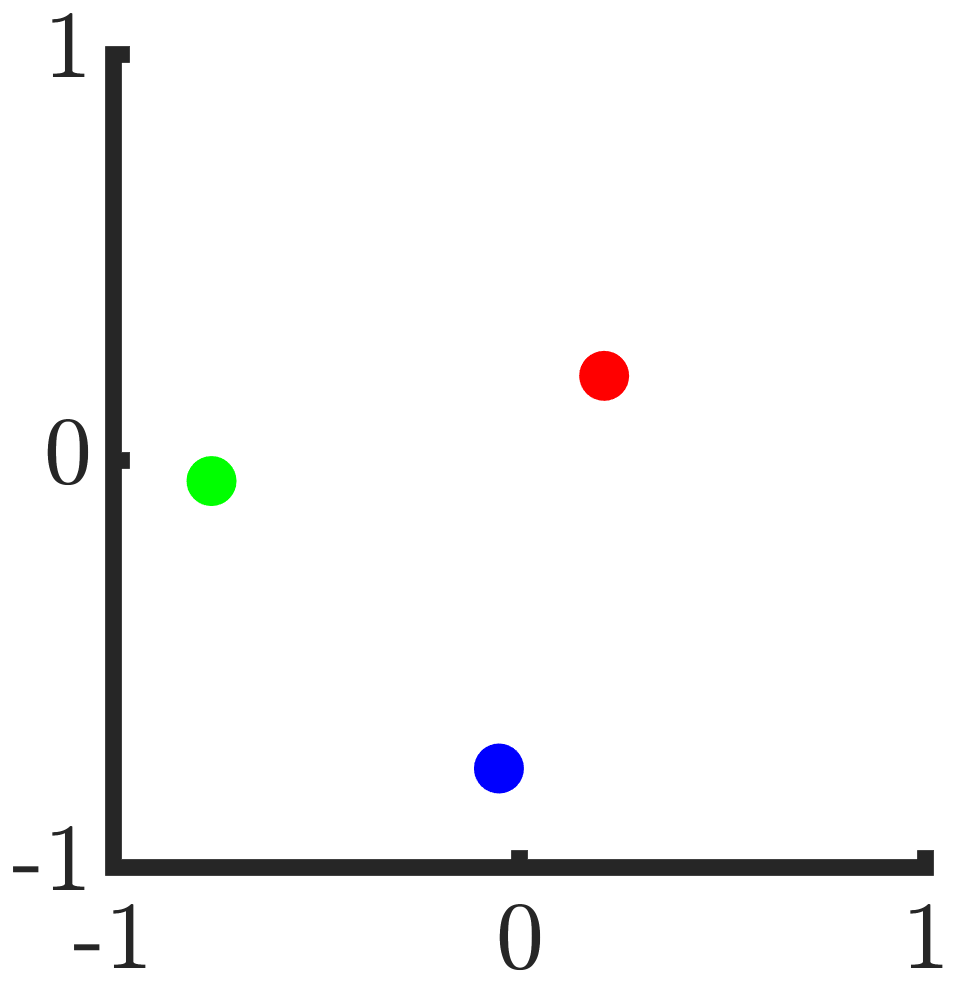}
        \subcaption{$\cY^{(2)}$}
        \label{fig:iso-unitary:2}
    \end{subfigure}
    \begin{subfigure}{.24\linewidth}
        \centering
        \includegraphics[width=1.\linewidth]{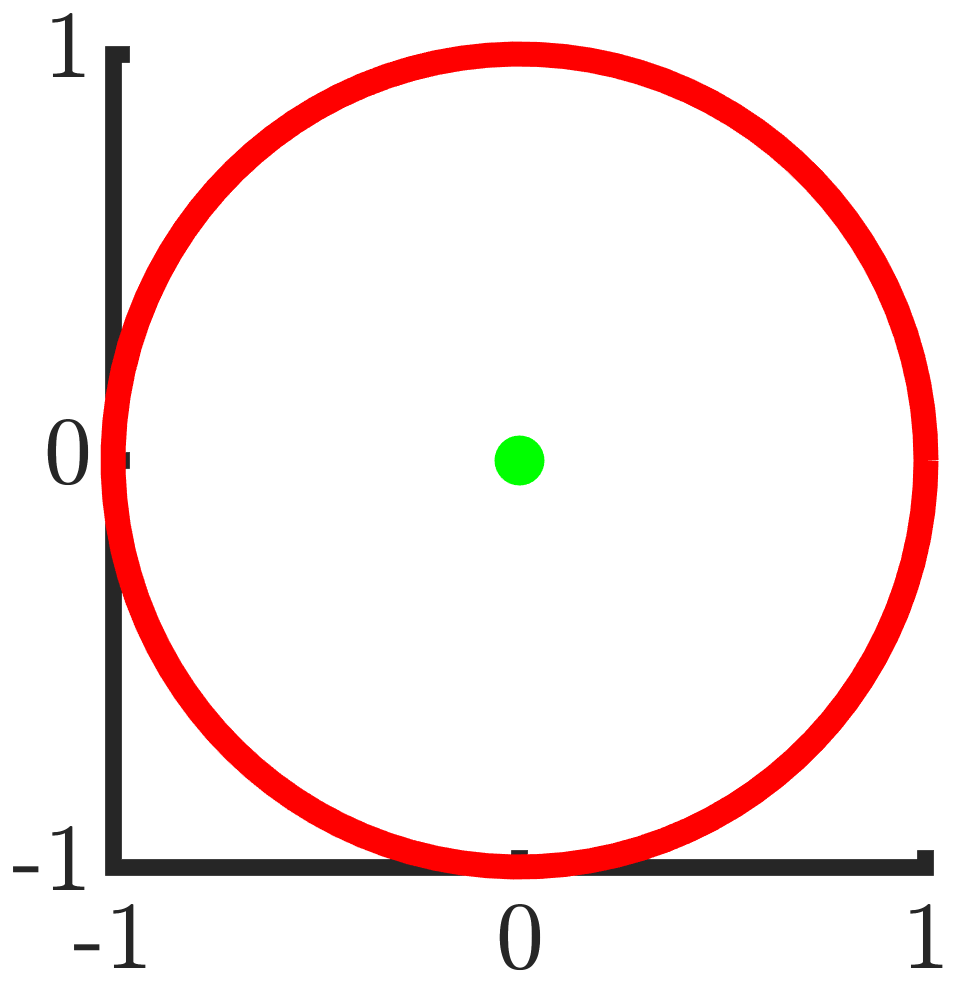}
        \subcaption{$\cY^{(3)}$}
        \label{fig:class-permut:1}
    \end{subfigure}
    \begin{subfigure}{.24\linewidth}
        \centering
        \includegraphics[width=1.\linewidth]{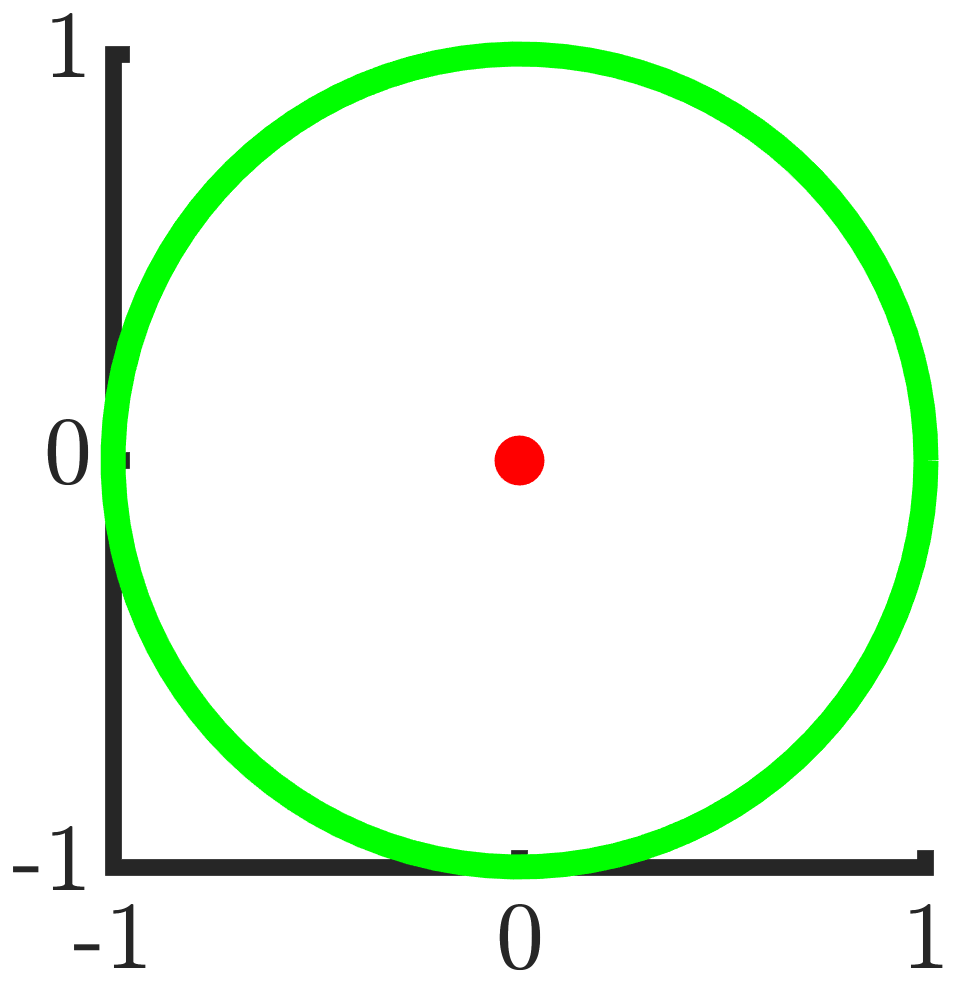}
        \subcaption{$\cY^{(4)}$}
        \label{fig:class-permut:2}
    \end{subfigure}
    \\
    \begin{subfigure}{.24\linewidth}
        \centering
        \includegraphics[width=1.\linewidth]{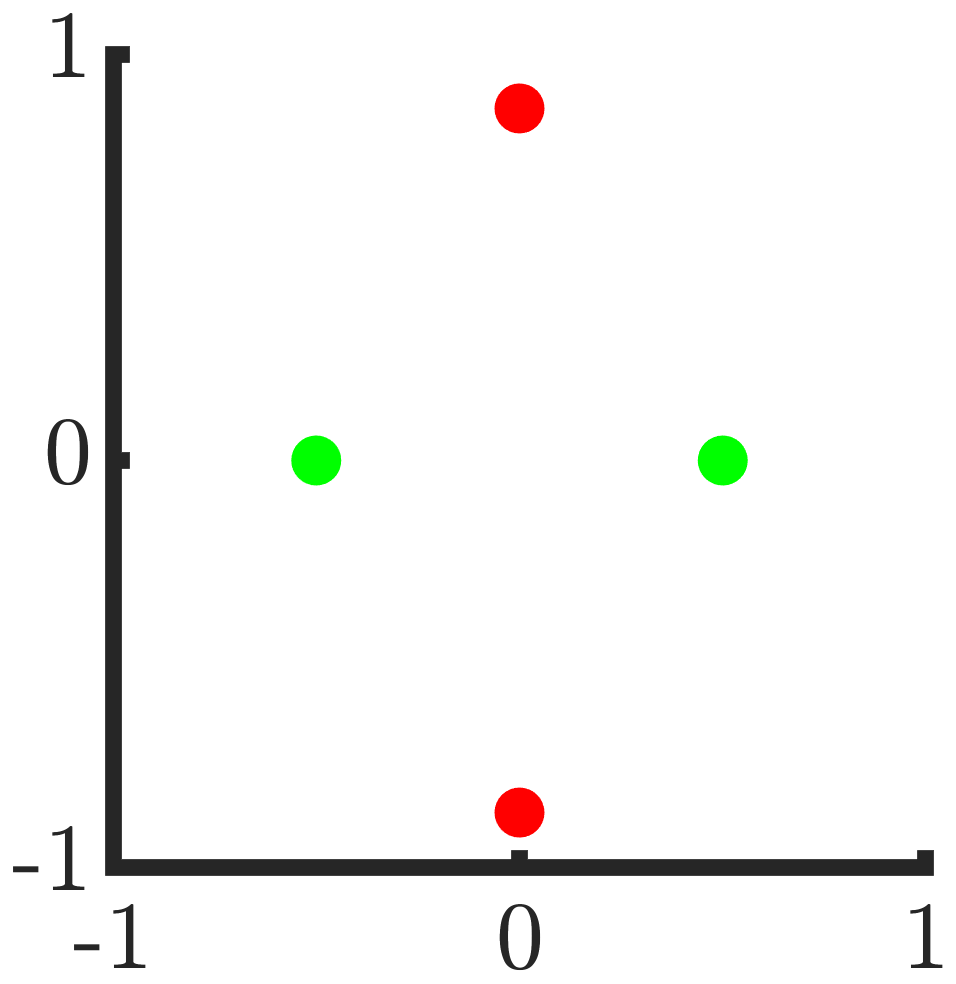}
        \subcaption{$\cY^{(5)}$}
        \label{fig:single-face:1}
    \end{subfigure}
    \begin{subfigure}{.24\linewidth}
        \centering
        \includegraphics[width=1.\linewidth]{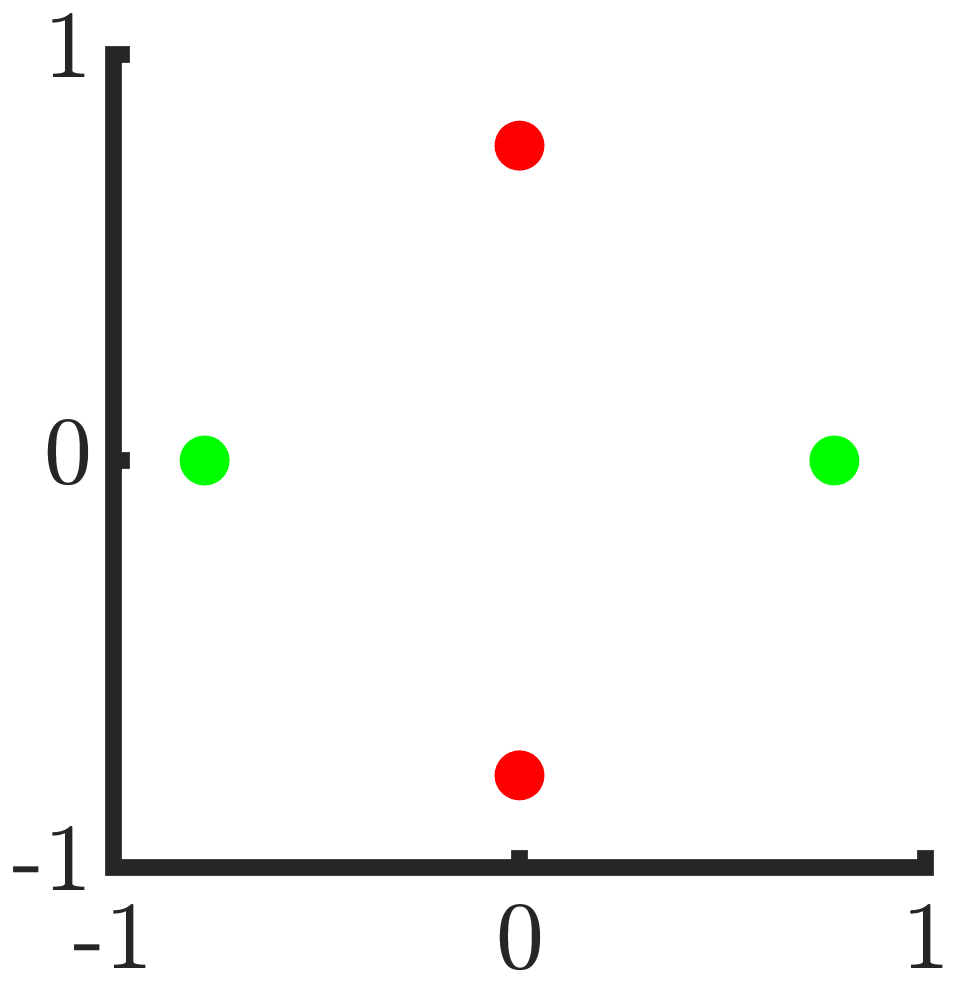}
        \subcaption{$\cY^{(6)}$}
        \label{fig:single-face:2}
    \end{subfigure}
    \caption{An example showing several ways in which a equidistant spacing produces other equidistant spacings in uninteresting ways. This figure shows eight equidistant spacings total. The figures are made up of four pairs. In the first pair, $\cY^{(1)}$ and $\cY^{(2)}$, are related by rototranslation. That is $Y^{(1)}_i = pY^{(2)}_i$ where $p \in E(2)$. The second pair, $\cY^{(3)}$ and $\cY^{(4)}$, are related by permuting class labels. That is, $Y_i^{(3)} = Y^{(4)}_{\sigma(i)}$ for some permutation $\sigma$ on $\set{1,\dots,I}$. The third pair, $\cY^{(5)}$ and $\cY^{(6)}$, are related by what we call a squash and stretch, so named because the green points are `squashed' together, and the red points `streched' out. Intuitively points of one class are `squished' together while points of another class are `stretched' apart to preserve equidistant spacing.}
    \label{fig:perserving-equidistant-spacings}
\end{figure}

\begin{example}[Operations that Preserve Equidistant Spacings]
    \label{exp:operations-that-preserve-equidistant-spacings}
    Figure \ref{fig:perserving-equidistant-spacings} showcases several ways in which equidistant spacings may be transformed into each other. The figure shows four pairs of equidistant spacings. The first three pairs represent isometries on equidistant spacings (further developed in Section \ref{sec:isometric-embeddings}). The final pair illustrates the notion of maximality.

    The first pair, $\cY^{(1)}$ (Figure \ref{fig:iso-unitary:1}) and $\cY^{(2)}$ (Figure \ref{fig:iso-unitary:2}), differ only by a rototranslation, an element of the Euclidean group on $\R^n$, denotes $E(n)$, where $n = 2$. Such an operation is an isometry on the ambient space, and so preserves equidistant spacings.%

    The second pair, $\cY^{(3)}$ (Figure \ref{fig:class-permut:1}) and $\cY^{(4)}$ (Figure \ref{fig:class-permut:2}), differ by permuting the class labels (i.e., the colors of the points). Such an operation preserves equidistant spacing and generalizes to an arbitrary number of points in the obvious way. %

    The third pair, $\cY^{(5)}$ (Figure \ref{fig:single-face:1}) and $\cY^{(6)}$ (Figure \ref{fig:single-face:2}), differ by a squish and stretch, which is defined in Definition \ref{def:squash-stretch}.  For this example, we `stretch' the green classes by moving the green points apart, and `squeeze' the red class by moving the red points together. If done properly, the resulting spacing is still equidistant. %
    In higher dimensions an analogous operation exists, but it is nontrivial to characterize. The key is that some dimensions of $\R^n$ are squished and others are stretched in such a way that maintains equidistant spacing.

    In fact, the three isometries of equidistant spacings outlined here (rototranslations, class permutations and squash and stretches) characterize all isometries in all spaces for maximal equidistant spacings. This is the content of our main theorem, \ref{thm:maximal-equidistant-spacings-are-isometric-iff-same-signature}. %

\end{example}

\subsection{Equidistant Spacings, Centers, and Radii}
\label{sec:equidistant-spacings-center-and-radii}

We now give two important results with trivial proofs. Both results are major workhorses of all developments that follow.

\begin{remark}[Notational Shorthand: I]
    \label{rmk:notational-shorthand:1}
    For the manuscript, we adopt the following notation. If a $\cY \in \ps$ is mentioned in a theorem, lemma, example, etc. it is assumed that,
    \begin{enumerate}
        \item the variables $i,j$ or $k$ denote class labels of $\cY \in \ps$,
        \item the symbols $Y_i, Y_j$ or $Y_k$ denote classes of $\cY$,
    \end{enumerate}
\end{remark}
Pursuant to Remark \ref{rmk:notational-shorthand:1}, in the statement of Lemma \ref{lem:ortho-theorem} we use the convention that $i$ and $j$ are understood to refer to classes of $\cY$. This implicit understanding will be used for subsequent results as well and helps avoid notational bloat.
\begin{lemma}[Orthogonality Lemma]
    \label{lem:ortho-theorem}
    Let $\cY \in \ps$, $i\neq j$, $y_i,y_i' \in Y_i$, and $y_j,y_j' \in Y_j$. Then $\aff Y_i \perp \aff Y_j$.
\end{lemma}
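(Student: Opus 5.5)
We will show directly that $\langle y_i - y_i', y_j - y_j'\rangle = 0$ for all $y_i,y_i' \in Y_i$ and $y_j,y_j' \in Y_j$, with $i \neq j$. Orthogonality of the affine hulls then follows by linearity. Indeed, the direction space of $\aff Y_i$ is the linear span of the differences $\{y_i - y_i' : y_i, y_i' \in Y_i\}$, and likewise for $Y_j$. So if every difference from $Y_i$ is orthogonal to every difference from $Y_j$, bilinearity of the inner product shows that the two direction spaces are orthogonal. This is exactly what $\aff Y_i \perp \aff Y_j$ means.

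The key computation is a polarization identity applied to the four points $y_i, y_i', y_j, y_j'$. Expanding each squared norm as $\norm{a-b}^2 = \norm{a}^2 - 2\langle a,b\rangle + \norm{b}^2$ gives
\begin{align*}
    2\langle y_i - y_i', y_j - y_j'\rangle = \norm{y_i - y_j'}^2 + \norm{y_i' - y_j}^2 - \norm{y_i - y_j}^2 - \norm{y_i' - y_j'}^2 .
\end{align*}
Each of the four pairs on the right consists of one point of $Y_i$ and one point of $Y_j$. Since $i \neq j$, Definition \ref{def:equidistantly-spaced} gives that all four distances equal $1$. The right-hand side is therefore $1 + 1 - 1 - 1 = 0$.

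There is essentially no obstacle here. The only care needed is to check the sign bookkeeping in the polarization identity, which is a routine expansion. One should also note that the argument applies when $y_i = y_i'$ or $y_j = y_j'$, which is the trivial case, and that it needs $I \ge 2$ so that distinct classes exist.
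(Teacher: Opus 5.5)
Your proposal is correct and follows the same route as the paper. Both arguments reduce the claim to $\langle y_i - y_i', y_j - y_j'\rangle = 0$ and get it from the polarization identity $2\langle y_i - y_i', y_j - y_j'\rangle = \norm{y_i - y_j'}^2 + \norm{y_i' - y_j}^2 - \norm{y_i - y_j}^2 - \norm{y_i' - y_j'}^2 = 0$; your version also spells out why orthogonality of differences gives orthogonality of the direction spaces, and it has the correct term $\norm{y_i' - y_j}^2$ where the paper's final display mistakenly repeats $\norm{y_i - y_j'}^2$.
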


\begin{proof}
    The claim is equivalent to showing that  $\innerprod{y_i - y_i'}{y_j - y_j'} = 0$. This follows from the following chain of equalities.
    \begin{align}
        2\innerprod{y_i - y_i'}{y_j - y_j'} &= 2\paren{\innerprod{y_i}{y_j} + \innerprod{y_i'}{y_j'} - \innerprod{y_i}{y_j'} - \innerprod{y_i'}{y_j}}\\
        &= \innerprod{y_i}{y_i} -  \innerprod{y_i}{y_i} + \innerprod{y_i'}{y_i'} -  \innerprod{y_i'}{y_i'}\nonumber\\
        &+ \innerprod{y_j}{y_j} -  \innerprod{y_j}{y_j} + \innerprod{y_j'}{y_j'} -  \innerprod{y_j'}{y_j'}\nonumber\\
        &+ 2\paren{\innerprod{y_i}{y_j} + \innerprod{y_i'}{y_j'} - \innerprod{y_i}{y_j'} - \innerprod{y_i'}{y_j}}\\
        &= \paren{-\innerprod{y_i}{y_i} + 2\innerprod{y_i}{y_j} - \innerprod{y_j}{y_j}}\nonumber\\
        &+ \paren{-\innerprod{y_i'}{y_i'} + 2\innerprod{y_i'}{y_j'} - \innerprod{y_j'}{y_j'}}\nonumber\\
        &+ \paren{\innerprod{y_i}{y_i} - 2\innerprod{y_i}{y_j'} + \innerprod{y_j'}{y_j'}}\nonumber\\
        &+ \paren{\innerprod{y_i'}{y_i'} - 2\innerprod{y_i'}{y_j} + \innerprod{y_j}{y_j}}\nonumber\\
        &= -\norm{y_i - y_j}^2 - \norm{y_i' - y_j'}^2\nonumber\\ 
        &+ \norm{y_i - y_j'}^2 + \norm{y_i - y_j'}^2 \\
        &= 2 - 2 = 0
    \end{align}
\end{proof}

\begin{definition}[Minimal Affine Set]
    \label{def:smallest-affine-set}
    For an $X \subset \R^n$ we define $\aff X$ as the smallest affine set containing $X$. For $\cY \in \ps$ we define the set $A_i \coloneqq \aff(Y_i)$. That is, $A_i$ is the minimal affine set that contains $Y_i$. %
\end{definition}

We now introduce a trick called the recoloring trick. This trick follows from the observation that, given a equidistant spacing $\cY$, we may make another by partitioning all classes of $\cY$ and coloring all classes of each partition the same color.

\begin{lemma}[The Recoloring Trick]
    \label{lem:recoloring-trick}
    Let $\cY \in \ps(I,\R^n)$ be a equidistant spacing, and $p_1,\dots,p_J$ a partition of $\set{1,\dots, I}$ with $J \geq 2$. Consider $\cY' = \sqcup_{j=1}^J Y_j'$ defined via
    \begin{align}
        \label{eqn:recoloring-eqn}
        Y_j' \coloneqq \bigcup_{i \in p_j} Y_i.
    \end{align}
    Then $\cY' \in \ps(J,\R^n)$.
\end{lemma}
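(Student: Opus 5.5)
The argument is a direct check against Definition \ref{def:equidistantly-spaced}. Since $J \geq 2$, we only need the case $I \geq 2$ of that definition, and not the single-class case with a center $c_0$.

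There are two things to verify. First, $\cY'$ is a disjoint union: the sets $p_1,\dots,p_J$ partition $\set{1,\dots,I}$ and the classes $Y_i$ are pairwise disjoint, so the sets $Y_j' = \bigcup_{i\in p_j} Y_i$ are pairwise disjoint. Moreover $\bigsqcup_{j=1}^J Y'_j = \bigsqcup_{i=1}^I Y_i$, so $\cY'$ is the same underlying point set with coarser labels.

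Second, we check Eqn. \ref{eqn:equidistant-spacing-equality} for $\cY'$. Take $j \neq j'$, $y \in Y_j'$ and $y' \in Y_{j'}'$. By Eqn. \ref{eqn:recoloring-eqn} there are indices $i \in p_j$ and $i' \in p_{j'}$ with $y \in Y_i$ and $y' \in Y_{i'}$. Because $p_j \cap p_{j'} = \emptyset$, we have $i \neq i'$. Equidistant spacing of $\cY$ then gives $\norm{y - y'} = 1$, so $\cY' \in \ps(J,\R^n)$.

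There is no real obstacle. The only step needing care is noting that disjointness of the partition blocks forces $i \neq i'$; this is exactly where the partition hypothesis is used. The hypothesis $J \geq 2$ is used only to place $\cY'$ in the multi-class branch of the definition.
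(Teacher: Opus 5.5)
Your proposal is correct and follows essentially the same argument as the paper: you pick points from two distinct blocks $p_j \neq p_{j'}$ and note that disjointness of the blocks forces them into distinct original classes, so the original spacing gives distance $1$. Your explicit checks that $\cY'$ is still a disjoint union and that $J \geq 2$ places it in the multi-class branch of the definition are small additions the paper leaves implicit.
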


\begin{proof}
    Consider a $i' \neq j'$, $y'_{i'} \in  Y'_{i'}$ and $y'_{j'} \in Y'_{j'}$. Then by Eqn. \ref{eqn:recoloring-eqn} there is an $i \neq j$ so that $y'_{i'} \in Y_i$ and $y_{j'} \in Y_j$. Because $\cY \in \ps$, we have that $\norm{y'_{i'} - y'_{j'}} = 1$
\end{proof}

\begin{figure}
    \centering
    \begin{subfigure}{.24\linewidth}
        \centering
        \includegraphics[width=1.\linewidth]{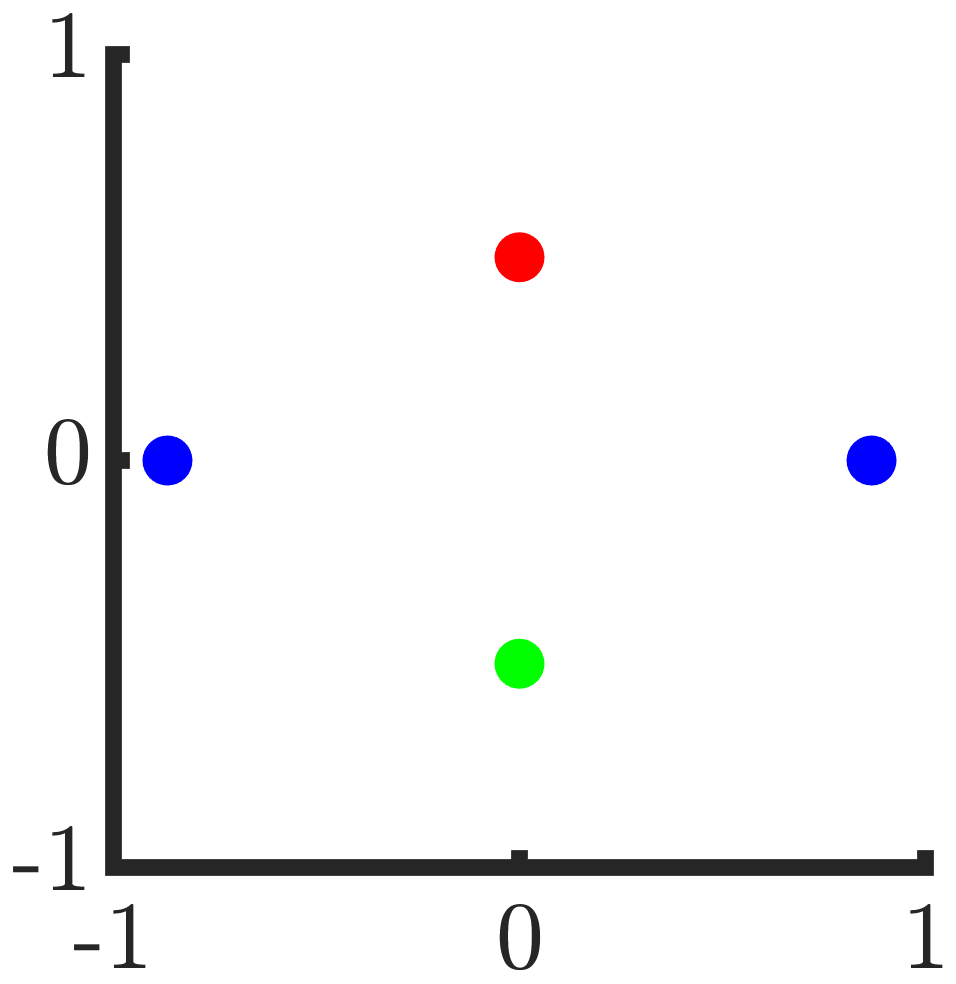}
        \subcaption{$\cY^{(1)}$}
        \label{fig:recoloring:1}
    \end{subfigure}
    \begin{subfigure}{.24\linewidth}
        \centering
        \includegraphics[width=1.\linewidth]{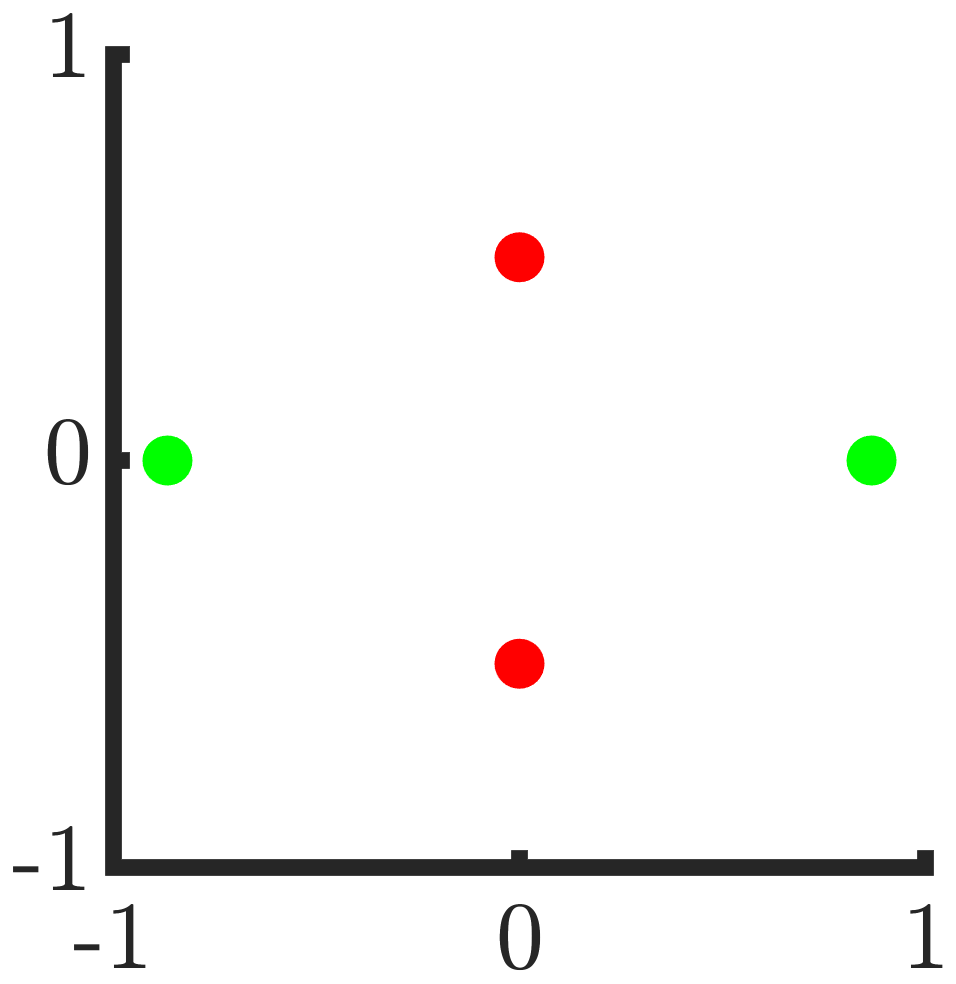}
        \subcaption{$\cY^{(2)}$}
        \label{fig:recoloring:2}
    \end{subfigure}
    \begin{subfigure}{.24\linewidth}
        \centering
        \includegraphics[width=1.\linewidth]{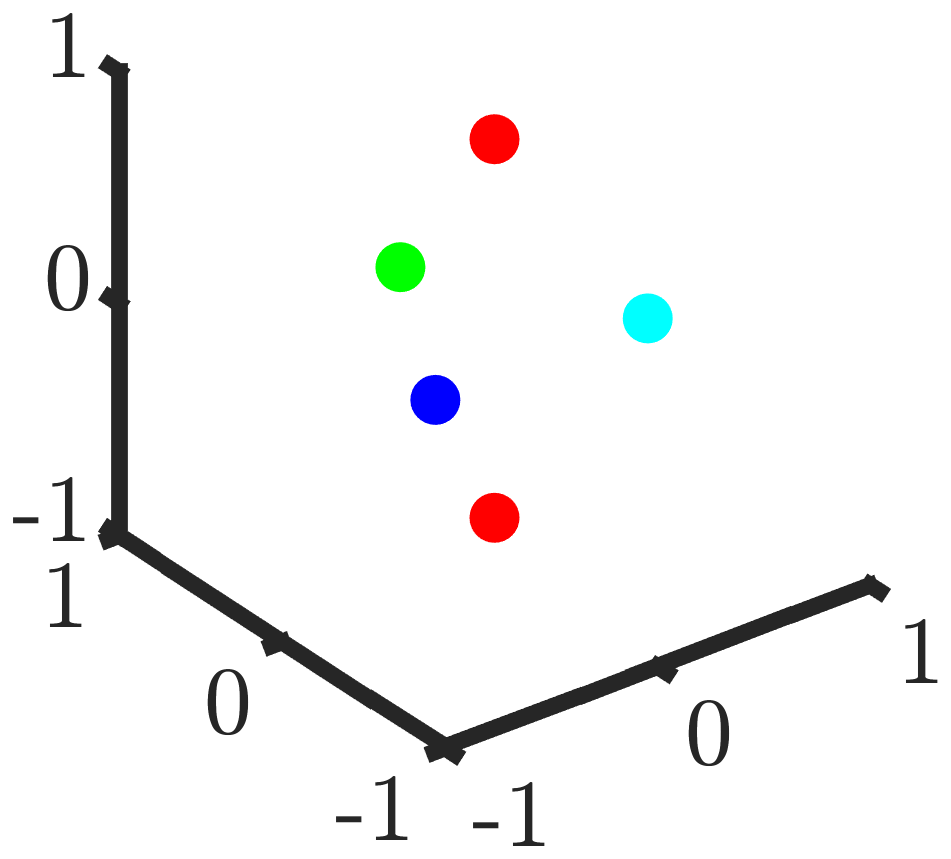}
        \subcaption{$\cY^{(3)}$}
        \label{fig:recoloring:3}
    \end{subfigure}
    \begin{subfigure}{.24\linewidth}
        \centering
        \includegraphics[width=1.\linewidth]{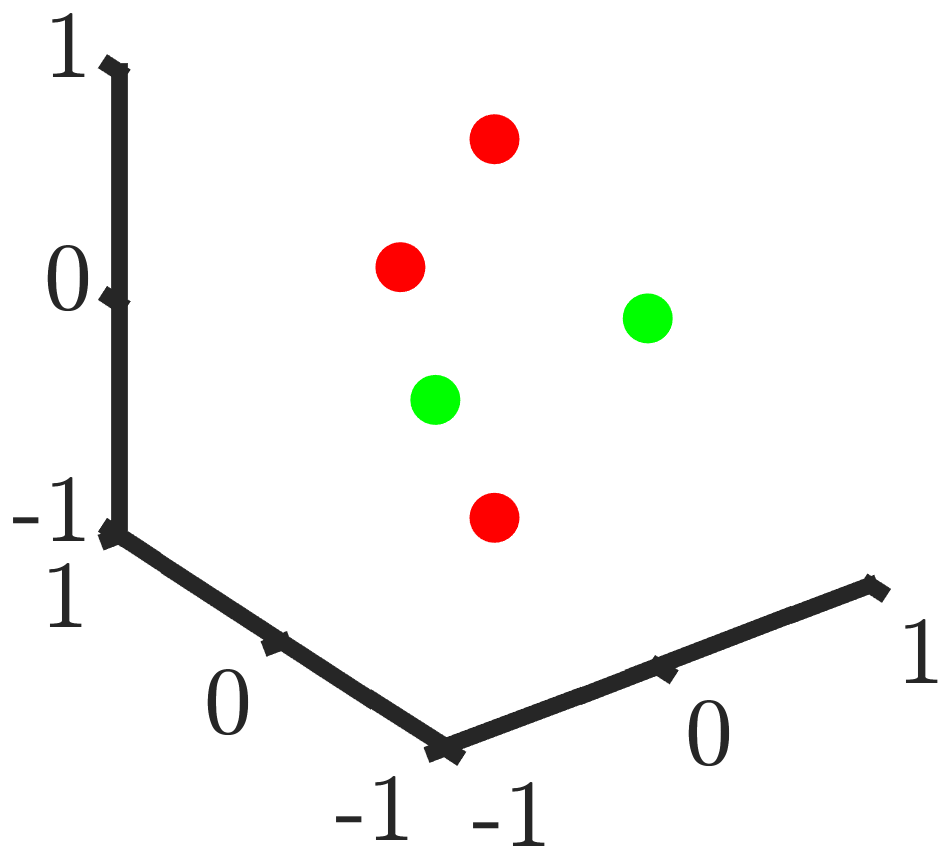}
        \subcaption{$\cY^{(4)}$}
        \label{fig:recoloring:4}
    \end{subfigure}
    \caption{Two examples of the recoloring trick. The first example shows $\cY^{(1)}$, plotted in subfigure \ref{fig:recoloring:1} recolored to $\cY^{(2)}$, plotted in \ref{fig:recoloring:2}. The second shows $\cY^{(3)}$, plotted in subfigure \ref{fig:recoloring:3} recolored to $\cY^{(4)}$, plotted in \ref{fig:recoloring:4}.}
    \label{fig:recoloring-trick}
\end{figure}

\begin{remark}[Necessity for $J \geq 2$ in the Recoloring Trick]
    \label{rmk:nesc-for-j-geq-2-in-the-recoloring-trick}
    The requirement that $J \geq 2$ is necessary, as coloring all points the same color does not always yield a equidistant spacings, see $\cY^{(1)}$ in Figure \ref{fig:recoloring-trick}.
\end{remark}

Two examples of the recoloring trick are given in Figure \ref{fig:recoloring-trick}. The recoloring trick is obvious but has a powerful use case. We may use it to extend results that relate classes $Y_i$ and $Y_j$ to more general results about $\cup_{i \in \cI} Y_i$ and $\cup_{j \in \cJ} Y_j$ for any disjoint $\cI, \cJ \subset \set{1,\dots,I}$. For example, the following corollary of Lemma \ref{lem:ortho-theorem}.

\begin{corollary}[Strong Orthogonality]
    \label{cor:strong-ortho}
    Let $\cY \in \ps(I)$ and $ \cI \subset \set{1,\dots,I}$. Then,
    \begin{align}
    \aff{\cup_{i \in\cI} Y_i} \perp \aff{\cup_{i \in\cI^c} Y_i}.
    \end{align}
\end{corollary}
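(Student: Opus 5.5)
The plan is to reduce the statement to the two-class case of Lemma \ref{lem:ortho-theorem} using the recoloring trick, Lemma \ref{lem:recoloring-trick}. First dispose of the degenerate cases. If $\cI = \emptyset$ or $\cI^c = \emptyset$, one of the two affine hulls is the affine hull of the empty set. Under the convention that the empty set (or a point) is orthogonal to everything, the claim is vacuous. So assume both $\cI$ and $\cI^c$ are nonempty.

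Next, consider the partition $p_1 = \cI$, $p_2 = \cI^c$ of $\set{1,\dots,I}$, which has $J = 2$ parts. Lemma \ref{lem:recoloring-trick} then says that $\cY' = Y_1' \sqcup Y_2'$, with $Y_1' = \cup_{i\in\cI} Y_i$ and $Y_2' = \cup_{i \in \cI^c} Y_i$, lies in $\ps(2,\R^n)$. Applying Lemma \ref{lem:ortho-theorem} to the two distinct classes of $\cY'$ gives $\aff Y_1' \perp \aff Y_2'$, which is exactly the claimed statement.

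For clarity, I would also record what orthogonality of affine sets means here. It means their direction spaces are orthogonal, that is, $\innerprod{a - a'}{b - b'} = 0$ for all $a,a' \in \aff Y_1'$ and $b,b' \in \aff Y_2'$. The proof of Lemma \ref{lem:ortho-theorem} checks this only for differences of points in the sets themselves. That is enough, because the direction space of $\aff X$ is the linear span of the differences $x - x'$ with $x,x' \in X$, and bilinearity of the inner product extends orthogonality from spanning sets to their spans.

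There is no real obstacle. The only points needing care are the empty-set edge case and this passage from differences of points to the affine hull. Both are routine.
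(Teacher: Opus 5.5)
Your proposal is correct and matches the paper's proof: recolor with the two-part partition $p_1 = \cI$, $p_2 = \cI^c$ via Lemma \ref{lem:recoloring-trick}, then apply Lemma \ref{lem:ortho-theorem} to the resulting two-class spacing. Your added remarks are sound, and the paper leaves both implicit: the nonemptiness of $\cI$ and $\cI^c$ (needed for $J \geq 2$), and the extension from differences of points to the full affine hulls.
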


\begin{proof}
    Apply the recoloring trick to $\cY$ to obtain $\cY'$ where $p_1 \coloneqq \cI$ and $p_2 \coloneqq \cI^c$. Then, the corollary follows from applying Lemma \ref{lem:ortho-theorem} to $\cY'$.
\end{proof}

With the definition of $A_i$, we may present our first theorem, the projection theorem. %

\begin{theorem}[Projection Theorem]
    \label{thm:projection-theorem}
    Let $\cY \in \ps(I,\R^n)$. For a given $i$, define the set $\cA_i \subset \R^{n+1}$ by 
    \begin{align}
        \label{eqn:equispaces-pairs}
        \cA_i \coloneqq \set{(c,r) \colon Y_i \subset S^{n-1}_{r}(c)}.
    \end{align}
    Then the following holds
    \begin{enumerate}
        \item The set $\cA_i$ is non-empty.
        \item Let $r_i \coloneqq \min \pi_{n+1} \cA_i$. Then $r_i \in [0,1]$ and there is a unique $c_i$ so that $(c_i,r_i) \in \cA_i$.
        \item The point $c_i$ satisfies
        \begin{align}
            \label{eqn:center-is-fixed}
            c_i \coloneqq \aff_{A_i} \cup_{j \in \cJ} Y_j
        \end{align}
        where $\cJ \subset \set{1,\dots,I}$ does not contain $i$.
    \end{enumerate}
    
\end{theorem}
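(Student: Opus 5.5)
The plan is to derive everything from one observation: any point of another class is a center of a sphere containing $Y_i$. Then I would decompose such centers orthogonally with respect to $A_i=\aff Y_i$.

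For item 1, fix $i$. If $I\ge 2$, pick any $j\neq i$ and any $y_j\in Y_j$. By Definition \ref{def:equidistantly-spaced}, $\norm{y-y_j}=1$ for all $y\in Y_i$, so $(y_j,1)\in\cA_i$. If $I=1$, the definition directly supplies a center $c_0$ and a radius at most $1$. Either way $\cA_i\neq\emptyset$, and $\cA_i$ contains a pair with radius at most $1$. I assume $Y_i\neq\emptyset$; the empty case is degenerate and I would dispose of it by convention.

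For item 2, take any $(c,r)\in\cA_i$. Write $c=p+q$, where $p$ is the orthogonal projection of $c$ onto $A_i$ and $q\perp \vec A_i$, with $\vec A_i$ the direction space of $A_i$. For $y\in Y_i\subset A_i$, Pythagoras gives
\begin{align*}
\norm{y-c}^2=\norm{y-p}^2+\norm{q}^2 .
\end{align*}
Hence $p$ is equidistant from $Y_i$, and $(p,\sqrt{r^2-\norm{q}^2})\in\cA_i$. Next I would show there is at most one point of $A_i$ equidistant from all of $Y_i$. Suppose $p,p'\in A_i$ both are. Subtracting the equations $\norm{y-p}^2=\rho^2$ for different $y,y'\in Y_i$ shows that $\innerprod{y-y'}{p}$ is determined by the data, and the same holds for $p'$. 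Therefore $p-p'\perp\vec A_i$, since $Y_i$ affinely spans $A_i$. But also $p-p'\in\vec A_i$, so $p=p'$.

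Call this unique point $c_i$ and let $\rho_i$ be its common distance to $Y_i$. Every $(c,r)\in\cA_i$ then satisfies $r^2=\rho_i^2+\norm{c-c_i}^2$. So the minimum of $\pi_{n+1}\cA_i$ is attained exactly at $c=c_i$, with $r_i=\rho_i$. This gives uniqueness, and $r_i\le 1$ follows from the radius-$1$ witness found in item 1.

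For item 3, I read \eqref{eqn:center-is-fixed} as saying that the orthogonal projection onto $A_i$ of $\aff\bigcup_{j\in\cJ}Y_j$ is the single point $c_i$. The computation in item 2 already shows that the projection onto $A_i$ of any $y_j\in Y_j$ is $c_i$, because it is the equidistant point in $A_i$. By Corollary \ref{cor:strong-ortho}, applied with $\cI=\{i\}$, $\aff\bigcup_{j\neq i}Y_j\perp A_i$. So the projection onto $A_i$ is constant along this affine set, and it is also constant along its subset $\aff\bigcup_{j\in\cJ}Y_j$. Since that set contains some $y_j$, the constant value is $c_i$.

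I expect the main obstacle to be stating item 3 precisely, since the notation is ambiguous, and handling the degenerate cases ($Y_i$ empty, $\cJ$ empty, $I=1$) cleanly. The geometry itself is routine.
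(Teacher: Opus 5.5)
Your proposal is correct and follows essentially the same route as the paper: Pythagoras projects any sphere center onto $A_i$, the equidistant point in $A_i$ is unique, and item 3 follows from strong orthogonality plus that uniqueness. Your inner-product argument is just the direct proof of the uniqueness of circumcenters that the paper cites, and your identity $r^2 = r_i^2 + \norm{c - c_i}^2$ for every admissible pair $(c,r)$ is a small improvement: it shows the minimum defining $r_i$ is actually attained and pins down the minimizer, which the paper leaves implicit.
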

\begin{proof}
    Point 1. If $I = 1$, then point 1 follows from definition. If $I > 1$, then $(y_j,1) \in \cA_i$ where $j \neq i$ and $y_j \in Y_j$. 

    Point 2. Let $(y,r) \in \cA_i$ and let $y^* \coloneqq \proj_{A_i}y$. By Pythagorean's theorem 
    \begin{align}
        \paren{y^*,\sqrt{r^2 - \norm{y - y^*}^2}} \in \cA_i.
    \end{align}
    Hence, if $(c_i,r_i)$ is such that $r_i = \min \pi_{n+1} \cA_i$, then $c_i \in A_i$. Because $A_i$ is the minimal affine set that contains $Y_i$, $Y_i$ contains a (non-degenerate) $\dim A_i+1$-simplex of points $\sigma$. The statements $(c_i,r_i) \in \cA_i$ and $c_i \in A_i$ means that $c_i$ is the circumcenter of $\sigma$. Uniqueness of $c_i$ comes from uniqueness of circumcenters.

    Point 3. Corollary \ref{cor:strong-ortho} means that $A_i \perp \aff \cup_{j \in \cI}A_j$. Hence, if $y \in Y_j$ for $j \in \cJ$ then $(\proj_{A_i}y_j , \sqrt{1 - \norm{y - \proj_{A_i}y_j}^2})$. But $\norm{y - \proj_{A_i}y_j}^2 = r_i$, and so $\proj_{A_i}y_j = c_i$ by point 2.

\end{proof}

\begin{definition}[Center and Radius of $Y_i$]
    \label{def:center-and-radius}
    Let $\cY \in \ps$. For each $i$, let $\cA_i$ be given by Eqn. \ref{eqn:equispaces-pairs}. We define the \emph{center} and \emph{radius} of $Y_i$ by $c_i$ and $r_i$ respectively where $r_i \coloneqq \min \pi_{n+1} \cA_i$ and $c_i$ is the unique point so that $(c_i,r_i) \in \cA_i$.
\end{definition}

\begin{remark}[Notational Shorthand: II]
    \label{rmk:notational-shorthand:2}
    We adopt the following notation. If a $\cY \in \ps$ is mentioned in a theorem, lemma, example, etc. and if a $c_i, c_j, c_k, r_i, r_j$ or $r_k$ are mentioned it is understood that $c_i$ (resp. $c_j, c_k$) refers to the center of class $Y_i$ (resp. $Y_j, Y_k$), and likewise $r_i$ (resp. $r_j, r_k$) refers to the radius.
\end{remark}

Next, we present the following technical helper lemma which may be considered a generalization of Lemma \ref{lem:ortho-theorem}.%

\begin{proposition}[Tri-part Orthogonality]
    \label{prop:tri-part-ortho}
    Let $\cY \in \ps$. Then 
    \begin{align}
        \cup_{i = 1}^I A_i \perp \aff\paren{c_1,\dots,c_I}.
    \end{align}
\end{proposition}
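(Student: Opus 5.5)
We interpret the statement as saying that for every $i$ and every pair of classes $j,k$, the direction space of $A_i$ is orthogonal to $c_j - c_k$; equivalently, $\innerprod{y_i - y_i'}{c_j - c_k} = 0$ for all $y_i,y_i' \in Y_i$. The plan is to split into cases according to whether $i$ is among $j,k$. Throughout we use the center and radius of Definition \ref{def:center-and-radius} and point 3 of Theorem \ref{thm:projection-theorem}. That result says the orthogonal projection onto $A_i$ of any point of any other class $Y_j$ is $c_i$.

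\emph{Case $i \notin \set{j,k}$.} Since $c_j \in A_j = \aff Y_j$ and $c_k \in A_k$, both centers lie in $\aff \cup_{l \neq i} Y_l$. Corollary \ref{cor:strong-ortho}, with $\cI = \set{i}$, gives $A_i \perp \aff \cup_{l\neq i} Y_l$. The difference $c_j - c_k$ is a direction vector of the latter affine set, so it is orthogonal to $y_i - y_i'$.

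\emph{Case $i = j$ (the case $i=k$ is symmetric).} We must show $\innerprod{y_i - y_i'}{c_i - c_k} = 0$.
\begin{itemize}
\item Because $c_k \in A_k$ and $c_k$ is an affine combination of points of $Y_k$, it suffices to show $\innerprod{y_i - y_i'}{c_i - y_k} = 0$ for every $y_k \in Y_k$. Linearity in the second slot then passes this to affine combinations, since the coefficients sum to one.
\item By Theorem \ref{thm:projection-theorem}(3), $c_i = \proj_{A_i} y_k$. Hence $y_k - c_i$ is orthogonal to the direction space of $A_i$.
\item The vector $y_i - y_i'$ lies in that direction space, which gives $\innerprod{y_i - y_i'}{y_k - c_i} = 0$.
\end{itemize}

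Combining the two cases, every direction $c_j - c_k$ of $\aff(c_1,\dots,c_I)$ is orthogonal to every $A_i$, which is the claim.

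The only delicate point is the reliance on Theorem \ref{thm:projection-theorem}(3), which requires $I \geq 2$. If its proof seems shaky, we would re-derive it directly. For $y_k \in Y_k$, the quantity $\norm{y_i - y_k}^2 = 1$ is constant over $y_i \in Y_i$. Together with $\norm{y_i - c_i} = r_i$ constant, subtracting gives that $\innerprod{y_i}{y_k - c_i}$ is constant in $y_i$. This says exactly that $y_k - c_i$ is orthogonal to the direction space of $A_i$, and it yields the second case without invoking the theorem. The case $I=1$ is trivial, since then $\aff(c_1)$ is a single point.
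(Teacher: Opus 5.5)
Your proof is correct and follows essentially the same route as the paper's. Both arguments rest on $c_i = \proj_{A_i} y_k$ from Theorem~\ref{thm:projection-theorem}(3), applied to the differences $c_i - c_k$. The paper removes the leftover term $y_k - c_k$ with Lemma~\ref{lem:ortho-theorem}, where you instead write $c_k$ as an affine combination of points of $Y_k$. Your first case ($i \notin \set{j,k}$) is correct but unnecessary: with $i$ fixed, the differences $c_i - c_k$ already span the direction space of $\aff(c_1,\dots,c_I)$, which is exactly the reduction the paper makes.
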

\begin{proof}
    For the proof, it suffices to show that $A_i\perp \aff\paren{c_1,\dots,c_I}$ for each $i = 1,\dots,I$. To show this, it suffices to show that $A_i\perp \aff\paren{c_i,c_j}$ for each $j$. Let $y_i \in Y_i$ and $y_j \in Y_j$, then 
    \begin{align}
        \innerprod{y_i - c_i}{c_i - c_j} &= \innerprod{y_i - c_i}{c_i - y_j + y_j - c_j}\\
            &= \langle\underbrace{y_i - c_i}_{\in A_i}\underbrace{c_j - y_j}_{\in A_j}\rangle + \innerprod{y_i - c_i}{y_j - c_i}\\
            &= 0
    \end{align}

    where $\innerprod{y_i - c_i}{y_j - c_i} = 0$, because $c_i$ is the orthogonal projection of $y_j$ onto $A_i$ by Theorem \ref{thm:projection-theorem} point 3.
\end{proof}

The guiding principle of this manuscript is that equidistant spacings may be understood by understanding their radii and centers. That is, many questions about equidistant spacings may be turned into questions about its centers and radii. Theorem \ref{thm:projection-theorem} shows that a equidistant spacing induces radii and centers (one per class). A natural question is if the reverse is true, that is if we may pick centers and radii in some way and then identify a equidistant spacings with those centers and radii. This is the case, as the later Theorem \ref{thm:radius-recipe} shows, but there are some compatibility conditions between the $r_i$'s and $c_i$'s. %

We may summarize the above results in the following Corollary. %

\begin{corollary}[Properties of Equidistant Spacings]
    \label{cor:properties-of-class-embeddings}
    Let $\cY \in \ps$. Then the following holds. %
    \begin{enumerate}
        \item For all $i,j$ so that $i \neq j$,
        \begin{align}
            \label{cor:properties-of-class-embeddings:1}
            r_i^2 + r_j^2 &\leq 1\\
            \label{cor:properties-of-class-embeddings:2}
            \norm{c_i - c_j}^2 &= 1 - r_i^2 - r_j^2.
        \end{align}
        \item The affine spaces $A_i$ are such that
        \begin{enumerate}
            \item $r_i > 0 \iff \dim A_i > 0$.
            \item The affine spaces
            \begin{align}
                \label{cor:properties-of-class-embeddings:3}
                A_1,\dots,A_I,\aff(\set{c_1,\dots,c_I})
            \end{align}
            are pairwise orthogonal. 
        \end{enumerate}
    \end{enumerate}
\end{corollary}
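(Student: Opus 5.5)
The plan is to derive everything from the Projection Theorem (Theorem \ref{thm:projection-theorem}), the Orthogonality Lemma and its strong form (Corollary \ref{cor:strong-ortho}), and Tri-part Orthogonality (Proposition \ref{prop:tri-part-ortho}), using Pythagoras repeatedly.

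\textbf{Step 1: the distance formula.} Fix $i\neq j$ and pick $y_i\in Y_i$, $y_j\in Y_j$. Decompose
\begin{align*}
y_i - y_j = (y_i - c_i) + (c_i - c_j) + (c_j - y_j).
\end{align*}
The first term lies in the direction space of $A_i$, since $c_i\in A_i$ by point 2 of the Projection Theorem. Likewise the last term lies in the direction space of $A_j$, and the middle term lies in that of $\aff(c_1,\dots,c_I)$. Lemma \ref{lem:ortho-theorem} gives $A_i\perp A_j$, and Proposition \ref{prop:tri-part-ortho} gives orthogonality of each $A_k$ with $\aff(c_1,\dots,c_I)$. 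So the three terms are mutually orthogonal, and Pythagoras yields
\begin{align*}
1 = \norm{y_i-y_j}^2 = r_i^2 + \norm{c_i-c_j}^2 + r_j^2.
\end{align*}
This is Eqn.~\ref{cor:properties-of-class-embeddings:2}. Since $\norm{c_i-c_j}^2\geq 0$, the inequality Eqn.~\ref{cor:properties-of-class-embeddings:1} follows immediately.

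\textbf{Step 2: radius and dimension.} Part 2(a) uses the circumcenter characterization from the proof of point 2 of the Projection Theorem.
\begin{itemize}
\item If $\dim A_i=0$, then $Y_i$ is a single point $c_i$, and $(c_i,0)\in\cA_i$, so $r_i=0$.
\item If $\dim A_i>0$, then $Y_i$ contains two distinct points. Both lie on the sphere of radius $r_i$ about $c_i$, so $r_i>0$.
\end{itemize}

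\textbf{Step 3: pairwise orthogonality.} Part 2(b) combines Lemma \ref{lem:ortho-theorem}, which gives $A_i\perp A_j$ for $i\neq j$, with Proposition \ref{prop:tri-part-ortho}, which gives $A_i\perp\aff(c_1,\dots,c_I)$.

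\textbf{Expected obstacle.} The main point of care is the Pythagorean step. Orthogonality here means orthogonality of direction spaces, so one must check that each difference vector actually lies in the relevant direction space. This needs $c_i\in A_i$, which point 2 of the Projection Theorem provides. Two edge cases also need to be handled:
\begin{itemize}
\item When $I=1$, the statement is vacuous for part 1, and part 2 is handled as above.
\item When some $A_i$ is a point, the corresponding term in the decomposition is simply zero, and the argument goes through unchanged.
\end{itemize}
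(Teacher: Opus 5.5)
Your proposal is correct and follows essentially the same route as the paper. Part 1 uses the decomposition $y_i-y_j=(y_i-c_i)+(c_i-c_j)+(c_j-y_j)$ with Pythagoras via Lemma \ref{lem:ortho-theorem} and Proposition \ref{prop:tri-part-ortho}; part 2(a) uses the observation that $r_i>0$ exactly when $Y_i$ contains a point other than $c_i$; part 2(b) cites the same two orthogonality results. Your explicit check that each difference vector lies in the correct direction space (using $c_i\in A_i$) is a point of care the paper leaves implicit.
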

\begin{proof}
    We prove each property in order
    \begin{enumerate}
        \item From Proposition \ref{prop:tri-part-ortho} we have that 
        \begin{align}
            1 &= \norm{y_i - y_j}^2 = \norm{y_i - c_i}^2 + \norm{c_i - c_j}^2 + \norm{c_j - y_j}^2\nonumber\\
            &= r_i^2 + r_j^2 + \norm{c_i - c_j}^2\nonumber\\
            &\geq r_i^2 + r_j^2.
        \end{align}
        This proves Eqns. \ref{cor:properties-of-class-embeddings:1} and \ref{cor:properties-of-class-embeddings:2}.
        
        \item Let $r_i > 0$. Then this implies that there is at least one $y_i \in Y_i$ so that $y_i \neq c_i$. This is only possible if $\dim A_i > 0$ (otherwise, $A_i$ is a single point and $\dim A_i = \dim \set{c_i} = 0$). Now, suppose that $\dim A_i > 0$. By minimality of $A_i$, this means that there is a $y_i \in Y_i$ so that $y_i \neq c_i$ (otherwise, $A_i = \set{c_i}$ would be smaller). Hence, $r_i = \norm{y_i - c_i} > 0$. This proves 2 (a).

        The proof that $A_i$ and $A_j$ are orthogonal if $i \neq j$ follows from Lemma \ref{lem:ortho-theorem}. The proof that $A_i$ and $\aff(c_1,\dots,c_I)$ are orthogonal follows from Proposition \ref{prop:tri-part-ortho}. This proves 2 (b).
    \end{enumerate}
\end{proof}

This corollary gives an idea about how to construct $Y_i$'s that arise from an $\cY \in \ps$. First, choose some centers $c_i$'s and radii $r_i$ that satisfy Eqns. \ref{cor:properties-of-class-embeddings:1} and \ref{cor:properties-of-class-embeddings:2}. Then, choose vector spaces $V_1,\dots,V_I$ that are all pairwise orthogonal and orthogonal to $\aff{\set{c_1,\dots,c_I}}$ subject to $r_i > 0 \implies \dim V_i > 0$. Then put $A_i \coloneqq c_i + V_i$. This idea works and, in fact, only depends on how the $r_i$'s are chosen.

\begin{theorem}[Construction of Equidistant Spacings]
    \label{thm:radius-recipe}
    Let radii $\set{r_i}_{i = 1}^I$ be such that $r_i \in [0,1]$, and let the centers $\set{c_i}_{i = 1}^I$ and $\set{V_i}_{i = 1}^I$ be defined such that the following holds.
    \begin{enumerate}
        \item For all $i,j$ so that $i \neq j$,  $r_i^2 + r_j^2 \leq 1$,
        \item The centers satisfy
        \begin{align}
            \label{eqn:center-condition}
            \norm{c_i - c_j}^2 \coloneqq 1 - r_i^2 - r_j^2.
        \end{align}
        \item The subspaces $V_i$ are chosen so that 
        \begin{enumerate}
            \item The dimension of $V_i$ is chosen so that $r_i > 0 \iff \dim(V_i) > 0$.
            \item The set $\set{V_i}_{i = 1}^I$ is pairwise-orthogonal.
            \item Each $V_i$ is orthogonal to $\Span{c_j - c_1,\dots,c_j - c_{I}}$
            for any $j$.
        \end{enumerate}
    \end{enumerate}
    Then $\sqcup_{i = 1}^I Y_i \in \ps(I,\R^n)$ where
    \begin{align}
        \quad Y_i \coloneqq S^{n-1}_{r_i}(c_i) \cap A_i
    \end{align}
    and $A_i \coloneqq c_i + V_i$. Moreover, $\sqcup_{i = 1}^I Y_i$ has centers $\set{c_i}$ and radii $\set{r_i}$.
\end{theorem}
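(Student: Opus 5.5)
The argument has two parts. First we verify the defining equality \eqref{eqn:equidistant-spacing-equality} through a Pythagorean decomposition that mirrors the proof of Corollary \ref{cor:properties-of-class-embeddings}. Then we identify the centers and radii using the uniqueness statement in Theorem \ref{thm:projection-theorem}.

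\textbf{Equidistance.} Fix $i \neq j$, $y_i \in Y_i$ and $y_j \in Y_j$. Write $y_i = c_i + v_i$ with $v_i \in V_i$ and $\norm{v_i} = r_i$, and similarly $y_j = c_j + v_j$. Then
\begin{align*}
    y_i - y_j = v_i + (c_i - c_j) - v_j .
\end{align*}
The three summands are pairwise orthogonal for the following reasons.
\begin{itemize}
    \item $v_i \perp v_j$ by hypothesis 3(b).
    \item $c_i - c_j$ lies in $\Span{c_j - c_1,\dots,c_j-c_I}$, so it is orthogonal to both $V_i$ and $V_j$ by hypothesis 3(c).
\end{itemize}
Hence
\begin{align*}
    \norm{y_i - y_j}^2 = r_i^2 + \norm{c_i-c_j}^2 + r_j^2 = 1
\end{align*}
by \eqref{eqn:center-condition}. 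Hypothesis 1 is exactly what makes \eqref{eqn:center-condition} admissible, since the right-hand side must be nonnegative.

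\textbf{The case $I=1$.} Here the claim follows directly from the definition. The set $Y_1$ is a sphere of radius $r_1 \le 1$ about $c_1$.

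\textbf{Centers and radii.} If $r_i = 0$, then $V_i = \{0\}$ and $Y_i = \{c_i\}$, so the minimal pair in $\cA_i$ is $(c_i,0)$. If $r_i > 0$, then $\dim V_i > 0$, and $Y_i$ is the full round sphere of radius $r_i$ centered at $c_i$ inside the affine space $A_i = c_i + V_i$. Two facts follow.
\begin{itemize}
    \item $\aff Y_i = A_i$, because a sphere of positive radius in an affine space of positive dimension affinely spans that space.
    \item $c_i$ is the unique point of $A_i$ equidistant from all of $Y_i$.
\end{itemize}
Theorem \ref{thm:projection-theorem} says the minimizing pair $(c_i,r_i)$ must have $c_i \in A_i$ and must be the circumcenter there. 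Any other pair $(c,r) \in \cA_i$ projects to $\proj_{A_i} c = c_i$ with $r \ge r_i$, by the Pythagorean argument in point 2 of that theorem. So the center and radius of $Y_i$ are exactly $c_i$ and $r_i$.

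\textbf{Expected obstacle.} The main subtlety is bookkeeping rather than depth. One must make sure that $Y_i$, defined as $S^{n-1}_{r_i}(c_i)\cap A_i$, really spans $A_i$ and is nonempty, which needs $\dim V_i>0$ when $r_i>0$. One must also make sure hypothesis 3(c) is read as orthogonality to all differences $c_i - c_j$. After that, the disjointness of the union needs a short check: points in different classes are at distance $1$, so they are distinct.
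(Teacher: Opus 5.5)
Your proposal is correct and follows the paper's proof: the same three-term Pythagorean decomposition $y_i - y_j = (y_i - c_i) + (c_i - c_j) + (c_j - y_j)$, with orthogonality supplied by hypotheses 3(b) and 3(c), followed by identifying the centers and radii. The only minor difference is in that second step: the paper observes $\proj_{A_i} y_j = c_i$ for $j \neq i$, in the spirit of point 3 of the Projection Theorem. You instead argue directly from the minimization in Definition \ref{def:center-and-radius} together with $\aff Y_i = A_i$, which is a bit more careful and also covers $I=1$.
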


Note that $\Span{c_i - c_1, \dots,c_i - c_I} = \Span{c_j - c_1, \dots,c_j - c_I}$ by Proposition \ref{prop:characterization-of-a_i}. 
\begin{proof}
    
    Let $y_i \in Y_i$ and $y_j \in Y_j$. Note that $y_i - c_i$ and $y_j - c_j$ are orthogonal by the assumption that the $V_i$'s are pairwise orthogonal, and $y_i - c_i$ and $c_i - c_j$ are orthogonal by orthogonality of $V_i$ and $\Span{c_j - c_1, \dots,c_j - c_I}$, and Proposition \ref{prop:affine-space-containing-points} point 2. 
    Applying the Pythagorean theorem twice, we have that
    \begin{align}
        \norm{y_i - y_j}^2 &= \norm{y_i - c_i + c_i - c_j + c_j - y_j}^2\nonumber\\
            &= \norm{y_i - c_i}^2 + \norm{c_i - c_j}^2 + \norm{c_j - y_j}^2\nonumber\\
            &= r_i^2 + \ell^2_{i,j} + r_j^2\nonumber\\
            &= r_i^2 + 1 - r_i^2 - r_j^2 + r_j^2\nonumber\\
            &= 1.
    \end{align}

    By the orthogonality of $V_i$'s, for each $i$, $\proj_{A_i}y_j = c_i$. Thus, $\sqcup_{i = 1}^I Y_i$ has the promised centers. If $r_i = 0$, then $S^{n-1}_{r_i}(c_i) \cap A_i = \set{c_i} \cap A_i = \set{c_i}$. So, $Y_i$ has radius 0 as desired. If $r_i > 0$, then $S^{n-1}_{r_i}(c_i) \cap A_i \neq \emptyset$ if and only if $\dim A_i > 0$. This proves that $Y_i$ has radius $r_i$ as desired.

\end{proof}

Corollary \ref{cor:properties-of-class-embeddings} and Theorem \ref{thm:radius-recipe} combine together to form an if and only if characterization of equidistant contrastive embedded points, as the following corollary shows.

\begin{corollary}[Characterization of Equidistant Spacings]
    \label{cor:characterization-of-equidistant-spacings}
    Let $\cY \coloneqq \set{Y_i}_{i = 1}^I$ be a set of points in $\R^n$. Then $\cY \in \ps(I,\R^n)$ if and only if there are centers $\set{c_i}_{i = 1}^I$, radii $\set{r_i}_{i = 1}^I$, affine subspaces $\set{A_i}_{i = 1}^I$ so that the following holds.
    \begin{enumerate}
        \item The centers and radii are such that 
        \begin{align}
            \label{eqn:center-compatability-condition}
            \norm{c_i - c_j}^2 = 1 - r_i^2 - r^2_j
        \end{align}
        \item For each $i \in I$, $A_i$ passes through $c_i$, and the $V_i$ are such that the following holds.
        \begin{enumerate}
            \item The dimension of $A_i$ is chosen so that $r_i > 0 \iff \dim(A_i) > 0$.
            \item The set $\set{A_i}_{i = 1}^I$ is pairwise-orthogonal.
            \item Each $A_i$ is orthogonal to $\tilde A$ where
            \begin{align}
                \tilde A \coloneqq \Span{c_j - c_1,\dots,c_j - c_{I}}
            \end{align}
            for any $j$.
        \end{enumerate}
        \item For each $Y_i \subset S^{n-1}_{r_i}(c_i) \cap A_i$.
    \end{enumerate}
\end{corollary}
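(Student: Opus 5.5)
The plan is to prove the two directions separately, getting each one almost directly from an earlier result.

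\emph{Forward direction.} Suppose $\cY \in \ps(I,\R^n)$. We take $c_i$ and $r_i$ to be the center and radius of Definition \ref{def:center-and-radius}; they are well defined by Theorem \ref{thm:projection-theorem}. We take $A_i \coloneqq \aff Y_i$, as in Definition \ref{def:smallest-affine-set}. The conditions are then checked as follows.
\begin{enumerate}
    \item Condition 1 is Eqn. \ref{cor:properties-of-class-embeddings:2} of Corollary \ref{cor:properties-of-class-embeddings}.
    \item $c_i \in A_i$ is established in the proof of point 2 of Theorem \ref{thm:projection-theorem}.
    \item Conditions 2(a) and 2(b) are items 2(a) and 2(b) of Corollary \ref{cor:properties-of-class-embeddings}.
    \item Condition 2(c) follows from Proposition \ref{prop:tri-part-ortho}. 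The direction space of $\aff(c_1,\dots,c_I)$ is exactly $\Span{c_j - c_1,\dots,c_j - c_I}$ for any fixed $j$, so orthogonality to the affine hull is the same as orthogonality to $\tilde A$.
    \item Condition 3 holds because $Y_i \subset A_i$ by definition, and $Y_i \subset S^{n-1}_{r_i}(c_i)$ since $(c_i,r_i) \in \cA_i$.
\end{enumerate}

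\emph{Reverse direction.} Suppose centers, radii and affine spaces satisfying conditions 1--3 are given. Set $V_i \coloneqq A_i - c_i$, which is a linear subspace because $c_i \in A_i$. We check the hypotheses of Theorem \ref{thm:radius-recipe}.
\begin{enumerate}
    \item $r_i^2 + r_j^2 \le 1$ follows from condition 1, since $\norm{c_i-c_j}^2 \ge 0$.
    \item $r_i \le 1$ follows from the same bound when $I\ge 2$. When $I=1$ we may simply choose $r_1 \le 1$, consistent with the definition.
    \item The orthogonality hypotheses on the $V_i$ translate directly from conditions 2(b) and 2(c).
\end{enumerate}
Theorem \ref{thm:radius-recipe} then shows that $\sqcup_i \bigl(S^{n-1}_{r_i}(c_i)\cap A_i\bigr)$ is an equidistant spacing. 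Condition 3 says each $Y_i$ is a subset of the corresponding class, and distances between points in different classes are inherited by subsets. So $\cY \in \ps(I,\R^n)$, provided each $Y_i$ is nonempty (implicit in $\cY$ being a family of classes).

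Alternatively, one can argue directly with the Pythagorean computation from the proof of Theorem \ref{thm:radius-recipe}. For $y_i \in Y_i$ and $y_j \in Y_j$, the three vectors $y_i-c_i$, $c_i-c_j$ and $c_j-y_j$ are pairwise orthogonal, so $\norm{y_i-y_j}^2 = r_i^2 + (1-r_i^2-r_j^2) + r_j^2 = 1$.

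\emph{Main obstacle.} The only subtle point is in the converse. The subsets $Y_i$ may not span $A_i$, and their minimal radius may be smaller than $r_i$, so the given $c_i$ and $r_i$ need not be the canonical center and radius of Definition \ref{def:center-and-radius}. The argument therefore must not claim that they are. This is harmless, because membership in $\ps$ only requires the unit-distance condition, and that is exactly what the direct Pythagorean computation gives. I would use that computation rather than the ``moreover'' clause of Theorem \ref{thm:radius-recipe}.
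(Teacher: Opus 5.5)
Your proposal is correct and follows the paper's own proof, which just says to apply Corollary \ref{cor:properties-of-class-embeddings} for the forward direction and Theorem \ref{thm:radius-recipe} for the converse; your extra checks correctly fill in what the paper leaves implicit: $c_i\in A_i$ from the proof of Theorem \ref{thm:projection-theorem}, the identification of $\tilde A$ with the direction space of $\aff(c_1,\dots,c_I)$, and using the direct Pythagorean computation instead of the ``moreover'' clause, since the given $c_i,r_i$ need not be the canonical center and radius of the subsets $Y_i$. The one slip is at $I=1$: in the converse $r_1$ is given rather than chosen, so $r_1\le 1$ has to be read as an implicit hypothesis (as in Theorem \ref{thm:radius-recipe}), because otherwise a circle of radius $2$ in $\R^2$ satisfies conditions 1--3 without lying in $\ps(1,\R^2)$, a point the paper's proof also passes over.
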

\begin{proof}
    For the forward direction, apply Corollary \ref{cor:properties-of-class-embeddings}. For the inverse direction apply Theorem \ref{thm:radius-recipe}.
\end{proof}

\begin{remark}[Counting Compatibility Equations]
    \label{rmk:counting-compatibility-equations}
    In Corollary \ref{cor:characterization-of-equidistant-spacings}, we can see that the problem of generating equidistant spacings reduces to making a choice of compatible $r_i$'s, $c_i$'s  and choosing subsets $V_i$ that satisfy all of the necessary orthogonality conditions. Making the choice of compatible $r_i$'s and $c_i$'s, however, is non-trivial. One idea is to choose a $c_1,\dots,c_I$, and then let $r_1,\dots,r_I$ be given by solving the linear (in $r_i^2$) system of equations
    \begin{align*}
        \norm{c_1 - c_2}^2 &= 1 - r_1^2 - r^2_2,\\
        \norm{c_1 - c_3}^2 &= 1 - r_2^2 - r^2_3,\\
        \vdots&\\
        \norm{c_{I-1} - c_I}^2 &= 1 - r^2_{I-1} - r^2_I.
    \end{align*}
    There are, however, $I$ unknowns and $\binom{I}{2}$ equations. When $I > 2$, $\binom{I}{2} > I$ and so it seems natural, therefore, to expect that for most choices of $c_1,\dots,c_I$, there are no choices of $r_1,\dots,r_I$ that satisfy Eqn. \ref{eqn:center-compatability-condition} when $I > 2$. This is indeed the case, and ruins the plan to choose $c_i$'s first and then choose the $r_i$'s.
\end{remark}
 To make Remark \ref{rmk:counting-compatibility-equations} more concrete, we present the following example.
\begin{example}[Non-triviality of choosing $\set{c_i}$]
    \label{examp:non-triv-choosing-c_i}
    Let $I = 3$ and $\set{c_i}_{i = 1}^I$ be given by 
    \begin{align}
        \label{eqn:examp:center-def}
        c_1 \coloneqq (0,-\epsilon), \quad c_2 \coloneqq (0,\epsilon) \quad, c_3 \coloneqq (\alpha,0)
    \end{align}
    where $\epsilon > 0$ is given. For which values of $\epsilon,\alpha$ are there choices of $\set{r_i}_{i = 1}^I$ so that Eqn. \ref{eqn:center-compatability-condition} holds?

    From symmetry, it is clear that $r_1 = r_2$. From Eqn. \ref{eqn:center-compatability-condition}, we then have that
    \begin{align}
        \norm{c_1 - c_2} = 2\epsilon  = \sqrt{1 - 2r_1^2} \implies r_1 = \sqrt{\frac12 - 2\epsilon^2}.
    \end{align}
    Applying Eqn. \ref{eqn:center-compatability-condition} again, we have that
    \begin{align}
        \label{eqn:examp:alpha-epsilon-relation}
        \norm{c_1 - c_3}^2 = \epsilon^2 + \alpha^2 = 1 - r^2_1 - r^2_3 
        \implies \alpha^2 = \frac12 + \epsilon^2 - r_3^2.
    \end{align}
    So, if $\alpha^2 > \frac12 + \epsilon^2$ then $r^2_3 < 0$, a contradiction. Hence, if $\alpha^2 > \frac12 + \epsilon^2$ there is no $r_3$ that yields a equidistant spacing with centers given by Eqn. \ref{eqn:examp:center-def}.
\end{example}
Understanding the constraints of the geometry of the centers is a major focus of this manuscript. Supposing that we have a compatible set of $c_i$ and $r_i$, we may present the following result.

\begin{corollary}[Feasibility of Choosing $V_i$]
    \label{cor:feasibility-of-chosing-v}
    Let $\set{r_i}_{i = 1}^I$ and $\set{c_i}_{i = 1}^I$ satisfy Assumptions (1 - 2) of Theorem \ref{thm:radius-recipe}. Then there are a set of $\set{V_i}_{i = 1}^I$ that satisfy Assumption (3) of Theorem \ref{thm:radius-recipe} in $\R^n$ if any only if 
    \begin{align}
        \label{eqn:dimensionality-condition}
        \dim{\tilde V} + \#\set{r_i > 0}_{i = 1}^I \leq n.
    \end{align}
    where $\tilde V \coloneqq \Span{c_1 - c_1, \dots, c_1 - c_I}$. Further, if $\set{d_i}_{i = 1}^I$ are such that $d_i \in \bbN$, $d_i = 0$ if and only if $r_i = 0$, and 
    \begin{align}
        \label{eqn:d-i-condition}
        \dim{\tilde V} + \qsum iI d_i \leq n
    \end{align}
    then there is a choice of $\set{V_i}_{i = 1}^I$ so that $\dim (V_i) = d_i$.
\end{corollary}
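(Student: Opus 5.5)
The plan is to reduce both parts of the statement to elementary linear algebra in the orthogonal complement $W \coloneqq \tilde V^\perp$, which has dimension $n - \dim\tilde V$. First I would record that $\tilde V$ does not depend on the base point: $c_j - c_k = (c_1 - c_k) - (c_1 - c_j)$, so $\Span{c_j - c_1,\dots,c_j-c_I} = \tilde V$ for every $j$. This is the remark made after Theorem \ref{thm:radius-recipe}. With this, condition 3(c) of Theorem \ref{thm:radius-recipe} says exactly that $V_i \subset W$ for every $i$. Condition 3(b) says the $V_i$ are pairwise orthogonal subspaces of $W$, and condition 3(a) says $\dim V_i \geq 1$ precisely when $r_i > 0$, with $V_i = \{0\}$ when $r_i = 0$.

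For necessity, suppose $V_1,\dots,V_I$ satisfy Assumption (3). Pairwise orthogonal subspaces intersect only in $0$, and by induction their sum is direct. Hence $\sum_i \dim V_i = \dim\bigl(\bigoplus_i V_i\bigr) \leq \dim W = n - \dim \tilde V$. Each $i$ with $r_i>0$ contributes at least $1$ to the left-hand side, so $\#\set{r_i>0} \leq n - \dim\tilde V$, which is \eqref{eqn:dimensionality-condition}.

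For sufficiency I would prove the stronger second claim, since it implies the first. Given $d_i$ with $d_i = 0 \iff r_i = 0$ and $\dim\tilde V + \sum_i d_i \leq n$, choose an orthonormal basis $e_1,\dots,e_m$ of $W$, where $m = n - \dim\tilde V \geq \sum_i d_i$. Partition the first $\sum_i d_i$ basis vectors into consecutive disjoint blocks of sizes $d_1,\dots,d_I$, and let $V_i$ be the span of the $i$-th block. Then:
\begin{itemize}
\item[(a)] $\dim V_i = d_i$, so condition 3(a) holds.
\item[(b)] The $V_i$ are spanned by disjoint sets of orthonormal vectors, so they are pairwise orthogonal.
\item[(c)] Each $V_i \subset W = \tilde V^\perp$, so each $V_i$ is orthogonal to $\tilde V$.
\end{itemize}
To recover the first claim from \eqref{eqn:dimensionality-condition}, take $d_i = 1$ when $r_i>0$ and $d_i = 0$ otherwise.

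There is no real obstacle here. The only point needing care is the base-point independence of $\tilde V$, which is what lets condition 3(c) be read as the single containment $V_i \subset \tilde V^\perp$ rather than a family of conditions indexed by $j$. The $d_i = 0$ case also needs no special handling, since it just means taking $V_i = \{0\}$.
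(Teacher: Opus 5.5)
Your proposal is correct and follows the same dimension-counting argument as the paper. Assumption (3) forces the $V_i$ to be mutually orthogonal subspaces of $\tilde V^\perp$, so they exist with prescribed dimensions exactly when $\dim\tilde V + \sum_i \dim V_i \le n$, and the smallest admissible sum is $\#\set{r_i>0}$. You also make explicit what the paper only asserts: base-point independence of $\tilde V$, the direct-sum bound for necessity, and the orthonormal-block construction for sufficiency.
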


\begin{proof}
    By the proof of Theorem \ref{thm:radius-recipe}, we have that the simplex formed by the $c_i$'s is contained in $\tilde V$. %
    Such a choice of $\set{V_i}_{i = 1}^I$ exists if and only if
    \begin{align}
        \label{eqn:dimension-distribution-sum}
        \paren{\qsum iI \dim(V_i)} + \dim(\tilde V) \leq n.
    \end{align}
    
    From Assumption 3.a, we have that $\dim(V_i) > 0$ if and only if $r_i > 0$. Hence the least value of $\qsum iI \dim(V_i)$ is $\#\set{r_i > 0}_{i = 1}^I$. And so,
    \begin{align}
        \#\set{r_i > 0} + \dim(\tilde V) \leq n.
    \end{align}
    This proves the first claim.

    The second claim follows from letting $d_i = \dim(V_i)$ in Eqn. \ref{eqn:dimension-distribution-sum} and moving the $\dim(\tilde V)$ term to the other side.
\end{proof}

Corollary \ref{cor:feasibility-of-chosing-v} says, in short, that the only difficult part of generating $\cY \in \ps$ is finding compatible centers and radii.

\subsection{The Ortho-Reflection Lemma and its Consequences}
\label{sec:ortho-proj-lemma-and-conseq}

\begin{figure}
    \centering
    \begin{subfigure}{.48\linewidth}
        \centering
        \includegraphics[width=1.\linewidth]{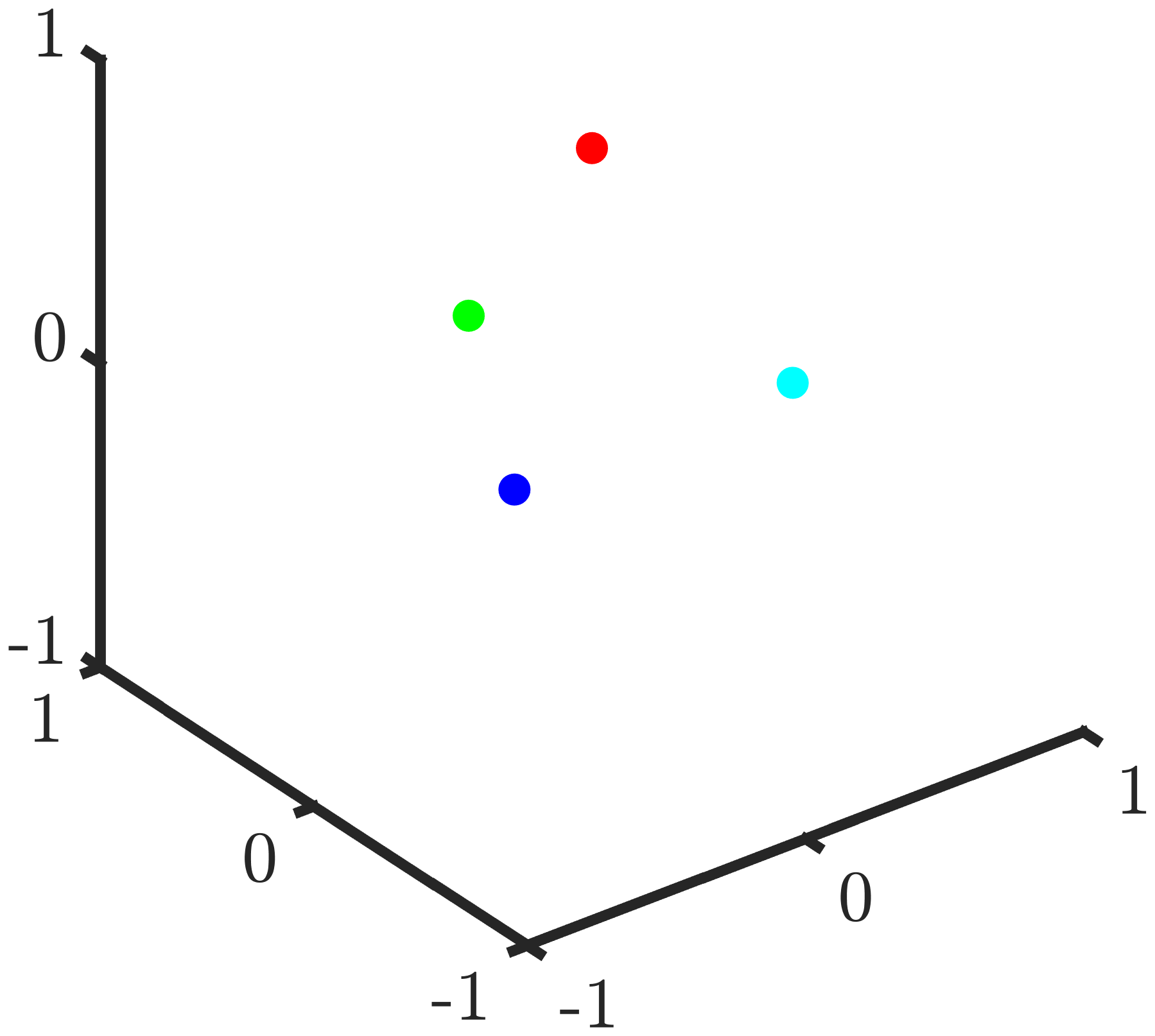}
        \subcaption{$\cY$}
    \end{subfigure}
    \begin{subfigure}{.48\linewidth}
        \centering
        \includegraphics[width=1.\linewidth]{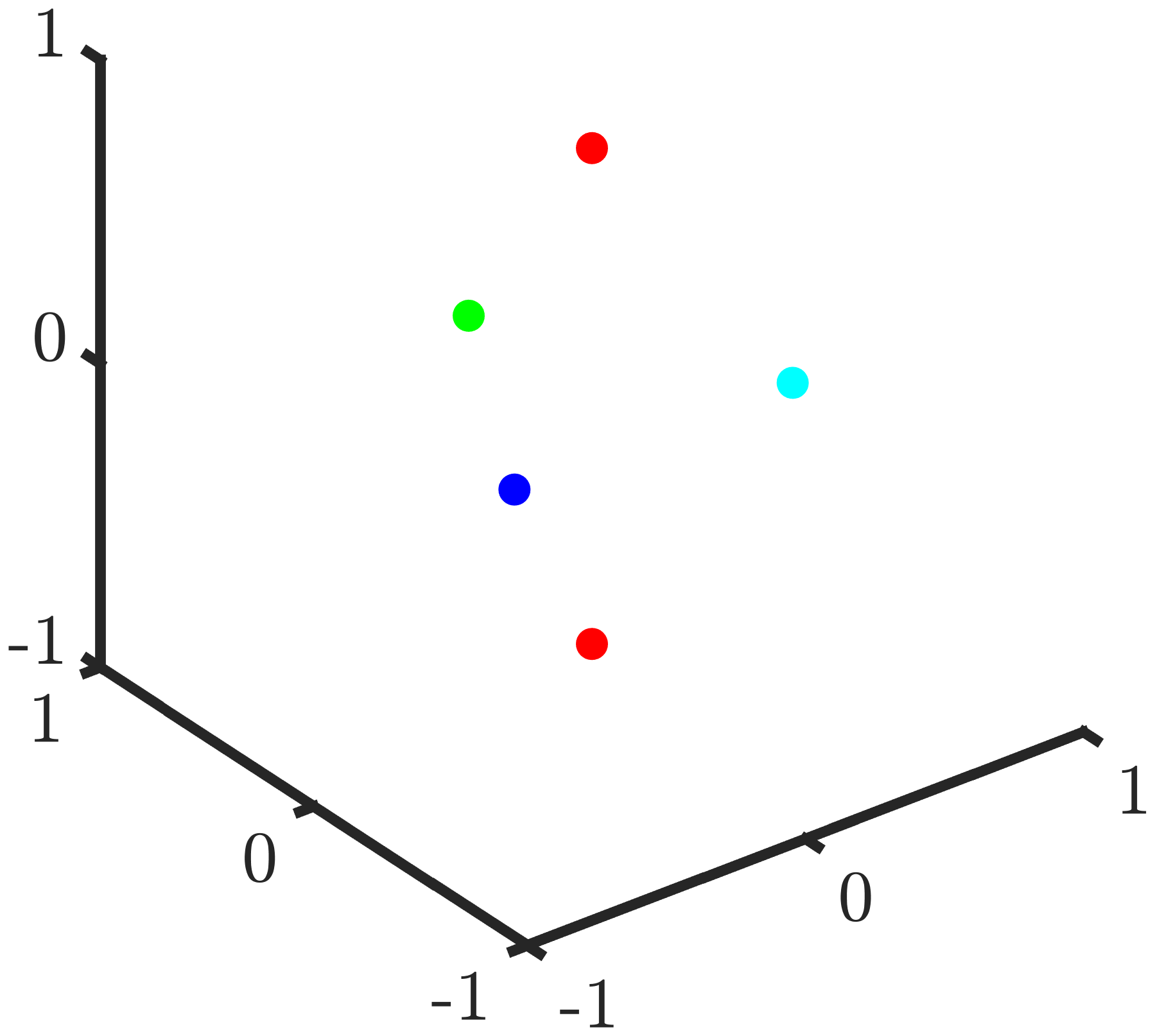}
        \subcaption{$\cY'$.}
    \end{subfigure}
    \caption{An example of the construction in Lemma \ref{lem:ortho-reflection-lemma} in three dimensions for $I = 4$. (a) An illustration of $\cY \in \ps(4,\R^n)$ where the red dot denotes $Y_4$. (b) By applying Lemma \ref{lem:ortho-reflection-lemma} we union the red dot with its reflection across its orthogonal projection onto the affine space containing the other three dots. This produces a spacing $\cY'$ that maximizes $\cY$.}
    \label{fig:ortho-proj-lemma}
\end{figure}

Next, we introduce a construction that enables deeper geometric results. The idea, broadly speaking, is that we may make a equidistant spacing larger in one class by projecting the class' center orthogonally onto the subspace spanned by the other centers, and doing a reflection. %

\begin{lemma}[Ortho-Reflection Lemma]
    \label{lem:ortho-reflection-lemma}
    Let $\cY \in \ps$ with centers $\set{c_i}_{i = 1}^I$. 
    
    Then, there is a equidistant spacing $\cY' = \sqcup_{i = 1}^I Y_i' \in \ps$ so that
    \begin{align}
        \label{eqn:lem:ortho-peojection-lemma:1}
        Y_i' = Y_i, \quad \text{for each }i \leq I - 1
    \end{align}
    and 
    \begin{align}
        \label{eqn:lem:ortho-peojection-lemma:2}
        Y_I' = Y_I \cup \reflect_{c^*_I} Y_I \quad \text{where} \quad c^*_I \coloneqq \proj_{\aff(c_1,\dots,c_{I-1})}c_I,
    \end{align}
    where $\reflect$ denotes the reflection operator, see Def. \ref{def:reflection-operator}.
\end{lemma}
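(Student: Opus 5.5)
Let $B \coloneqq \aff(c_1,\dots,c_{I-1})$. Since $\reflect_{c^*_I}$ denotes the point reflection $x \mapsto 2c^*_I - x$, the set $\reflect_{c^*_I}Y_I$ consists of the points $y' = 2c_I^* - y_I$ with $y_I \in Y_I$. Because $Y'_i = Y_i$ for $i \le I-1$, pairs of classes not involving $I$ are unchanged and remain at unit distance. Pairs involving $Y_I$ itself are also fine by hypothesis. So the whole claim reduces to one check: for every $j \le I-1$, $y_j \in Y_j$ and $y_I \in Y_I$,
\begin{align*}
\norm{2c^*_I - y_I - y_j} = 1.
\end{align*}
I plan to show this by proving that the point reflection through $c_I^*$ fixes each $Y_j$ setwise up to a Euclidean motion that preserves the relevant distances. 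More concretely, I will compute the distance directly with the Pythagorean decomposition from Corollary \ref{cor:properties-of-class-embeddings}.

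Write $y_I = c_I + u_I$ with $u_I \in V_I \coloneqq A_I - c_I$, and $y_j = c_j + u_j$ with $u_j \in V_j$. Then
\begin{align*}
2c^*_I - y_I - y_j = (c^*_I - c_I) + (c^*_I - c_j) - u_I - u_j .
\end{align*}
The four vectors $u_I$, $u_j$, $c^*_I - c_I$ and $c^*_I - c_j$ are mutually orthogonal, for the following reasons.
\begin{enumerate}
\item $u_I \perp u_j$ by Lemma \ref{lem:ortho-theorem}.
\item Both $u$'s are orthogonal to $\aff(c_1,\dots,c_I)$ by Proposition \ref{prop:tri-part-ortho}, and that affine hull contains $c_I$, $c^*_I$ and $c_j$.
\item $c_I^* - c_I \perp c^*_I - c_j$, because $c^*_I$ is the orthogonal projection of $c_I$ onto $B \ni c_j$.
\end{enumerate}
Hence
\begin{align*}
\norm{2c^*_I - y_I - y_j}^2 = \norm{c_I^*-c_I}^2 + \norm{c_I^* - c_j}^2 + r_I^2 + r_j^2 = \norm{c_I - c_j}^2 + r_I^2 + r_j^2 = 1 .
\end{align*}
The middle equality is Pythagoras once more, and the last is Eqn. \ref{eqn:center-compatability-condition}. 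This completes the distance check.

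The only remaining point is that $\cY'$ must be a disjoint union with each $Y'_i$ a set of the right kind. If a reflected point coincided with a point of some $Y_j$, its distance to $Y_I$'s reflected copy would be $0$ rather than $1$, which contradicts the computation above. So disjointness is automatic.

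The main obstacle is bookkeeping the orthogonality of $c^*_I - c_I$ against the $u$'s. The relevant fact is that $c_I^*-c_I$ lies in the direction space of $\aff(c_1,\dots,c_I)$, so Proposition \ref{prop:tri-part-ortho} applies. I would also double-check that the statement intends the point reflection. If instead $\reflect_{c_I^*}$ means reflection of $Y_I$ in the affine hyperplane or subspace through $c^*_I$, the same decomposition works: such a reflection fixes $V_I$-components and only flips the $c_I - c_I^*$ component. The computation above is then unchanged except for the sign of $u_I$, which does not affect the squared norm.
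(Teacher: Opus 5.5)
Your proof is correct and is essentially the paper's argument. The paper shows that the angle at $c^*_I$ in $\triangle y_I c^*_I y_j$ is a right angle and concludes by SAS congruence with the reflected triangle $\triangle (2c^*_I - y_I)\, c^*_I\, y_j$; you expand $2c^*_I - y_I - y_j$ into the same four mutually orthogonal pieces and close with Pythagoras and Eqn. \ref{eqn:center-compatability-condition}. Your version also checks the $y_j - c_j$ component, which the paper's displayed inner-product computation (written with $c_j$ in place of $y_j$) skips, and it handles disjointness. You can drop the abandoned ``Euclidean motion'' plan and the alternative-reflection paragraph, since Def. \ref{def:reflection-operator} fixes the point reflection.
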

\begin{proof}
    Let $y_I,y_I^* \in Y_I'$ be reflections of each other, and $y_j \in Y_j$ for some $j \leq I-1$. Consider the triangles $\triangle y_I c^*_I y_j$ and $\triangle y_I^*c^*_I y_j$. The triangles share the sides $c^*_I y_j$, and the length of sides $y_Ic^*_I$ and $y_I^*c^*_I$ are the same. Further, the angles $\angle y_I c^*_I y_j$ and $\angle y_I c^*_I y_j$ are the same (namely, right angles), as follows from the following computation.
    
    \begin{align}
        \innerprod{c_j - c^*_I}{c^*_I - y^*_I} &= \innerprod{c_j - c^*_I}{c^*_I - y_I - 2(y - c_I^*)}\\
        &= \innerprod{c_j - c^*_I}{y_I - c^*_I}\\
        &= \innerprod{c_j - c^*_I}{y_I - c_I} + \innerprod{c_j - c^*_I}{c_I - c^*_I} = 0.
    \end{align}
    Hence, the two triangles are equivalent by side angle side and so $\norm{y^*_I - y_j} = \norm{y_I - y_j} = 1$ which proves that $\cY_I' \in \ps$.
\end{proof}

A visual example application of this lemma is given in Figure \ref{fig:ortho-proj-lemma}. Note that $\abs{\cY} \subset \abs{\cY'}$. A useful way to think of Lemma \ref{lem:ortho-reflection-lemma} is as an algorithm that takes a equidistant spacing $\cY$ as inputs, and produces a $\cY'$ that may be larger. The main use of Lemma \ref{lem:ortho-reflection-lemma}, is that it produces an invariant for maximal equidistant spacings. This is described in the following definition and proposition.

\begin{definition}[Maximal Equidistant Spacing]
    \label{def:maximal-perf-spacing}
    We say that $\cY \in \ps(I,\R^n)$ is \emph{maximal} if it is maximal under the subset operation in $\ps(I,\R^n)$. That is, $\cY$ is maximal when $\cY' \in \ps(I,\R^n)$ and $\cY \subset \cY'$ implies that $\cY = \cY'$. We define
    \begin{align}
        \mps(I,\R^n) \coloneqq \set{\cY \in \ps(I,\R^n) \colon \text{$\cY$ is maximal}}.
    \end{align}
\end{definition}

\begin{proposition}
    \label{prop:any-ortho-proj-implies-non-maximality}
    Let $\cY \in \mps$. Then
    \begin{align}
        c_I \in \aff(c_1,\dots,c_{I-1}).
    \end{align}
\end{proposition}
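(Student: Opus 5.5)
We argue by contrapositive, using Lemma \ref{lem:ortho-reflection-lemma} as the enlarging operation. Suppose $\cY \in \mps$ but $c_I \notin \aff(c_1,\dots,c_{I-1})$, and put $c_I^* \coloneqq \proj_{\aff(c_1,\dots,c_{I-1})} c_I$. Then $c_I^* \neq c_I$. Lemma \ref{lem:ortho-reflection-lemma} gives a spacing $\cY' \in \ps$ with $Y_i' = Y_i$ for $i \leq I-1$ and $Y_I' = Y_I \cup \reflect_{c_I^*} Y_I$. Clearly $\cY \subset \cY'$. By maximality $\cY' = \cY$, so $\reflect_{c_I^*} Y_I \subset Y_I$. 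The aim is to show this inclusion is impossible.

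Take any $y \in Y_I$ (nonempty by definition of a class); its reflection is $y^* = 2c_I^* - y$. Decompose $y - c_I^* = (y - c_I) + (c_I - c_I^*)$. The first term lies in the direction space of $A_I$. The second term is nonzero and is orthogonal to $A_I$ by Proposition \ref{prop:tri-part-ortho}, since $c_I - c_I^*$ lies in the direction space of $\aff(c_1,\dots,c_I)$. Hence
\begin{align*}
    y^* - y = -2(y - c_I^*)
\end{align*}
has a nonzero component orthogonal to $A_I$, so $y^* \notin A_I$. But the inclusion $\reflect_{c_I^*} Y_I \subset Y_I$ forces $y^* \in Y_I \subset A_I = \aff Y_I$, which is a contradiction. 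Therefore $c_I \in \aff(c_1,\dots,c_{I-1})$.

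The main point needing care is the orthogonality of $c_I - c_I^*$ to $A_I$, and specifically that it remains orthogonal even when the vector is compared against $y - c_I$. Proposition \ref{prop:tri-part-ortho} gives $A_I \perp \aff(c_1,\dots,c_I)$, and $c_I - c_I^*$ is a difference of two points of that affine hull, so the orthogonality holds. The Pythagorean decomposition then shows that the component of $y^*$ along $c_I - c_I^*$, measured from $c_I$, equals $-2(c_I - c_I^*) \neq 0$, so $y^*$ cannot lie in $A_I$.

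An alternative avoids the affine hull entirely. By the same Pythagorean decomposition,
\begin{align*}
    \norm{y^* - c_I}^2 = r_I^2 + 4\norm{c_I - c_I^*}^2 > r_I^2,
\end{align*}
whereas every point of $Y_I$ lies on $S_{r_I}(c_I)$. So $y^* \notin Y_I$, which again contradicts $\reflect_{c_I^*} Y_I \subset Y_I$. Either route completes the proof.
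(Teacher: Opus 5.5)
Your proof is correct and follows the paper's route. Like the paper, you use the Ortho-Reflection Lemma plus maximality to force $\reflect_{c_I^*} Y_I \subset Y_I$, and then Proposition \ref{prop:tri-part-ortho} to rule out $c_I^* \neq c_I$. The only difference is in the last step: the paper notes that $c_I^*$, as the midpoint of a point of $Y_I$ and its reflection, lies in $A_I$ and so equals $\proj_{A_I} y_j = c_I$, whereas you show that the nonzero component $c_I - c_I^* \perp A_I$ pushes the reflected point off $A_I$ (or off $S_{r_I}(c_I)$).
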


\begin{proof}
    Let us apply Lemma \ref{lem:ortho-reflection-lemma} to $\cY$ and define $c^*_I \coloneqq \proj_{\aff(c_1,\dots,c_{I-1})} c_I$.

    Suppose that $\reflect_{c^*_I} Y_I \not \subset Y_I$, then we may make $\cY$ strictly larger by replacing $Y_I$ with  $Y_I \cup \reflect_{c^*_I} Y_I$ via Lemma \ref{lem:ortho-reflection-lemma}. This contradicts maximality of $\cY$. Hence, $\reflect_{c^*_I} Y_I \subset Y_I$.
    
    Let $j \leq I-1$ be arbitrary. Then from definitions we have that $\proj_{A_i} y_j = c_i$. Further, we have that $c^*_I\in A_i$ (because it's on the line connecting $y_I$ and $\reflect_{c^*_I}y_I$) and that $y_j - c^*_I$ and $c^*_I - y_I$ are orthogonal by Proposition \ref{prop:tri-part-ortho}. Hence, $\proj_{A_i} y_j = c_i^*$. Thus $c_i^* = c_i$, so
    \begin{align}
        c_I = c^*_I = \proj_{\aff(c_1,\dots,c_{I-1})} c_I.
    \end{align}

\end{proof}

We may also use Lemma \ref{lem:ortho-reflection-lemma} to prove the following two corollaries, which relate how equidistant spacings may look if dimensions are `projected off.'

\begin{proposition}[Center Orthogonal Projections Coincide]
    \label{prop:center-ortho-projs-coincide}
    Let $\cY \in \ps$. Then
    \begin{align}
        \label{eqn:center-ortho-projs-coincide}
        \proj_{\aff(c_1,\dots,c_{I-2})}c_{I-1} = \proj_{\aff(c_1,\dots,c_{I-2})}c_{I}
    \end{align}
\end{proposition}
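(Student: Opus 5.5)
We want to show that $c_{I-1}$ and $c_I$ have the same orthogonal projection onto $B \coloneqq \aff(c_1,\dots,c_{I-2})$. The plan is to reduce this to a purely metric statement about the centers using Eqn.~\ref{eqn:center-compatability-condition}. The underlying fact is that a point's projection onto $B$ is determined by its squared distances to $c_1,\dots,c_{I-2}$ up to an additive constant. Concretely, for any point $x$ and $k,l \leq I-2$,
\begin{align*}
\norm{x - c_k}^2 - \norm{x - c_l}^2 = -2\innerprod{x}{c_k - c_l} + \norm{c_k}^2 - \norm{c_l}^2 .
\end{align*}
The right-hand side depends on $x$ only through its component in the direction space $\Span{c_k - c_l \colon k,l \leq I-2}$ of $B$. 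This component is exactly what fixes $\proj_B x$, since $\proj_B x - c_1$ is the projection of $x - c_1$ onto that direction space.

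So the key step is to verify that the differences $\norm{c_{I-1} - c_k}^2 - \norm{c_{I-1} - c_l}^2$ and $\norm{c_I - c_k}^2 - \norm{c_I - c_l}^2$ agree for all $k,l \leq I-2$. Corollary~\ref{cor:properties-of-class-embeddings} gives $\norm{c_m - c_k}^2 = 1 - r_m^2 - r_k^2$ for $m \neq k$. Hence
\begin{align*}
\norm{c_m - c_k}^2 - \norm{c_m - c_l}^2 = r_l^2 - r_k^2 ,
\end{align*}
which does not depend on $m$. Taking $m = I-1$ and $m = I$, both of which are distinct from every $k,l \leq I-2$, gives the equality.

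From this it follows that $\innerprod{c_{I-1} - c_I}{c_k - c_l} = 0$ for all $k,l \leq I-2$. Therefore $c_{I-1} - c_I$ is orthogonal to the direction space of $B$. Since orthogonal projection onto an affine set is affine, $\proj_B c_{I-1} - \proj_B c_I$ equals the linear projection of $c_{I-1} - c_I$ onto that direction space, which is zero. This gives Eqn.~\ref{eqn:center-ortho-projs-coincide}.

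Two edge cases need a brief remark. When $I-2 = 1$, $B$ is a single point and the claim is trivial. When $I \leq 2$, the statement is vacuous or ill-posed, and I would note the standing assumption $I \geq 3$.

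I expect no real obstacle. The only care needed is to state the projection-onto-affine-hull fact cleanly, namely that $\proj_B$ is determined by inner products with the direction vectors $c_k - c_1$. One could alternatively phrase the argument via Lemma~\ref{lem:ortho-reflection-lemma}, but the direct metric computation above is shorter and does not need maximality.
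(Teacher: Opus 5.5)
Your proof is correct, and it takes a more direct route than the paper's.

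\textbf{The paper's route.} It goes through Lemma~\ref{lem:ortho-reflection-lemma}. It deletes class $I-1$ (resp.\ class $I$) and reflects the remaining last class about its projection onto $B=\aff(c_1,\dots,c_{I-2})$. This makes $\hat c_I=\proj_B c_I$ and $\hat c_{I-1}=\proj_B c_{I-1}$ actual centers of auxiliary spacings with enlarged radii $r_I^*, r_{I-1}^*$. Proposition~\ref{prop:tri-part-ortho} then gives $\norm{c_k-\hat c_I}^2-\norm{c_k-\hat c_{I-1}}^2=r_{I-1}^{*2}-r_I^{*2}$ for every $k\le I-2$. The paper concludes by contradiction, asserting that if $\hat c_I\neq\hat c_{I-1}$, some $c_k$ is strictly closer to one of them and some $c_l$ strictly closer to the other.

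\textbf{Your route.} You stay with $c_{I-1},c_I$ themselves and read off $\innerprod{c_{I-1}-c_I}{c_k-c_l}=0$ directly from $\norm{c_m-c_k}^2=1-r_m^2-r_k^2$.

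\textbf{Comparison.} The two computations carry the same information. One Pythagoras step gives $\norm{c_k-\hat c_I}^2+r_I^{*2}=\norm{c_k-c_I}^2+r_I^2=1-r_k^2$, so the auxiliary spacings are a detour. Your version is also cleaner at the last step. The paper's separation claim is not valid for two arbitrary distinct points of $B$: both can lie on the same side of every $c_k$. What actually closes the argument is the fact you use. A difference of squared distances to two fixed points is an affine function whose linear part is the inner product with their difference. Constancy of it on an affinely spanning set therefore forces orthogonality to the direction space of $B$.

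What the paper's route buys is the interpretation of $\proj_B c_I$ as the center of a reflected class. It reuses this for the equality of projected radii in Corollary~\ref{cor:projected-radii-coincide}. With your argument, that corollary follows from one more application of Pythagoras.
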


\begin{proof}
    When $I = 2$, the projection is onto the origin, and so Eqn. \ref{eqn:center-ortho-projs-coincide} trivially follows. If $I = 3$ then $\aff(c_1,\dots,c_{I-2})$ only has one point ($c_1$), and so the result trivially follows as well. 

    Let $I \leq 4$. Let $\cY^{(I-1)}$ be $\cY$ with class $I-1$ removed. Likewise, let $\cY^{(I)}$ be $\cY$ with class $I$ removed. Let $\hat \cY^{(I-1)}$ and $\hat \cY^{(I)}$ be the equidistant spacings given by applying Lemma \ref{lem:ortho-reflection-lemma} to $\cY^{(I-1)}$ and $\cY^{(I)}$ respectively. 
    
    Let us denote 
    \begin{align}
        \hat c_{I-1}&\coloneqq \proj_{\aff(c_1,\dots,c_{I-2})}c_{I-1}\\
        \hat c_{I}&\coloneqq\proj_{\aff(c_1,\dots,c_{I-2})}c_{I}.
    \end{align}
    The claim follows if we can show that $\hat c_{I-1} = \hat c_{I}$. We denote by $\hat Y_{I-1}$ the $I-1$'th class of $\hat \cY^{(I-1)}$ and similarly we denote by $\hat Y_{I}$ the $I-1$'th class of $\hat \cY^{(I)}$.
    
    Denoting $r_{I-1}^*$ and $r_I^*$ as the radii of classes $\hat Y_{I-1}$ and $\hat Y_{I}$ respectively, for an arbitrary $y_1 \in Y_1, \hat y_{I-1} \in \hat Y_{I-1}$ and $\hat y_I \in \hat Y_{I}$ we have the following result by Proposition \ref{prop:tri-part-ortho},
    \begin{align}
        0 &= \norm{y_1 - \hat y_I^2} - \norm{y_1 - \hat y_{I-1}}^2\\
        &=\norm{y_1 -c_1}^2 + \norm{c_1 - \hat c_I^2} + \norm{\hat c_I - \hat y_I}^2 \\ 
        &- \norm{y_1-c_1}^2 - \norm{c_1 - \hat c_{I-1}}^2 - \norm{\hat c_{I-1} - \hat y_{I-1}}^2\\
        &= \norm{c_1 - \hat c_I}^2 - \norm{c_1 - \hat c_{I-1}}^2 + r_I^{*2} - r_{I-1}^{*2}.\label{eqn:proof:prop:center-ortho-projs-coincide}
    \end{align}

    Suppose that $\hat c_{I-1} \neq \hat c_{I}$. Then there must be one center in $c_1,\dots,c_{I-2}$ closer to $\hat c_{I-1}$ then $\hat c_{I}$, and vice versa. W.l.o.g., let $c_1$ and $c_2$ be such that 
    \begin{align}
        \norm{c_1 - \hat c_I} < \norm{c_1 - \hat c_{I-1}},\quad \text{and} \quad \norm{c_1 - \hat c_{I-1}} < \norm{c_1 - \hat c_{I}}.
    \end{align}
    Hence, we have that by applying Eqn. \ref{eqn:proof:prop:center-ortho-projs-coincide} $r_{I-1} > r_I$. By running through the same calculations but with $c_2$ in the place of $c_1$, we obtain that $r_I > r_{I-1}$. A contradiction. Thus $\hat c_I = \hat c_{I-1}$.
\end{proof}

Proposition \ref{prop:center-ortho-projs-coincide} implies the following, as well.

\begin{corollary}[Projected Radii Coincide]
    \label{cor:projected-radii-coincide}
    Let $\cY \in \ps$ and $c^*_{I}\coloneqq \proj_{\aff(c_1,\dots,c_{I-2})}c_I$ and $c^*_{I-1}\coloneqq \proj_{\aff(c_1,\dots,c_{I-2})}c_{I-1}$. Then, for any $y_I$ and $y_{I-1}$,
    \begin{align}
        \label{eqn:cor:proj-radii:1}
         \norm{y_I - c^*_I}^2_2 &= \norm{y_{I-1} - c^*_{I-1}}^2_2\\
        \label{eqn:cor:proj-radii:2}
         \norm{y_I - c_I}^2_2 + \norm{c_I - c_I^*}^2_2 &=\norm{y_{I-1} - c_{I-1}}^2_2 + \norm{c_{I-1} - c_{I-1}^*}^2_2
    \end{align}
\end{corollary}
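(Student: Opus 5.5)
Both identities come from computing the distance from a point $y_1\in Y_1$ to $y_I$ and to $y_{I-1}$ with the Pythagorean theorem, and then invoking Proposition \ref{prop:center-ortho-projs-coincide}, which says $c^*_I=c^*_{I-1}$. Write $c^*$ for this common point. I will prove Eqn. \ref{eqn:cor:proj-radii:2} first and then obtain Eqn. \ref{eqn:cor:proj-radii:1} from it.

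\textbf{Step 1: orthogonal decomposition.} For $k\in\set{I-1,I}$ we have $y_k-c_k\in V_k$, and $c_k-c^*\in \Span{c_j-c_1,\dots,c_j-c_I}$ because $c^*\in\aff(c_1,\dots,c_{I-2})\subset\aff(c_1,\dots,c_I)$. By Proposition \ref{prop:tri-part-ortho}, or equivalently Corollary \ref{cor:properties-of-class-embeddings} point 2(b), these two vectors are orthogonal. Hence
\begin{align*}
\norm{y_k-c^*}^2=\norm{y_k-c_k}^2+\norm{c_k-c^*}^2=r_k^2+\norm{c_k-c^*}^2 .
\end{align*}
The left-hand sides of Eqns. \ref{eqn:cor:proj-radii:1} and \ref{eqn:cor:proj-radii:2} are therefore literally the same quantity, $\norm{y_k-c^*}^2$ for $k=I$ and $k=I-1$. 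So it suffices to prove Eqn. \ref{eqn:cor:proj-radii:2}, which reads
\begin{align*}
r_I^2+\norm{c_I-c^*}^2=r_{I-1}^2+\norm{c_{I-1}-c^*}^2 .
\end{align*}

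\textbf{Step 2: compare against class $1$.} Take $j=1$ (any $j\le I-2$ works; this needs $I\ge3$). By Eqn. \ref{eqn:center-compatability-condition},
\begin{align*}
1-r_1^2 = r_k^2+\norm{c_k-c_1}^2 .
\end{align*}
The vector $c_k-c^*$ is orthogonal to $\aff(c_1,\dots,c_{I-2})$, because $c^*$ is the orthogonal projection of $c_k$ onto that affine set. The vector $c^*-c_1$ lies in the direction space of that set. So Pythagoras gives
\begin{align*}
\norm{c_k-c_1}^2=\norm{c_k-c^*}^2+\norm{c^*-c_1}^2 .
\end{align*}
Substituting,
\begin{align*}
r_k^2+\norm{c_k-c^*}^2 = 1-r_1^2-\norm{c^*-c_1}^2 .
\end{align*}
The right-hand side does not depend on $k$. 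Evaluating at $k=I$ and $k=I-1$ gives Eqn. \ref{eqn:cor:proj-radii:2}, and Step 1 then gives Eqn. \ref{eqn:cor:proj-radii:1}.

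\textbf{Main obstacle.} The only real dependence is on the equality $c^*_I=c^*_{I-1}$ from Proposition \ref{prop:center-ortho-projs-coincide}. Without it, Step 2 produces two different right-hand sides, $\norm{c^*_I-c_1}^2$ and $\norm{c^*_{I-1}-c_1}^2$, and the argument fails. The remaining issue is the degenerate case $I=2$, where $\aff(c_1,\dots,c_{I-2})$ is empty. There I would adopt the convention used in that proposition, taking the projection to be a fixed common point $c^*$, and check the identity directly. With $c^*$ fixed, Step 1 still applies. I would choose $c^*=c_1$ if $c_1=c_2$; otherwise I would choose $c^*$ on the line through $c_1,c_2$ with $\norm{c_2-c^*}^2-\norm{c_1-c^*}^2=r_1^2-r_2^2$, and handle $I=2$ separately as a remark.
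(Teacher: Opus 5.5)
Your proof is correct and is essentially the paper's argument. Both decompose $\norm{y_1-y_k}^2=1$ orthogonally through $c_1$ and the common projection $c^*$ supplied by Proposition \ref{prop:center-ortho-projs-coincide}. The paper works with the reflected spacings from that proposition's proof and obtains Eqn. \ref{eqn:cor:proj-radii:1} first, while you use the compatibility condition of $\cY$ itself and obtain Eqn. \ref{eqn:cor:proj-radii:2} first. Your restriction to $I\ge 3$ is the right fix: for $I=2$ the statement is ill-posed, and under the paper's ``origin'' convention it even fails (e.g. $Y_1=\set{0}$, $Y_2=\set{1}\subset\R$), so you need not construct a $c^*$ that forces the identity.
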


\begin{proof}
    Equation \ref{eqn:cor:proj-radii:1} follows from combining Eqn. \ref{eqn:proof:prop:center-ortho-projs-coincide} with Proposition \ref{prop:center-ortho-projs-coincide}. Eqn. \ref{eqn:cor:proj-radii:1} implies \ref{eqn:cor:proj-radii:2} by Pythagorean's theorem.
\end{proof}

By using the trick of reflecting across affine spaces, we may prove the following theorem that relates the condition of being a $\cY \in \ps$ maximized by another spacing all of whose centers coincide with the rank of the affine space that contains the centers of $\cY$, with two of $\cY$'s centers removed. 

\begin{theorem}[Leave-two-out Affine Rank implies Maximization by Coinciding Centers]
    \label{thm:leave-two-out-affine-rank-implies-max-by-concide-centers}
    Let $\cY$ be a equidistant spacing with centers $\set{c_i}_{i = 1}^I$ and let there be an $i,j \leq I$ where $i \neq j$ so that
    \begin{align}
        \label{eqn:affine-rank-condition}
        \aff(c_1,\dots,c_{i-1},c_{i+1},\dots,c_{j-1},c_{j+1},\dots,c_I) = \aff(c_1,\dots,c_I).
    \end{align} %
    
    Then, there is a equidistant spacing $\hat \cY$ that maximizes $\cY$, so that $\hat c_1 = \dots = \hat c_I$ where $\hat c_1,\dots,\hat c_I$ denote the centers of $\hat \cY$.
\end{theorem}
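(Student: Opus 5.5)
We argue by repeatedly applying the Ortho-Reflection Lemma (Lemma \ref{lem:ortho-reflection-lemma}) together with the recoloring-free geometry of Corollary \ref{cor:characterization-of-equidistant-spacings}. Relabel so that $i = I-1$ and $j = I$, and write $B \coloneqq \aff(c_1,\dots,c_{I-2})$, so the hypothesis Eqn. \ref{eqn:affine-rank-condition} says $c_{I-1},c_I \in B$. The plan is to build $\hat\cY$ directly from centers and radii via Theorem \ref{thm:radius-recipe}: we will show that a common center $c^\ast$ and suitable enlarged radii $\hat r_k$ satisfy the compatibility equations $0 = 1 - \hat r_k^2 - \hat r_l^2$, i.e. $\hat r_k^2 = \tfrac12$ for all $k$ (when $I\ge 3$; for $I=2$ only $\hat r_1^2+\hat r_2^2=1$ is needed), and that each $Y_k$ lies on $S_{\hat r_k}(c^\ast)$ inside an affine space $\hat A_k$ with the $\hat A_k - c^\ast$ pairwise orthogonal.

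First I would locate $c^\ast$. Proposition \ref{prop:center-ortho-projs-coincide}, applied after permuting which two classes are left out, says that for any two indices $k,l$, the projections of $c_k$ and $c_l$ onto the affine hull of the remaining centers coincide. Since the remaining hull contains all centers in our situation (by the leave-two-out rank condition, possibly after noting it persists for the other pairs), the natural candidate is the point of $\aff(c_1,\dots,c_I)$ that is equidistant-in-the-right-sense: define $c^\ast$ as the point of $\aff(c_1,\dots,c_I)$ with $\norm{c^\ast - c_k}^2 + r_k^2 = \tfrac12$ for all $k$. Existence of $c^\ast$ is the key step: using Eqn. \ref{eqn:center-compatability-condition}, the conditions $\norm{x-c_k}^2 - \norm{x-c_l}^2 = r_l^2 - r_k^2$ are linear in $x$, and I expect the rank condition to make the system consistent with the remaining quadratic equation pinned by Corollary \ref{cor:projected-radii-coincide} (which is exactly the statement that the quantities $r_k^2+\norm{c_k - c_k^\ast}^2$ agree). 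This is the main obstacle: showing the linear system has a solution with the right constant $\tfrac12$; I would try first to set $c^\ast \coloneqq \proj_B c_I = \proj_B c_{I-1}$ and verify the identity using Corollary \ref{cor:projected-radii-coincide} with $c_{I-1},c_I$ in the roles of the two left-out classes, then propagate to other $k$ by symmetry of the hypothesis.

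Given $c^\ast$, set $\hat r_k^2 \coloneqq r_k^2 + \norm{c_k - c^\ast}^2$ and $\hat A_k \coloneqq \aff(A_k \cup \{c^\ast\})$, so $Y_k \subset S_{\hat r_k}(c^\ast)\cap\hat A_k$ by Pythagoras (Proposition \ref{prop:tri-part-ortho} gives $A_k \perp \aff(c_1,\dots,c_I)$). One must check pairwise orthogonality of the directions $c_k - c^\ast$: this follows from $\norm{c_k-c_l}^2 = 1 - r_k^2 - r_l^2 = \norm{c_k - c^\ast}^2 + \norm{c_l-c^\ast}^2$ (using $\hat r_k^2=\tfrac12$), the converse Pythagorean theorem. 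Combined with Corollary \ref{cor:properties-of-class-embeddings} 2(b), the $\hat A_k$ are pairwise orthogonal, and Theorem \ref{thm:radius-recipe} yields $\hat\cY$ with $\hat Y_k \coloneqq S_{\hat r_k}(c^\ast)\cap \hat A_k \supseteq Y_k$, all centers equal to $c^\ast$; enlarging to a maximal element (Zorn, or directly since these spheres are already maximal in their planes) finishes the proof.
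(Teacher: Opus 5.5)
\textbf{Gap.} The gap is at the step you yourself call the main obstacle. You never establish a point $c^\ast$ with $\norm{c^\ast-c_k}^2+r_k^2=\tfrac12$ for all $k$. The justification you sketch for the remaining indices (``propagate to other $k$ by symmetry of the hypothesis'', ``possibly after noting it persists for the other pairs'') is not valid, because the rank hypothesis concerns only the pair $(I-1,I)$ and does not transfer. For example, take $I=4$, $c_1=c_3=c_4=\bszero$, $c_2=te_1$, $r_1=r_3=r_4=\tfrac{\sqrt2}2$ and $r_2^2=\tfrac12-t^2$ with $0<t\le\tfrac{\sqrt2}2$. Eqn.~\ref{eqn:center-compatability-condition} holds, so Theorem~\ref{thm:radius-recipe} realizes these data as a spacing. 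The condition holds for the pair $(3,4)$, but it fails for $(1,2)$ since $c_2\notin\aff(c_3,c_4)=\set{\bszero}$.

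\textbf{The fix.} There is in fact no linear system to solve.
\begin{itemize}
\item Since $c_{I-1},c_I\in B$, both projections in Proposition~\ref{prop:center-ortho-projs-coincide} are the identity. That proposition therefore gives $c_{I-1}=c_I$ outright, and you may take $c^\ast\coloneqq c_I$.
\item Eqn.~\ref{eqn:center-compatability-condition} for this pair reads $0=1-r_{I-1}^2-r_I^2$.
\item Corollary~\ref{cor:projected-radii-coincide}, whose projected centers are now $c_{I-1}$ and $c_I$ themselves, gives $r_{I-1}=r_I$. Hence $r_{I-1}^2=r_I^2=\tfrac12$.
\item The extension to $k\le I-2$ comes from Eqn.~\ref{eqn:center-compatability-condition} applied to the pair $(k,I)$, not from symmetry: $\norm{c_k-c^\ast}^2=1-r_k^2-\tfrac12$.
\end{itemize}
This is exactly how the paper's proof opens.

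\textbf{Comparison with the paper.} With that repaired, the rest of your argument is correct and takes a genuinely different route. The paper sets $\hat Y_k\coloneqq Y_k\cup\reflect_{c_I}Y_k$ for $k\le I-2$. It uses $\norm{y_k-c_I}=\tfrac{\sqrt2}2$ to make the segments $\overline{y_kc_I}$ mutually orthogonal, so reflection preserves all distances, and it reads off the common center from $c_I$ being a midpoint. You instead feed $c^\ast$, $\hat r_k=\tfrac{\sqrt2}2$ and $\hat A_k=\aff(A_k\cup\set{c^\ast})$ into Theorem~\ref{thm:radius-recipe}. Your converse-Pythagoras orthogonality of the vectors $c_k-c^\ast$, together with Corollary~\ref{cor:properties-of-class-embeddings}, plays the role of the paper's law-of-cosines step. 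What your route buys is full spheres, a superset containing the paper's $\hat{\mathcal Y}$, and coinciding centers for free from the ``moreover'' clause.

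\textbf{Two small points.}
\begin{itemize}
\item You should verify condition 3(a) of Theorem~\ref{thm:radius-recipe}: if $r_k=0$ then $\norm{c_k-c^\ast}^2=\tfrac12$, so $\dim\hat A_k\ge 1$.
\item The final Zorn sentence is unnecessary, since the paper's own proof treats ``maximizes'' as merely producing a superset. Moreover, ``these spheres are already maximal in their planes'' would not give maximality in $\R^n$ when $\sum_k\dim\hat A_k<n$.
\end{itemize}
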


\begin{proof}
    To ease notation, w.l.o.g. let $i = I-1$ and $j = I$. By Proposition \ref{prop:center-ortho-projs-coincide}, we have that 
    \begin{align}
        c_I = \proj_{\aff(c_1,\dots,c_{I-2})}c_I = \proj_{\aff(c_1,\dots,c_{I-2})}c_{I-1} = c_{I-1}.
    \end{align}
    From Corollary \ref{cor:projected-radii-coincide}, we have that $r_I = r_{I-1} = \frac{\sqrt{2}}2$. From this fact, we have that $\norm{y_i - c_I} = \frac{\sqrt{2}}2$ for a arbitrary $k \leq I$. Hence, by the law of cosines, the lines $\overline {y_kc_I}$ and $\overline {y_\ell c_I}$ are orthogonal for an arbitrary $k,\ell \leq I$ such that $k \neq \ell$.

    Consider $\hat \cY \in \ps$ so that $\hat \cY = \set{\hat Y_k}^I_{k = 1}$ defined by
    \begin{align}
        \hat Y_{k} = Y_k \cup \reflect_{c_I}Y_k \quad \text{ for each }k = 1,\dots,I-2,
    \end{align}
    $\hat Y_{I} = Y_{I}$, and $\hat Y_{I-1} = Y_{I-1}$. Because of orthogonality of $\overline {y_kc_I}$ and $\overline {y_\ell c_I}$ we have, by the same arguments as in the proof of Lemma \ref{lem:ortho-reflection-lemma}, that $\norm{y_\ell - y_k} = \norm{\hat y_\ell - y_k}$, $\norm{y_\ell - y_{I-1}} = \norm{\hat y_\ell - y_{I-1}}$ and $\norm{y_\ell - y_{I}} = \norm{\hat y_\ell - y_{I}}$ for any $y_\ell \in Y_\ell, \hat y \in \reflect_{c_I}Y_\ell$, $y_k \in Y_k$, $y_{I-1} \in Y_{I-1}$ and $y_I\in Y_I$. Because $\ell$ and $k$ are arbitrary, we thus have that $\hat \cY$ is a equidistant spacing.

    We now prove that all centers of $\hat \cY$ coincide. This follows from observing that $c_I\in \aff(Y_k \cup \reflect_{c_I}Y_k)$, and so applying Def. \ref{def:center-and-radius}, we have that $c_k = \proj_{\aff(Y_k \cup \reflect_{c_I}Y_k)}y_I = c_I$ for any $y_I \in \hat Y_i$.
\end{proof}

Theorem \ref{thm:leave-two-out-affine-rank-implies-max-by-concide-centers} and Proposition \ref{prop:any-ortho-proj-implies-non-maximality} have the following immediate corollary. It provides a relationship between maximality of a equidistant spacing, and the rank of the affine spaces that span its centers.

\begin{corollary}[Leave-out Rank of Maximal Spacings]
    \label{cor:leave-out-rank-and-max-spacing}
    Let $\cY\in \mps$ with centers $c_1,\dots,c_I$, that don't all coincide and $i \neq j$. Then,
    \begin{align}
        \label{eqn:leave-out-rank:1}
         \aff(c_1,\dots,c_{i-1},c_{i+1},\dots,c_I) = \aff(c_1,\dots,c_I)
    \end{align}
    and
    \begin{align}
        \label{eqn:leave-out-rank:2}
        \aff(c_1,\dots,c_{i-1},c_{i+1},\dots,c_{j-1},c_{j+1},\dots,c_I) \subsetneq \aff(c_1,\dots,c_I).
    \end{align}
\end{corollary}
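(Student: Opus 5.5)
The corollary splits into two independent claims, Eqn.~\ref{eqn:leave-out-rank:1} and Eqn.~\ref{eqn:leave-out-rank:2}. Each follows from one of the two results just proved, applied after relabelling the classes. Recolorings (permutations of class labels) preserve both equidistant spacing and maximality. So the statements of Proposition~\ref{prop:any-ortho-proj-implies-non-maximality} and Theorem~\ref{thm:leave-two-out-affine-rank-implies-max-by-concide-centers}, which are phrased for the last class (respectively the last two classes), apply to an arbitrary index $i$ (respectively an arbitrary pair $i \neq j$).

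For Eqn.~\ref{eqn:leave-out-rank:1}, I would permute labels so that $i$ becomes $I$ and apply Proposition~\ref{prop:any-ortho-proj-implies-non-maximality}. This gives
\[
c_i \in \aff(c_1,\dots,c_{i-1},c_{i+1},\dots,c_I).
\]
Adjoining a point already in an affine hull does not enlarge it, so the leave-one-out hull equals $\aff(c_1,\dots,c_I)$. The reverse inclusion is trivial. This step does not use the hypothesis that the centers fail to coincide.

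For Eqn.~\ref{eqn:leave-out-rank:2}, I would argue by contradiction. The inclusion $\subseteq$ is immediate, so suppose the leave-two-out hull equals $\aff(c_1,\dots,c_I)$. This is exactly hypothesis Eqn.~\ref{eqn:affine-rank-condition}. Theorem~\ref{thm:leave-two-out-affine-rank-implies-max-by-concide-centers} then produces $\hat\cY \in \ps$ with $\cY \subset \hat\cY$ whose centers all coincide. Maximality of $\cY$ forces $\hat\cY = \cY$. Centers are intrinsic to the spacing by Definition~\ref{def:center-and-radius}, so the centers of $\cY$ all coincide, contradicting the hypothesis.

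The main point needing care is the phrase ``maximizes $\cY$'' in Theorem~\ref{thm:leave-two-out-affine-rank-implies-max-by-concide-centers}. I would read it as ``$\hat\cY \supseteq \cY$ classwise,'' which the construction in that proof gives ($\hat Y_k = Y_k \cup \reflect_{c_I} Y_k$, with the other classes unchanged). Maximality per Definition~\ref{def:maximal-perf-spacing} then gives equality. There is one degenerate case: for $I = 2$ the leave-two-out hull is empty, so the strict inclusion holds trivially. For $I = 3$ it is a single point, and the argument goes through unchanged.
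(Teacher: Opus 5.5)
Your proposal is correct and follows the paper's own proof. Eqn.~\ref{eqn:leave-out-rank:1} comes from Proposition~\ref{prop:any-ortho-proj-implies-non-maximality} after relabelling, and Eqn.~\ref{eqn:leave-out-rank:2} comes from Theorem~\ref{thm:leave-two-out-affine-rank-implies-max-by-concide-centers} together with maximality. Your reading of ``maximizes'' as classwise containment, so that maximality forces equality and hence coinciding centers, spells out more cleanly what the paper compresses into its appeal to ``uniqueness of maximal embeddings.''
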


\begin{proof}
    Eqn. \ref{eqn:leave-out-rank:1} follows immediately from Proposition \ref{prop:any-ortho-proj-implies-non-maximality}.

    Eqn. \ref{eqn:leave-out-rank:2} follows from uniqueness of maximal embeddings. The hypothesis implies that $\cY$ is not maximized by a equidistant spacing with centers that coincide. Therefore, Eqn. \ref{eqn:affine-rank-condition} must not hold.
\end{proof}

Corollary \ref{cor:leave-out-rank-and-max-spacing} gives a simple proof of the following fact.
\begin{corollary}[If Two Centers Coincide in a Maximal Spacing, they all do]
    \label{cor:if-two-centers-coincide-in-a-maximal-spacing-they-all-do}
    Let $\cY \in \mps$. If $i \neq j$ are such that $c_i = c_j$, then $c_1 = \dots = c_I$.
\end{corollary}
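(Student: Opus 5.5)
The plan is to argue by contradiction using Corollary \ref{cor:leave-out-rank-and-max-spacing}. Suppose $\cY \in \mps$ has $c_i = c_j$ for some $i \neq j$, but the centers do not all coincide. Then the hypotheses of Corollary \ref{cor:leave-out-rank-and-max-spacing} hold, so both Eqn. \ref{eqn:leave-out-rank:1} and Eqn. \ref{eqn:leave-out-rank:2} are available for this pair $i,j$.

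The key step is to show that $c_i = c_j$ forces the leave-two-out affine hull to equal the full affine hull, contradicting Eqn. \ref{eqn:leave-out-rank:2}.

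\begin{itemize}
\item Apply Eqn. \ref{eqn:leave-out-rank:1} with $i$ removed. This gives $\aff(\{c_k\}_{k\neq i}) = \aff(c_1,\dots,c_I)$.
\item Removing $j$ from the set $\{c_k\}_{k \neq i}$ does not change its affine hull, because $c_j = c_i$ and so we delete only a duplicate, not a point.
\item That conclusion needs one thing: $c_j$ must still lie in $\aff(\{c_k\}_{k\neq i,j})$.
\end{itemize}

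This is the step to treat with care. The cleanest route is to apply Eqn. \ref{eqn:leave-out-rank:1} to the spacing $\cY'$ obtained by deleting class $i$. That is legitimate only if $\cY'$ is again maximal with non-coinciding centers, which is not immediate.

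So I would instead argue directly through Proposition \ref{prop:any-ortho-proj-implies-non-maximality}, relabelled so that class $j$ plays the role of $I$. It gives
\[
c_j \in \aff(\{c_k\}_{k\neq j}).
\]
The affine hull on the right is spanned by the points $c_k$ for $k \neq j$, one of which is $c_i = c_j$. Hence the proposition as stated is tautological here and gives no new information.

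I therefore expect the main obstacle to be exactly this degeneracy. With a duplicated center, the leave-one-out and leave-two-out hulls differ only if $c_i$ is affinely independent of the remaining centers.

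To close the gap, I would use the proof mechanism of Proposition \ref{prop:any-ortho-proj-implies-non-maximality} rather than its statement. That mechanism is maximality under the ortho-reflection of Lemma \ref{lem:ortho-reflection-lemma}, applied to the recolored spacing in which classes $i$ and $j$ are merged via Lemma \ref{lem:recoloring-trick}.

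\begin{itemize}
\item The merged class has center $c_i = c_j$, since $A_i \perp A_j$ and both pass through that point.
\item Reflecting the other classes through their projections onto the hull of the rest should then show that each $c_k$ lies in $\aff(\{c_\ell\}_{\ell\neq k})$ with $i,j$ counted once.
\item From this, the hull spanned by the centers other than $i$ and $j$ already contains $c_i$.
\end{itemize}

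That gives equality in Eqn. \ref{eqn:affine-rank-condition}, contradicting Eqn. \ref{eqn:leave-out-rank:2}.

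A shorter fallback is available. By Theorem \ref{thm:leave-two-out-affine-rank-implies-max-by-concide-centers}, such a $\cY$ is contained in a spacing with all centers equal, and maximality then forces $\cY$ to equal that spacing. So it suffices to establish Eqn. \ref{eqn:affine-rank-condition}, which the argument above aims to do.
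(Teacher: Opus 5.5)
You have isolated the real crux, and you are right that it is not automatic. In fact the paper's proof is exactly your first plan: it asserts $\aff(\set{c_k}_{k\neq i,j})=\aff(\set{c_k}_{k\neq i})$ on the sole grounds that $c_i=c_j$, then quotes Eqns.~\ref{eqn:leave-out-rank:1}--\ref{eqn:leave-out-rank:2}. Your second bullet is wrong, since in $\set{c_k}_{k\neq i}$ the point $c_j$ is the \emph{only} copy of $c_i=c_j$. Your third bullet is correct: the equality is equivalent to $c_i\in\aff(\set{c_k}_{k\neq i,j})$, i.e.\ to Eqn.~\ref{eqn:affine-rank-condition} itself.

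Your repair does not supply this fact, for three reasons.
\begin{itemize}
\item The spacing obtained by merging $Y_i$ and $Y_j$ via Lemma~\ref{lem:recoloring-trick} is not maximal. If the merged class has center $c\coloneqq c_i$, as you claim, then $r_i=r_j$. The whole sphere of that radius about $c$ in $c+V_i\oplus V_j$ is then at unit distance from every other class and strictly contains $Y_i\cup Y_j$. So Proposition~\ref{prop:any-ortho-proj-implies-non-maximality} cannot be applied to the merged spacing.
\item Suppose instead you rerun that proposition's reflection argument using the maximality of $\mathcal Y$ itself. Reflecting a class $k\neq i,j$ only returns $c_k\in\aff(\set{c_\ell}_{\ell\neq k})$, which is already known and says nothing about $c_i$.
\item Reflecting $Y_i$ through $c^*\coloneqq\proj_{\aff(\set{c_k}_{k\neq i,j})}c$ fails as well. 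By Lemma~\ref{lem:ortho-theorem}, Proposition~\ref{prop:tri-part-ortho} and $r_i^2+r_j^2=1$, you get $\norm{(2c^*-y_i)-y_j}^2=4\norm{c^*-c}^2+1$. So the enlarged $Y_i$ stays at unit distance from $Y_j$ only if $c^*=c$, which is exactly what you want to prove.
\end{itemize}
Your fallback via Theorem~\ref{thm:leave-two-out-affine-rank-implies-max-by-concide-centers} needs the same fact, so the argument is circular at its key step.

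The missing idea is to bypass affine hulls entirely. This is what the second half of the proof of Theorem~\ref{thm:leave-two-out-affine-rank-implies-max-by-concide-centers} does (it uses its hypothesis only to produce two coinciding centers), and likewise the first part of the proof of Corollary~\ref{cor:determination-of-center-coinciding-of-maximal-spacings}.
\begin{itemize}
\item The case $I=2$ is trivial. For $I\geq 3$, pick $k\neq i,j$. Eqn.~\ref{eqn:center-compatability-condition} gives $1-r_k^2-r_i^2=\norm{c_k-c_i}^2=\norm{c_k-c_j}^2=1-r_k^2-r_j^2$, so $r_i=r_j$. Then $0=\norm{c_i-c_j}^2=1-2r_i^2$ gives $r_i^2=\tfrac12$.
\item Every $y\in\abs{\mathcal Y}$ then satisfies $\norm{y-c}^2=\tfrac12$; for $y\notin Y_i$, use $c_i=\proj_{A_i}y$ from Theorem~\ref{thm:projection-theorem}.
\item Hence points $y_k,y_\ell$ of different classes satisfy $\innerprod{y_k-c}{y_\ell-c}=0$. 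So $\reflect_c Y_k$ is still at unit distance from every other class.
\item Maximality forces $\reflect_c Y_k\subseteq Y_k$, so $c\in A_k$ is a point of $A_k$ equidistant from $Y_k$. Uniqueness of the center (Theorem~\ref{thm:projection-theorem}) then gives $c_k=c$ for every $k$.
\end{itemize}
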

\begin{proof}
    If $I = 2$, the claim is automatic. Let $I \geq 3$, and let $k$ be distinct from $i$ and $j$. From $c_i = c_j$ we have that %
    \begin{align*}
        &\aff(c_1,\dots,c_{i-1},c_{i+1},\dots,c_{j-1},c_{j+1},\dots,c_I) \underbrace{=}_{c_i = c_j} \aff(c_1,\dots,c_{i-1},c_{i+1},\dots,c_I) \underbrace{=}_{\text{Eqn. \ref{eqn:leave-out-rank:1}}} \aff(c_1,\dots,c_I).
    \end{align*}
    This contradicts Eqn. \ref{eqn:leave-out-rank:2} of Corollary \ref{cor:leave-out-rank-and-max-spacing} unless $c_1 = \dots = c_I$. Hence $c_1 = \dots = c_I$.
\end{proof}

In light of Corollary \ref{cor:if-two-centers-coincide-in-a-maximal-spacing-they-all-do}, we introduce the following definition.

\begin{definition}[Maximal Coinciding/Noncoindicing Centers]
    \label{def:maximal-coinciding-noncoinciding-centers}
    We define the two subset of $\mps$ as follows.
    \begin{align}
        \mps_=(I,\R^n) &\coloneqq \set{\cY \in \mps \colon \text{the centers of $\cY$ coincide}}\\
        \mps_{\neq}(I,\R^n) &\coloneqq \set{\cY \in \mps \colon \text{the centers of $\cY$ do not coincide}}
    \end{align}
\end{definition}

\begin{remark}[The Nature of Equidistant Spacings whose Centers Coincide]
    \label{rmk:the-nature-of-equidistant-spacings-whose-centers-coincide}

    Using this new notation, we may rephrase Corollary \ref{cor:if-two-centers-coincide-in-a-maximal-spacing-they-all-do} with the following equations.
    \begin{align}
        \mps(I,\R^n) = \mps_=(I,\R^n)\cup \mps_{\neq}(I,\R^n).
    \end{align}
    Clearly, $\mps_=(I,\R^n)\cap \mps_{\neq}(I,\R^n) = \emptyset$, so we see that the set $\mps$ separates into two classes. In the first, $\mps_=$, \emph{all} of the centers coincide. In the second, $\mps_{\neq}$, \emph{none} of the centers coincide. It turns out, separating $\mps$ in this way is natural. 
    
    We shall see that many statements about $\cY \in \mps$ have one part which applies when $\cY \in \mps_=$, and another when $\cY \in \mps_{\neq}$, as happened in Corollary \ref{cor:leave-out-rank-and-max-spacing}. Other examples occur later in the manuscript, such as Definitions \ref{def:extent} \ref{def:inner-class}, \ref{def:equilateral-normal-form}, \ref{def:signagure-of-equidistant-spacing}.
\end{remark}

Note, that in the following proposition and the rest of the manuscript, $\mathrm{conv}$ refers to the convex hull operation, see Definition \ref{def:convex-hull}. Before proceeding, we review the concepts of orthocentric simplices and orthocentric systems from elementary geometry.

\begin{definition}[Orthocentric Simplex]
    \label{def:orthocentric-simplex}
    Let $\sigma$ be a $n$ simplex. We say that $\sigma$ is \emph{orthocentric} if the $n+1$ altitudes have a common point called the \emph{orthocenter}.
    
    Let $x_1,\dots,x_{n+1}$ be vertices of an orthocentric $n$ simplex with orthocenter $x_0$. Then $x_0,x_1,\dots,x_{n+1}$ are called an \emph{orthocentric system} and omitting any point from this system yields an orthocentric simplex whose orthocenter is the omitted point.
\end{definition}

All triangles have an orthocenter. In higher dimensions, this is not the case. A simplex is generically not an orthocentric simplex.

\begin{proposition}[Equivalent Conditions for Orthocentricity/Orthocentric System]
    \label{prop:equiv-conditions-for-orthocentricity}
    Let $\triangle$ be an $n$ simplex. Then the following are equivalent.
    \begin{enumerate}
        \item $\triangle$ is orthocentric.
        \item The feet of the altitudes of $\triangle$ are the orthocenters of their respective faces.
        \item Supposing that $n \geq 3$, each edge of $\triangle$ is orthogonal to the opposite $n-2$ face.
    \end{enumerate}
    Let points $x_0,\dots,x_{n+1} \in \bbR^n$ be given. Then the following are equivalent.
    \begin{enumerate}
        \item The points $x_0,\dots,x_{n+1}$ form an orthocentric system.
        \item There are $\lambda_0,\dots,\lambda_{n+1} \in \bbR$ so that 
        \begin{align}
            \label{eqn:carycentric-coordinates-orthocentric-system}
            \norm{x_i - x_j}^2 = \lambda_i + \lambda_j\quad \text{with} \quad \sum_{i = 0}^{n+1} \frac1{\lambda_i} = 0, \text{ and } \lambda_i + \lambda_j > 0, \ i\neq j.
        \end{align}
        \item Let $p \colon \R^n \to \R^{n+1}$ be defined by $p(x) = (x,1)$. Then, there is an $i \in \set{0,\dots,n+1}$ so that the matrix
        \begin{align}
            A \coloneqq \pmat{p(x_0 - x_i) & \dots &p(x_{i-1} - x_i)&p(x_{i+1} - x_i)&\dots&p(x_{n+1} - x_i)}
        \end{align}
        is orthogonal.
        
    \end{enumerate}
\end{proposition}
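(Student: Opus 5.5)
The plan is to reduce every condition to one inner-product identity. For a simplex with vertices $x_1,\dots,x_{n+1}$, a point $h$ lies on the altitude from $x_i$ exactly when $\innerprod{h - x_i}{x_j - x_k} = 0$ for all $j,k \neq i$. So $\triangle$ is orthocentric exactly when one $h$ satisfies all of these relations at once. Subtracting the relations for two indices shows that this is equivalent to
\begin{align*}
    \innerprod{x_i - h}{x_j - h} = c \quad \text{for all } i \neq j,
\end{align*}
for a single constant $c$. In turn, this is equivalent to $\innerprod{x_i - x_j}{x_k - x_\ell} = 0$ whenever $\set{i,j}\cap\set{k,\ell} = \emptyset$.

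This last form is literally condition 3 of the first list, since an edge is orthogonal to the opposite $(n-2)$-face iff it is orthogonal to every edge of that face. So $1 \Leftrightarrow 3$ needs only the computation above. For $1 \Rightarrow 2$, let $H$ be the orthocenter and let $f_i$ be the foot of the altitude from $x_i$, so $f_i = \proj_{F_i}x_i = \proj_{F_i}H$, where $F_i$ is the opposite face. Then
\begin{align*}
    \innerprod{f_i - x_j}{x_k - x_\ell} = \innerprod{H - x_j}{x_k - x_\ell} = 0 \quad \text{for } j,k,\ell \neq i,
\end{align*}
because $H - f_i \perp F_i$. Hence $f_i$ is the orthocenter of $F_i$. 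For $2 \Rightarrow 1$, I would read off the disjoint-edge orthogonality relations from the fact that each foot is the orthocenter of its face, and conclude by $3 \Rightarrow 1$; the case $n \le 2$ is checked directly.

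For the second list, take $x_0$ to be the orthocenter $H$ of the simplex $x_1,\dots,x_{n+1}$. Set $\lambda_0 \coloneqq c$ and $\lambda_i \coloneqq \norm{x_i - H}^2 - c$. Polarization then gives $\norm{x_i - x_j}^2 = \lambda_i + \lambda_j$ for all $i \neq j$, including the pairs involving index $0$. Conversely, if such $\lambda$'s exist, polarization gives $\innerprod{x_i - x_0}{x_j - x_0} = \lambda_0$ for all $i \neq j \ge 1$. This is exactly the orthocenter identity, so $x_0$ is the orthocenter and $1 \Leftrightarrow 2$ holds apart from the reciprocal condition. Positivity $\lambda_i + \lambda_j > 0$ is simply distinctness of the points. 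The constraint $\sum_i 1/\lambda_i = 0$ comes from the fact that the $n+2$ points lie in $\R^n$, which forces the Cayley--Menger determinant to vanish. For the matrix with zero diagonal and off-diagonal entries $\lambda_i + \lambda_j$, bordered by ones, I expect the determinant to be a nonzero multiple of $\prod_i \lambda_i \sum_i 1/\lambda_i$. I would verify this by a rank-one update (matrix determinant lemma) applied to $-2\,\mathrm{diag}(\lambda)$. Conversely, when the determinant vanishes, the realizing points are affinely dependent and fit in $\R^n$.

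Condition 3 is where I expect trouble. The Gram entries of the columns are
\begin{align*}
    \innerprod{p(x_j - x_i)}{p(x_k - x_i)} = \innerprod{x_j - x_i}{x_k - x_i} + 1 = \lambda_i + 1 \quad (j \neq k),
\end{align*}
using the identity above with base point $x_i$. Mutual orthogonality of the columns therefore amounts to $\lambda_i = -1$ for a suitable base index $i$; since $\lambda_i + \lambda_j > 0$, there is a unique negative $\lambda_i$, namely the inner point. This holds only after a scaling normalization, so the main obstacle is pinning down the intended meaning of ``orthogonal''. I would read it as ``orthogonal columns after rescaling the configuration so that the negative $\lambda_i$ equals $-1$'', and then prove $2 \Leftrightarrow 3$ via this Gram computation. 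Under that reading, orthogonality of the columns yields constant pairwise inner products at base $x_i$, which by the first paragraph makes $x_i$ the orthocenter of the others.
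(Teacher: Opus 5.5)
Your route is genuinely different from the paper's, which gives no argument and only cites the literature (Klamkin; Edmonds et al.; Egerv\'ary). Reducing everything to the identity $\innerprod{x_i-h}{x_j-h}=c$ for $i\neq j$ is the standard self-contained proof, and it is sound in outline. Some steps, however, are asserted rather than proved.

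In the first list, you say this identity is ``in turn'' equivalent to disjoint-edge orthogonality. That is immediate in only one direction. The converse is the whole content of $3\Rightarrow 1$, and it needs affine independence. Put $u_j=x_j-x_{n+1}$. Orthogonality of $x_jx_k$ to $x_\ell x_{n+1}$ for distinct $j,k,\ell\le n$ gives $\innerprod{u_j}{u_\ell}=\innerprod{u_k}{u_\ell}$. By symmetry, all off-diagonal $\innerprod{u_j}{u_k}$ then equal one number $a$ (trivially when $n\le 2$). Since the $u_j$ are linearly independent, some $w$ has $\innerprod{u_j}{w}=a$ for every $j$. Then $h=x_{n+1}+w$ satisfies $\innerprod{x_j-h}{x_k-h}=\norm{w}^2-a$ for all $j\neq k$. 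Your $2\Rightarrow 1$ sketch then becomes one line: $\innerprod{x_i-x_j}{x_k-x_\ell}=\innerprod{x_i-f_i}{x_k-x_\ell}+\innerprod{f_i-x_j}{x_k-x_\ell}=0$ for distinct $i,j,k,\ell$.

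In the second list, polarization gives $\innerprod{x_i-x_0}{x_j-x_0}=\lambda_0$. This makes $x_0$ an orthocenter only once $x_1,\dots,x_{n+1}$ are known to be affinely independent, and that is where the reciprocal condition is really used. (``Fitting in $\R^n$'' is automatic, since the points are given in $\R^n$.) The Gram matrix of $x_j-x_{n+1}$, $j\le n$, is $\mathrm{diag}(\lambda_1,\dots,\lambda_n)+\lambda_{n+1}\mathbf{1}\mathbf{1}^T$. Its determinant is $\prod_{j=1}^{n+1}\lambda_j\sum_{j=1}^{n+1}\lambda_j^{-1}=-\lambda_0^{-1}\prod_{j=1}^{n+1}\lambda_j\neq 0$.

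In the forward direction you must show $\lambda_i\neq 0$ before writing $\sum 1/\lambda_i$. Since $\innerprod{x_j-x_i}{x_k-x_i}=\lambda_i$ for distinct $j,k\neq i$, $\lambda_i=0$ would give $n+1$ pairwise orthogonal vectors in $\R^n$, so two points would coincide. The same Gram matrix with base point $x_0$ gives the reciprocal condition more directly than Cayley--Menger. Its determinant is $\lambda_0\prod_{j\ge 1}\lambda_j\sum_{j\ge 0}\lambda_j^{-1}$, and it vanishes because $n+1$ vectors in $\R^n$ are dependent. Existence of a negative $\lambda_i$ also comes from the reciprocal condition; positivity of $\lambda_i+\lambda_j$ only gives uniqueness.

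Your diagnosis of the matrix condition is right, and it can be sharpened: read literally, the condition is false. Every column has $\norm{p(x_j-x_i)}^2=\norm{x_j-x_i}^2+1>1$, so $A$ is never an orthogonal matrix for distinct points. Mere pairwise orthogonality of the columns means $\lambda_i=-1$, which is not scale invariant. Your repaired version is the correct statement: pairwise orthogonal columns after scaling so that the negative $\lambda_i$ equals $-1$, or equivalently an orthogonal matrix after also normalizing the columns. Its converse gets affine independence for free. The $n+1$ nonzero pairwise orthogonal vectors $(x_j-x_i,1)$ are linearly independent, which is exactly affine independence of $\set{x_j}_{j\neq i}$.
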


\begin{proof}
    For the equivalent conditions for being an orthocentric simplex. For Points 1 $\iff$ 2, see \cite{klamkin1998orthocenters}. For Points 2 $\iff$ 3 see \cite[Page 4]{edmonds2005orthocentric} and the references contained therein.

    For the equivalence of points 2 and 3, see \cite[Section 2]{egervary1940orthocentric}.
\end{proof}

Now we present a powerful geometric result that gives a geometric criterion for how centers in a maximal embedding lie.

\begin{proposition}[Centers are Orthocentric Systems]
    \label{prop:centers-are-orthocentric-systems}
    Let $\cY \in \mps_{\neq}$. Then $c_1,\dots,\dots,c_I$ are an orthocentric system.
\end{proposition}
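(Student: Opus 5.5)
The plan is to verify condition 2 of Proposition \ref{prop:equiv-conditions-for-orthocentricity} for the centers, using the compatibility relation Eqn. \ref{eqn:center-compatability-condition} and the rank information in Corollary \ref{cor:leave-out-rank-and-max-spacing}.

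\textbf{Step 1: choose the weights.} Set $\lambda_i \coloneqq \tfrac12 - r_i^2$. Then Eqn. \ref{eqn:center-compatability-condition} gives exactly
\begin{align*}
\norm{c_i - c_j}^2 = 1 - r_i^2 - r_j^2 = \lambda_i + \lambda_j .
\end{align*}
Since $\cY \in \mps_{\neq}$, Corollary \ref{cor:if-two-centers-coincide-in-a-maximal-spacing-they-all-do} makes the centers pairwise distinct. Hence $\lambda_i + \lambda_j > 0$ for all $i \neq j$. So the distance part of Eqn. \ref{eqn:carycentric-coordinates-orthocentric-system} is automatic. What remains is the relation $\sum_i 1/\lambda_i = 0$, together with the right dimension count.

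\textbf{Step 2: dimension count.} By Corollary \ref{cor:leave-out-rank-and-max-spacing}, removing any single center does not change $\aff(c_1,\dots,c_I)$, but removing any two centers strictly shrinks it. I would show this forces $\dim \aff(c_1,\dots,c_I) = I-2$, with every $I-1$ of the centers affinely independent. This is the configuration $x_0,\dots,x_{m+1}$ in $\R^m$ with $m = I-2$ required by the definition of an orthocentric system.

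\begin{itemize}
\item[(a)] Let $d$ be the dimension of the hull. There is some affinely independent subset of $d+1$ centers. If $I - 2 \geq d+1$, we can delete two centers outside that subset without changing the hull, contradicting Eqn. \ref{eqn:leave-out-rank:2}. Hence $d \geq I-2$.
\item[(b)] By Eqn. \ref{eqn:leave-out-rank:1}, deleting one center keeps the hull, so $d \leq I-2$.
\item[(c)] Any $I-2$ centers span a proper subspace, so they are affinely independent, and then any $I-1$ centers are as well.
\end{itemize}

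\textbf{Step 3: derive $\sum_i 1/\lambda_i = 0$.} The $I$ points satisfy one affine dependence $\sum_i \mu_i c_i = 0$ with $\sum_i \mu_i = 0$. By Step 2(c), every $\mu_i \neq 0$. Apply the standard Cayley–Menger identity $\sum_{i,j} \mu_i \mu_j \norm{c_i - c_j}^2 = -2\norm{\sum_i \mu_i c_i}^2 = 0$. Substituting $\lambda_i + \lambda_j$ gives
\begin{align*}
2\Bigl(\sum_i \mu_i\Bigr)\Bigl(\sum_j \mu_j\lambda_j\Bigr) - 2\sum_i \mu_i^2 \lambda_i = 0 ,
\end{align*}
which is only one scalar equation and not yet enough.

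To get more, fix $k$ and take the inner product of $\sum_i \mu_i c_i = 0$ with $c_i - c_k$, using polarization. Here $\innerprod{c_i - c_k}{c_j - c_k} = \lambda_k + \tfrac12(\norm{c_i - c_k}^2 + \norm{c_j - c_k}^2 - \norm{c_i - c_j}^2)$, suitably adjusted. This yields, for each $k$, a linear relation of the form $\sum_i \mu_i(\lambda_i + \lambda_k) = c$ with $c$ independent of $k$. Using $\sum_i \mu_i = 0$, these relations force $\mu_k \lambda_k$ to be constant in $k$ (possibly after first handling a case where some $\lambda_k = 0$). Then $\sum_k \mu_k = 0$ gives $\sum_k 1/\lambda_k = 0$.

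\textbf{Main obstacle.} I expect Step 3 to be the hardest part, specifically extracting $\mu_k \lambda_k = \text{const}$ and ruling out $\lambda_k = 0$. If some $\lambda_k = 0$, then $c_k$ would be the right-angle vertex: $\norm{c_i - c_j}^2 = \norm{c_i - c_k}^2 + \norm{c_j - c_k}^2$ for all $i, j$. The vectors $c_i - c_k$ would then be mutually orthogonal, making the remaining $I-1$ centers affinely independent in dimension $I-1$, which contradicts Step 2. If the algebra is messy, my fallback is to use the orthocenter characterization directly. Proposition \ref{prop:center-ortho-projs-coincide} shows that the projections of $c_{I-1}$ and $c_I$ onto the face spanned by the others coincide, and every $I-1$ centers form a simplex. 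So the altitudes from $c_{I-1}$ and from $c_I$ hit the same foot, and $c_I$ lies on the altitude of the simplex $c_1,\dots,c_{I-1}$ through $c_{I-1}$. Doing this for each pair shows that $c_I$ lies on every altitude, so $c_I$ is the orthocenter, which is exactly Definition \ref{def:orthocentric-simplex}.
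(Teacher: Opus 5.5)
Your fallback argument is exactly the paper's proof. Corollary \ref{cor:leave-out-rank-and-max-spacing} makes every $I-1$ centers affinely independent. Proposition \ref{prop:center-ortho-projs-coincide}, applied to each pair $(i,j)$, then puts $c_i$ on the altitude of $\bigtriangleup_i$ through $c_j$. Your main route, which verifies the Egerv\'ary criterion of Proposition \ref{prop:equiv-conditions-for-orthocentricity} with $\lambda_i = \tfrac12 - r_i^2$, is genuinely different and does work. However, two steps are wrong as written.

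\textbf{Step 2(c).} The implication runs backwards. Any $I-2$ points span a proper subspace of an $(I-2)$-dimensional hull, so that fact says nothing. Independence of $(I-2)$-subsets also does not give independence of $(I-1)$-subsets. Argue directly instead: by Eqn. \ref{eqn:leave-out-rank:1}, any $I-1$ centers span the full $(I-2)$-dimensional hull, hence they are affinely independent.

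\textbf{Step 3.} This is the more serious problem: the mechanism you sketch carries no information. Because $\sum_i \mu_i = 0$, the relation $\sum_i \mu_i(\lambda_i+\lambda_k) = c$ reduces to $\sum_i \mu_i \lambda_i = c$ for every $k$. Nothing there forces $\mu_k\lambda_k$ to be constant. Your polarization formula also double counts: for distinct $i,j,k$ one gets exactly $\innerprod{c_i-c_k}{c_j-c_k} = \lambda_k$.

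The information sits in the diagonal term. Fix $j \neq k$ and pair $\sum_i \mu_i (c_i - c_k) = 0$ with $c_j - c_k$, using $\norm{c_j-c_k}^2 = \lambda_j + \lambda_k$:
\begin{align*}
0 = \mu_j(\lambda_j+\lambda_k) + \lambda_k \sum_{i\notin\{j,k\}} \mu_i = \mu_j\lambda_j - \mu_k\lambda_k .
\end{align*}
Hence $\mu_k\lambda_k = C$ for all $k$. If $C=0$, then every $\lambda_k = 0$ (since all $\mu_k \neq 0$), contradicting $\lambda_i+\lambda_j>0$. So $\mu_k = C/\lambda_k$, and $\sum_k \mu_k = 0$ gives $\sum_k 1/\lambda_k = 0$. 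This also makes your separate right-angle argument for $\lambda_k = 0$ unnecessary.

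\textbf{Comparison with the paper.} Your route replaces Proposition \ref{prop:center-ortho-projs-coincide} by the single relation Eqn. \ref{eqn:center-compatability-condition}. In fact, the same polarization gives $\innerprod{c_k-c_l}{c_i-c_j}=0$ for distinct $i,j,k,l$, which is precisely the content of that proposition. So after Step 2 you could finish from the altitude definition without computing $\sum_k 1/\lambda_k$ at all.

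What the Egerv\'ary version buys is explicit data. Exactly one $\lambda_{i_0}$ is negative, i.e.\ exactly one class has $r_{i_0} > \tfrac{\sqrt2}{2}$. Moreover, $c_{i_0} = \sum_{k\neq i_0} (-\lambda_{i_0}/\lambda_k)\, c_k$ with positive weights summing to one. This gives the inner class and the criterion of Corollary \ref{cor:characterization-of-inner-class} at once, and it matches (with $\alpha = 1$) the parametrization $r_i = \sqrt{\tfrac12 - \alpha^2\lambda_i}$ of Theorem \ref{thm:small-orthocentric-systems-correspond-to-centers-in-mps}.

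The cost is reliance on the equivalence in Proposition \ref{prop:equiv-conditions-for-orthocentricity}, which the paper only cites, plus a longer computation. The paper's argument works straight from the definition of the orthocenter.
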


\begin{proof}
    Let us define $\bigtriangleup_i \coloneqq\mathrm{conv}(c_1,\dots,c_{i-1},c_{i+1},\dots,c_I)$. The claim follows if we can show that $\bigtriangleup_i$ is a simplex, and $c_i$ is its orthocenter. $\bigtriangleup_i$ being a simplex is equivalent to showing that $c_1,\dots,c_{i - 1},c_{i + 1},\dots,c_I$ are affinely independent. This follows from Corollary \ref{cor:leave-out-rank-and-max-spacing}.

    Now we show that $c_i$ is the orthocenter of $\bigtriangleup_i$. Note, we know that $c_i \neq c_j$ when $i\neq j$ from Corollary \ref{cor:if-two-centers-coincide-in-a-maximal-spacing-they-all-do}. Hence, the line $\overline{c_jc_i}$ is well-defined. For ease of notation, let us define
    \begin{align}
        A_{(i,j)}\coloneqq \aff\paren{c_1,\dots,c_{j-1},c_{j+1},\dots,c_{i-1},c_{i+1},\dots,c_I}.
    \end{align}
    and $c^*_j \coloneqq \proj_{A_{(i,j)}}c_j$. From Proposition \ref{prop:center-ortho-projs-coincide}, we have that 
    \begin{align*}
        \proj_{A_{(i,j)}} c_i = c^*_j
    \end{align*}
    Thus, $c_i, c_j$, and $c^*_j$ all lie on a line. Because $c^*_j$ is defined by the orthogonal projection, this line is orthogonal to $A_{(i,j)}$. Hence, $c_i$ is contained on the line containing $c_j$ orthogonal to $A_{(i,j)}$, that is, $c_i$ lies on the line connecting $c_j$ to its orthogonal projection on its opposite face in $\bigtriangleup_i$. This holds for each $j$, and so $c_i$ is the orthocenter of $\bigtriangleup_i$.
\end{proof}
The following corollary gives a useful identity
\begin{corollary}[Dimension of $\tilde A$]
    \label{cor:dim-of-tilde-A}
    Let $\cY \in \mps(I,\R^n)$, then $n = \dim \tilde A + \qsum iI\dim A_i$ and 
    \begin{align}
        \label{eqn:a-tilde-fomula}
        \dim \tilde A = \twopartpiecewise{0}{\cY \in \mps_=}{I-2}{\cY \in \mps_{\neq}}.
    \end{align}
\end{corollary}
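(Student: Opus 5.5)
The corollary has two parts, and I would handle them separately: first the dimension count $\dim \tilde A = 0$ or $I-2$, then the identity $n = \dim\tilde A + \sum_i \dim A_i$, which I read as an additive decomposition of $\R^n$ forced by maximality.

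\textbf{The formula for $\dim \tilde A$.} Recall $\tilde A = \Span{c_j - c_1,\dots,c_j - c_I}$, so $\dim \tilde A$ is the affine dimension of $\aff(c_1,\dots,c_I)$.
\begin{itemize}
\item If $\cY \in \mps_=$, all centers coincide, $\tilde A = \set{0}$, and $\dim\tilde A = 0$.
\item If $\cY \in \mps_{\neq}$, Proposition \ref{prop:centers-are-orthocentric-systems} says that for each $i$ the points $\set{c_k}_{k\neq i}$ are affinely independent; this independence is established in its proof via Corollary \ref{cor:leave-out-rank-and-max-spacing}. So $\aff(\set{c_k}_{k\neq i})$ has dimension $I-2$. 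By Eqn. \ref{eqn:leave-out-rank:1}, this affine hull equals $\aff(c_1,\dots,c_I)$, hence $\dim \tilde A = I-2$.
\end{itemize}

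\textbf{The decomposition of $\R^n$.} By Corollary \ref{cor:properties-of-class-embeddings} point 2(b), the direction spaces $V_i$ of the $A_i$ and $\tilde A$ are pairwise orthogonal. Hence $W \coloneqq \tilde A \oplus V_1 \oplus \dots \oplus V_I$ is a direct sum, and $\dim \tilde A + \sum_i \dim A_i \le n$. It remains to show equality, and I will do this by contradiction with maximality. Suppose $W \neq \R^n$, and pick a unit vector $u \perp W$.
\begin{itemize}
\item \emph{Some class has positive radius.} I first argue that some $i$ has $r_i > 0$. In the coinciding case, Eqn. \ref{eqn:center-compatability-condition} gives $r_i^2 + r_j^2 = 1$, so at most one radius vanishes and, since $I \ge 2$, some $r_i > 0$. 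In the noncoinciding case, if all $r_i = 0$ then $\norm{c_i - c_j} = 1$ and the centers form a regular simplex. A regular simplex is not an orthocentric system for $I \ge 3$, since no vertex is the orthocenter of the others, and for $I = 2$ it contradicts Proposition \ref{prop:any-ortho-proj-implies-non-maximality}. So some $r_i > 0$ in either case.
\item \emph{Enlarging that class.} Replace $V_i$ by $V_i \oplus \Span{u}$. This enlarged space still satisfies Assumption 3 of Theorem \ref{thm:radius-recipe}. Apply Theorem \ref{thm:radius-recipe} with this choice and define the new class as $S_{r_i}(c_i) \cap (c_i + V_i \oplus \Span{u})$, leaving the other classes unchanged. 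Since $Y_i \subset S_{r_i}(c_i) \cap A_i$, this produces an equidistant spacing that contains $\cY$. It contains strictly more points, for instance $c_i + r_i u$, which lies outside $A_i$ and so is not in $Y_i$. This contradicts maximality.
\end{itemize}

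One subtlety is that a maximal $Y_i$ must equal the full sphere $S_{r_i}(c_i) \cap A_i$. That also follows from Theorem \ref{thm:radius-recipe} together with maximality, but it is not needed for the argument above.

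The main obstacle is guaranteeing a class with $r_i > 0$ in the noncoinciding case, that is, excluding all radii being zero. The argument above handles it through the regular-simplex observation. If the orthocentric-system route proves awkward, I would instead use the reflection trick of Lemma \ref{lem:ortho-reflection-lemma}: reflecting a point class $Y_i = \set{c_i}$ through a point along $u$ is not directly available, so the direct computation via Eqn. \ref{eqn:center-compatability-condition} is the fallback.
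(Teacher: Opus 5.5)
Your proposal is correct and follows essentially the same route as the paper: rule out the case that all radii vanish, enlarge a positive-radius class by a direction orthogonal to $\tilde A \oplus V_1 \oplus \dots \oplus V_I$ to contradict maximality, and get $\dim \tilde A = I-2$ from the affine independence behind Proposition \ref{prop:centers-are-orthocentric-systems} together with Eqn.~\ref{eqn:leave-out-rank:1}. The differences are cosmetic. The paper rules out all-zero radii directly with the reflection trick of Lemma \ref{lem:ortho-reflection-lemma}; this does apply, by reflecting $\set{c_i}$ through its projection onto the affine hull of the other centers, so your fallback worry is unfounded. You instead use the compatibility equation and the orthocentric-system property, and you spell out the enlargement via Theorem \ref{thm:radius-recipe}.
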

\begin{proof}
    Suppose that for all $i$, $r_i = 0$. Then $\cY$ is an equilateral simplex, and may be strictly maximized via the reflection trick as in Figure \ref{fig:ortho-proj-lemma}, a contradiction.
    
    Let $i$ be such that $r_i > 0$. Define
    \begin{align}
        \hat V_i \coloneqq \paren{\aff{\abs\cY}}^\perp = \paren{\aff{\tilde A + \qsum iI A_i}}^\perp
    \end{align}
    and $\hat A_i = c_i V_i$. Define $\hat \cY$ so that 
    \begin{align}
        \hat Y_j = \twopartpiecewise{Y_j}{i \neq j}{\paren{ A_i  + \hat A_i} \cap S^{n-1}_{r_i}(c_i)}{i = j}.
    \end{align}
    Then, $\hat\cY \in \ps$. If $n > \dim \tilde A + \qsum iI\dim A_i$, then $\hat V_i$ is not trivial and so $Y_i\subsetneq\hat Y_i$, hence $\cY$ is not maximal. This is a contradiction. Thus, $n = \dim \tilde A + \qsum iI\dim A_i$.
    
    Now we prove Eqn. \ref{eqn:a-tilde-fomula}. When the centers coincide the proof is automatic. When the centers don't coincide, then 
    \begin{align}
        \dim \tilde A = \dim \aff{c_1,\dots,c_I} = \dim \mathrm{conv}(c_1,\dots,c_I) = I-2
    \end{align}
    where the last equality follows from Proposition \ref{prop:centers-are-orthocentric-systems}.
\end{proof}

It turns out that the compatibility condition, Eqn. \ref{eqn:center-compatability-condition}, is equivalent to the statement that the centers form an orthocentric simplex.

\begin{proposition}[The Ortho-Reflection Lemma Produces Orthocentric Systems]
    \label{prop:the-ortho-reflection-lemma-produces-orthocentric-systems}
    Let $c_1,\dots,c_I$ be centers of $\cY \in \ps$ and affinely independent. Then $c_1,\dots,c_{I-1},c^*_I$ is an orthocentric system where $c^*_I \coloneqq \proj_{\aff(c_1,\dots,c_{I-1})}c_I$.
\end{proposition}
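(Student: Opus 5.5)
The plan is to verify criterion 2 of Proposition \ref{prop:equiv-conditions-for-orthocentricity} for the $I$ points $x_j \coloneqq c_j$ ($j \leq I-1$) and $x_I \coloneqq c^*_I$. These points lie in the $(I-2)$-dimensional affine space $\aff(c_1,\dots,c_{I-1})$, which is the right setting: $I$ points in $\R^{I-2}$. I would not work with the reflected spacing from Lemma \ref{lem:ortho-reflection-lemma} directly. Instead I would read everything off the compatibility equation, Eqn. \ref{eqn:center-compatability-condition}, together with one Pythagorean step.

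\textbf{Step 1: the parameters.} For $j \leq I-1$ set $\lambda_j \coloneqq \tfrac12 - r_j^2$. Eqn. \ref{eqn:center-compatability-condition} then reads $\norm{c_i - c_j}^2 = \lambda_i + \lambda_j$ for $i,j \leq I-1$.

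Let $h \coloneqq \norm{c_I - c^*_I}$. Since $c_I - c^*_I \perp \aff(c_1,\dots,c_{I-1})$, the Pythagorean theorem gives, for every $j \leq I-1$,
\begin{align*}
\norm{c_j - c^*_I}^2 = \norm{c_j - c_I}^2 - h^2 = \lambda_j + \paren{\tfrac12 - r_I^2 - h^2}.
\end{align*}
So setting $\lambda_I \coloneqq \tfrac12 - r_I^2 - h^2$ gives $\norm{x_i - x_j}^2 = \lambda_i + \lambda_j$ for all $i \neq j$. Positivity $\lambda_i + \lambda_j > 0$ is exactly distinctness of the points. For pairs among $c_1,\dots,c_{I-1}$ this follows from affine independence. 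For pairs involving $c^*_I$ it needs $c^*_I \neq c_j$, which I would check separately below.

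\textbf{Step 2: the reciprocal sum, via an affine dependency.} Because the $I$ points lie in an $(I-2)$-dimensional affine space, there is a nonzero $\alpha \in \R^I$ with $\sum_i \alpha_i = 0$ and $\sum_i \alpha_i x_i = 0$. Expanding squares, the quantity
\begin{align*}
\sum_i \alpha_i \norm{x_i - x_j}^2 = \sum_i \alpha_i \norm{x_i}^2 \eqqcolon \kappa
\end{align*}
is independent of $j$.

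On the other hand, substitute $\norm{x_i - x_j}^2 = \lambda_i + \lambda_j$ for $i \neq j$ and $0$ for $i = j$. The left side becomes $\sum_i \alpha_i\lambda_i - 2\lambda_j\alpha_j$. Hence
\begin{align*}
2\lambda_j\alpha_j = \lambda^\top\alpha - \kappa \eqqcolon C \quad \text{for every } j.
\end{align*}
If all $\lambda_j \neq 0$, then $\alpha_j = C/(2\lambda_j)$. Since $\alpha \neq 0$ we get $C \neq 0$, and $\sum_j \alpha_j = 0$ becomes $\sum_j 1/\lambda_j = 0$. This is criterion 2, so the points form an orthocentric system.

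\textbf{Main obstacle: degeneracies.} The expected difficulty is excluding (or separately handling) two cases: some $\lambda_j = 0$, and $c^*_I = c_j$ for some $j$.

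Suppose $\lambda_j = 0$ for some $j$. Then $C = 0$, so every $\alpha_k$ with $\lambda_k \neq 0$ vanishes. If exactly one $\lambda_j$ is zero, this contradicts $\alpha \neq 0$ together with $\sum \alpha = 0$. If two vanish, say $\lambda_j = \lambda_k = 0$, then $\norm{x_j - x_k} = 0$, which is the coincidence case.

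I would try to rule out the coincidence $c^*_I = c_j$ using Proposition \ref{prop:center-ortho-projs-coincide}, or else accept it as the degenerate "right-angle" orthocentric system, in which one vertex is its own orthocenter. That degenerate case falls outside the strict inequality in Eqn. \ref{eqn:carycentric-coordinates-orthocentric-system}. There I would instead verify Definition \ref{def:orthocentric-simplex} directly from the orthogonality relations $\norm{x_i - x_j}^2 = \lambda_i + \lambda_j$, since these give $\innerprod{x_i - x_k}{x_j - x_k} = \lambda_k$.
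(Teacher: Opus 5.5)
Your argument is correct, but it takes a different route from the paper. The paper uses Lemma~\ref{lem:ortho-reflection-lemma} to build a spacing whose centers are exactly $c_1,\dots,c_{I-1},c^*_I$. It then applies the projection identity of Proposition~\ref{prop:center-ortho-projs-coincide} (cited there via Corollary~\ref{cor:projected-radii-coincide}) to the pair $(i,I)$, which gives $\proj_{F_i}c_i=\proj_{F_i}c^*_I$ with $F_i=\aff(c_1,\dots,c_{i-1},c_{i+1},\dots,c_{I-1})$. So $c_i-c^*_I\perp F_i$, meaning $c^*_I$ lies on the altitude from every vertex $c_i$ and is the orthocenter. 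You instead read the parameters $\lambda_j=\tfrac12-r_j^2$ and $\lambda_I=\tfrac12-r_I^2-h^2$ straight off Eqn.~\ref{eqn:center-compatability-condition} plus one Pythagorean step, and you derive $\sum_j 1/\lambda_j=0$ from an affine dependency; that computation is right. Your route works purely from the compatibility equation and produces explicit parameters, the same parametrization the paper later inverts in Theorem~\ref{thm:small-orthocentric-systems-correspond-to-centers-in-mps}. The cost is that it relies on the cited implication ``criterion 2 $\Rightarrow$ orthocentric system'' of Proposition~\ref{prop:equiv-conditions-for-orthocentricity} and forces a case split. The paper's altitude argument is uniform.

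On the degenerate case, do not try to rule out $c^*_I=c_j$: it genuinely occurs whenever $r_j=\tfrac{\sqrt2}2$ for some $j\le I-1$. In that case $\innerprod{c_I-c_j}{c_k-c_j}=\tfrac12-r_j^2=0$ for every $k\le I-1$, $k\neq j$. Concretely, in $\R^4$ take $c_1=\bszero$, $c_j=e_{j-1}/\sqrt2$ for $j=2,3,4$, $r_1=\tfrac{\sqrt2}2$ and $r_2=r_3=r_4=0$; then $c^*_4=c_1$. So your fallback is needed, and in fact it suffices on its own. Polarization gives $\innerprod{x_i-x_k}{x_j-x_k}=\lambda_k$ for distinct indices, hence $\innerprod{c_i-c^*_I}{c_k-c_l}=\lambda_I-\lambda_I=0$ for all $k,l\notin\set{i,I}$. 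This is exactly the paper's altitude condition, and it holds with or without coincidences. Using it throughout also removes a small unverified step in your main route. Criterion 2 needs $\lambda_i+\lambda_j>0$, and you never showed that having all $\lambda_j\neq 0$ excludes $c^*_I=c_j$. It does: comparing distances to a third center forces $\lambda_I=\lambda_j$, and together with $\lambda_I+\lambda_j=0$ both would vanish.
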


\begin{proof}
    The goal is to show that $\overline{c_{i}c^*_I} \in \aff(c_1,\dots,c_{i-1},c_{i+1},\dots,c_{I-1})^\perp$ for each $i = 1,\dots, I-1$. By the ortho-reflection trick (applied to $I$), we may define $\cY' \in \ps$ where $c'_i = c_i$ when $i = 1,\dots,I-1$, and $c_I' = c^*_I$. Then, Corollary \ref{cor:projected-radii-coincide} applied to $\cY'$ implies that 
    \begin{align}
        \proj_{\aff(c_1',\dots,c_{i-1}',c_{i+1}',\dots,c_{I-1}')}c_i' = \proj_{\aff(c_1',\dots,c_{i-1}',c_{i+1}',\dots,c_{I-1}')}c_I'.
    \end{align}
    By replacing $c'_i = c_i$, $c'_I = c^*_I$ and rearranging terms we have 
    \begin{align}
        \proj_{\aff(c_1,\dots,c_{i-1},c_{i+1},\dots,c_{I-1})}\paren{c_i -c_I^*} = \bszero \implies \overline{c_{i}c^*_I} \in \aff(c_1,\dots,c_{i-1},c_{i+1},\dots,c_{I-1})^\perp
    \end{align}
    which proves the result.
\end{proof}

Proposition \ref{prop:the-ortho-reflection-lemma-produces-orthocentric-systems} implies that a version of Proposition \ref{prop:centers-are-orthocentric-systems} that applies to non-maximal equidistant spacings.

\begin{corollary}[Center form Orthocentric Simplices]
    \label{cor:equidistantly-s}
    If $\cY \in \ps(I)$, then $\triangle_{c_1,\dots,c_I}$ forms a (possibly degenerate) orthocentric simplex.
\end{corollary}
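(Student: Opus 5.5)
The plan is to derive the corollary from Proposition \ref{prop:the-ortho-reflection-lemma-produces-orthocentric-systems}, with the degenerate cases handled separately. Let $\cY \in \ps(I)$ with centers $c_1,\dots,c_I$. If the centers are affinely dependent, the simplex $\triangle_{c_1,\dots,c_I}$ is degenerate, and the phrase ``possibly degenerate orthocentric simplex'' should then be read through the metric characterization of Proposition \ref{prop:equiv-conditions-for-orthocentricity}. In that reading, the compatibility equation \ref{eqn:center-compatability-condition} already gives, with $\lambda_i \coloneqq \tfrac12 - r_i^2$,
\begin{align*}
\norm{c_i - c_j}^2 = \lambda_i + \lambda_j, \qquad i \neq j.
\end{align*}
So the squared edge lengths have the ``sum of vertex weights'' form characteristic of orthocentric configurations. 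I would make this the backbone of the argument, so that degenerate and nondegenerate cases are treated uniformly.

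For affinely independent centers, I would argue geometrically. Fix a vertex $c_k$ and let $F_k \coloneqq \aff(c_j : j \neq k)$. The goal is to show that the altitude from $c_k$ to $F_k$ passes through a common point for every $k$. Apply Proposition \ref{prop:the-ortho-reflection-lemma-produces-orthocentric-systems} after relabeling so that $c_k$ plays the role of $c_I$. This shows that $\{c_j\}_{j\neq k} \cup \{c^*_k\}$ is an orthocentric system, where $c^*_k$ is the foot of the altitude from $c_k$. In particular, $c^*_k$ is the orthocenter of the facet $\triangle_{c_j : j \neq k}$. By the equivalence (1)$\iff$(2) in Proposition \ref{prop:equiv-conditions-for-orthocentricity}, a simplex whose altitude feet are the orthocenters of their facets is orthocentric. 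The result then follows.

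As a cross-check, and as an alternative in case (2) is not directly applicable, I would verify condition (3) of that proposition: each edge is orthogonal to the opposite $(I-3)$-face. For distinct $a,b,c,d$ the weights give
\begin{align*}
\innerprod{c_a - c_b}{c_c - c_d} = \tfrac12\paren{\norm{c_a-c_d}^2+\norm{c_b-c_c}^2-\norm{c_a-c_c}^2-\norm{c_b-c_d}^2} = 0,
\end{align*}
since the $\lambda$'s cancel. This computation holds whether or not the centers are affinely independent, so it also covers the degenerate case directly.

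I expect the main obstacle to be the low-dimensional and degenerate bookkeeping. For $I \le 3$, orthocentricity is automatic (every triangle is orthocentric), so those cases are immediate. For degenerate configurations, the task is to state precisely what ``degenerate orthocentric'' means and check that the edge-orthogonality identity above matches it. I would adopt edge/opposite-face orthogonality as the definition in that case.
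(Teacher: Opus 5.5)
Your proposal is correct. For affinely independent centers, your argument is the paper's. Its one-line proof applies the ortho-reflection lemma in class $I$: Proposition \ref{prop:the-ortho-reflection-lemma-produces-orthocentric-systems} shows that the foot $c_I^*$ of the altitude from $c_I$ is the orthocenter of the opposite facet, and the criterion in Proposition \ref{prop:equiv-conditions-for-orthocentricity} then finishes. You do this at every vertex, which is harmless; one vertex already suffices, because a foot that is the orthocenter of its facet forces every edge to be orthogonal to every non-incident edge.

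Your ``cross-check'', however, is a genuinely different and more economical proof. Rewrite Eqn.~\ref{eqn:center-compatability-condition} as $\norm{c_i-c_j}^2=\lambda_i+\lambda_j$ with $\lambda_i=\tfrac12-r_i^2$. Polarizing then gives $\innerprod{c_a-c_b}{c_c-c_d}=0$ for all distinct $a,b,c,d$, using nothing beyond Corollary \ref{cor:properties-of-class-embeddings}. This alone proves the corollary, and it buys three things:
\begin{itemize}
\item It needs no affine independence. That is the hypothesis of Proposition \ref{prop:the-ortho-reflection-lemma-produces-orthocentric-systems}, which is why the paper's proof does not directly reach the degenerate case.
\item Taking $\{a,b\}=\{I-1,I\}$, it reproves Proposition \ref{prop:center-ortho-projs-coincide}, on which the paper's chain of results depends.
\item It avoids that whole chain of reflection arguments.
\end{itemize}
What the paper's route buys instead is geometric meaning. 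The orthocenter of each facet is realized as the center of the reflected class of an enlarged spacing, which is the same mechanism the paper uses to study maximal spacings.

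The paper never defines a degenerate orthocentric simplex, so taking edge/opposite-face orthogonality as the definition there is the right way to make the statement precise. Do not also claim the full weighted condition of Proposition \ref{prop:equiv-conditions-for-orthocentricity}. Its requirements $\sum_i 1/\lambda_i=0$ and $\lambda_i+\lambda_j>0$ generally fail for non-maximal spacings: for example, $\lambda_i=0$ when $r_i=\tfrac{\sqrt2}2$, and $\sum_i 1/\lambda_i=2I$ for the equilateral simplex with all $r_i=0$.
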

\begin{proof}
    Consider applying the ortho-reflection lemma to $\cY$ in class $I$. 
\end{proof}

Propositions \ref{prop:centers-are-orthocentric-systems} and \ref{prop:the-ortho-reflection-lemma-produces-orthocentric-systems} have inverses, up to scaling, as the following theorem shows.

\begin{theorem}[Small Orthocentric Systems Correspond to Centers in $\mps_{\neq}$]
    \label{thm:small-orthocentric-systems-correspond-to-centers-in-mps}
    Let $c_0,\dots,c_{n+1} \in \R^m$ be an orthocentric system. If $m$ is large enough, there is some $\alpha \in (0,\infty)$ and $\cY \in \mps_{\neq}(n+1,\R^m)$ so that $\alpha c_0,\dots,c\alpha _{n+1}$.
\end{theorem}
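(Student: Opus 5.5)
We use the barycentric characterization of orthocentric systems in Proposition \ref{prop:equiv-conditions-for-orthocentricity}, point 2. It gives $\lambda_0,\dots,\lambda_{n+1}$ with
\[
\norm{c_i - c_j}^2 = \lambda_i + \lambda_j, \qquad \sum_i \frac1{\lambda_i} = 0, \qquad \lambda_i + \lambda_j > 0 \ (i \neq j).
\]
Compare this with the compatibility condition, Eqn. \ref{eqn:center-compatability-condition}, for the scaled points $\alpha c_i$:
\[
\alpha^2(\lambda_i + \lambda_j) = 1 - r_i^2 - r_j^2 .
\]
This suggests the ansatz $r_i^2 = \tfrac12 - \alpha^2\lambda_i$. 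Each pairwise equation then holds identically. We therefore only need $r_i^2 \in [0,1]$ and $r_i^2 + r_j^2 \leq 1$.

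\textbf{Feasibility of the radii.} The second condition is automatic, since $r_i^2 + r_j^2 = 1 - \alpha^2(\lambda_i+\lambda_j) < 1$. For $r_i^2 \geq 0$ we need $\alpha^2 \lambda_i \leq \tfrac12$ for all $i$. Because $\sum 1/\lambda_i = 0$, at least one $\lambda_i$ is negative, and $\lambda_i+\lambda_j>0$ forces exactly one to be negative. Negative $\lambda_i$ impose no constraint, so choosing $\alpha$ small enough handles the positive ones. We also get $r_i^2 \le 1$: for the negative index, $r_i^2 = \tfrac12 + \alpha^2|\lambda_i|$, which is at most $1$ once $\alpha$ is small.

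To make every $r_i > 0$, which we want for maximality, we take $\alpha^2 < 1/(2\max_i \lambda_i)$ strictly. A nondegenerate orthocentric system has all $\lambda_i \neq 0$.

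\textbf{Construction.} Set $c_i' = \alpha c_i$ and the radii as above. Take $m \geq n + \#\{i\} = n + (n+2)$, and choose pairwise orthogonal subspaces $V_i$, each orthogonal to $\tilde V = \Span{c'_j - c'_k}$ (which has dimension $n$), with $\dim V_i \geq 1$ and the dimensions filling $\R^m$ exactly. Corollary \ref{cor:feasibility-of-chosing-v} permits this. Theorem \ref{thm:radius-recipe} then yields $\cY \in \ps(n+2,\R^m)$ with centers $\alpha c_i$, which do not coincide. The statement's "$n+1$" should read $n+2$ classes, or equivalently the indexing is off by one.

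\textbf{Maximality (main obstacle).} Suppose $\cY \subset \cY'$ with $\cY' \in \ps$. We must show each class cannot grow. Since each $Y_i$ is a full sphere $S_{r_i}(c_i)\cap A_i$ with $\dim A_i \geq 1$, the center and radius of $Y_i'$ are determined by $Y_i$ being nonempty. Concretely, by Theorem \ref{thm:projection-theorem} point 3 the center $c_i'$ is the projection of any $y_j$ onto $A_i'$, and the distance constraints to the full spheres $Y_j$ force $A_i' \perp A_j$ and $A_i' \perp \tilde V$.

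Because the $A_j$ together with $\tilde V$ fill $\R^m$, we get $A_i' \subseteq A_i$. Then any point of $Y_i'$ at distance $1$ from all of $Y_j$ lies on $S_{r_i}(c_i) \cap A_i = Y_i$.

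The delicate point is showing that $A_i'$ cannot tilt into $\tilde V$. Orthogonality of $A_i'$ to $\tilde V$ uses Corollary \ref{cor:properties-of-class-embeddings} applied to $\cY'$, together with the centers of $\cY'$ agreeing with those of $\cY$ for $j \neq i$. I would verify this by the projection characterization of centers. I expect this step, which is essentially the converse of Corollary \ref{cor:dim-of-tilde-A}, to be the main difficulty.
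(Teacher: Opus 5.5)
Your construction matches the paper's: the weights $\lambda_i$ from Proposition \ref{prop:equiv-conditions-for-orthocentricity}, the ansatz $r_i^2=\frac12-\alpha^2\lambda_i$, and then Corollary \ref{cor:feasibility-of-chosing-v} and Theorem \ref{thm:radius-recipe}, with the $V_i$ chosen so that together with $\tilde V$ they fill $\R^m$. Your handling of the single negative $\lambda_i$ and of $r_i\le 1$ is more careful than the paper's; its bound $\alpha^2\le\inf_i 1/(2\lambda_i)$ is negative as written. You are also right that there are $n+2$ classes: the paper's own proof produces an element of $\ps(n+2,\R^m)$.

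\textbf{The gap is maximality.} You explicitly leave this step open, and two sentences in that part are not right as stated.
\begin{itemize}
\item The center and radius of an enlarged class $Y_i'\supsetneq Y_i$ are \emph{not} determined by $Y_i$. Every point of $c_i+V_i^\perp$ is equidistant from $Y_i$, so a priori $A_i'\supsetneq A_i$ and $c_i'\neq c_i$. This is exactly what has to be excluded.
\item The centers of $\cY'$ need not a priori agree with those of $\cY$ when several classes grow at once.
\end{itemize}
The paper only asserts maximality from the dimension count, citing Corollary \ref{cor:dim-of-tilde-A} and Proposition \ref{prop:centers-are-orthocentric-systems}. Both are consequences of maximality, not criteria for it. So you are right that an argument is needed, but you do not give one.

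Two ingredients close it.
\begin{enumerate}
\item Reduce to one class. If $\cY\subsetneq\cY'$, then some $Y_i'\supsetneq Y_i$. Replacing only $Y_i$ by $Y_i'$ still gives an equidistant spacing, and in it $c_j'=c_j$ for $j\neq i$, because a center depends only on its own class.
\item By Definition \ref{def:orthocentric-simplex}, deleting $c_i$ from the orthocentric system leaves a non-degenerate $n$-simplex. Hence the direction space of $\aff(c_j : j\neq i)$ is all of $\tilde V$.
\end{enumerate}
With these, Proposition \ref{prop:tri-part-ortho} applied to the modified spacing gives $A_i'\perp\tilde V$, which is your missing ``no tilting'' step. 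Lemma \ref{lem:ortho-theorem} gives $A_i'\perp A_j$ for $j\neq i$, the filling then forces $A_i'=A_i$, and your last step finishes.

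You can also argue directly, writing $c_j$ for the scaled centers $\alpha c_j$. Suppose $\norm{y-y_j}=1$ for all $y_j\in Y_j$, $j\neq i$. Testing the points $c_j\pm r_ju\in Y_j$ (with $u\in V_j$ a unit vector and $r_j>0$) gives $y-c_j\perp V_j$ and $\norm{y-c_j}^2=1-r_j^2$. Since $\R^m=\tilde V\oplus\bigoplus_k V_k$, this forces $y=c_i+t+v_i$ with $t\in\tilde V$ and $v_i\in V_i$. Using Eqn. \ref{eqn:center-compatability-condition},
\[
\norm{t}^2+2\innerprod{t}{c_i-c_j}=r_i^2-\norm{v_i}^2\quad\text{for every } j\neq i .
\]
So $\innerprod{t}{c_j-c_k}=0$ for all $j,k\neq i$. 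The second ingredient then gives $t=0$, hence $\norm{v_i}=r_i$ and $y\in Y_i$.
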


\begin{proof}
    Let $c_0,\dots,c_{n+1}$ be given. Then, by Eqn. \ref{eqn:carycentric-coordinates-orthocentric-system} there are $\lambda_0,\dots,\lambda_{n+1}$ so that $\norm{c_i - c_j}^2 = \lambda_i +\lambda_j$ for each $i \neq j$. If we choose
    \begin{align}
        \alpha^2 \leq \min( \inf _{i}\frac1{2\lambda_i}, \inf_{i,j}\norm{c_i-c_j})
    \end{align}
    then $\norm{\alpha c_i - \alpha c_j} \leq 1$ and $\frac12 - \alpha^2\lambda_i \geq 0$. Then $\norm{\alpha c_i - \alpha c_j}^2 = \alpha^2\lambda_i +\alpha^2\lambda_j$. If we define $r_i \coloneqq \sqrt{\frac12 - \alpha^2\lambda_i}$, then we see that $\norm{\alpha c_i - \alpha c_j}^2 = 1 - r_i^2 - r_j^2$. Thus, this choice of $\alpha c_0,\dots,\alpha c_{n+1}$ and $r_{0},\dots,r_{n+1}$ satisfy Eqn. \ref{eqn:center-compatability-condition}, $r_i^2 + r_j^2 \leq 1$.

    By Corollary \ref{cor:dim-of-tilde-A}, $\dim \aff{\alpha c_0 , \dots , \alpha c_{n+1}}$. Hence, by Corollary \ref{cor:feasibility-of-chosing-v} if $m \geq n + \#\set{r_i > 0}^{n+1}_{i=0}$ then there is a choice of affine space $A_i$ so that $\cY \in \ps(n+2,\R^m)$ where
    \begin{align}
        Y_i \coloneqq S^{m-1}_{c_i}(r_i) \cap A_i.
    \end{align}
    By choosing the $A_i$ so that $\sum _{i = 0}^{n+1} \dim A_i = m - n$, then $\cY$ is maximal by Corollary \ref{cor:dim-of-tilde-A} and Proposition \ref{prop:centers-are-orthocentric-systems}.
\end{proof} 

Propositions \ref{prop:equiv-conditions-for-orthocentricity} and \ref{prop:centers-are-orthocentric-systems} implies that the centers of a $\cY \in \mps_{\neq}$ are not geometrically homogeneous. For one of the centers, $c_i$, the corresponding $\lambda_i < 0$. Geometrically, this means that $c_i$ lies in the convex hull of the other centers. We call this distinguished center the inner center. The inner center becomes very important later.

\begin{definition}[Inner Class]
    \label{def:inner-class}
    Let $\cY \in \ps$. We say $i$ is an \emph{inner class} of $\cY$ if $c_i \in \mathrm{conv}(c_1,\dots,c_{i-1},c_{i+1},\dots,c_I)$.

    A \emph{non-inner class} is any class that is not an inner class.
\end{definition}
\begin{lemma}[Existence of Inner Center]
    \label{lem:existence-of-inner-center}
    When $I > 1$, each $\cY \in \mps(I)$ has an inner class. If $\cY \in \mps_{\neq}(I)$, then the inner class is unique. %
\end{lemma}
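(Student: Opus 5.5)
We split into the two cases of Remark \ref{rmk:the-nature-of-equidistant-spacings-whose-centers-coincide}.

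\textbf{Coinciding centers.} If $\cY \in \mps_=(I)$, then $c_1 = \dots = c_I$, so for any $i$ we have $c_i \in \mathrm{conv}(c_1,\dots,c_{i-1},c_{i+1},\dots,c_I) = \set{c_i}$. This uses $I > 1$, so that the complementary set is nonempty. Hence every class is inner, and existence is immediate. Uniqueness is not claimed here.

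\textbf{Non-coinciding centers.} Let $\cY \in \mps_{\neq}(I)$. By Proposition \ref{prop:centers-are-orthocentric-systems} the centers $c_1,\dots,c_I$ form an orthocentric system. By Corollary \ref{cor:leave-out-rank-and-max-spacing}, any $I-1$ of them are affinely independent and span $\aff(c_1,\dots,c_I)$, which has dimension $I-2$ by Corollary \ref{cor:dim-of-tilde-A}. So we have $I$ points in an $(I-2)$-dimensional affine space in general position. Radon's theorem, in its sharp form for $d+2$ points in general position in $\R^d$, gives a unique (up to sign) affine dependence $\sum_i \mu_i c_i = 0$ with $\sum_i \mu_i = 0$, and all $\mu_i \neq 0$ by general position.

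We then identify the signs using the barycentric characterization (Proposition \ref{prop:equiv-conditions-for-orthocentricity}, point 2). From $\norm{c_i-c_j}^2 = \lambda_i+\lambda_j$ one checks that $\mu_i = 1/\lambda_i$ is an affine dependence. Indeed, writing $c_i$ relative to a base point, the Gram relations together with $\sum_i 1/\lambda_i = 0$ force $\sum_i c_i/\lambda_i = 0$; equivalently, this is the classical fact that the orthocenter has barycentric weights $1/\lambda_i$. Since $\lambda_i + \lambda_j > 0$ for all $i \neq j$, at most one $\lambda_i$ is negative. Since $\sum_i 1/\lambda_i = 0$, at least one is negative. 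So exactly one index $i_0$ has $\lambda_{i_0} < 0$.

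Rearranging the dependence gives
\[
c_{i_0} = \sum_{j\neq i_0} \frac{1/\lambda_j}{-1/\lambda_{i_0}}\, c_j ,
\]
whose coefficients are positive and sum to $1$. Hence $c_{i_0}$ lies in the convex hull of the others, and $i_0$ is an inner class.

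For uniqueness, suppose $c_k \in \mathrm{conv}$ of the others. This yields an affine dependence in which $\mu_k$ has one sign and all other coefficients are $\geq 0$. By uniqueness of the dependence up to scaling, it must be proportional to $(1/\lambda_i)$. Exactly one coefficient of that vector has the opposite sign from the rest, which forces $k = i_0$.

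\textbf{Main obstacle.} The key step is verifying that $(1/\lambda_i)$ is genuinely the affine dependence of an orthocentric system. I would prove this directly: set $x_i = c_i - c_{i_0}$ and use the polarization identity $\innerprod{x_i}{x_j} = \lambda_{i_0}$ for $i \neq j$ (both not $i_0$), together with $\norm{x_i}^2 = \lambda_i + \lambda_{i_0}$. Then compute the inner product of $\sum_i x_i/\lambda_i$ with each $x_j$. Using $\sum_i 1/\lambda_i = 0$, each of these inner products vanishes, and since the $x_j$ span the space, the vector $\sum_i x_i/\lambda_i$ is zero. One also has to confirm that no $\lambda_i = 0$, which holds because the paper's characterization of orthocentric systems requires all $1/\lambda_i$ to be defined.
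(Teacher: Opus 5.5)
Your proof is correct, and it takes a genuinely different route from the paper's. The paper handles $\cY \in \mps_{\neq}$ by invoking the orthogonal-matrix characterization (point 3 of Proposition \ref{prop:equiv-conditions-for-orthocentricity}) to single out a center. It then simply asserts that it is ``geometrically clear'' that this center lies in the convex hull of the others and that it is unique.

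You instead use the $\lambda$-characterization (point 2). You verify from the Gram relations $\innerprod{x_i}{x_j} = \lambda_b$ and $\norm{x_i}^2 = \lambda_i + \lambda_b$ that $(1/\lambda_i)_i$ is an affine dependence. You read off from $\lambda_i + \lambda_j > 0$ and $\sum_i 1/\lambda_i = 0$ that exactly one $\lambda_{i_0}$ is negative. You then get uniqueness from Corollary \ref{cor:leave-out-rank-and-max-spacing}: the affine dependences of the centers form a one-dimensional space with all coordinates nonzero. So you actually supply the argument the paper leaves implicit.

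Your route also buys more. For $I \geq 3$ the $\lambda_i$ are determined by the pairwise distances, so Eqn. \ref{eqn:center-compatability-condition} forces $\lambda_i = \tfrac12 - r_i^2$. Hence your inner class is exactly the class with $r_i > \tfrac{\sqrt2}{2}$, which gives an honest proof of Corollary \ref{cor:characterization-of-inner-class}. With this identification you could even bypass the cited orthocentric-system characterization. Pair the one-dimensional dependence $\mu$ with each $x_j = c_j - c_b$ to get $\mu_j\lambda_j = \mu_b\lambda_b$ for all $j$. Together with $\lambda_i + \lambda_j = \norm{c_i - c_j}^2 > 0$, this shows at once that $\lambda_j \neq 0$, that $\mu$ is proportional to $(1/\lambda_j)_j$, and that $\sum_j 1/\lambda_j = 0$.

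A few small points to tidy up:
\begin{itemize}
\item What you use is not Radon's theorem but plain linear algebra: $I$ points of affine rank $I-2$ have a one-dimensional space of affine dependences. The dependence is unique up to a nonzero scalar, not just up to sign.
\item Your uniqueness step silently needs $I \geq 3$, so that a negative scalar would produce at least two negative coefficients. This holds because Proposition \ref{prop:any-ortho-proj-implies-non-maximality} forces $c_2 = c_1$ when $I = 2$, so $\mps_{\neq}(2)$ is empty.
\item Rather than reading $\lambda_i \neq 0$ off the form of the cited characterization, you can check it directly. If $\lambda_k = 0$, the $I-1$ nonzero vectors $c_j - c_k$ would be pairwise orthogonal in an $(I-2)$-dimensional space, which is impossible.
\end{itemize}
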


\begin{proof}

    The proof is obvious when $\cY \in \mps_{=}$ as the centers coincide.
    
    By Proposition \ref{prop:centers-are-orthocentric-systems}, the centers of $\cY \in \mps_{\neq}$ form an orthocentric system. By Proposition \ref{prop:equiv-conditions-for-orthocentricity} point 3, there is a center which, when regarded as the origin, the remaining columns form an orthogonal matrix. Call this center $c_i$. Geometrically, it is clear that $c_i$ is contained in the convex hull of the other classes. Further, this center is unique.

\end{proof}

\subsection{Gluing equidistant spacings together}
\label{sec:gluing-equidistant-spacings-together}

Given some number of equidistant spacings, it may be possible to embed them into some common space so that they form a larger equidistant spacing. In this section, we determine when this is possible.

\begin{definition}[Glue Site, Gluing]
    \label{def:gluing}
    Let $\cY \in \ps$. If $\abs{\cY} \in \ps(1)$, then we define the \emph{glue site} as the center of  $\abs{\cY}$.
    
    Let $N \in \bbN$, and $\cY^{(1)} \in \ps(I^{(1)},\R^{n^{(1)}}), \dots, \cY^{(N)} \in \ps(I^{(N)},\R^{n^{(N)}})$. We say that $\cY^{(1)},\dots,\cY^{(N)}$ \emph{glue together in $\R^m$} or \emph{are glueable in $\R^m$} if there exist isometric embeddings $f^{(1)} \colon\R^{n^{(1)}} \to \R^m,\dots, f^{(N)}\colon \R^{n^{(N)}} \to \R^m$ such that 
    \begin{align}
        \bigcup_{n = 1}^N f^{(n)}(\cY^{(n)}) \in \ps(\qsum nN I^{(n)},\R^m).
    \end{align}
    We call the tuple $(f^{(1)},\dots,f^{(N)})$ a \emph{gluing of $\cY^{(1)},\dots,\cY^{(N)}$}, or simply a gluing when $\cY^{(1)},\dots,\cY^{(N)}$ are clear from context.

    We say that $\cY^{(1)},\dots,\cY^{(N)}$ \emph{may be glued together} or \emph{are glueable} if there is some $m$ and $(f^{(1)},\dots,f^{(N)})$ so that $\cY^{(1)},\dots,\cY^{(N)}$ glue together in $\R^m$.

    When it exists, we denote the gluing of $\cY^{(1)},\dots,\cY^{(N)}$ by $\cY^{(1)}\vee \dots\vee\cY^{(N)}$.
\end{definition}
As noted in Remark \ref{rmk:nesc-for-j-geq-2-in-the-recoloring-trick}, $\cY \in \ps$ does not imply that $\abs{\cY} \in \ps$. Hence, not all equidistant spacings have glue sites. Glue sites are necessary to have any gluings, as the following proposition shows.
\begin{remark}[Interpretation of Centers and Glue Sites]
    \label{rmk:interpretation-of-centers-and-glue-sites}
    We may combine the idea of gluings and the recoloring trick in the following way. If equidistant spacings $\cY^{(1)},\dots,\cY^{(N)}$ glue together in $\R^m$, then they define a $\cY' \in \ps(N,\R^m)$ with classes
    \begin{align}
        Y_i \coloneqq \abs{\cY^{(i)}}.
    \end{align}
    That is, gluing equidistant spacings together `looks like' forming equidistant spacings, just `one level up.' From this perspective, we can see that the glue sites of $\cY^{(i)}$ are just the centers of $\abs{\cY^{(i)}}$, viewed as a single class in $\cY'$. 
\end{remark}

\begin{figure}
    \centering
    \begin{subfigure}{.32\linewidth}
        \centering
        \includegraphics[width=1.\linewidth]{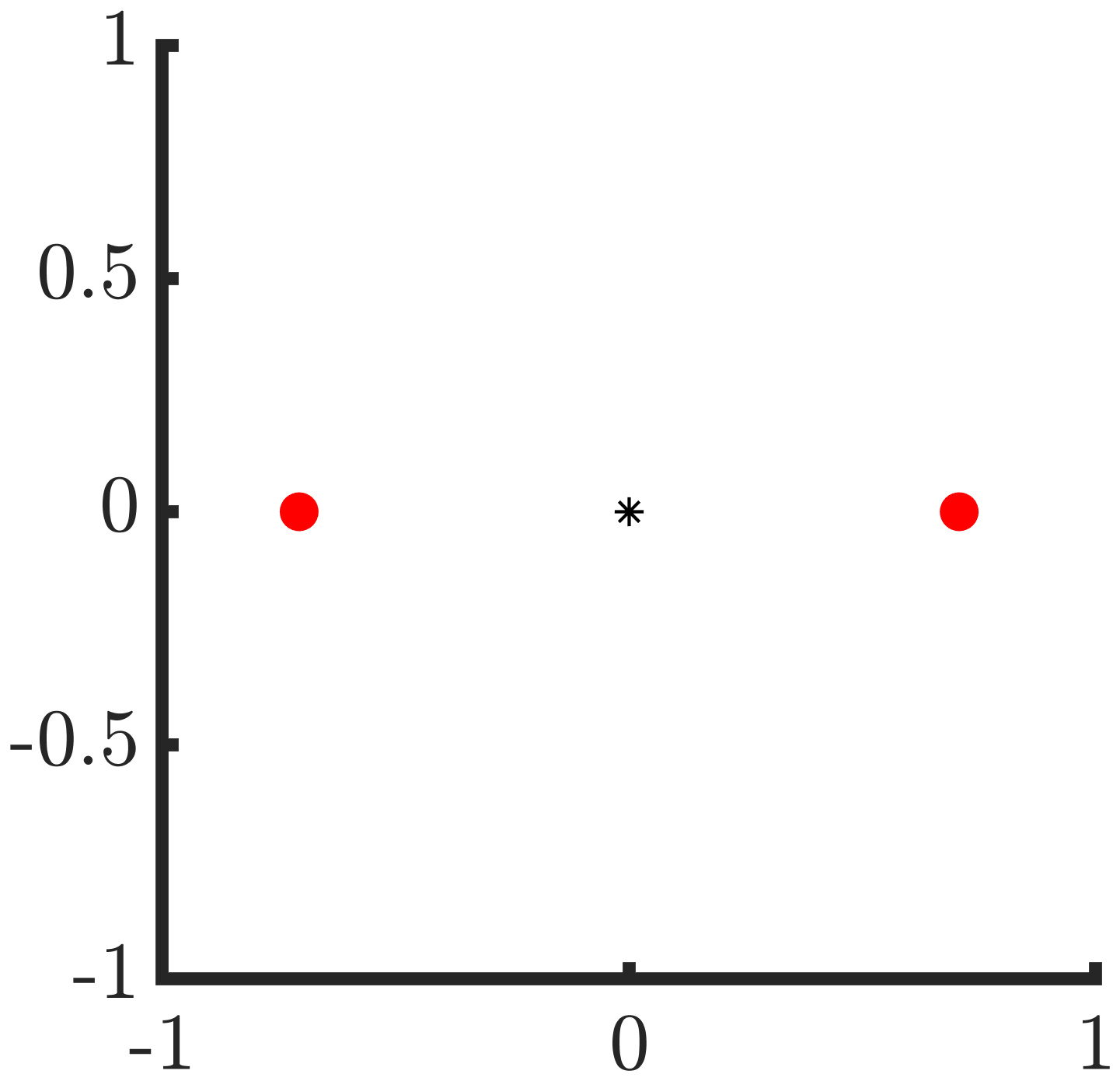}
        \subcaption{$\cY^{(1)}$}
        \label{fig:examp:gluings:Y1:2d}
    \end{subfigure}
    \begin{subfigure}{.32\linewidth}
        \centering
        \includegraphics[width=1.\linewidth]{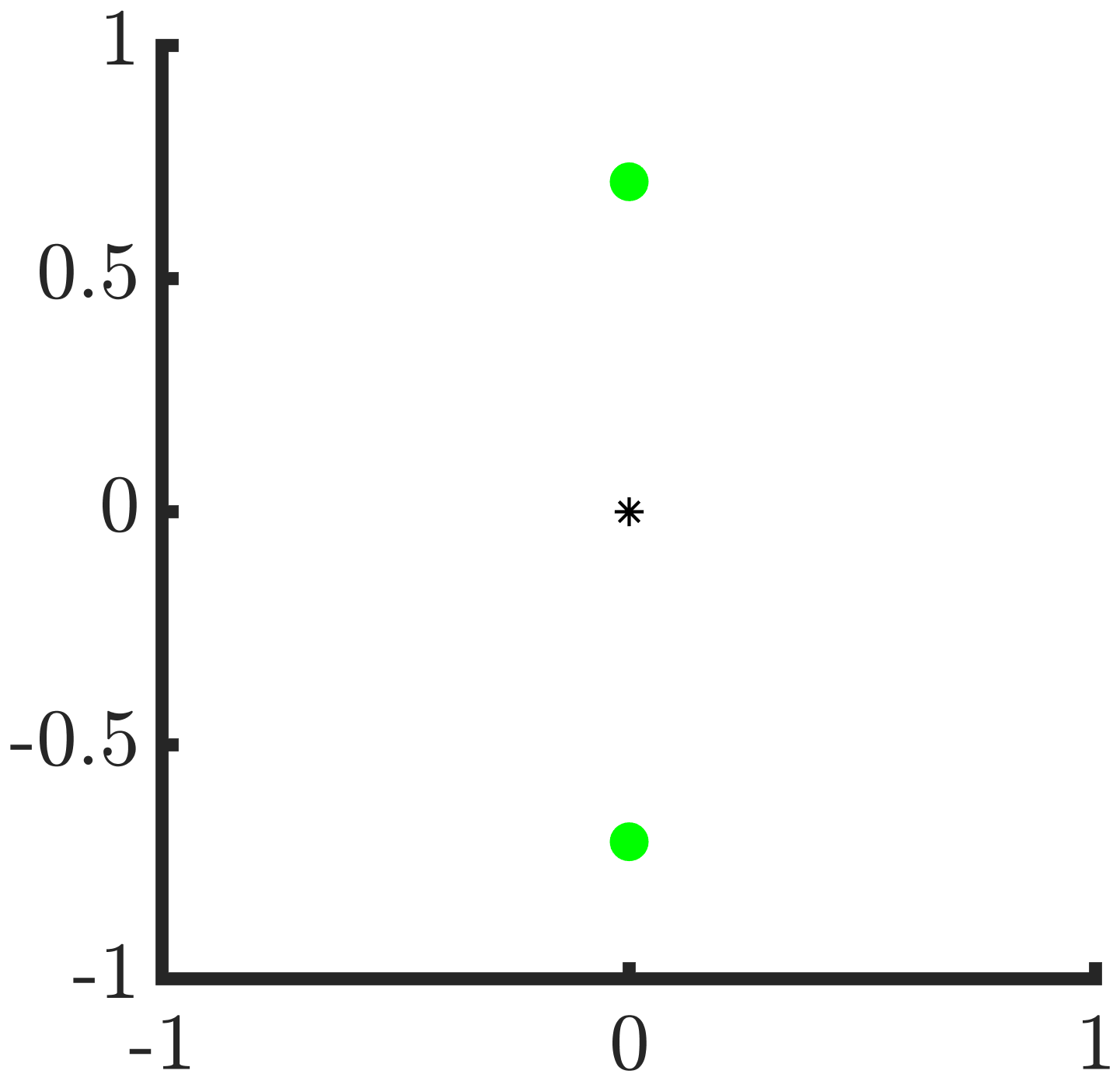}
        \subcaption{$\cY^{(2)}$}
        \label{fig:examp:gluings:Y2:2d}
    \end{subfigure}
    \begin{subfigure}{.32\linewidth}
        \centering
        \includegraphics[width=1.\linewidth]{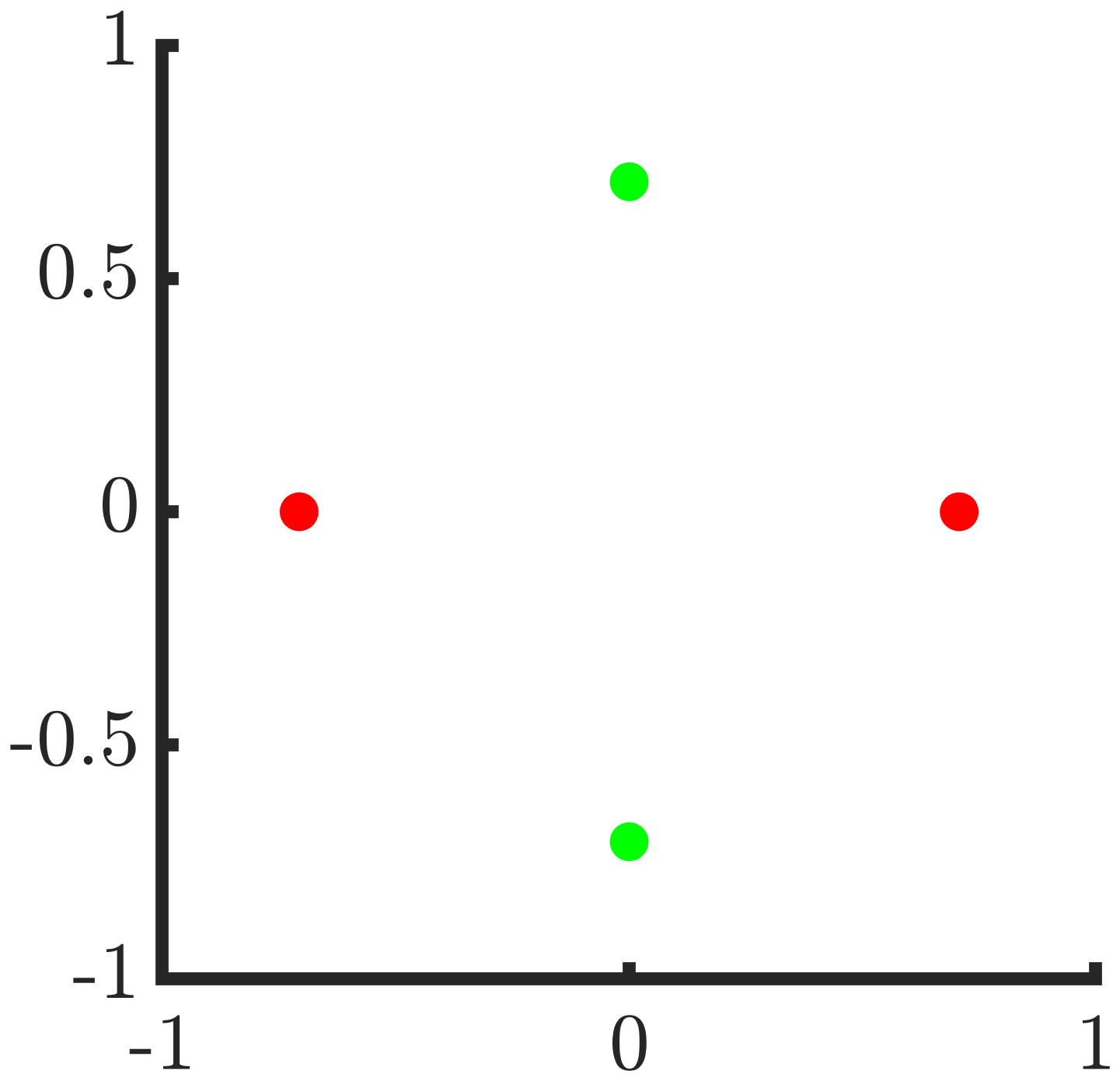}
        \subcaption{$\cY^{(1)}\vee \cY^{(2)}$}
        \label{fig:examp:gluings:Y1-vee-Y2:2d}
    \end{subfigure}\\
    \begin{subfigure}{.32\linewidth}
        \centering
        \includegraphics[width=1.\linewidth]{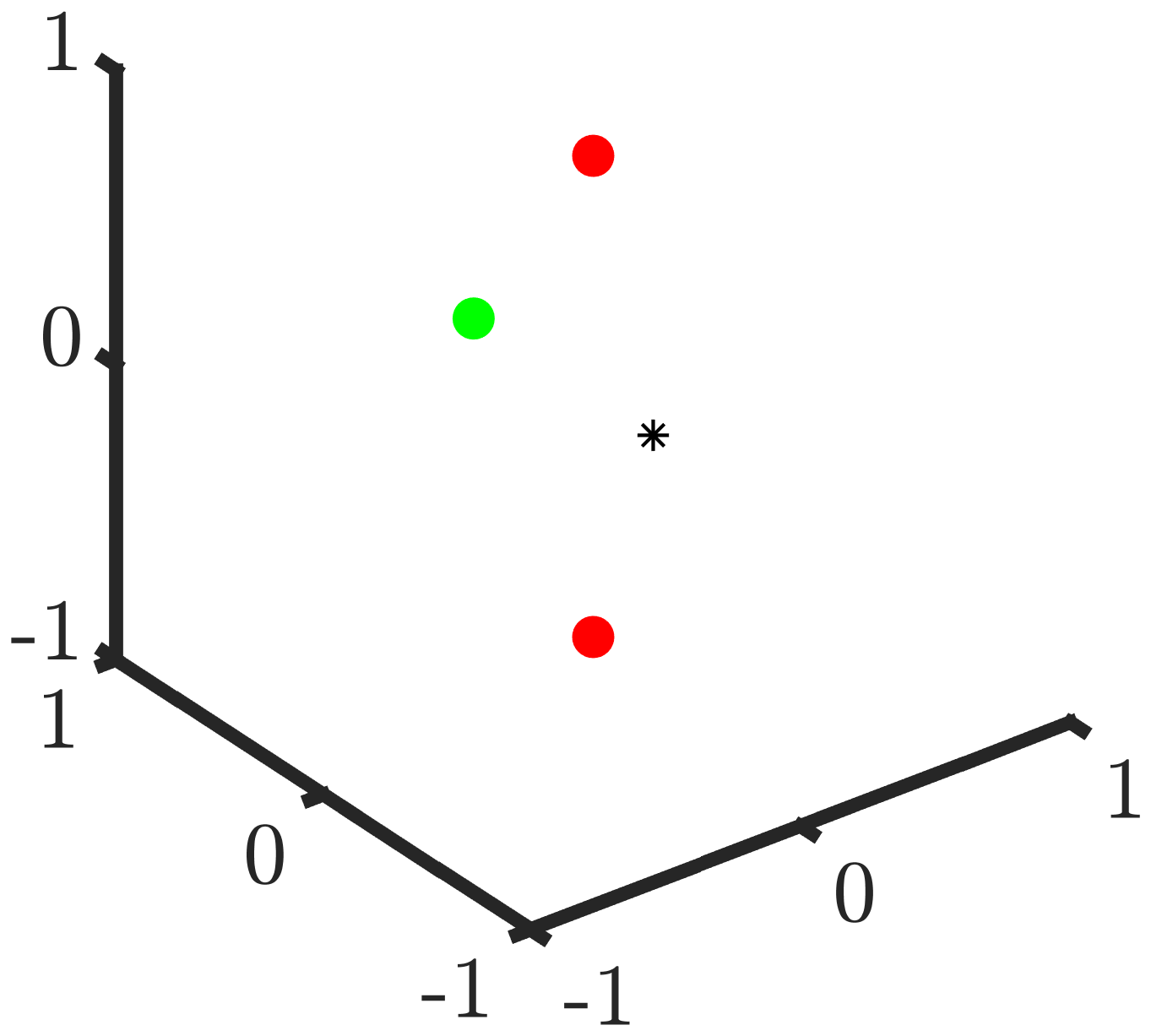}
        \subcaption{$\cY^{(3)}$}
        \label{fig:examp:gluings:Y3:3d}
    \end{subfigure}
    \begin{subfigure}{.32\linewidth}
        \centering
        \includegraphics[width=1.\linewidth]{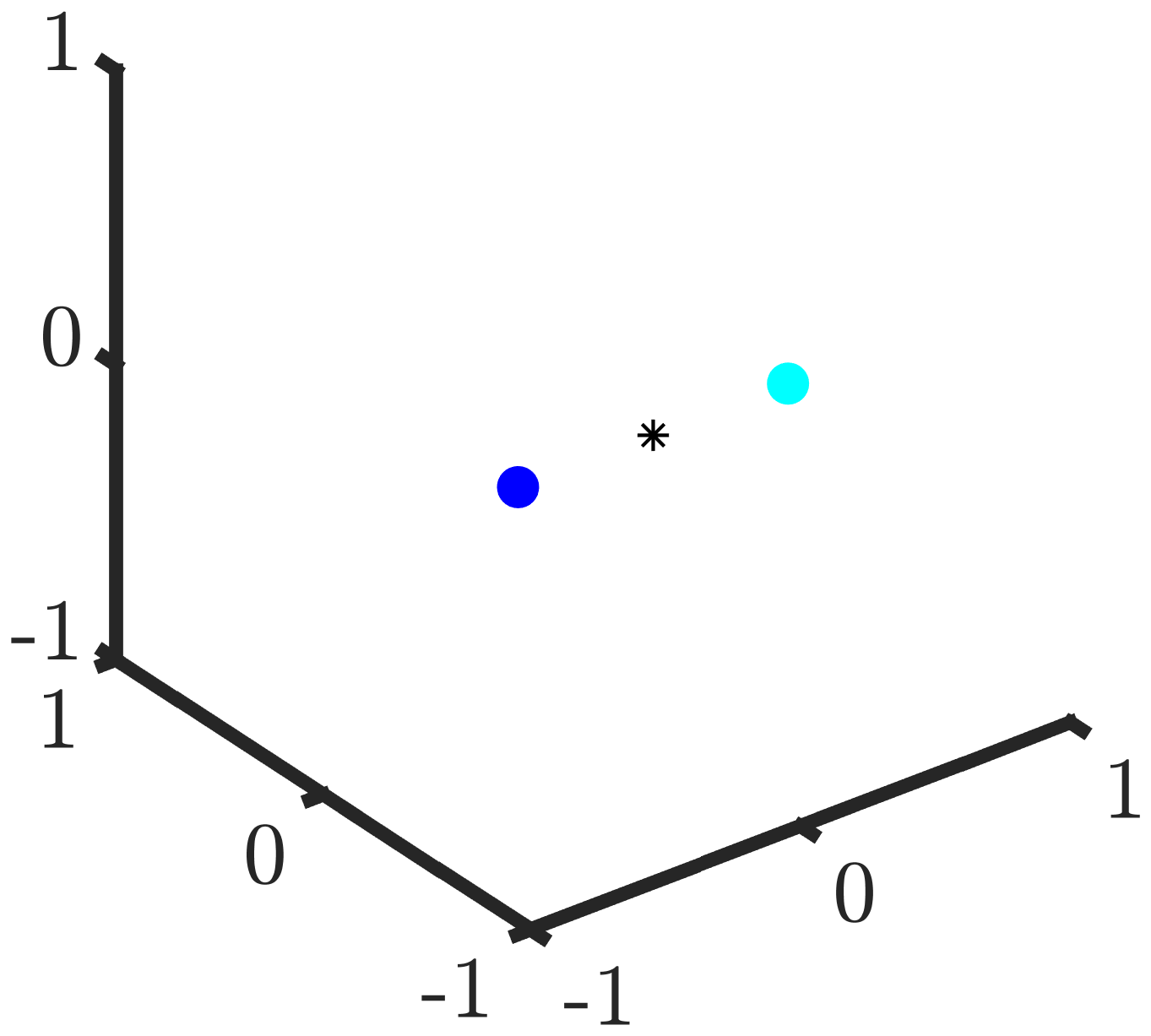}
        \subcaption{$\cY^{(4)}$}
        \label{fig:examp:gluings:Y4:3d}
    \end{subfigure}
    \begin{subfigure}{.32\linewidth}
        \centering
        \includegraphics[width=1.\linewidth]{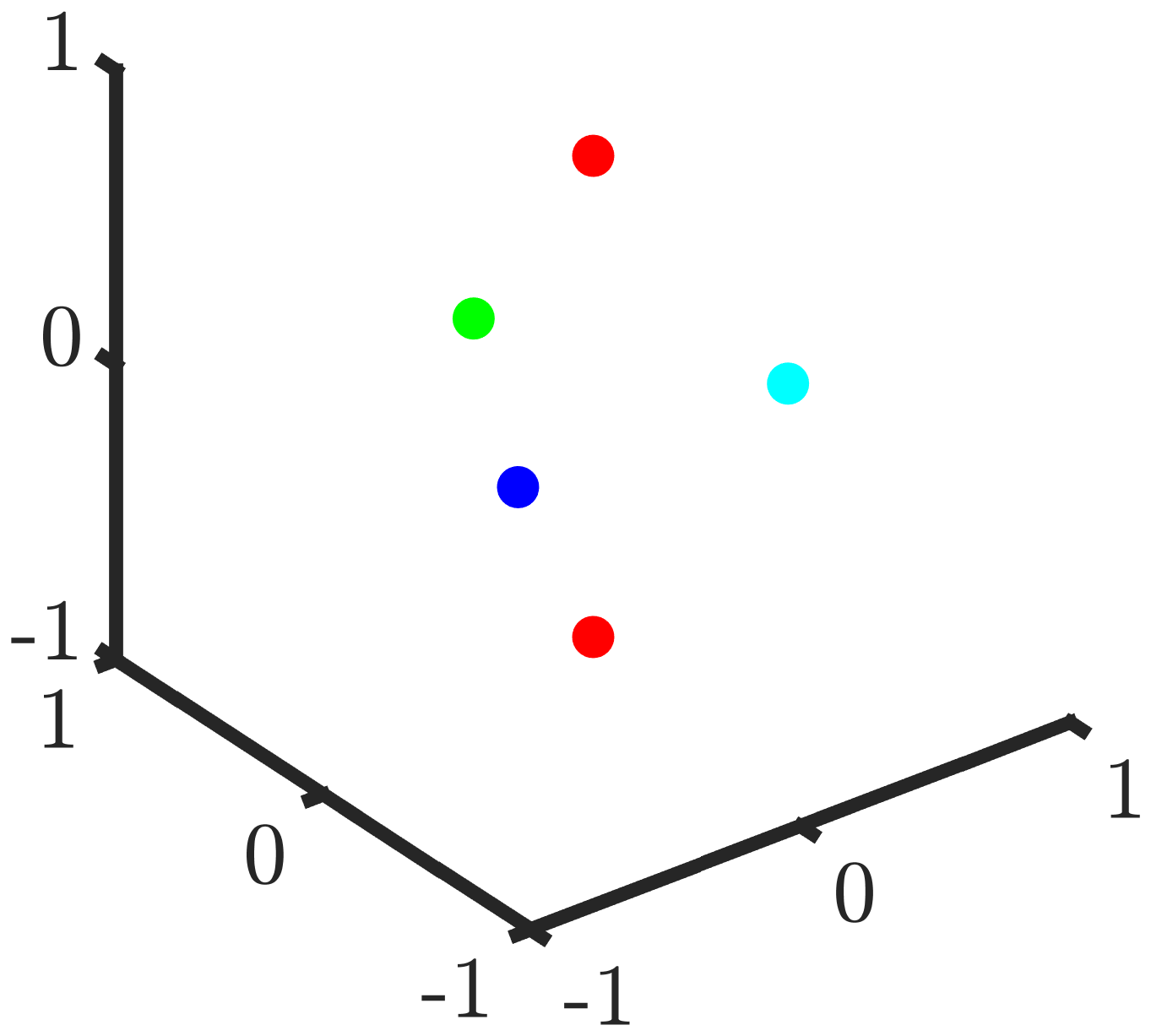}
        \subcaption{$\cY^{(3)}\vee \cY^{(4)}$}
        \label{fig:examp:gluings:Y3-vee-Y4:3d}
    \end{subfigure}\\
    \caption{Examples of gluings with glue sites shown with a black $*$. Subfigures \ref{fig:examp:gluings:Y1:2d} - \ref{fig:examp:gluings:Y1-vee-Y2:2d} show gluings in $\R^2$, and subfigures \ref{fig:examp:gluings:Y3:3d} - \ref{fig:examp:gluings:Y3-vee-Y4:3d} show gluings in three dimensions.}
    \label{fig:examp:gluings}
\end{figure}

\begin{example}[Example of Gluings]
    \label{examp:gluings}

    An example of a gluing is given by $\cY^{(1)},\cY^{(2)} \in \ps(1,\R^2)$ where
    \begin{align}
        Y^{(1)}_1 \coloneqq \set{\paren{\frac{\sqrt 2}2,0},{\paren{\frac{-\sqrt 2}2,0}}} \quad Y^{(2)}_1 \coloneqq \set{\paren{0,\frac{\sqrt 2}2},{\paren{0,\frac{-\sqrt 2}2}}}.
    \end{align}
    These two spacings glue together to form $\cY^{(1)} \vee \cY^{(2)}$ where $f^{(1)} = f^{(2)} = I$ with glue sites at the origin.
    $\cY^{(1)}$ and $\cY^{(2)}$ is illustrated by the red and green points respectively in Subfigures \ref{fig:examp:gluings:Y1:2d} and\ref{fig:examp:gluings:Y1:2d}. The glue sites are shown with black $*$. The gluing $\cY^{(1)} \vee \cY^{(2)}$ is illustrated in \ref{fig:examp:gluings:Y1-vee-Y2:2d}.

    Another example of gluings is given by $\cY^{(3)},\cY^{(4)} \in \ps(2,\R^3)$ where
    \begin{align}
        &Y^{(3)}_1 \coloneqq \set{\paren{0,0,\frac{\sqrt 6}3},\paren{0,0,\frac{-\sqrt 6}3}},\quad &Y^{(3)}_2 \coloneqq\set{{\paren{0,\frac{\sqrt 3}3,0}}}\nonumber\\ &Y^{(4)}_1 \coloneqq \set{\paren{\frac{-1}2,\frac{-1}{\sqrt{12}},0}},\quad &Y^{(4)}_2 \coloneqq\set{{\paren{\frac{1}2,\frac{-1}{\sqrt{12}},0}}}.
    \end{align}
    These two spacings glue together to form $\cY^{(3)} \vee \cY^{(4)}$ where $f^{(3)} = f^{(4)} = I$ with glue sites at the origin.
    $\cY^{(3)}$ and $\cY^{(4)}$ is illustrated by the red and green points respectively in Subfigures \ref{fig:examp:gluings:Y3:3d} and\ref{fig:examp:gluings:Y4:3d}. The glue sites are shown with black $*$. The gluing $\cY^{(3)} \vee \cY^{(4)}$ is illustrated in \ref{fig:examp:gluings:Y3-vee-Y4:3d}

    An example of equidistant spacings that have glue sites but fail to glue is given by $\cY^{(5)},\cY^{(6)}\in\ps(1,\R^2)$ where
    \begin{align}
        Y^{(5)}_1 \coloneqq \set{\paren{1,0},{\paren{-1,0}}} \quad Y^{(6)}_1 \coloneqq \set{\paren{0,1},{\paren{0,-1}}}.
    \end{align}
    Both $\cY^{(5)}$ and $\cY^{(6)}$ have glue sites $c^{*(5)} = c^{*(6)} = \bszero$. Yet, any candidate gluing $f^{(5)},f^{(6)}$ would satisfy $\norm{f^{(5)}(y_5) - f^{(6)}(y_6)} = 1$ but
    \begin{align*}
        \norm{f^{(5)}(y_5) - f^{(6)}(y_6)}^2 &= \norm{f^{(5)}(y_5) - f^{(5)}(c^{*(5)})}^2 + \norm{f^{(5)}(c^{*(5)}) - f^{(6)}(c^{*(6)})}^2 + \norm{f^{(6)}(c^{*(6)}) - f^{(6)}(y_6)}^2\\
        &> 2.
    \end{align*}
    So $\cY^{(5)}$ and $\cY^{(6)}$ do not glue.
\end{example}

Remark \ref{examp:gluings} shows that for $\cY^{(1)},\dots,\cY^{(N)}$ to be glueable in $\R^m$, each of the $\cY^{k}$ must have a glue site, and the $m$ must be large enough, and each of the spacings must not be `too large.' To make precise this notion of `largeness' we introduce the concept of extent.

\begin{definition}[Extent]
    \label{def:extent}
    For each $I,n \in \bbN$, we define the \emph{extent}%
    , denoted by $\extent \colon \ps(I,\R^n) \to \R \cup \set{\infty}$ %
    as
    \begin{align}
        \label{eqn:extent-def}
        \extent \cY \coloneqq \twopartpiecewise{\infty}{ \text{ $\cY$ has no glue site}}{\norm{y - c^*}}{\text{where $c^*$ is the glue site of $\cY$ and }y \in \abs\cY}%
    \end{align}
    If the set in the r.h.s. of Eqn. \ref{eqn:extent-def} is empty, then we define $\extent \cY = \infty$.
    
\end{definition}
The concept of extent is so called because it measures both the size of a equidistant spacings, and also the degree to which it is glueable as the following proposition shows.
\begin{proposition}[Elementary Properties of Extent]
    \label{prop:elementary-properties-of-extent}
    Let $\cY \in \ps(I,\R^n)$. Then there is a $\cY' \in \ps$ so that $\cY$ and $\cY'$ glue if and only if $\extent \cY \leq 1$.
\end{proposition}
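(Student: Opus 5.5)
The plan is to prove both directions by viewing a gluing "one level up", as in Remark \ref{rmk:interpretation-of-centers-and-glue-sites}. There the whole of $\cY$ becomes a single class of a two-class equidistant spacing, and the extent becomes the radius of that class. The forward direction then follows from Theorem \ref{thm:projection-theorem}. The reverse direction is an explicit construction: glue on a one-point spacing placed directly "above" the glue site.

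\emph{($\Rightarrow$)} Suppose $\cY$ and some $\cY'$ glue in $\R^m$ via isometric embeddings $f, f'$. Then $\cY'' \coloneqq f(\cY)\cup f'(\cY') \in \ps(I+I',\R^m)$. Apply the recoloring trick (Lemma \ref{lem:recoloring-trick}) with the partition $p_1 = $ classes of $f(\cY)$ and $p_2 = $ classes of $f'(\cY')$. This gives a two-class spacing whose first class is $f(\abs{\cY})$.
\begin{enumerate}
\item By Theorem \ref{thm:projection-theorem} (point 2), the first class has a unique minimal center $c$ in $\aff f(\abs\cY)$ and a radius $r\in[0,1]$, with $f(\abs{\cY}) \subset S_r(c)$.
\item Pull this back through $f$. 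Since $f$ is an isometric embedding, it maps $\aff\abs\cY$ affinely and isometrically onto $\aff f(\abs\cY)$. Hence $f^{-1}(c)\in\aff\abs\cY$ is equidistant (at distance $r$) from every point of $\abs\cY$.
\item So $\abs\cY\in\ps(1)$, and $\cY$ has a glue site. By uniqueness of circumcenters within the affine hull, that glue site is $c^* = f^{-1}(c)$.
\end{enumerate}
Therefore $\extent\cY = \norm{y-c^*} = r \le 1$. In particular, if $\cY$ has no glue site, then $\extent\cY=\infty$ and no gluing can exist, which is consistent with the statement.

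\emph{($\Leftarrow$)} Suppose $\extent \cY = r \le 1$, with glue site $c^*$. Embed $\R^n\hookrightarrow\R^{n+1}$ by $f(x)=(x,0)$. Let $\cY'\in\ps(1,\R^{1})$ be the one-point spacing $\set{0}$, embedded by $f'(t) = (c^*, t+\sqrt{1-r^2})$; this is an isometric embedding of $\R^1$. For $y\in\abs\cY$, Pythagoras gives
\begin{align*}
\norm{f(y)-f'(0)}^2=\norm{y-c^*}^2+(1-r^2)=r^2+1-r^2=1 .
\end{align*}
Distances between distinct classes of $f(\cY)$ are preserved by the isometry, so they remain $1$. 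Hence $f(\cY)\cup f'(\cY')\in\ps(I+1,\R^{n+1})$, and $\cY,\cY'$ glue.

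The only step needing care is in the forward direction: checking that the center produced by Theorem \ref{thm:projection-theorem} for the recolored class coincides with the image of the glue site, so that the radius $r\le 1$ really equals $\extent\cY$. I would handle this through the uniqueness of the circumcenter inside the affine hull, together with the fact that isometric embeddings preserve affine hulls.
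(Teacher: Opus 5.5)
Your proof is correct and follows the paper's route. In the reverse direction you make the same construction of placing points at height $\sqrt{1-\extent\cY^2}$ directly above the glue site in an extra coordinate (the paper uses the two-point class $\set{(\bszero^n,\pm\sqrt{1-r^2})}$ where you use a single point). In the forward direction you likewise recolor the gluing into a two-class spacing with $f(\abs{\cY})$ as one class.

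The one difference is how you bound the radius: you get $r\le 1$ directly from Theorem~\ref{thm:projection-theorem}, since any point of the other class witnesses radius $1$. The paper instead uses the decomposition $1=\norm{y-c}^2+\norm{c-c'}^2+\norm{c'-y'}^2$ from Proposition~\ref{prop:tri-part-ortho}. Your version also proves explicitly that a glue site exists and equals the pulled-back center, which the paper simply takes from Remark~\ref{rmk:interpretation-of-centers-and-glue-sites}.
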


\begin{proof}
    Let $\cY \in \ps(I,\R^n)$ and $\extent \cY \leq 1$. $\cY$ has a glue site $c$. Then, define 
    \begin{align}
        \cY' \coloneqq \set{(\bszero^{n}, \pm\sqrt{1 - r^2})}
    \end{align}
    where $r \coloneqq \norm{y - c^*}$. Clearly, $\cY' \in \ps(1)$, and $\cY$ and $\cY'$ glue via the mapping $(x \mapsto (x - c,0), Id_{n+1})$. Hence, $\extent \cY \leq 1$ implies that there is a $\cY'$ so that $\cY$ and $\cY'$ glue.

    Suppose that $\cY$ and $\cY'$ glue. Let $y \in \abs{\cY}$, $y' \in \abs{\cY'}$, $c$ denote the glue site of $\cY$ and $c'$ denote the glue site of $\cY'$. Then, by Proposition \ref{prop:tri-part-ortho}, Remark \ref{rmk:interpretation-of-centers-and-glue-sites}, and pythagoreans theorem
    \begin{align}
        1 = \norm{y - y'}^2 = \norm{y - c}^2 + \norm{c - c'}^2 + \norm{c' - y'}^2 \geq \norm{y - c}^2 = \extent \cY^2.
    \end{align}
    Hence, $\extent \cY \leq 1$.

\end{proof}

The extent also has a concrete use. It gives simple conditions under which two equidistant spacings may be glued together, as shown in the following proposition.
\begin{proposition}[Extent Characterizes Gluablity]
    \label{prop:extent-characterizes-gluablity}
    Let $\cY \in \ps(I,\R^n)$ and $\cY' \in \ps(I',\R^{n'})$.
    
    If
    \begin{align}
        \extent \cY ^2 + \extent \cY'{}^2 \leq 1
    \end{align}
    then $\cY$ and $\cY'$ glue together in $\R^m$ where $m = n + n' + 1$. Further, if $\extent \cY ^2 + \extent \cY'{}^2 = 1$. then we may take $m = n + n'$.

    If $\cY$ and $\cY'$ glue together in $\R^m$, then 
    \begin{align}
        \extent \cY ^2 + \extent \cY'{}^2 \leq 1.
    \end{align}
    Further, if $n$ (resp. $n'$) are the minimal dimensions so that $\cY$ (resp. $\cY'$) embeds into $\R^n$ (resp. $\R^{n'}$) and $\cY$ and $\cY'$ glue together in $\R^{n + n'}$ then $\extent \cY ^2 + \extent \cY'{}^2 = 1$.
\end{proposition}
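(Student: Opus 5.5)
The plan is to treat the gluing of $\cY$ and $\cY'$ as a two-class equidistant spacing ``one level up'', as in Remark \ref{rmk:interpretation-of-centers-and-glue-sites}. Set $e \coloneqq \extent\cY$ and $e' \coloneqq \extent\cY'$. Both are finite in the first direction, since their squares sum to at most $1$, so $\cY$ and $\cY'$ have glue sites $c$ and $c'$.

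For the sufficiency direction I will build the gluing explicitly.
\begin{enumerate}
    \item Define $f\colon\R^n\to\R^m$ by $f(x) = (x - c, \bszero^{n'}, 0)$ and $f'\colon \R^{n'}\to\R^m$ by $f'(x') = (\bszero^{n}, x'-c', h)$, where $h \coloneqq \sqrt{1 - e^2 - e'^2}$.
    \item These are isometric embeddings into $\R^{n+n'+1}$. Moreover, $f(\abs\cY)$ lies in $\R^n\times\set{0}\times\set{0}$ and $f'(\abs{\cY'})$ lies in $\set{0}\times\R^{n'}\times\set{h}$.
    \item For $y\in\abs\cY$ and $y'\in\abs{\cY'}$, Pythagoras gives $\norm{f(y) - f'(y')}^2 = \norm{y-c}^2 + \norm{y'-c'}^2 + h^2 = e^2 + e'^2 + h^2 = 1$.
    \item Distances within $\cY$ and within $\cY'$ are preserved by isometry, so the union lies in $\ps(I+I',\R^m)$.
    \item If $e^2+e'^2 = 1$, then $h = 0$ and the last coordinate can be dropped, giving $m = n+n'$.
\end{enumerate}

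For the necessity direction I apply the recoloring trick (Lemma \ref{lem:recoloring-trick}) to the glued spacing with the partition $p_1 = $ classes of $\cY$ and $p_2 = $ classes of $\cY'$. This gives a two-class spacing whose classes are $f(\abs\cY)$ and $f'(\abs{\cY'})$. By Theorem \ref{thm:projection-theorem}, each class has a center and a radius. I then need to identify these with the images of the glue sites and with $e$ and $e'$. A point of $f(\abs\cY)$ is equidistant from the center $\tilde c$ of that class, and $\tilde c$ lies in $\aff f(\abs\cY)$. Uniqueness of the circumcenter in Theorem \ref{thm:projection-theorem}, point 2, then shows that $\tilde c = f(c)$ and that the radius equals $e$; the same holds for $\cY'$. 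Corollary \ref{cor:properties-of-class-embeddings} now yields $e^2 + e'^2 \le 1$, with $\norm{f(c) - f'(c')}^2 = 1 - e^2 - e'^2$.

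The equality case is the delicate step and the one I expect to be the main obstacle. Assume $n$ and $n'$ are minimal, so that $\dim\aff\abs\cY = n$ and $\dim\aff\abs{\cY'} = n'$ (minimal embedding dimension equals affine dimension). By Corollary \ref{cor:properties-of-class-embeddings} 2(b), the spaces $A_1 = \aff f(\abs\cY)$, $A_2 = \aff f'(\abs{\cY'})$ and the line $\aff(f(c), f'(c'))$ are pairwise orthogonal. Together they span an affine space of dimension $n + n' + \dim\aff(f(c),f'(c'))$. If this sits inside $\R^{n+n'}$, the last dimension must be $0$, so $f(c) = f'(c')$ and $e^2+e'^2 = 1$. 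The care needed is in making this dimension count rigorous: I must check that the direction spaces are mutually orthogonal, hence linearly independent, which is what Corollary \ref{cor:properties-of-class-embeddings} provides.
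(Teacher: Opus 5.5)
Your proposal is correct and follows the paper's proof essentially step for step. It uses the same explicit embeddings with the extra coordinate $\sqrt{1-e^2-e'^2}$, the same recoloring trick plus Pythagorean decomposition for necessity, and the same orthogonality/dimension count forcing the two glue sites to coincide in the minimal-dimension case. Your version of that count, a direct sum of pairwise orthogonal direction spaces inside $\R^{n+n'}$, is stated more cleanly than the paper's.

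The only point left implicit is that $\cY$ and $\cY'$ actually have glue sites in the necessity direction. This follows from Proposition~\ref{prop:elementary-properties-of-extent}, or directly from your circumcenter identification: pulling the recolored class center back by $f$ gives a point of $\aff\abs\cY$ equidistant from $\abs\cY$.
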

\begin{proof}
    First we prove that $\extent \cY ^2 + \extent \cY'{}^2\leq 1$ implies that $\cY$ and $\cY'$ are glueable. Both $\cY$ and $\cY'$ have glue sites, denoted $c^*$ and $c^*{}'$ respectively by Proposition \ref{prop:elementary-properties-of-extent} point 4.
    
    Define $f \colon \R^n \to \R^n \times \R^{n'} \times \R$ as
    \begin{align}
        f(x) \coloneqq (x - c^*, \bszero^{n'},0)
    \end{align}
    and $f' \colon \R^{n'} \to \R^n \times \R^{n'} \times \R$ be defined as
    \begin{align}
        f'(x) \coloneqq (\bszero^{n},x' - c'{}^*,\alpha)
    \end{align}
    where $\alpha \coloneqq \sqrt{1 - \extent \cY^2 - \extent \cY'^2}$. We claim that $f$ and $f'$ constitute a gluing of $\cY$ and $\cY'$ in $\R^{n+n'+1}$. Clearly, $\norm{y_i - y_j} = 1$ if $y_i$ and $y_j$ are both points in $\cY$ or $\cY'$. What's left to check is the case when $y_i \in \cY$ and $y_j' \in \cY'$. Then
    \begin{align}
        \norm{y_i - y_j'}^2 &= \norm{y_i - c^*}^2 + \alpha^2 + \norm{y_j' - c^*{}'}^2\\
        &= \extent \cY ^2 + (1 - \extent \cY^2 - \extent \cY'{}^2) + \extent \cY'{}^2\\
        &= 1.
    \end{align}
    Hence, $f(\cY) \cup f'(\cY') \in \ps(I + I',\R^{n + n' + 1})$. 

    If $\extent \cY^2 + \extent \cY'{}^2 = 1$, then $\alpha = 0$ and  $f(\cY)\cup f'(\cY') \subset \R^n \times \R^{n'} \times \set{0}$, so we may `trim' the extra dimension.

    Now suppose that $\cY$ and $\cY'$ glue together in $\R^m$. Via the recoloring trick of Lemma \ref{lem:recoloring-trick}, we may realize that the glue sites $c^*$ and $c'{}^*$ of $\cY$ and $\cY'$ are centers of an element of $\ps(2,\R^m)$. Hence, via Proposition \ref{prop:tri-part-ortho}, for all $y \in \abs\cY$ and $y' \in \abs{\cY'}$, we have that 
    \begin{align}
        \label{eqn:tri-part-orth-applied-to-glueibility}
        1 &= \norm{y - c^*}^2 + \norm{c^* - c'{}^*}^2 + \norm{c'{}^* - y'}^2
    \end{align}
    which implies that $\norm{y - c^*}^2 + \norm{c'{}^* - y'}^2 = \extent \cY^2 + \extent \cY'{}^2 \leq 1$.

    Suppose that $n$ and $n'$ are minimal. By the recoloring trick again, and Proposition \ref{prop:tri-part-ortho} we have that $\aff(\cY) \perp \aff(\cY')$, and that $c^* - c'{}^* \in \paren{\aff(\cY) \oplus \aff(\cY')}^\perp = \R^{n + n'}$ and so $c^* - c'{}^* = \bszero$. Therefore, Eqn. \ref{eqn:tri-part-orth-applied-to-glueibility} reads $1 = \norm{y - c^*}^2 + 0 + \norm{c'{}^* - y'}^2 = \extent \cY^2 + \extent \cY'{}^2$.
\end{proof}

\begin{corollary}[Gluing Spacings in the Same Space]
    \label{cor:gluing-spacings-in-the-same-space}
    Let $\cY \in \ps(I,\R^n)$, $\cY' \in \ps(I',\R^n)$. Then, $\cY$ and $\cY'$ glue together in $\R^n$ if and only if $\extent \cY^2 + \extent \cY'{}^2 \leq 1$ and 
    \begin{align}
        \dim \aff \abs{\cY} +\dim \aff \abs{\cY'} \leq \twopartpiecewise{n}{\extent \cY^2 + \extent \cY'{}^2 = 1}{n-1}{\extent \cY^2 + \extent \cY'{}^2 < 1}
    \end{align}
\end{corollary}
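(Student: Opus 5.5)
I plan to reduce both directions to Proposition \ref{prop:extent-characterizes-gluablity} and to the orthogonal geometry supplied by Proposition \ref{prop:tri-part-ortho} and the recoloring trick, Lemma \ref{lem:recoloring-trick}. Throughout, write $e \coloneqq \extent \cY$, $e' \coloneqq \extent \cY'$, $d \coloneqq \dim \aff\abs{\cY}$ and $d' \coloneqq \dim\aff\abs{\cY'}$. Let $c^*$ and $c'{}^*$ be the glue sites, whenever they exist.

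\emph{Necessity.} Suppose $f,f'$ glue $\cY,\cY'$ in $\R^n$. Proposition \ref{prop:extent-characterizes-gluablity} already gives $e^2+e'{}^2\le 1$. To get the dimension bound, recolor $f(\cY)\cup f'(\cY')$ into two classes, $\abs{f(\cY)}$ and $\abs{f'(\cY')}$, as in Remark \ref{rmk:interpretation-of-centers-and-glue-sites}. The centers of these two classes are $f(c^*)$ and $f'(c'{}^*)$, and their affine hulls are $f(\aff\abs\cY)$ and $f'(\aff\abs{\cY'})$.

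By Corollary \ref{cor:properties-of-class-embeddings} point 2(b), these two affine spaces and the line through the two centers are pairwise orthogonal. Hence the direction spaces give an orthogonal direct sum inside $\R^n$ of dimension $d+d'+\dim\Span{f(c^*)-f'(c'{}^*)}$. Equation \ref{eqn:tri-part-orth-applied-to-glueibility} gives $\norm{f(c^*)-f'(c'{}^*)}^2=1-e^2-e'{}^2$. If the sum of squared extents is strictly less than $1$, the centers are distinct and this extra dimension equals $1$. Therefore $d+d'\le n-1$ in that case and $d+d'\le n$ otherwise.

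\emph{Sufficiency.} I will mimic the construction in the proof of Proposition \ref{prop:extent-characterizes-gluablity}, but place it inside $\R^n$ rather than in $\R^{n+n'+1}$. Set $V\coloneqq \aff\abs\cY-c^*$ and $V'\coloneqq\aff\abs{\cY'}-c'{}^*$, which are linear subspaces of dimensions $d$ and $d'$. Since $e,e'<\infty$, both glue sites exist, and the glue site lies in the affine hull by Theorem \ref{thm:projection-theorem}.

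Choose mutually orthogonal subspaces $W,W'\subset\R^n$ with $\dim W=d$ and $\dim W'=d'$. In the strict case, also choose a unit vector $u\perp W\oplus W'$; this is possible precisely because $d+d'\le n-1$. Extend linear isometries $V\to W$ and $V'\to W'$ to rigid motions of $\R^n$, giving
\begin{align*}
f(x) \coloneqq Q(x-c^*), \qquad f'(x) \coloneqq Q'(x-c'{}^*)+\alpha u,
\end{align*}
where $\alpha=\sqrt{1-e^2-e'{}^2}$ and $\alpha u$ is taken to be $\bszero$ in the equality case. Distances within each spacing are preserved. For $y\in\abs\cY$ and $y'\in\abs{\cY'}$, the three vectors $Q(y-c^*)\in W$, $-\alpha u$ and $Q'(y'-c'{}^*)\in W'$ are pairwise orthogonal, so Pythagoras gives squared distance $e^2+\alpha^2+e'{}^2=1$.

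\emph{Main obstacle.} The real content is the justification, in the necessity direction, that the segment joining the two glue sites is orthogonal to both affine hulls. This is exactly Proposition \ref{prop:tri-part-ortho} applied to the recolored two-class spacing, so I must check that its centers are indeed the glue sites. That identification follows from the uniqueness in Theorem \ref{thm:projection-theorem} point 2, since each glue site lies in its affine hull and is equidistant from all of its points. A minor point to settle is the degenerate case $e=0$, where $\abs\cY$ is a single point; the same argument applies with $V=\set{\bszero}$.
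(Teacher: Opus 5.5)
Your proposal is correct and follows essentially the paper's route. For necessity, you recolor into two classes and count the pairwise orthogonal directions $y-c^*$, $c^*-c'{}^*$, $c'{}^*-y'$; since $\norm{c^*-c'{}^*}^2 = 1-\extent \cY^2-\extent \cY'{}^2$, the strict case forces one extra dimension. For sufficiency, the only difference is that the paper cites Proposition \ref{prop:extent-characterizes-gluablity} on minimal embeddings in $\R^{d}$ and $\R^{d'}$, while you carry out that same construction directly with rigid motions of $\R^n$. Your version is slightly cleaner, because the gluing maps are then genuinely defined on $\R^n$.
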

\begin{proof}
    For the reverse direction we may embed $\cY$  into $\R^{n_1}$ if and only if $n_1  \geq d \coloneqq \dim \aff \abs{\cY}$, and $\cY'$  into $\R^{n_2}$ if and only if $n_2  \geq d' \coloneqq \dim \aff \abs{\cY'}$. So, by Proposition \ref{prop:extent-characterizes-gluablity} applied to $\cY$'s embedding in $\R^{d}$ and $\cY'$'s embedding in $\R^{d'}$, $\cY$ and $\cY'$ are glueable in $\R^{d + d'}$ if $\extent \cY ^2 + \extent \cY'{}^2 = 1$ and are glueable in $\R^{d+d'+1}$ if $\extent \cY ^2 + \extent \cY'{}^2 < 1$.

    To prove the forward direction, it follows from observing that Eqn. \ref{eqn:tri-part-orth-applied-to-glueibility} holds for all glueable spacings (recall $y - c^*$, $c^* - c'{}^*$ and $c'{}^* - y'$ are all orthogonal). If $\norm{c^* - c'{}^*}^2 = 1 - \extent \cY^2 - \extent \cY'{}^2 = 0$, then $c^* - c'{}^* = \bszero$, and we may take $n \geq \dim \aff \abs{\cY} +\dim \aff \abs{\cY'}$. Otherwise, we need one extra orthogonal dimension for $c^* - c'{}^*$, so $n \geq \dim \aff \abs{\cY} +\dim \aff \abs{\cY'} + 1$.
\end{proof}

The following proposition gives a convenient expression for $\extent \cY \vee \cY'$ in terms of $\extent \cY$ and $\extent \cY'$.

\begin{proposition}[Extent of Glueable Equidistant Spacings]
    \label{prop:extent-of-glueable-equidistant-spacings}
    Let $\cY$ and $\cY'$ glue together. If $\extent \cY \vee \cY' < \infty$, then
    \begin{align}
        \label{eqn:extent-of-a-gluing}
        \extent \paren{\cY \vee \cY'} = \frac{\sqrt{1 - 4 \extent \cY ^2 \extent \cY'{}^2}}{2\sqrt{1 - \extent \cY^2 - \extent \cY'{}^2}}.
    \end{align}

\end{proposition}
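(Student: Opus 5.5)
The plan is to reduce everything to the two glue sites, using the ``one level up'' viewpoint of Remark~\ref{rmk:interpretation-of-centers-and-glue-sites}. Write $e\coloneqq\extent\cY$ and $e'\coloneqq\extent\cY'$, and identify $\cY,\cY'$ with their images under a gluing. Let $c^*,c'{}^*$ be their glue sites (these exist by Proposition~\ref{prop:elementary-properties-of-extent}) and set $d\coloneqq\norm{c^*-c'{}^*}$.

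By the recoloring trick (Lemma~\ref{lem:recoloring-trick}), $\abs{\cY}$ and $\abs{\cY'}$ are the two classes of an element of $\ps(2,\R^m)$, with centers $c^*,c'{}^*$ and radii $e,e'$. Corollary~\ref{cor:properties-of-class-embeddings} then gives two facts:
\begin{align*}
d^2=1-e^2-e'^2,
\end{align*}
and $\aff\abs{\cY}$, $\aff\abs{\cY'}$ and the line through $c^*,c'{}^*$ are pairwise orthogonal. I will assume $d>0$ throughout, since the right-hand side of Eqn.~\ref{eqn:extent-of-a-gluing} is otherwise undefined. When $d=0$, finiteness of the extent forces $e=e'$, so $e^2=e'^2=\tfrac12$ and the formula degenerates to $0/0$; I would note this case separately.

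Next I locate the glue site $c''$ of $\cY\vee\cY'$, which exists because the extent is finite. It lies in $\aff(\abs{\cY}\cup\abs{\cY'})$, and this set contains $c^*$ and $c'{}^*$ because each center lies in the affine hull of its own class. Decompose $c''-c^*$ into a component in the direction $\aff\abs{\cY}-c^*$, a component in the direction $\aff\abs{\cY'}-c'{}^*$, and a component along $c'{}^*-c^*$. The first two components must vanish: this is the argument of Theorem~\ref{thm:projection-theorem} point~3, since $c''$ is equidistant from all of $\abs{\cY}$ and the projection of $c''$ onto $\aff\abs{\cY}$ must therefore be the unique center $c^*$, and symmetrically for $\cY'$. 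Hence
\begin{align*}
c''=c^*+t\,(c'{}^*-c^*)\quad\text{for some } t\in\R.
\end{align*}
This step is the main obstacle: it needs the orthogonal-decomposition bookkeeping done carefully, and it needs the hull of the union to be exactly the orthogonal sum of $\aff\abs{\cY}$, $\aff\abs{\cY'}$ and the connecting line.

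With $c''$ on the line, Pythagoras gives, for $y\in\abs{\cY}$ and $y'\in\abs{\cY'}$,
\begin{align*}
\norm{y-c''}^2=e^2+t^2d^2,\qquad \norm{y'-c''}^2=e'^2+(1-t)^2d^2.
\end{align*}
Setting these equal gives $t=(d^2+e'^2-e^2)/(2d^2)=(1-2e^2)/(2d^2)$. Substituting back and using $d^2=1-e^2-e'^2$,
\begin{align*}
\extent(\cY\vee\cY')^2=\frac{4e^2(1-e^2-e'^2)+(1-2e^2)^2}{4d^2}=\frac{1-4e^2e'^2}{4(1-e^2-e'^2)}.
\end{align*}
Taking square roots yields Eqn.~\ref{eqn:extent-of-a-gluing}.
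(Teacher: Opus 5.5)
Your proof is correct and follows the same route as the paper. Both arguments place the glue site of $\cY\vee\cY'$ on the line through $c^*$ and $c'{}^*$, equate the two Pythagorean expressions to solve for the line parameter, and simplify using $\norm{c^*-c'{}^*}^2=1-\extent\cY^2-\extent\cY'{}^2$.

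Your version is more careful in three places. Your projection argument actually justifies the collinearity, where the paper just cites the recoloring trick. You treat the degenerate case $c^*=c'{}^*$ separately, where the right-hand side is $0/0$; the paper divides by $\norm{c^*-c'{}^*}^2$ without comment. You also let the parameter range over all of $\mathbb{R}$, whereas the paper asserts it lies in $[0,1]$, which can fail but is never used.
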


\begin{proof}
    Let us denote by $c^{\vee*}$ the glue site of $\cY \vee \cY'$. By orthogonality, we have that
    \begin{align}
        \label{eqn:glued-extent-calc:1}
        \paren{\extent \cY \vee \cY'}^2 = \norm{y - c^*}^2 + \norm{c^* - c^{\vee *}}^2 = \norm{y' - c'{}^*}^2 + \norm{c'{}^* - c^{\vee *}}^2
    \end{align}
    By the recoloring trick, $c^{\vee *}$ lies in the line connecting $c^*$ and $c'{}^*$. Hence there is a $\lambda \in [0,1]$ so that 
    \begin{align}
        \label{eqn:glued-extent-calc:1.5}
        c^{\vee*} = (1 - \lambda) c^* + \lambda c'{}^*.
    \end{align}
    By calculation, $\norm{c^* - c^{\vee *}} = \lambda \norm{c^* - c'{}^*}$ and $\norm{c'{}^* - c^{\vee *}} = (1 - \lambda) \norm{c^* - c'{}^*}$. Substituting this into Eqn. \ref{eqn:glued-extent-calc:1} shows that $\lambda$ satisfies
    \begin{align}
        \norm{y - c^*}^2 + \lambda^2\norm{c^* - c'{}^*}^2 &= \norm{y' - c'{}^*}^2 + (1 - \lambda)^2\norm{c^* - c'{}^*}^2\\
        \label{eqn:glued-extent-calc:2}
       \alpha + \lambda^2 \gamma &= \beta + (1 - \lambda)^2\gamma
    \end{align}
    where we have denoted $\alpha \coloneqq \norm{y - c^*}^2, \beta \coloneqq \norm{y' - c'{}^*}^2$, and $\gamma \coloneqq \norm{c^* - c'{}^*}^2$.

    Solving for $\lambda$ yields 
    \begin{align}
        \label{eqn:glued-extent-calc:2.5}
        \lambda = \frac{\beta - \alpha + \gamma}{2\gamma}.
    \end{align}

    Now we show Eqn. \ref{eqn:extent-of-a-gluing}. Combining Eqns. \ref{eqn:glued-extent-calc:1}, \ref{eqn:glued-extent-calc:2} and \ref{eqn:glued-extent-calc:2.5} yields
    \begin{align}
        \label{eqn:glued-extent-calc:3}
        \paren{\extent \cY \vee \cY'}^2 = \alpha + \lambda ^2 \gamma = \alpha + \frac{\paren{\beta - \alpha + \gamma}^2}{4\gamma}.
    \end{align}
    Observing that $1 = \alpha + \gamma + \beta$ and simplifying Eqn. \ref{eqn:glued-extent-calc:3} with this identity, we have that %
    \begin{align}
        \paren{\extent \cY \vee \cY'}^2 &= \frac{4\alpha(1 - \alpha - \beta) + \paren{1 - 2\alpha}^2}{4\paren{1 - \alpha - \beta}}\\
        &=\frac{1 - 4\alpha \beta}{4\paren{1 - \alpha - \beta}}.
    \end{align}
    Finally, by noting that $\alpha = \extent \cY^2, \beta = \extent \cY'{}^2$ we have that 
    \begin{align}
        \paren{\extent \cY \vee \cY'}^2 = \frac{1 - 4 \extent \cY^2 \extent \cY'{}^2}{4 \paren{1 - \extent \cY^2 - \extent \cY'{}^2}}
    \end{align}
\end{proof}

\begin{corollary}[Glue Site of Gluing]
    \label{cor:glue-site-of-gluing}
    If $\cY$ and $\cY'$ are glueable, the glue site of $\cY \vee \cY'$ is 
    \begin{align}
        \frac12\paren{c^* + c'{}^*} + \delta\paren{c^* - c'{}^*}
    \end{align}
    where $c^*$ and $c'{}^*$ are the glue sites of $\cY$ and $\cY'$ respectively and 
    \begin{align}
        \delta \coloneqq \frac{\extent^2 \cY - \extent^2 \cY'}{2\paren{1 - \extent^2 \cY - \extent^2 \cY'}}.
    \end{align}
\end{corollary}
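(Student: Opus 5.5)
The plan is to reuse the computation in the proof of Proposition \ref{prop:extent-of-glueable-equidistant-spacings}. That proof already shows that the glue site $c^{\vee *}$ of $\cY \vee \cY'$ lies on the segment joining $c^*$ and $c'{}^*$. Concretely, $c^{\vee*} = (1-\lambda)c^* + \lambda c'{}^*$ with
\begin{align*}
\lambda = \frac{\beta - \alpha + \gamma}{2\gamma},
\end{align*}
where $\alpha = \extent \cY^2$, $\beta = \extent \cY'{}^2$ and $\gamma = \norm{c^* - c'{}^*}^2$. So the statement amounts to rewriting this convex combination in symmetric form.

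\textbf{Step 1: eliminate $\gamma$.} Use the identity $1 = \alpha + \gamma + \beta$, which is Eqn. \ref{eqn:tri-part-orth-applied-to-glueibility}, to get $\gamma = 1 - \alpha - \beta$. Substituting,
\begin{align*}
\lambda = \frac{1 - 2\alpha}{2(1-\alpha-\beta)}, \qquad 1 - \lambda = \frac{1-2\beta}{2(1-\alpha-\beta)}.
\end{align*}

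\textbf{Step 2: rewrite in symmetric form.} Write
\begin{align*}
(1-\lambda)c^* + \lambda c'{}^* = \tfrac12(c^* + c'{}^*) + \left(\tfrac12 - \lambda\right)(c^* - c'{}^*).
\end{align*}
Then compute
\begin{align*}
\tfrac12 - \lambda = \frac{(1-\alpha-\beta) - (1-2\alpha)}{2(1-\alpha-\beta)} = \frac{\alpha - \beta}{2(1-\alpha-\beta)},
\end{align*}
which is exactly $\delta$ once $\alpha = \extent^2\cY$ and $\beta = \extent^2\cY'$ are substituted back.

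\textbf{Main obstacle: justifying the collinearity.} The real content lies in the claim, taken from the earlier proof, that $c^{\vee*}$ lies on the line through $c^*$ and $c'{}^*$. If I need to justify it independently, I would proceed as follows.
\begin{enumerate}
\item Recolor $\cY \vee \cY'$ into two classes, $\abs{\cY}$ and $\abs{\cY'}$, and apply the Projection Theorem (Theorem \ref{thm:projection-theorem}) to this two-class spacing.
\item The centers of these two classes are $c^*$ and $c'{}^*$. By Proposition \ref{prop:tri-part-ortho}, both $\aff\abs{\cY}$ and $\aff\abs{\cY'}$ are orthogonal to $c^* - c'{}^*$.
\item The glue site is the unique point of $\aff(\abs{\cY}\cup\abs{\cY'})$ equidistant from all points, that is, the circumcenter. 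Decompose this affine hull as $c^* + \aff\abs{\cY}$-directions $\oplus$ $\aff\abs{\cY'}$-directions $\oplus$ $\mathrm{span}(c^*-c'{}^*)$.
\item Equidistance from all of $\abs{\cY}$ forces the $\aff\abs{\cY}$-component of the glue site to sit at $c^*$. Likewise, equidistance from all of $\abs{\cY'}$ fixes its $\aff\abs{\cY'}$-component at $c'{}^*$. What remains is a point on the line through $c^*$ and $c'{}^*$.
\end{enumerate}

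\textbf{Degenerate case.} When $\gamma = 0$ the formula is undefined. This corresponds to $\extent \cY \vee \cY' = \infty$ (no glue site), and I would note that this case is excluded implicitly, as in Proposition \ref{prop:extent-of-glueable-equidistant-spacings}.
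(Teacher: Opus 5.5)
Your argument is essentially the paper's proof. You substitute $\gamma = 1-\alpha-\beta$ (Eqn.~\ref{eqn:tri-part-orth-applied-to-glueibility}) into $\lambda$ from Proposition~\ref{prop:extent-of-glueable-equidistant-spacings} and rewrite $(1-\lambda)c^*+\lambda c'{}^*$ symmetrically. Your independent collinearity argument usefully fills in what the paper only asserts, and it correctly places the glue site on the line through $c^*$ and $c'{}^*$, not necessarily on the segment, since $\lambda<0$ whenever $\extent^2\cY>\frac12$.

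One correction to your aside: $\gamma=0$ does not always mean there is no glue site. When $\extent^2\cY=\extent^2\cY'=\frac12$, as in the first gluing of Example~\ref{examp:gluings}, the glue site exists at $c^*=c'{}^*$ but $\delta=0/0$. That case must simply be excluded from the statement rather than identified with $\extent(\cY\vee\cY')=\infty$.
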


\begin{proof}
    Using the same notation as in the proof of Proposition \ref{prop:extent-of-glueable-equidistant-spacings}, Equation \ref{eqn:glued-extent-calc:2.5} reads
    \begin{align}
        \lambda = \frac{\extent^2 \cY' - \extent^2 \cY + 1 - \extent^2 \cY - \extent^2 \cY'}{2(1 - \extent^2 \cY + \extent^2 \cY')}.
    \end{align}
    Using the formula \ref{eqn:glued-extent-calc:1.5}, the glue site of $\cY \vee \cY'$ is
    \begin{align}
        (1 - \lambda) c^* + \lambda c'{}^* = \frac12\paren{c^* + c'{}^*} + \frac{\extent^2 \cY - \extent^2 \cY'}{2\paren{1 - \extent^2 \cY - \extent^2 \cY'}}\paren{c^* - c'{}^*}.
    \end{align}
\end{proof}

\begin{corollary}[Extent of Equilateral Equidistant Spacing]
    \label{cor:extent-of-equidistant-spacing}
    Let $\cY \in \ps(n+1, \R^n)$ be the equidistant spacing given by coloring each vertex of $E_{1,n}$ a distinct color. Then $\extent(\cY) = \sqrt{\frac{n}{2(n+1)}}$.
\end{corollary}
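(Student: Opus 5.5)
We need the extent of the regular unit simplex $E_{1,n}$, with each of its $n+1$ vertices in its own class. By Definition \ref{def:extent}, the extent is $\norm{y - c^*}$ for any $y \in \abs\cY$, where $c^*$ is the glue site. The glue site is the center of $\abs\cY$ regarded as a single class. So the plan has two steps: show that $\abs\cY \in \ps(1)$ with glue site equal to the centroid of the simplex, and then compute the circumradius of a regular $n$-simplex with unit edge.

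For the first step, the vertices $v_0,\dots,v_n$ are affinely independent and span $\R^n$. A point equidistant from all of them, lying in $\aff\abs\cY = \R^n$, is therefore unique: it is the circumcenter, which Theorem \ref{thm:projection-theorem} point 2 also identifies as the minimal-radius center. By symmetry (every permutation of the vertices is realized by an isometry fixing the centroid), the centroid $c^* = \frac{1}{n+1}\sum_i v_i$ is equidistant from all vertices. Hence $c^*$ is the circumcenter and the glue site. We also need the common radius to be at most $1$, so that $\abs\cY \in \ps(1)$. This will come out of the computation below, since $\frac{n}{2(n+1)} < 1$.

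For the computation, realize $E_{1,n}$ concretely as the points $e_i/\sqrt2$ in $\R^{n+1}$, for $i=0,\dots,n$. These lie in the hyperplane $\sum x_i = 1/\sqrt2$, which is a copy of $\R^n$, and satisfy $\norm{e_i/\sqrt2 - e_j/\sqrt2}^2 = 1$. The centroid is $\frac{1}{\sqrt2(n+1)}\mathbf 1$. Then
\begin{align*}
\norm{\tfrac{1}{\sqrt2}e_0 - \tfrac{1}{\sqrt2(n+1)}\mathbf 1}^2 = \tfrac12\paren{\paren{1-\tfrac1{n+1}}^2 + \tfrac{n}{(n+1)^2}} = \tfrac12\cdot\tfrac{n^2+n}{(n+1)^2} = \tfrac{n}{2(n+1)}.
\end{align*}
Taking square roots gives $\extent\cY = \sqrt{n/(2(n+1))}$. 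Since extent is defined through distances, it is invariant under isometry, so using this particular realization is legitimate.

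As a cross-check, the same value follows by induction from Proposition \ref{prop:extent-of-glueable-equidistant-spacings}, gluing $E_{1,n-1}$ to a single point. A single point has extent $0$. With $\extent E_{1,n-1}^2 = \frac{n-1}{2n}$, the formula gives $\frac{1}{4(1-\frac{n-1}{2n})} = \frac{n}{2(n+1)}$. The only subtle point in the whole argument is confirming that the glue site is the circumcenter within the affine hull, and that is handled by Theorem \ref{thm:projection-theorem}.
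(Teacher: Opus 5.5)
Your proof is correct and follows the paper's route. The paper's one-line proof cites Proposition \ref{prop:circumcenter-of-equi-simplex}, which computes the circumradius with the same device you use (vertices at scaled standard basis vectors, center on the diagonal by symmetry), and you additionally make explicit, via Theorem \ref{thm:projection-theorem}, why the glue site is the circumcenter. Your count of $n+1$ vertices with $n$ remaining coordinates is also cleaner than that proposition's bookkeeping: it writes $k\alpha^2$ where $(k-1)\alpha^2$ is needed for $k$ vertices, an off-by-one that happens to cancel against the $E_{1,n}$ labeling used in this corollary.
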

\begin{proof}
    The proof follows from Proposition \ref{prop:circumcenter-of-equi-simplex}.
\end{proof}

Now we present two related calculations that give concrete formulae for where the glue site of a gluing lies, given the constituent glue sites.

\begin{proposition}[Calc I]
    \label{prop:calc-1}
    Let $\cY,\cY' \in \ps(I,\R^n)$ glue together and $c^* = \bszero^n$, $c'{}^* = (\bszero^{n-1},\gamma)$ where $\gamma = \sqrt{1 - \extent^2 \cY - \extent^2 \cY'}$. Then 
    \begin{align}
        c^\vee{}^* = \paren{\bszero^{n-1},\frac{1 - 2\extent^2 \cY}{2\sqrt{1 - \extent^2\cY - \extent^2\cY'}}}.
    \end{align}
\end{proposition}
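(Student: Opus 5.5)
This is a direct specialization of the computation in the proof of Proposition \ref{prop:extent-of-glueable-equidistant-spacings}, together with Corollary \ref{cor:glue-site-of-gluing}. We work in the coordinates given in the statement. Put $\alpha \coloneqq \extent^2\cY$, $\beta \coloneqq \extent^2\cY'$ and $\gamma^2 \coloneqq \norm{c^* - c'{}^*}^2$.

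First, apply the recoloring trick (Lemma \ref{lem:recoloring-trick}) to $\cY\vee\cY'$, with the two parts $\abs{\cY}$ and $\abs{\cY'}$. This makes $c^*$ and $c'{}^*$ the centers of an element of $\ps(2,\R^n)$. Then Eqn. \ref{eqn:tri-part-orth-applied-to-glueibility} gives $1 = \alpha + \gamma^2 + \beta$, which is consistent with the hypothesis $\gamma = \sqrt{1-\alpha-\beta}$.

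Next, as in the proof of Proposition \ref{prop:extent-of-glueable-equidistant-spacings}, the glue site $c^{\vee *}$ lies on the segment joining $c^*$ to $c'{}^*$. Write $c^{\vee*} = (1-\lambda)c^* + \lambda c'{}^* = (\bszero^{n-1}, \lambda\gamma)$. Orthogonality, via Proposition \ref{prop:tri-part-ortho}, gives the balance equation
\begin{align*}
    \alpha + \lambda^2\gamma^2 = \beta + (1-\lambda)^2\gamma^2 ,
\end{align*}
and solving it yields
\begin{align*}
    \lambda = \frac{\beta - \alpha + \gamma^2}{2\gamma^2}.
\end{align*}
Here $\gamma^2$ plays the role that $\gamma$ played in the earlier proof, since there $\gamma$ denoted the squared distance.

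Substituting $\gamma^2 = 1 - \alpha - \beta$ gives $\beta - \alpha + \gamma^2 = 1 - 2\alpha$. Hence
\begin{align*}
    \lambda\gamma = \frac{1 - 2\alpha}{2\gamma} = \frac{1 - 2\extent^2\cY}{2\sqrt{1 - \extent^2\cY - \extent^2\cY'}},
\end{align*}
which is the claimed last coordinate of $c^{\vee *}$. Equivalently, we can read the same result off Corollary \ref{cor:glue-site-of-gluing} by taking the last coordinate of $\tfrac12(c^*+c'{}^*) + \delta(c^*-c'{}^*)$, namely $\gamma(\tfrac12 - \delta)$, and simplifying.

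The only delicate point is justifying that $c^{\vee*}$ lies on the line through $c^*$ and $c'{}^*$, so that its first $n-1$ coordinates vanish. We take this from the recoloring argument already used in Proposition \ref{prop:extent-of-glueable-equidistant-spacings}. The projection theorem (Theorem \ref{thm:projection-theorem}, point 3) places the glue site in $\aff(\abs{\cY}\cup\abs{\cY'})$. Proposition \ref{prop:tri-part-ortho} makes the components of $c^{\vee*}$ orthogonal to that line vanish. We also need $\gamma > 0$ so that $\lambda$ is well-defined; this holds whenever $\extent(\cY\vee\cY')<\infty$.
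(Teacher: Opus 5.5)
Your argument is correct and follows the paper's intended route. The paper gives no separate proof of this proposition: it is the specialization of the $\lambda$-computation in Proposition~\ref{prop:extent-of-glueable-equidistant-spacings} (equivalently Corollary~\ref{cor:glue-site-of-gluing}) to $c^*=\bszero^n$, $c'{}^*=(\bszero^{n-1},\gamma)$, and you correctly account for $\gamma$ denoting a squared distance there.

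Three side remarks should be tightened:
\begin{itemize}
\item Collinearity is cleanest if you check that $c^*+\lambda(c'{}^*-c^*)$ lies in $\aff(\abs{\cY}\cup\abs{\cY'})$ and is equidistant from all its points, which is where Proposition~\ref{prop:tri-part-ortho} enters. Then invoke uniqueness of such a point (Theorem~\ref{thm:projection-theorem}, point 2).
\item The glue site lies on the line through $c^*$ and $c'{}^*$, but not necessarily on the segment, since your $\lambda$ is negative when $\extent^2\cY>\tfrac12$.
\item $\gamma>0$ is an implicit hypothesis, not a consequence of $\extent(\cY\vee\cY')<\infty$. The first gluing in Example~\ref{examp:gluings} has $\gamma=0$ yet a glue site at the origin.
\end{itemize}
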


\begin{proposition}[Calc. II]
    \label{prop:calc-2}
    Let $\cY,\cY' \in \ps(I,\R^n)$, $c^*,c^*{}' \in \bszero^{n-1}\times \R$ and $c^{*\vee} = \bszero^n$. Then
    \begin{align}
        c^* = (\bszero^{n-1},\mp\frac{1 - 2\extent^2\cY}{2\sqrt{1 - \extent^2\cY - \extent^2\cY'}}), \quad c^*{}' =(\bszero^{n-1},\pm\frac{1 - 2\extent^2\cY'}{2\sqrt{1 - \extent^2\cY - \extent^2\cY'}}).
    \end{align}
\end{proposition}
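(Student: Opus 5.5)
The plan is to reduce the claim to the computation already carried out in the proof of Proposition \ref{prop:extent-of-glueable-equidistant-spacings}, or equivalently to translate Proposition \ref{prop:calc-1}. Throughout, write $\alpha \coloneqq \extent^2\cY$, $\beta \coloneqq \extent^2\cY'$ and $\gamma \coloneqq \norm{c^* - c^*{}'}^2$. I implicitly assume, as the statement does, that $\cY$ and $\cY'$ glue with $\gamma > 0$, so that the gluing has a finite extent.

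First I record the relation between the three glue sites. The recoloring trick together with Proposition \ref{prop:tri-part-ortho} gives the orthogonal decomposition \eqref{eqn:tri-part-orth-applied-to-glueibility}, namely $1 = \alpha + \gamma + \beta$. Hence $\gamma = 1 - \extent^2\cY - \extent^2\cY'$. As in \eqref{eqn:glued-extent-calc:1.5}, the glue site of the gluing lies on the line through $c^*$ and $c^*{}'$:
\begin{align*}
    c^{*\vee} = (1-\lambda)c^* + \lambda c^*{}', \qquad \lambda = \frac{\beta - \alpha + \gamma}{2\gamma},
\end{align*}
where the formula for $\lambda$ is \eqref{eqn:glued-extent-calc:2.5}.

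The key simplification is that $\beta - \alpha + \gamma = 1 - 2\alpha$, by the identity $1 = \alpha + \beta + \gamma$. Likewise $1 - \lambda = (1-2\beta)/(2\gamma)$.

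Now impose the normalization $c^{*\vee} = \bszero^n$. The displayed relation then gives
\begin{align*}
    c^* = -\lambda\,(c^*{}' - c^*), \qquad c^*{}' = (1-\lambda)(c^*{}' - c^*).
\end{align*}
Since $c^*$ and $c^*{}'$ both lie in $\bszero^{n-1}\times\R$, the difference $c^*{}' - c^*$ equals $(\bszero^{n-1}, \pm\sqrt\gamma)$. Substituting this and the expressions for $\lambda$ and $1-\lambda$ yields last coordinates
\begin{align*}
    \mp\frac{1-2\alpha}{2\sqrt\gamma} \quad\text{and}\quad \pm\frac{1-2\beta}{2\sqrt\gamma}
\end{align*}
respectively, with $\sqrt\gamma = \sqrt{1 - \extent^2\cY - \extent^2\cY'}$. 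These are exactly the claimed formulas. The opposite signs come automatically from the single sign choice for $c^*{}' - c^*$.

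The only delicate point is bookkeeping rather than mathematics. First, the identity $1 = \alpha+\beta+\gamma$ must be invoked correctly in simplifying $\lambda$. Second, the signs must be tracked without assuming $\lambda\in[0,1]$. Writing $c^*$ and $c^*{}'$ as multiples of the common vector $c^*{}' - c^*$, as above, handles the signs uniformly.

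As a cross-check, translating Proposition \ref{prop:calc-1} by $-c^{\vee *}$ gives the same formula for $c^*$. The formula for $c^*{}'$ then follows by exchanging the roles of $\cY$ and $\cY'$.
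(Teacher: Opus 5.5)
Your proof is correct and is essentially the paper's argument. The paper translates the configuration of Proposition \ref{prop:calc-1} by $-c^{\vee*}$, and Proposition \ref{prop:calc-1} is just the parametrization $c^{\vee*} = (1-\lambda)c^* + \lambda c'{}^*$ with $\lambda = \frac{1-2\extent^2\cY}{2(1-\extent^2\cY-\extent^2\cY')}$ written in the frame $c^* = \bszero^n$, which you use directly (and you mention the translation as a cross-check). Your version is slightly more careful than the paper's: it derives the formula for $c'{}^*$ explicitly through $1-\lambda = \frac{1-2\extent^2\cY'}{2(1-\extent^2\cY-\extent^2\cY')}$, and it rightly avoids the paper's claim that $\lambda\in[0,1]$, which fails whenever $\extent^2\cY > \frac12$.
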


\begin{proof}
    Let $c^{(1)*} \coloneqq \bszero^n, c^{(1)}{}'{}^{*} \coloneqq \paren{\bszero^{n-1},\gamma}$ and $c^{(1)\vee*} \coloneqq \paren{\bszero^{n-1},\frac{1 - 2\extent^2 \cY}{2\sqrt{1 - \extent^2\cY - \extent^2\cY'}}}$. These are the centers of $\cY,\cY$ and $\cY\vee\cY'$ respectively by Proposition \ref{prop:calc-1}. Then the expressions for $c^*, c'{}^*$, and $c^{\vee*}$ come from evaluating
    \begin{align}
        c^* = c^{(1)} \pm c^{(1)\vee*}, \quad c'{}^* = c'{}^{(1)} \pm c^{(1)\vee*}, \quad c^{\vee*}  = c^{(1)\vee*} \pm c^{(1)\vee*}.
    \end{align}
\end{proof}

\subsection{Isometric Embeddings}
\label{sec:isometric-embeddings}

Equidistant spacings have two types of isometries. First, are Euclidean isometries (i.e. isometries that extend to the ambient space). Second are what we call a squash and stretch isometry that does not typically extend to an isometry on $\R^n$. This section is spent understanding the squash and stretch isometry.

\begin{definition}[Unitary Rototranslation]
    \label{def:unitary-isometry}
    Let $\cY\in\ps(\R^n,I)$, $\cY' \in \ps(\R^n,I')$. We say that $\cY$ and $\cY'$ are \emph{trivially isomorphic} or \emph{isomorphic by $U$} if $I = I'$ and there is a $U \in E(n)$ so that for each $i = 1,\dots, I$,
    \begin{align}
        UY'_i = Y_i,
    \end{align}
    where $E(n)$ denotes the Euclidean group over $\R^n$.
\end{definition}
Figure \ref{fig:iso-unitary:1} and \ref{fig:iso-unitary:2} shows an example of two equidistant spacings that are trivially isomorphic. 
Figure \ref{fig:iso-unitary:1} and \ref{fig:iso-unitary:2} show an example of two equidistant spacings that are trivially isomorphic.

The final isometry focuses on the role of $r$ in Corollary \ref{cor:characterization-of-equidistant-spacings}. %
\begin{definition}[Squash and Stretch]
    \label{def:squash-stretch}
    Let $\cY \in \ps(\R^n,I)$. %
    A map $f \colon \R^n \to \R^n$ is called a  \emph{squash and stretch of $\cY$} if it satisfies the following properties.
    \begin{enumerate}
        \item The map 
        \begin{align}
            f|_{\bigcup_{i = 1}^I \set{c_i}}\colon \bigcup_{i = 1}^I c_i \to \R^n
        \end{align}
        is injective.
        \item For each $i = 1,\dots,I$, the map $d \colon V_i \to \R^n$ is defined by
        \begin{align}
            \label{eqn:squash-stretch:dilation-map}
            d(x) \coloneqq f(x + c_i) - f(c_i) %
        \end{align}
        is a dilation of $V_i$ about $c_i$ by factor $\alpha \neq 0$.
    \end{enumerate}

    We say that $\cY$ and $\cY'$ are \emph{squash and stretch isomorphic} or \emph{isomorphic by squash and stretch $f$} if $I = I'$ and there is a squash and stretch $f$ of $\cY$ so that for each $i = 1,\dots,I$,
    \begin{align}
        fY_i = Y_i'.
    \end{align}

\end{definition}
The name squash and stretch comes from the animation principle of the same name. In animation, the principle of squash and stretch refers to the act of stretching an object (e.g. a cartoon character) along one axis, while squashing it along a perpendicular axis to emphasize movement or momentum \cite{thomas1995illusion}. The use here is similar. The map $f$ has the effect of `squashing'  one class while `stretching' another. The definition of squash and stretch isometry leaves open the possibility that $\cY$ could be squash and stretch isomorphic to $\cY'$, but not the other way around. This is, however, not the case as the following proposition shows.
\begin{proposition}[Properties of Squash and Stretch]
    \label{prop:prop-of-squash-stretch}
    Let $\cY \in \ps$ and $f$ be a squash and stretch of $\cY$. Then the following holds.
    \begin{enumerate}
        \item For each $i = 1,\dots,I$, the set $f(A_i)$ is a translation of $A_i$, and $f|_{A_i}$ is a bijection.
        \item If $f\cY \in \ps$, then $f|_{\bigcup_{i = 1}^IY_i}$ is injective.
        \item If $f$ is a squash and stretch of $\cY$, and $\cY' \in \ps$ so that $fY_i = Y_i'$, then there is a squash and stretch $g$ of $\cY'$, so that 
        \begin{align}
            gY_i' = Y_i.
        \end{align}
    \end{enumerate}
\end{proposition}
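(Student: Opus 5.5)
We prove the three points of Proposition \ref{prop:prop-of-squash-stretch} in order; each rests on Definition \ref{def:squash-stretch} together with the decomposition of Corollary \ref{cor:characterization-of-equidistant-spacings}, where $A_i = c_i + V_i$. Throughout we read condition 2 of Definition \ref{def:squash-stretch} as saying that $f$ restricted to $A_i$ has the form $f(c_i + v) = f(c_i) + \alpha_i v$ for $v \in V_i$ and some $\alpha_i \neq 0$. The dilation factor may depend on the class, and a dilation of $V_i$ maps $V_i$ into itself.

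For point 1, fix $i$. Every $x \in A_i$ is written uniquely as $x = c_i + v$ with $v \in V_i$, so $f(x) = f(c_i) + \alpha_i v$. The map $v \mapsto \alpha_i v$ is a linear bijection of $V_i$ because $\alpha_i \neq 0$. Hence $f(A_i) = f(c_i) + V_i$, which is the translate $A_i + (f(c_i) - c_i)$ of $A_i$. Moreover $f|_{A_i}$ is the composition of the bijection $A_i \to V_i$, $x \mapsto x - c_i$, the scaling by $\alpha_i$, and a translation, so it is a bijection onto $f(A_i)$.

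For point 2, take $y \neq y'$ in $\bigcup_i Y_i$ and split into two cases.
\begin{itemize}
\item If $y, y' \in Y_i$ for the same $i$, then both lie in $A_i$, and point 1 gives $f(y) \neq f(y')$.
\item If $y \in Y_i$ and $y' \in Y_j$ with $i \neq j$, suppose $f(y) = f(y')$. This point lies in $f(Y_i) \cap f(Y_j)$. Since $f\cY \in \ps$, points of distinct classes are at distance $1$, so this common point would be at distance $1$ from itself. That is a contradiction.
\end{itemize}
This argument needs the classes of $f\cY$ to be the sets $fY_i$, which is the intended reading. Injectivity on the centers, condition 1 of Definition \ref{def:squash-stretch}, is not even needed here.

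For point 3, we construct $g$ class by class. Let $\cY' \in \ps$ with $fY_i = Y_i'$. Define $\tilde c_i \coloneqq f(c_i)$; we must show that $\tilde c_i$ is the center of $Y_i'$ and that $A_i' = f(A_i)$.
\begin{itemize}
\item \emph{Affine hull.} Since $f|_{A_i}$ is an affine bijection onto $f(A_i)$ by point 1, it carries $\aff Y_i = A_i$ onto $\aff fY_i$. Hence $A_i' = f(A_i) = \tilde c_i + V_i$.
\item \emph{Center and radius.} Each point of $Y_i'$ has the form $\tilde c_i + \alpha_i(y - c_i)$ with $y \in Y_i$. It therefore lies in $A_i'$ at distance $|\alpha_i| r_i$ from $\tilde c_i$. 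By the uniqueness part of Theorem \ref{thm:projection-theorem} (the circumcenter within $A_i'$), $\tilde c_i$ is the center of $Y_i'$ and $r_i' = |\alpha_i| r_i$.
\item \emph{Definition of $g$.} Set $g(\tilde c_i + w) \coloneqq c_i + \alpha_i^{-1} w$ for $w \in V_i' = V_i$. The sets $A_i'$ are pairwise orthogonal, and by Proposition \ref{prop:tri-part-ortho} they meet only at their centers where relevant. We may therefore extend $g$ arbitrarily to the rest of $\R^n$; for instance, take it affine on a complement, or leave it arbitrary, since Definition \ref{def:squash-stretch} imposes no global condition.
\item \emph{Verification.} $g$ is injective on the centers $\{\tilde c_i\}$ because it sends them back to the distinct points $c_i$. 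Since $f$ is injective on the centers, the $\tilde c_i$ are distinct, so $g$ is well defined there. By construction $g$ is a dilation by $\alpha_i^{-1} \neq 0$ on each $V_i$ about $\tilde c_i$, and $gY_i' = Y_i$.
\end{itemize}

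The main obstacle is the well-definedness of $g$ if two of the sets $A_i'$ intersect away from their centers. In the coinciding-center case, distinct $A_i'$ can share the point $\tilde c_i = \tilde c_j$. This is why we should use that $f$ maps the $c_i$ injectively, and we would note that when centers coincide the prescriptions on different $A_i'$ agree at the common point. Away from the center, orthogonality of the $V_i$ gives $A_i' \cap A_j' \subseteq \{\tilde c_i\} \cap \{\tilde c_j\}$ whenever the affine sets are orthogonal and the centers are joined by a vector orthogonal to both. That vector condition is exactly Proposition \ref{prop:tri-part-ortho} applied to $\cY'$.
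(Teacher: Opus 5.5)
Your proof is correct. Point 1 is the same computation as the paper's, but points 2 and 3 take a different and more careful route.

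\textbf{Point 2.} For the cross-class case, the paper argues via the Projection Theorem that $f(y_i)=f(y_j)$ forces $y_j=c_i$ and $y_i=c_j$. It then appeals to injectivity of $f$ on the centers. You instead note that $f(y_i)$ and $f(y_j)$ lie in different classes of $f\cY\in\ps$, so they are at distance $1$. This is shorter, and it shows that condition 1 of Definition~\ref{def:squash-stretch} is not needed for point 2.

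\textbf{Point 3.} The paper takes $g$ to be the inverse of $f|_{\bigcup_i Y_i}$, which exists by point 2, and asserts that it dilates $f(c_i)+V_i$ by $\alpha^{-1}$. This leaves two things implicit:
\begin{itemize}
\item a squash and stretch of $\cY'$ must be defined at the centers and on the affine hulls of $\cY'$, which in general are not contained in $\bigcup_i Y_i'$;
\item those centers and hulls are $f(c_i)$ and $f(A_i)$.
\end{itemize}
Your construction supplies exactly these checks. You identify $A_i'=f(A_i)$ and $c_i'=f(c_i)$ via uniqueness of the circumcenter, and define $g$ on each $A_i'$. You check consistency on $A_i'\cap A_j'$ using Lemma~\ref{lem:ortho-theorem} and Proposition~\ref{prop:tri-part-ortho} applied to $\cY'$, together with injectivity of $f$ on the centers. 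Then you extend $g$ arbitrarily. The cost is some extra bookkeeping; the paper's version is shorter only because it skips these verifications.

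\textbf{One wording slip.} The sentence ``the $\tilde c_i$ are distinct'' is false when the centers coincide. What you actually need, and use later, is that $\tilde c_i=\tilde c_j$ if and only if $c_i=c_j$. Then, on the set of centers, $g$ is just the inverse of the bijection $c_i\mapsto\tilde c_i$.
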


\begin{proof}
    First, we show point 1. Recall that $A_i = c_i + V_i$ from Corollary \ref{cor:characterization-of-equidistant-spacings}. Making the substitution $\tilde x \coloneqq x - c_i$ in Eqn. \ref{eqn:squash-stretch:dilation-map}, the equation becomes
    \begin{align}
        d(x) = f(x + c_i) - f(c_i) = \tilde f(\tilde x) - \tilde f(0)
    \end{align}
    where $\tilde f \colon V_i \to \R^n$. Because $d(x)$ is a dilation, $\tilde f(\tilde x) - \tilde f(0) \subset V_i$ and $\tilde f(V_i) - \tilde f(0) = V_i$ it is clear that $\tilde f$ acts on $V_i$ by dilation plus a shift by $\tilde f(0) = f(c_i)$. That is, $\tilde f \colon V_i \to f(c_i) + V_i$ is a bijection. Hence, $f|_{A_i}\colon A_i \to f(c_i) + V_i$ is a bijection. 

    Now we show point 2. If $y_i,y'_i \in Y_i$, and $f(y_i) = f(y'_i)$ then $y_i = y'_i$ is point 1. What's left is to show $f(y_i) = f(y_j)$ implies $y_i = y_j$ when $i \neq j$. Suppose that 
    $y_i \in Y_i$ and $y_j \in Y_j$ where $i \neq j$, and suppose that $f(y_i) = f(y_j)$, then $y_j = \proj_{A_i}y_j = c_i$ and $y_i = \proj_{A_j}y_i = c_j$ by Theorem \ref{thm:projection-theorem}. Thus, by Property 1 of squash and stretch isometries, we have that $y_i = y_j$.

    Now we show point 3. By point 2, $f|_{\bigcup_{i = 1}^I Y_i}$ has a unique inverse map on its range, which we denote $g$. What's left to show is that $g$ is a squash and stretch. 

    It is obvious that $g$ satisfies Def. \ref{def:squash-stretch} point 1. In light of Proposition \ref{prop:prop-of-squash-stretch} point 1, we see that the $d$ in Eqn. \ref{eqn:squash-stretch:dilation-map} is really a map $d \colon V_i \to V_i$ and a dilation by factor $\alpha \neq 0$. Such a dilation is invertible, and the inverse is also a dilation. Because $g$ is the inverse of $f$, it follows that $g$ must dilate $f(c_i) + V_i$ by factor $\alpha^{-1} \neq 0$. Hence, $g$ is a squash and stretch.

\end{proof}

\begin{remark}[Notational Shorthand: III]
    Given an $\cY \in \ps(\R^n,I)$, we use notation $f \cY$ to denote
    \begin{align}
        f \cY \coloneqq \bigsqcup_{i = 1}^I f(Y_i).
    \end{align}

    We will call a squash and stretch of $\cY$ simply a squash and stretch when the $\cY$ is clear from context.
    
    A squash and stretch of $\cY$ does not need to be invertible on all of $\R^n$, but is invertible on $f\cY$, %
    as shown by Proposition \ref{prop:prop-of-squash-stretch}. Hence, when $\cY$ is clear, we denote by $f^{-1}$ the squash and stretch that sends $f\cY$ to $\cY$.
\end{remark}

In general, if $\cY\in\ps$ then there is no reason to expect that $f\cY \in \ps$ when $f$ is a squash and stretch. Indee, if $f$ is a squash and stretch that has the same dilation factor $\alpha \neq 0,1$ for each class $i$ then $f\cY \not \in \ps$. Nevertheless, it is possible that $f\cY \in \ps$ for some $f$. Intuitively, if $f$ has dilation factor $< 1$ for some class and dilation factor $>1$ for other ones, it may be that these effects cancel out so that $f\cY\in\ps$. %

\begin{figure}
    \centering
    \begin{subfigure}{.24\linewidth}
        \centering
        \includegraphics[width=1.\linewidth]{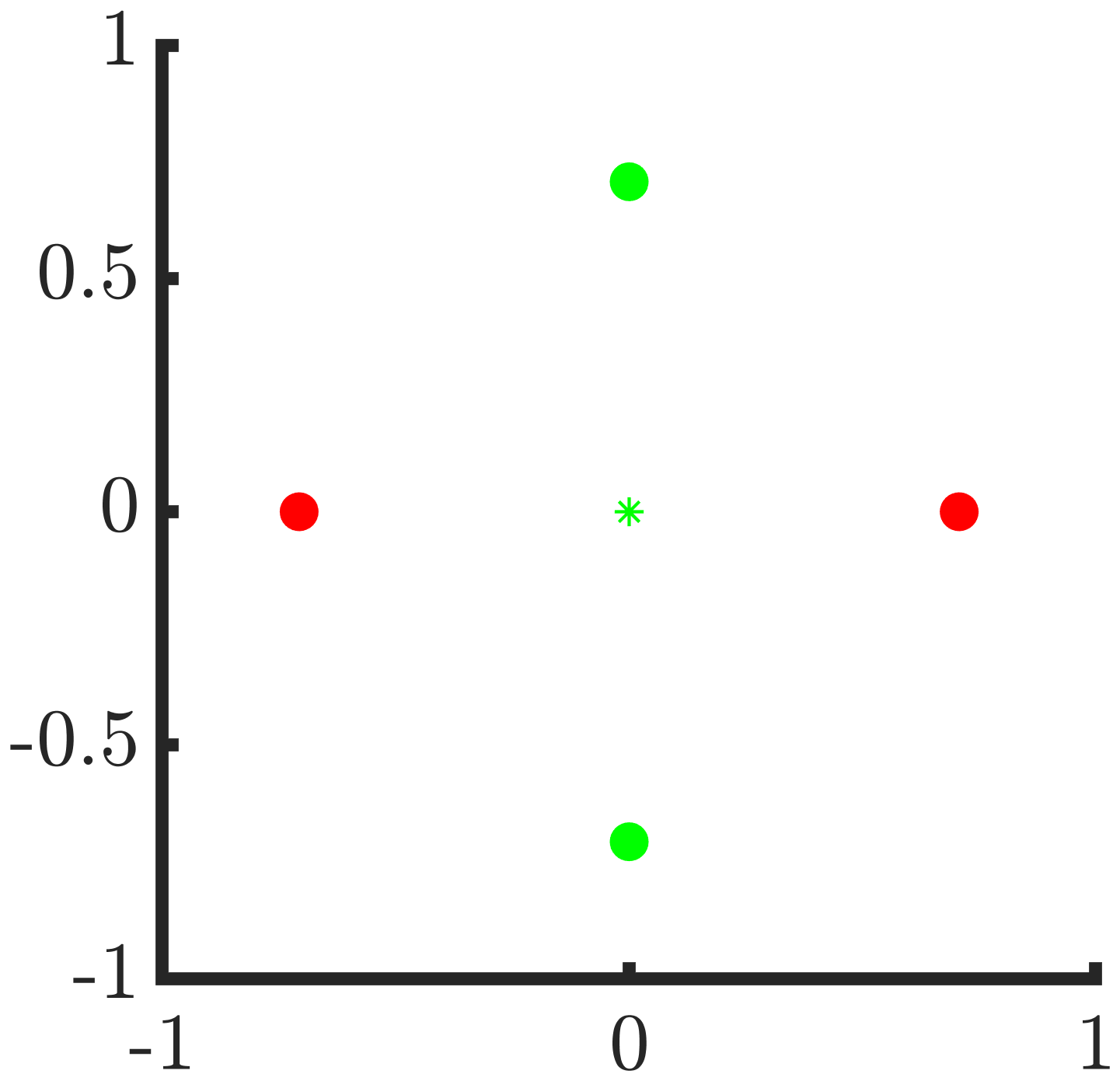}
        \subcaption{$\cY_1$}
        \label{fig:squash-stretch-iso-fig:Y1}
    \end{subfigure}
    \begin{subfigure}{.24\linewidth}
        \centering
        \includegraphics[width=1.\linewidth]{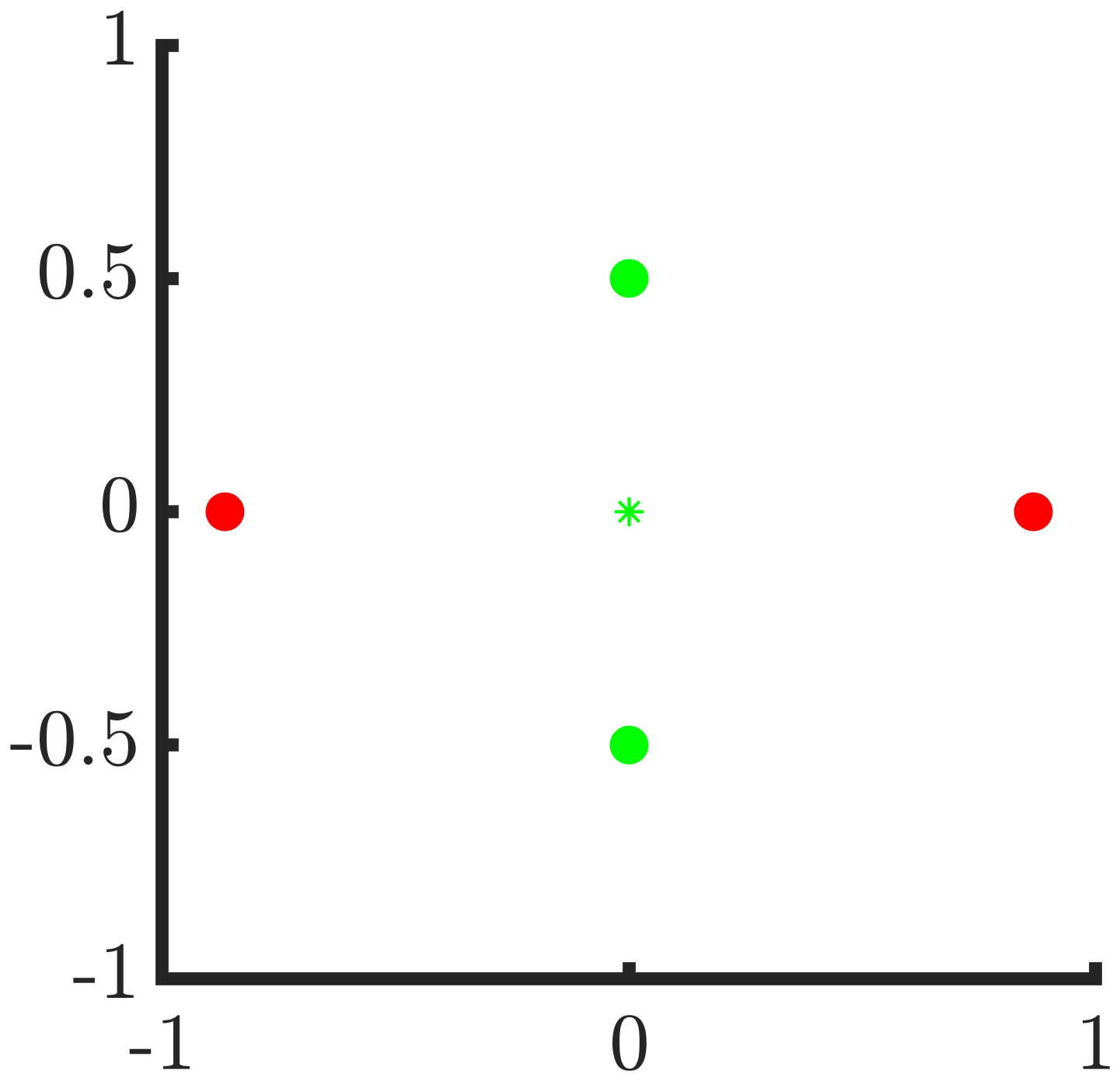}
        \subcaption{$f_1\cY_1$}
        \label{fig:squash-stretch-iso-fig:fY1}
    \end{subfigure}
    \begin{subfigure}{.24\linewidth}
        \centering
        \includegraphics[width=1.\linewidth]{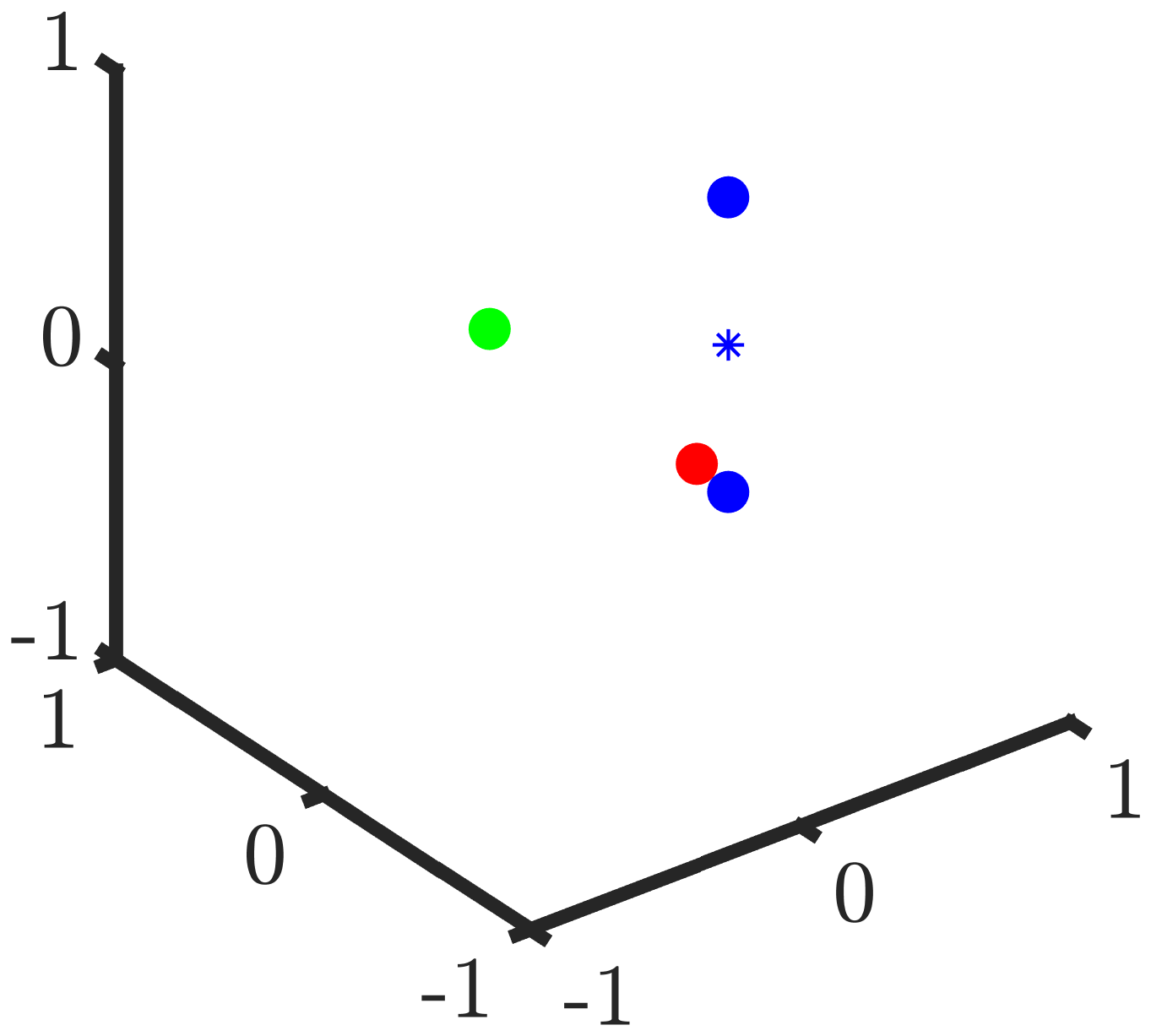}
        \subcaption{$\cY_2$}
        \label{fig:squash-stretch-iso-fig:Y2}
    \end{subfigure}
    \begin{subfigure}{.24\linewidth}
        \centering
        \includegraphics[width=1.\linewidth]{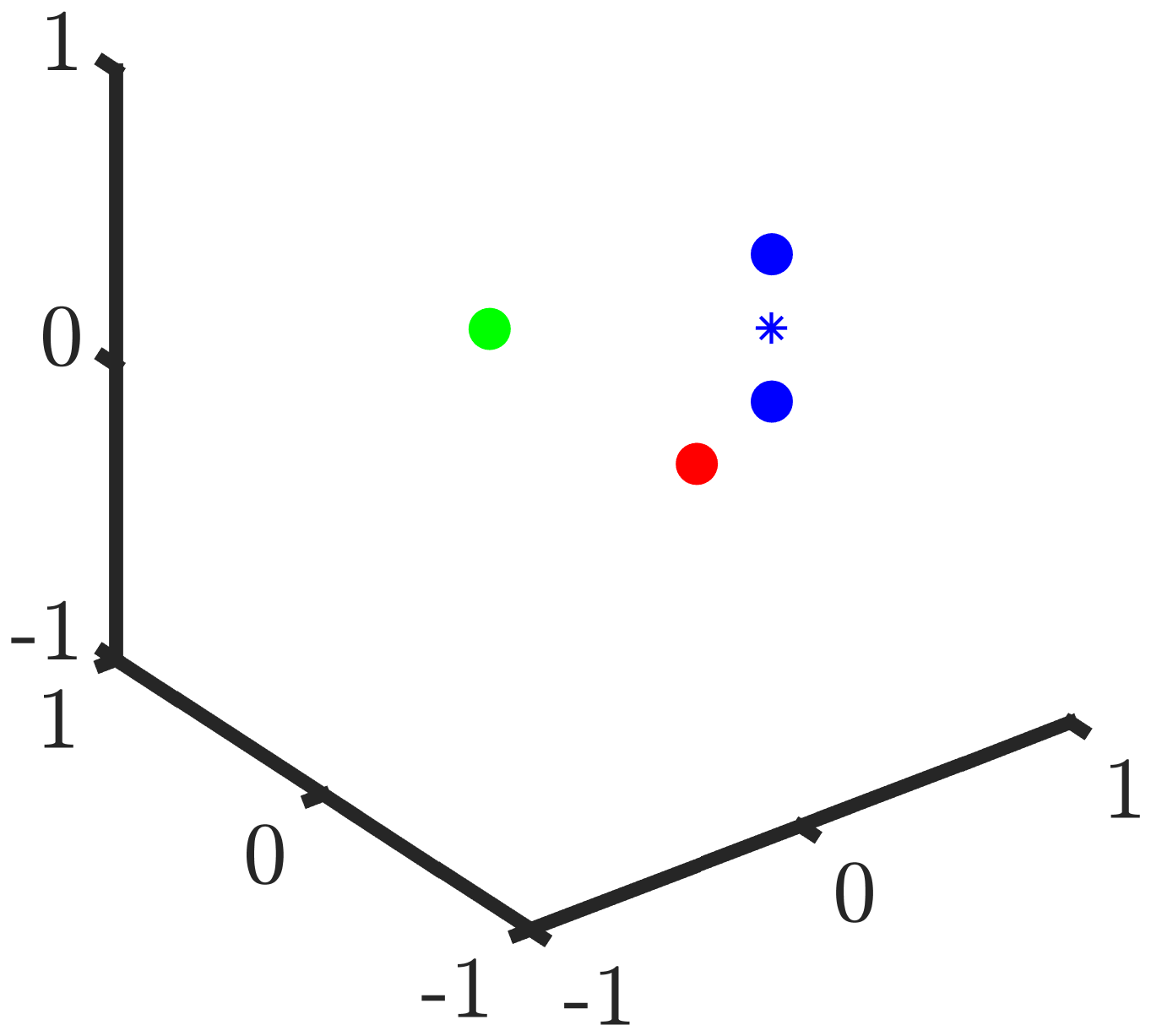}
        \subcaption{$f_2\cY_2$}
        \label{fig:squash-stretch-iso-fig:fY2}
    \end{subfigure}\\
    \begin{subfigure}{.24\linewidth}
        \centering
        \includegraphics[width=1.\linewidth]{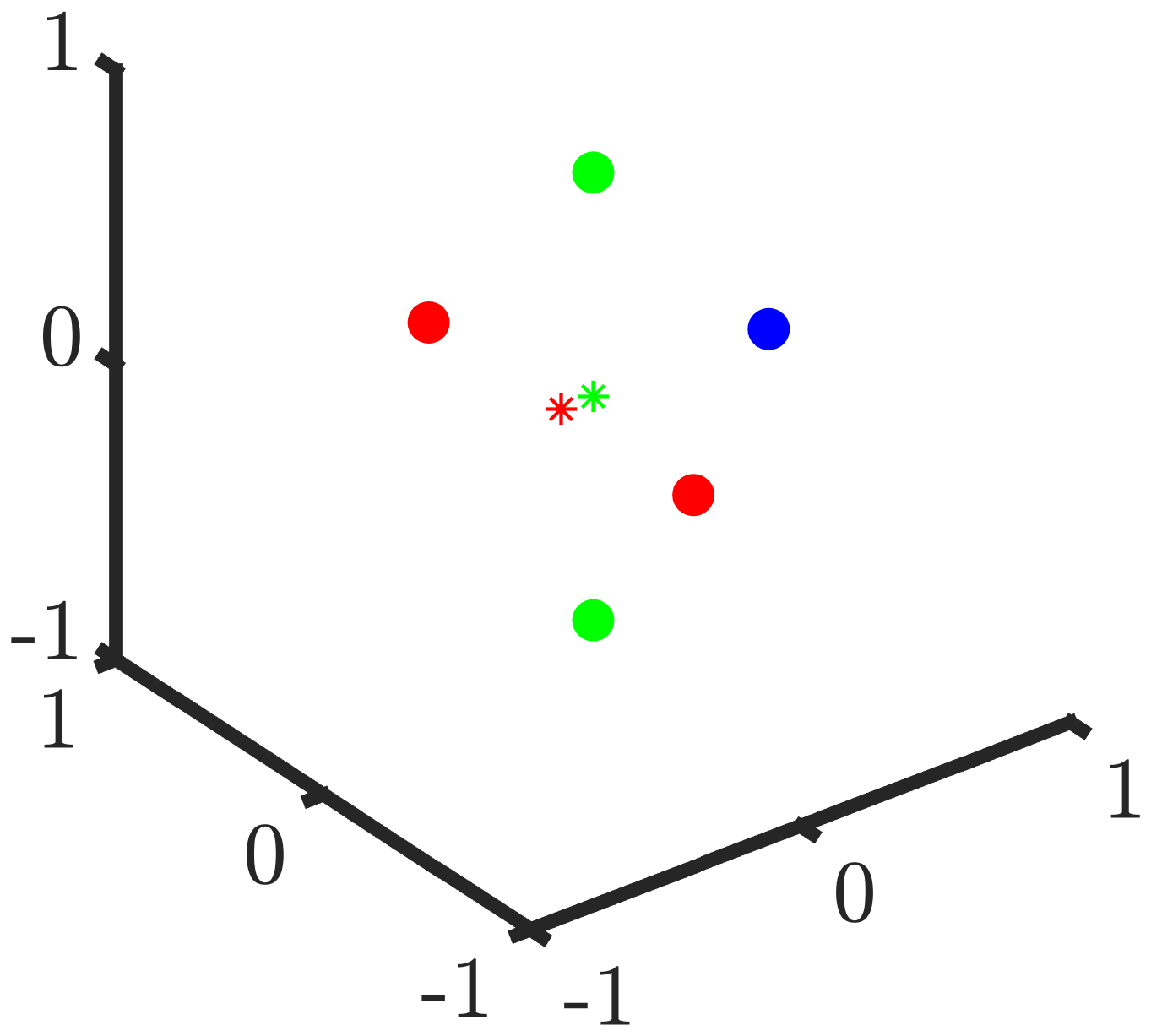}
        \subcaption{$\cY_3$}
        \label{fig:squash-stretch-iso-fig:Y3}
    \end{subfigure}
    \begin{subfigure}{.24\linewidth}
        \centering
        \includegraphics[width=1.\linewidth]{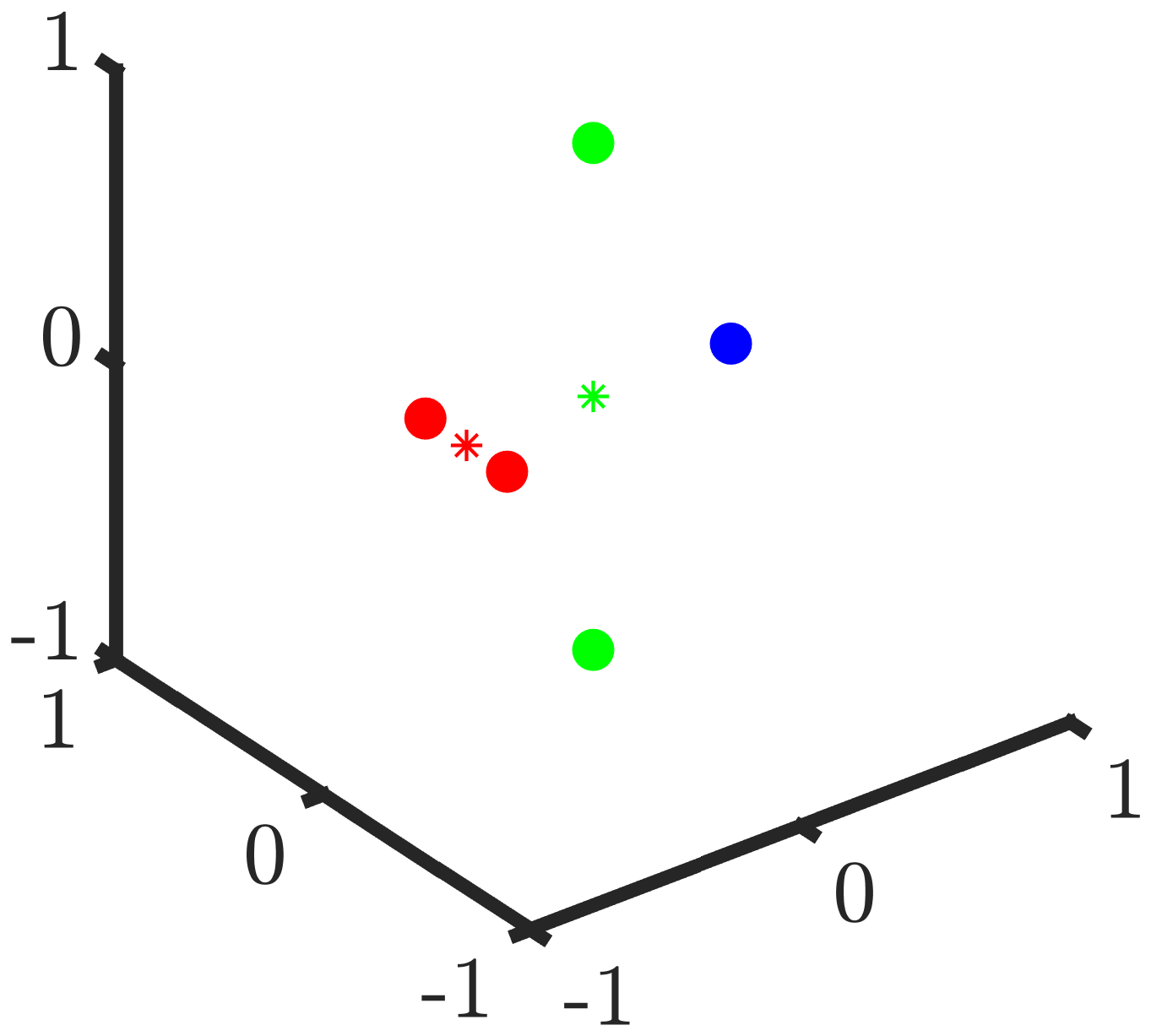}
        \subcaption{$f_3\cY_3$}
        \label{fig:squash-stretch-iso-fig:fY3}
    \end{subfigure}
    \begin{subfigure}{.24\linewidth}
        \centering
        \includegraphics[width=1.\linewidth]{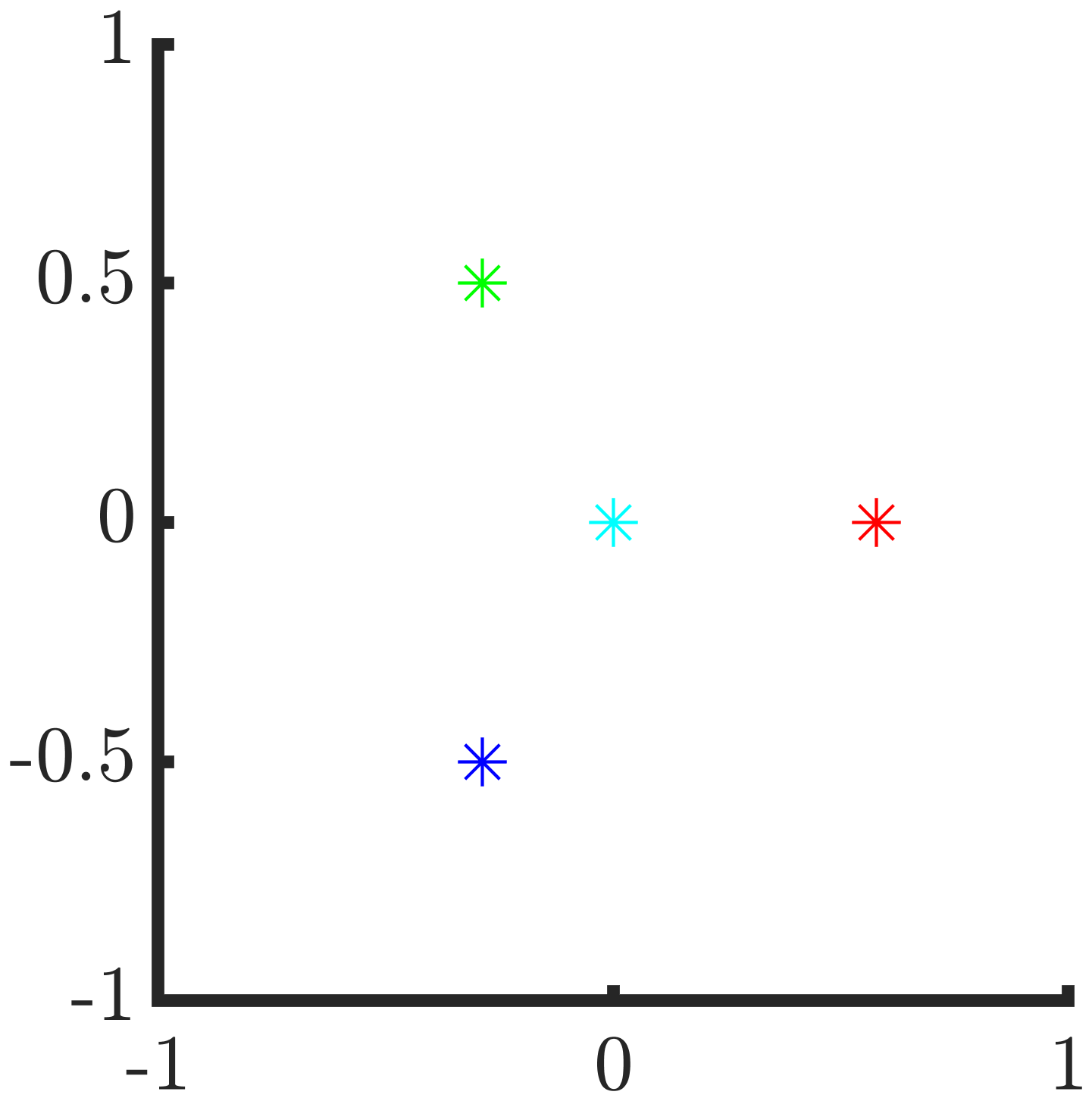}
        \subcaption{Centers of $\cY_4$}
        \label{fig:squash-stretch-iso-fig:Y4}
    \end{subfigure}
    \begin{subfigure}{.24\linewidth}
        \centering
        \includegraphics[width=1.\linewidth]{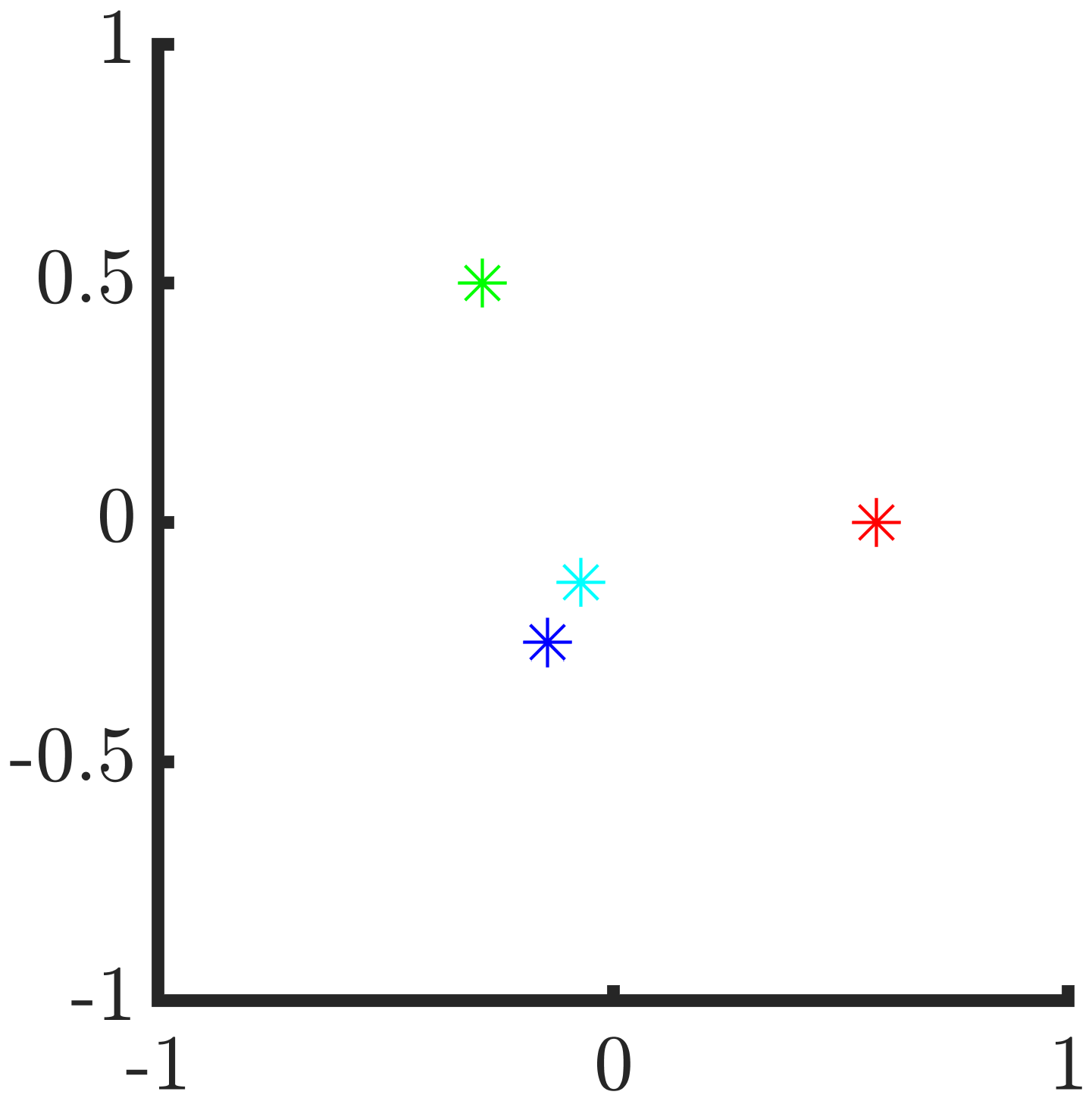}
        \subcaption{Centers of $f_4\cY_4$}
        \label{fig:squash-stretch-iso-fig:fY4}
    \end{subfigure}\\
    \caption{A plot of the examples of squash and stretch isometries detailed in Example \ref{examp:example-of-squash-stretch-isometries}. Six equidistant spacings are plotted. The four equidistant spacings, figures (a,c,e) in the left column, are squash and stretch isomorphic to the figures (b,d,f) in the respective position. Dots of the same color belong to the same class. For classes with more than two points, the centers are denoted by a star of matching color. The final figures (g,h) illustrate how squash and stretches effect the centers in higher dimensions. In figures (g,h) the equidistant spacings themselves are suppressed, and the centers of each class are plotted instead.}
    \label{fig:squash-stretch-iso-fig}
\end{figure}

\begin{example}[Example of Squash and Stretch Isometries]
    \label{examp:example-of-squash-stretch-isometries}

    Figures \ref{fig:squash-stretch-iso-fig:Y1} and \ref{fig:squash-stretch-iso-fig:fY1} demonstrate a squash and stretch isometry in two dimensions. Figure \ref{fig:squash-stretch-iso-fig:Y1} is a equidistant spacing formed by coloring opposite corners of a square of unit edge length the same color, and Figure \ref{fig:squash-stretch-iso-fig:fY1} is a rhombus with unit edge lengths. Notice how $f_1{}|_{\bigcup_{i = 1}^I c_i}$ is the identity and $f_1{}|_{A_i}$ acts by a dilation of factor $>1$ on the red class, and by factor $<1$ on the green class. Intuitively, dilating a class by a factor $<1$ moves its points closer to its center (and all other classes) and dilating a class by a factor $>1$ moves its points further away from its center (and all other classes). $f_1$ acts by applying one dilation $>1$, and another $<1$.

    Figures \ref{fig:squash-stretch-iso-fig:Y2} and \ref{fig:squash-stretch-iso-fig:fY2} demonstrate a squash and stretch isometry that acts on only one class, the blue class in both figures. $f_2$ acts via two actions. First, it moves the center blue class further from the green and red classes. Second, it dilates the blue class by factor $<1$, moving its points closer to its center.

    Figures \ref{fig:squash-stretch-iso-fig:Y3} and \ref{fig:squash-stretch-iso-fig:fY3} demonstrate a third kind of squash and stretch, $f_3$. $f_3$ acts on both the green and red classes. On the red class, it moves the red center further away from the other two centers and dilates the red class by a factor $<1$ to compensate. On the green class, it moves the center closer to the blue center, further from the red center, and dilates the green class by a factor $>1$.

    Figure \ref{fig:squash-stretch-iso-fig:Y4} and \ref{fig:squash-stretch-iso-fig:fY4} gives an idea for how squash and streches look in higher dimension. In this figure, the equidistant spacings are suppressed, only the centers are plotted. From this figure, it is each to see that a squash and stretch is a deformation of the centers that preserves orthocentricity.
\end{example}

\begin{remark}[Interpretation of Squash and Stretch Isometries on $\mps_{\neq}$]
    \label{rmk:interpretation-of-squash-and-stretch-isometries-on-mps-neq}
    Squash and stretch isometries have a natural interpretation when applied to $\mps_{\neq}$. Recall that Proposition \ref{prop:centers-are-orthocentric-systems} and Theorem \ref{thm:small-orthocentric-systems-correspond-to-centers-in-mps} together show that centers of $\mps_{\neq}$ form orthocentric systems. A squash and stretch is a deformation of the centers that preserves orthocentricity, and does not make the resulting simplex too large (in the sense of Theorem \ref{thm:small-orthocentric-systems-correspond-to-centers-in-mps}).
\end{remark}

As shown in example \ref{examp:example-of-squash-stretch-isometries}, it is possible to find squash and stretch isometries that are distinct from a rototranslation or recoloring. Further, as demonstrated by the squash and stretch isometry $f_3$ in Figure \ref{fig:squash-stretch-iso-fig}, squash and stretch isometries may be non-trivial, in the sense that they require moving several classes in a carefully balanced sequenced way to maintain the equidistant spacing.

\subsection{Canonical Form}
\label{sec:canonical-form}
The main difficulty in classifying maximal equidistant spacings is understanding when two equidistant spacings in $\mps_{\neq}$ are squash and stretch isomorphic. To solve this problem we develop a canonical form of maximal equidistant spacings in this section. It is an invariant of isometries, Proposition \ref{prop:invariance-of-equilateral-normal-form}.

The big idea behind the canonical form is that the centers of $\mps_{\neq}$ are \emph{not} geometrically homogeneous. There is one distinguished center that lies inside the simplex generated by the other centers, as in Def.\ref{def:inner-class}. Moreover, it turns out that squash and stretches that involve the inner center and any other center, called inner-outer squash and stretches, are straight-forward to characterize, see Proposition \ref{prop:outer-inner-squash-stretch}. Further, any squash and stretch may be written as the composition of inner-outer squash and stretches.

The following proposition is very useful, but requires some motivation. The idea is to explore the simplest possible squash and stretch, one that is the identity on all classes except the inner class and one outer class. These simple squash and stretches are the building blocks of many results to come.

The setup of Proposition \ref{prop:outer-inner-squash-stretch} is unusual, but it has an important message. Think of $\cY \in \ps(I)$ as being the main spacing of interest and $\cY_{1:I-2} \coloneqq \bigsqcup_{i = 1}^{I-2} Y_i \in \ps(I-2)$ and $\cY_{I-1:I} \coloneqq \bigsqcup_{i = I-1}^{I} Y_I \in \ps(2)$ as being the spacings formed by considering classes $Y_1,\dots,Y_{I-2}$ and $Y_{I-1},Y_{I}$ independently, where $Y_{I-1}$ is an outer class of $\cY$, and $Y_{I}$ is its inner class. Clearly, $\cY_{1:I-2}$ and $\cY_{I-1:I}$ glue (because $\cY_{1:I-2} \vee \cY_{I-1:I} = \cY$). What Proposition \ref{prop:outer-inner-squash-stretch} shows, is that there are squash and stretches $f$ of $\cY_{I-1:I}$ so that $\cY_{1:I-2}$ and $f\cY_{I-1:I}$ glue. We can regard such squashes and stretches of $\cY_{I-1:I}$ as squash and stretches of $\cY$ that are not the identity on two classes via $f\cY \coloneqq \cY_{1:I-2}\vee f\cY_{I-1:I}$. Proposition \ref{prop:outer-inner-squash-stretch} spells out under what conditions on $f$ the spacings $\cY_{1:I-2}$ and $f\cY_{I-1:I}$ glue. The $r_{I-1}$ and $r_I$ in the below proposition are the radii of $fY_{I-1}$ and $fY_I$.%

\begin{proposition}[Outer-Inner Squash and Stretch]
    \label{prop:outer-inner-squash-stretch}
    Let $\cY \in \ps(I-2,\R^n)$ be a spacing, and $V \subset \aff(|\cY|)^\perp$. %
    
    Let $r_{I-1} \in [0,\frac{\sqrt 2}2), r_I \in[\frac{\sqrt2}2,\sqrt{1 - r_{I-1}^2})$ be such that
    \begin{align}
        \label{eqn:prop:outer-inner-squash-stretch:extent-condition}
        \paren{\extent \cY}^2 = 1 - r^2_I - \frac{\paren{2r_I^2 - 1}^2}{4\paren{1 - r_I^2 - r_{I-1}^2}}, \quad 
        \dim V \geq 2 \quad \text{and}\quad \dim V \geq 3 \text{ if } r_{I-1} > 0.
    \end{align}
    Then there is a $c_I,c_{I-1} \in \R^n$ and $\cY' \in \ps(I,\R^n)$ so that the following holds
    \begin{enumerate}
        \item The center of $Y_{I-1}'$ and $Y_I'$ are $c_{I-1}$ and $c_I$ respectively.
        \item The radius of $Y_{I-1}'$ and $Y_I'$ are $r_{I-1}$ and $r_I$ respectively.
        \item $Y'_i = Y_i$ when $i \in 1,\dots,I_1$.
        \item $c_{I} \in \overline{c_{I-1} c^*_{I-1}}$ where $\quad c^*_{I-1} \in \proj_{\aff(c_1,\dots,c_{I-2})}c_{I-1}$.
    \end{enumerate}
\end{proposition}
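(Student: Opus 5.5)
The plan is to build $\cY'$ by gluing $\cY$ to a two-class spacing $\cZ \coloneqq Z_{I-1}\sqcup Z_I$ with prescribed radii $r_{I-1},r_I$, using Proposition \ref{prop:extent-characterizes-gluablity}. Throughout, I read the hypothesis as saying that $V$ is a subspace orthogonal to the directions of $\aff\abs\cY$, so that the new classes can be placed in $V$ without disturbing $\cY$.

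\textbf{Step 1: build $\cZ$.} Set $\gamma^2 \coloneqq 1 - r_I^2 - r_{I-1}^2$, which is positive because $r_I<\sqrt{1-r_{I-1}^2}$. Choose orthonormal directions $e_0,e_1,e_2$ in $V$. Put $c_I \coloneqq c^* + t e_0$ and $c_{I-1} \coloneqq c_I + \gamma e_0$, where $c^*$ is the glue site of $\cY$ and $t$ is a scalar fixed in Step 2. Let
\begin{align*}
Z_I &\coloneqq S^{n-1}_{r_I}(c_I)\cap (c_I + \Span{e_1}),\\
Z_{I-1} &\coloneqq S^{n-1}_{r_{I-1}}(c_{I-1})\cap(c_{I-1}+\Span{e_2}).
\end{align*}
If $r_{I-1}=0$, take $Z_{I-1}=\set{c_{I-1}}$ and only two directions are needed. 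This is exactly where the conditions $\dim V\ge 2$, and $\dim V \ge 3$ when $r_{I-1}>0$, are used. By Theorem \ref{thm:radius-recipe}, $\cZ\in\ps(2)$ with the prescribed centers and radii, which gives points 1 and 2.

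\textbf{Step 2: show $\cZ$ has a glue site of the right extent.} By the recoloring idea in Remark \ref{rmk:interpretation-of-centers-and-glue-sites}, the glue site $g$ of $\cZ$ must lie on the line $c_I + \R e_0$. We need $\norm{z-g}$ to be constant over $\abs\cZ$. By Pythagoras this is the equation
\begin{align*}
r_I^2 + s^2 = r_{I-1}^2 + (\gamma - s)^2, \qquad g = c_I + s e_0,
\end{align*}
which gives $s = \dfrac{1-2r_I^2}{2\gamma}$. This matches Proposition \ref{prop:calc-1}, and $s\le 0$ because $r_I\ge \frac{\sqrt2}{2}$. Hence
\begin{align*}
\extent^2\cZ = r_I^2 + \frac{(2r_I^2-1)^2}{4\gamma^2}.
\end{align*}
Hypothesis \eqref{eqn:prop:outer-inner-squash-stretch:extent-condition} then says precisely that $\extent^2\cY + \extent^2\cZ = 1$. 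Proposition \ref{prop:extent-characterizes-gluablity} in its equality case lets the two spacings be glued with coincident glue sites and no extra dimension. So choose $t \coloneqq -s$, making $g=c^*$, and set $\cY' \coloneqq \cY \sqcup \cZ$.

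\textbf{Step 3: verify cross distances and the remaining points.} For $y\in\abs\cY$ and $z\in\abs\cZ$, the vectors $y-c^*$ and $z-c^*$ are orthogonal because $z-c^*\in V$. Therefore
\begin{align*}
\norm{y-z}^2 = \extent^2\cY + \extent^2\cZ = 1.
\end{align*}
Point 3 ($Y_i'=Y_i$) holds by construction. For point 4, $\aff(c_1,\dots,c_{I-2})\subset\aff\abs\cY$ and $V$ is orthogonal to the latter. So the projection of any point of $c^*+\R e_0$ onto $\aff(c_1,\dots,c_{I-2})$ equals the projection of $c^*$, provided $c^*$ itself lies in the affine hull of the centers of $\cY$. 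Then $c^*_{I-1}=\proj c^*$, and $c^*$, $c_I$, $c_{I-1}$ all lie on the line $c^*+\R e_0$. The ordering follows from the sign of $s$: $c_{I-1}=c^*+(\gamma-s)e_0$ and $c_I=c^*-se_0$ with $0\le -s\le \gamma-s$, so $c_I$ lies between $c^*$ and $c_{I-1}$.

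\textbf{Expected obstacle.} The hard step is point 4 in the degenerate case where $c^*$ is not in $\aff(c_1,\dots,c_{I-2})$. The glue site of $\cY$ is the center of $\abs\cY$, and the recoloring trick with Theorem \ref{thm:projection-theorem} point 3 suggests it lies in the affine hull of the centers of $\cY$, but this needs checking. If it fails, I would replace $c^*$ by its projection and absorb the difference into the $e_0$ coordinate, still using orthogonality of $V$ to $\aff\abs\cY$.
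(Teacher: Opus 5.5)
Your construction is the paper's. With $\gamma=\sqrt{1-r_I^2-r_{I-1}^2}$ and $t=-s=\frac{2r_I^2-1}{2\gamma}$, you place $c_I=c^*+t e_0$ and $c_{I-1}=c^*+(t+\gamma)e_0$, which are the paper's $c^*+\beta e_1$ and $c^*+(\alpha+\beta)e_1$. The paper's verification uses exactly this placement, although its displayed definitions swap the two labels. You also put the new classes on orthogonal lines of $V$, as the paper does. Your identity $\extent^2\cY+\extent^2\cZ=1$ just repackages the paper's $r_I^2+\beta^2+\extent^2\cY=1$ and $r_{I-1}^2+(\alpha+\beta)^2=r_I^2+\beta^2$. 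Steps 1--3 are correct as written.

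The one real gap is the step you flagged yourself. Point 4 needs $c^*\in\aff(c_1,\dots,c_{I-2})$, and you did not prove it. Your fallback cannot replace it. $c^*$ is the only point of $\aff\abs\cY$ equidistant from $\abs\cY$, so the new centers must lie on $c^*+V$. Moreover, any offset $c^*-\proj c^*$ would be orthogonal to $V$, so it could not be absorbed into the $e_0$ coordinate. The paper tacitly assumes the same fact: its first distance computation writes $\norm{c^*_{I-1}-y_i}=\extent\cY$ without comment.

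The fact is true and closes in a few lines.
\begin{enumerate}
\item For $y\in Y_i$, Pythagoras gives $\norm{y-c^*}^2=\norm{y-\proj_{A_i}c^*}^2+\norm{\proj_{A_i}c^*-c^*}^2$. So $\proj_{A_i}c^*$ is a point of $A_i$ equidistant from $Y_i$.
\item Since $Y_i$ affinely spans $A_i$ and circumcenters are unique, $\proj_{A_i}c^*=c_i$. That is, $c^*-c_i\perp (A_i-c_i)$ for every $i$.
\item By Theorem \ref{thm:projection-theorem}, point 2, applied to $\abs\cY$ as a single class, $c^*\in\aff\abs\cY$. Write $c^*-c_1=w+\sum_j v_j$, with $w$ in the direction space of $\aff(c_1,\dots,c_{I-2})$ and $v_j\in A_j-c_j$. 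These summands are mutually orthogonal by Corollary \ref{cor:properties-of-class-embeddings}.
\item Now $c^*-c_i=(w+c_1-c_i)+\sum_j v_j$, and every summand except $v_i$ is orthogonal to $A_i-c_i$. Hence $0=\innerprod{c^*-c_i}{v_i}=\norm{v_i}^2$, so every $v_i=0$ and $c^*\in\aff(c_1,\dots,c_{I-2})$.
\end{enumerate}
It follows that $c^*_{I-1}=c^*$, and your collinearity and ordering argument then gives point 4.
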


\begin{figure}
    \centering
    \includegraphics[width=0.8\linewidth]{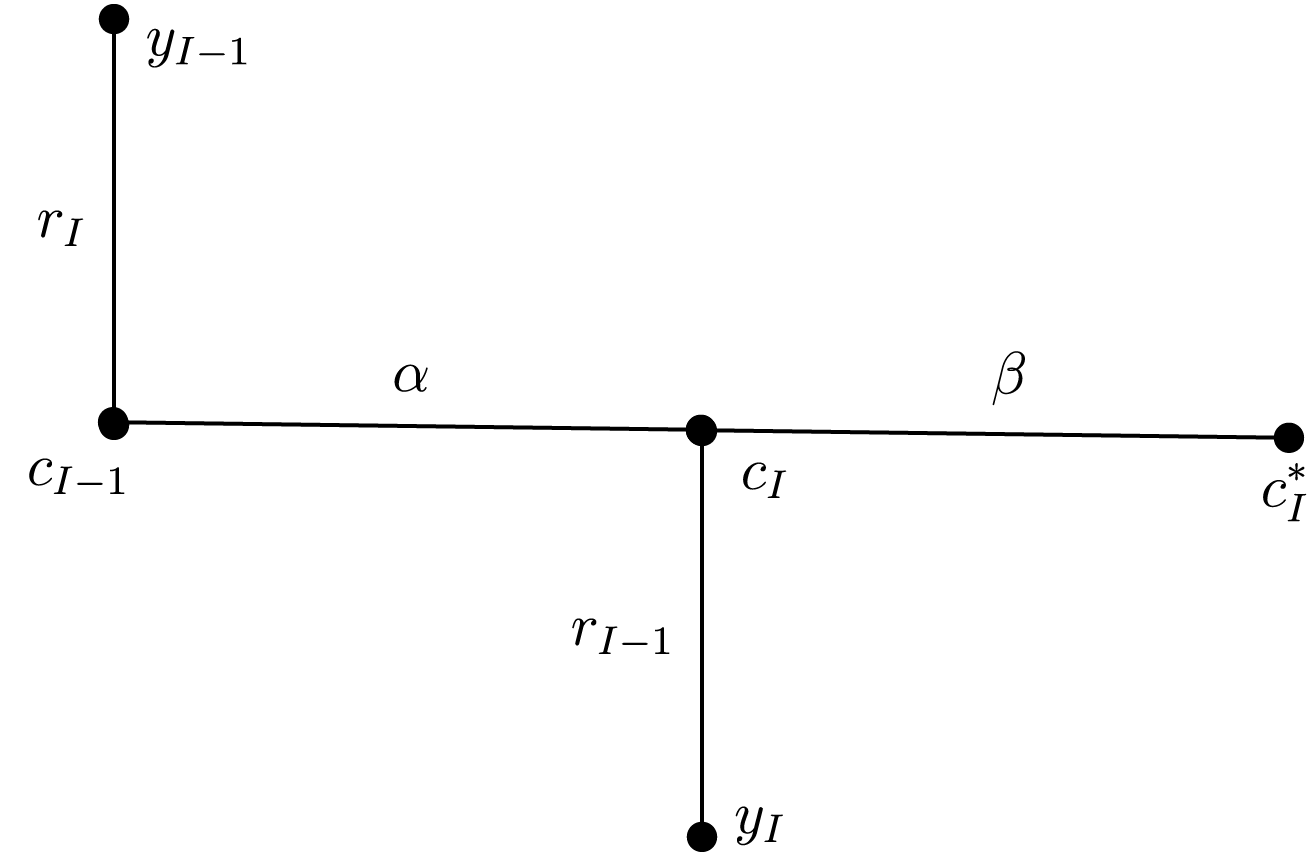}
    \caption{A sketch of the quantities used in the proof of Proposition \ref{prop:outer-inner-squash-stretch}}
    \label{fig:prop:outer-inner-squash-stretch}
\end{figure}
\begin{proof}
    The setup for the notation of the proof is given in Figure \ref{fig:prop:outer-inner-squash-stretch}.

    Let $e_1,e_2 \in V$ be orthogonal, and define $\alpha \coloneqq 1 - r_I^2 - r_{I-1}^2$ and $\beta \coloneqq \frac{2r_I^2 - 1}{2\sqrt{1 - r_I^2 - r_{I-1}^2}}$. Let $c^*$ be the glue site of $\cY$, and put
    \begin{align}
        c_{I-1} \coloneqq c^* + \beta e_1, \quad \text{and} \quad c_{I} \coloneqq c^* + (\alpha +\beta)e_1.
    \end{align}
    Define $Y_I\coloneqq S^{n-1}_{r_{I}}(c_I) \cap \paren{c_I + \Span{e_2}}$. If $r_{I-1} > 0$, let $e_3 \in V$ be orthonormal to $e_1,e_2$ and put $Y_{I-1}\coloneqq S^{n-1}_{r_{I-1}}(c_{I-1}) \cap \paren{c_{I-1} + \Span{e_3}}$. Otherwise put $Y_3 \coloneqq \set{c_{I-1}}$.

    Let $\alpha = \sqrt{1 - r_I^2 - r_{I-1}^2}$ and $\beta = \frac{2r_I^2 - 1}{2\sqrt{1 - r_I^2 - r_{I-1}^2}}$. By direct computation we have that 
    \begin{align}
        r_I^2 + \beta^2 + \paren{\extent \cY }^2 = 1 \quad \text{and} \quad r^2_{I-1} + \paren{\alpha + \beta}^2 = r_I^2 +\beta^2.
    \end{align}
    Then we must verify the following.
    \begin{enumerate}
        \item $\norm{y_I - y_i} = 1$ where $i < I-1$. 
        \item $\norm{y_{I-1} - y_i} = 1$ where $i < I-1$. 
        \item $\norm{y_I - y_{I-1}} = 1$.
    \end{enumerate}

    \begin{enumerate}
        \item Calculating, from orthogonality of the projection and collinearity of $c_I$ and $c_{I-1}$, we have that
        \begin{align}
            \norm{y_I - y_i}^2 &= \norm{y_I - c_I}^2 + \norm{c_I - c^*_{I-1}}^2 + \norm{c^*_{I-1} - y_i}^2\\
            &= r_I^2 + \beta^2 + \paren{\extent \cY }^2\\
            &= 1.
        \end{align}

        \item Again from orthogonality of the projection and collinearity of $c_I$ and $c_{I-1}$, we have that
        \begin{align}
            \norm{y_{I-1} - y_i}^2 &= \norm{y_{I-1} - c_{I-1}}^2 + \paren{\norm{c_{I-1} - c_I} + \norm{c_{I} - c_{I-1}^*}}^2 \nonumber\\
            &+ \norm{c^*_{I-1} - y_i}^2\\
            &= r_{I-1}^2 + \paren{\alpha + \beta}^2 + \paren{\extent \cY }^2\\
            &= r_I^2 + \beta^2+ \paren{\extent \cY }^2\\
            &= 1.
        \end{align}
        \item By orthogonality of all terms,
        \begin{align}
            \norm{y_{I-1} - y_I}^2 &= \norm{y_{I-1} - c_{I-1}}^2 + \norm{c_{I-1} - c_I}^2 + \norm{c_I - y_I}^2\\
            &= r_{I-1}^2 + \alpha^2 + r_{I}^2\\
            &= 1
        \end{align}
    \end{enumerate}
\end{proof}
Proposition \ref{prop:outer-inner-squash-stretch} has the following corollary.

\begin{corollary}[Necessary Conditions on Radius for Maximal Equidistant Spacings]
    \label{cor:nec-conditions-on-radius-for-maximal-equidistant-spacing}
    Let $\cY \in \mps$ be a equidistant spacing with inner class $I$. Then $r_I \geq \frac{\sqrt{2}}2$.
\end{corollary}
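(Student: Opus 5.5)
The plan is to prove the bound directly from the compatibility equation, Eqn. \ref{eqn:center-compatability-condition}, together with the definition of an inner class (Def. \ref{def:inner-class}). I will not need the finer orthocentric machinery. The idea is to set $\lambda_i \coloneqq \tfrac12 - r_i^2$. Then Corollary \ref{cor:properties-of-class-embeddings} gives $\norm{c_i - c_j}^2 = \lambda_i + \lambda_j$ for $i \neq j$, and the goal $r_I \geq \frac{\sqrt 2}2$ becomes $\lambda_I \leq 0$.

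\emph{Step 1: the case $\cY \in \mps_=$.} All centers coincide, so $r_i^2 + r_j^2 = 1$ for all $i \neq j$. If $I \geq 3$, comparing the equations for three distinct classes forces $r_i^2 = \tfrac12$ for every $i$, and we are done. If $I = 2$, every class is inner in the sense of Def. \ref{def:inner-class}, and $r_1^2 + r_2^2 = 1$ only guarantees that \emph{one} of them has radius at least $\frac{\sqrt2}{2}$. So here I would state the claim for a suitable choice of inner class: relabel so that $I$ is the class of larger radius. This degenerate case is the one place where the statement needs care.

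\emph{Step 2: the case $\cY \in \mps_{\neq}$.} By Corollary \ref{cor:dim-of-tilde-A}, $\dim \tilde A = I - 2$, so $I \geq 3$, and by Corollary \ref{cor:if-two-centers-coincide-in-a-maximal-spacing-they-all-do} the centers are pairwise distinct. For $j \neq k$, both different from $I$, the polarization identity gives
\begin{align*}
\innerprod{c_j - c_I}{c_k - c_I} = \tfrac12\paren{\norm{c_j-c_I}^2 + \norm{c_k - c_I}^2 - \norm{c_j - c_k}^2} = \lambda_I .
\end{align*}
Since $I$ is inner, write $c_I = \sum_{j \neq I} \mu_j c_j$ with $\mu_j \geq 0$ and $\sum_j \mu_j = 1$, so that $\sum_{j\neq I}\mu_j (c_j - c_I) = 0$. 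Pick $k$ with $\mu_k > 0$ and take the inner product of this relation with $c_k - c_I$:
\begin{align*}
0 = \mu_k \norm{c_k - c_I}^2 + \lambda_I \sum_{j \neq I,k} \mu_j .
\end{align*}
The first term is strictly positive because $c_k \neq c_I$. Hence $\lambda_I < 0$, since $\lambda_I \geq 0$ would make the right-hand side strictly positive. This gives $r_I > \frac{\sqrt 2}2$, which is stronger than needed.

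The main obstacle is not the computation, which is short, but the bookkeeping in the degenerate situations. One must confirm that $I \geq 3$ in the non-coinciding case, which Corollary \ref{cor:dim-of-tilde-A} supplies. One must also handle the $I = 2$ coinciding case, where "the inner class" is not unique and the claim holds only for the appropriate choice of inner class. Proposition \ref{prop:outer-inner-squash-stretch} could serve as a cross-check, since it only produces inner radii in $[\frac{\sqrt2}2, \cdot)$, but the argument above does not depend on it.
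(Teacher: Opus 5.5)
Your proof is correct, but it takes a different route from the paper's. The paper writes $\cY = \cY|_{1:I-2}\vee(Y_{I-1}\sqcup Y_I)$. It uses Proposition~\ref{prop:centers-are-orthocentric-systems} to place $c_I$ on the altitude from $c_{I-1}$ to $c^*_{I-1}=\proj_{\aff(c_1,\dots,c_{I-2})}c_{I-1}$. It then invokes the construction of Proposition~\ref{prop:outer-inner-squash-stretch}, asserting that its conditions are ``if and only if.'' Concretely, the signed distance from $c^*_{I-1}$ to $c_I$ must be $\beta=(2r_I^2-1)/(2\norm{c_{I-1}-c_I})$, and $c_I$ lying between $c^*_{I-1}$ and $c_{I-1}$ forces $\beta\geq 0$.

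In your notation $\beta=-\lambda_I/\norm{c_{I-1}-c_I}$, so both arguments detect the same quantity. Yours, however, needs only Eqn.~\ref{eqn:center-compatability-condition}, Def.~\ref{def:inner-class} and distinctness of the centers. It avoids three things the paper relies on:
\begin{enumerate}
    \item the orthocentric-system result;
    \item the unjustified passage from ``on the line'' to ``on the segment,'' which needs inner-ness;
    \item the unproved converse of Proposition~\ref{prop:outer-inner-squash-stretch}.
\end{enumerate}
Maximality enters your argument only through distinctness of centers. So it proves the bound for any $\cY\in\ps$ whose inner center differs from the other centers. It also gives the strict bound $r_I>\frac{\sqrt2}2$ on $\mps_{\neq}$ directly, which the paper reaches only afterwards in Corollary~\ref{cor:determination-of-center-coinciding-of-maximal-spacings}.

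Your caveat about the coinciding case points to a real defect in the statement, not just bookkeeping. The paper's proof rests on a result about $\mps_{\neq}$ and never addresses this case. For $I=2$ with coinciding centers, both classes are inner. For example, the maximal spacing $Y_1=\set{\bszero}$, $Y_2=S^{n-1}_1(\bszero)$ (signature $(1,(n))$) has inner class $1$ with radius $0$. The corollary therefore needs $I\geq 3$, or $\cY\in\mps_{\neq}$, or the inner class chosen as the one of larger radius, as you propose.
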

\begin{proof}
    Let $\cY|_{1:I-2} \coloneqq \sqcup_{i = 1}^{I-2} Y_i$. Proposition \ref{prop:outer-inner-squash-stretch} applies to $\cY|_{1 : I-2}$. Note that $\cY|_{1:I-2} \vee \paren{Y_{I-1}\sqcup Y_I} = \cY$. Because $\cY \in \mps$, $c_I \in \overline{c_{I-1} c_{I-1}^*}$ where $c_{I-1}^* \coloneqq \proj_{\aff{\cY|_{1:I-2}}} c_{I-1}$ by Proposition \ref{prop:centers-are-orthocentric-systems}. Proposition \ref{prop:outer-inner-squash-stretch} applies to $\cY|_{1:I-2}$. From the proof of Proposition \ref{prop:outer-inner-squash-stretch}, it is clear that the conditions on $r_I$ and $r_{I-1}$ are if and only if. Therefore, $r_I \geq \frac{\sqrt{2}}2$.
\end{proof}

\begin{corollary}[Determination of Center Coinciding of Maximal Spacings]
    \label{cor:determination-of-center-coinciding-of-maximal-spacings}
    Let $\cY \in \mps(I)$ for $I > 2$. Then the centers of $\cY$ coincide if and only if there exists a class $i$ so that $r_i = \frac{\sqrt{2}}2$. The centers of $\cY$ do not coincide if and only if there exists a class $i$ so that $r_i > \frac{\sqrt{2}}2$.
\end{corollary}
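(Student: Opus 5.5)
The plan is to prove the two biconditionals together. By Corollary \ref{cor:if-two-centers-coincide-in-a-maximal-spacing-they-all-do}, $\mps = \mps_= \sqcup \mps_{\neq}$. So it suffices to show two things:
\begin{itemize}
\item[(a)] If $\cY \in \mps_=$, then some $r_i = \frac{\sqrt2}2$ and no $r_i > \frac{\sqrt2}2$.
\item[(b)] If $\cY \in \mps_{\neq}$, then some $r_i > \frac{\sqrt2}2$ and no $r_i = \frac{\sqrt2}2$.
\end{itemize}
Once (a) and (b) hold, each direction of both equivalences follows by contraposition, using the dichotomy.

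For (a), suppose all centers coincide. Then Eqn. \ref{eqn:center-compatability-condition} gives $r_i^2 + r_j^2 = 1$ for every pair $i \neq j$. Since $I > 2$, take three distinct classes $i,j,k$. The relations $r_i^2 + r_j^2 = r_i^2 + r_k^2 = r_j^2 + r_k^2 = 1$ force $r_i^2 = r_j^2 = r_k^2 = \frac12$. Running $k$ over all other classes shows every radius equals $\frac{\sqrt2}2$. In particular some radius equals $\frac{\sqrt2}2$ and none exceeds it.

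For (b), Lemma \ref{lem:existence-of-inner-center} gives a unique inner class, which we relabel as $I$, and Corollary \ref{cor:nec-conditions-on-radius-for-maximal-equidistant-spacing} gives $r_I \ge \frac{\sqrt2}2$. The main obstacle is to upgrade this to a strict inequality and to exclude $r_j = \frac{\sqrt2}2$ for every $j$. I would use the orthocentric structure from Proposition \ref{prop:centers-are-orthocentric-systems} and Proposition \ref{prop:equiv-conditions-for-orthocentricity}. From $\norm{c_i - c_j}^2 = 1 - r_i^2 - r_j^2$, set $\lambda_i := \frac12 - r_i^2$, so that $\norm{c_i - c_j}^2 = \lambda_i + \lambda_j$.

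For a nondegenerate orthocentric system, I expect the representation $\norm{x_i - x_j}^2 = \lambda_i + \lambda_j$ to be unique. It should satisfy $\sum_i 1/\lambda_i = 0$ with every $\lambda_i \neq 0$; in Eqn. \ref{eqn:carycentric-coordinates-orthocentric-system} the reciprocals must be defined. I also expect exactly one $\lambda_i < 0$, attained at the inner (orthocenter) point. Uniqueness of this representation, given $I \ge 3$ distinct, affinely spanning points, is what I would need to check carefully.

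Granting it, $\lambda_i \neq 0$ for all $i$ gives $r_i \neq \frac{\sqrt2}2$ for every $i$. Moreover, $\sum 1/\lambda_i = 0$ forces some $\lambda_i < 0$, which is exactly $r_i > \frac{\sqrt2}2$.

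As a fallback that avoids citing uniqueness, I would argue directly. Suppose $r_j = \frac{\sqrt2}2$, so $\lambda_j = 0$. Then $\norm{c_j - c_k}^2 = \lambda_k$ for all $k \neq j$, and expanding $\norm{c_k - c_l}^2$ gives $\langle c_k - c_j, c_l - c_j\rangle = 0$ for distinct $k,l \neq j$. This makes $I-1$ mutually orthogonal nonzero vectors inside $\tilde A$, whose dimension is $I-2$ by Corollary \ref{cor:dim-of-tilde-A}. That is a contradiction.

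For the existence of a radius above $\frac{\sqrt2}2$, suppose instead that every $\lambda_k > 0$. Then the Gram matrix of the vectors $c_k - c_I$, namely $\lambda_I \mathbf{1}\mathbf{1}^T + \mathrm{diag}(\lambda_k)_{k\neq I}$, is positive definite. So the $I-1$ vectors $c_k - c_I$ are linearly independent, again contradicting $\dim \tilde A = I-2$. Hence some $r_i > \frac{\sqrt2}2$, which completes (b).
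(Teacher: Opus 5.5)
Your argument is correct, but it reaches the two substantive implications by a different route than the paper. Part (a) coincides with the paper's argument: the $3\times 3$ system forces every radius to be $\frac{\sqrt2}{2}$ when the centers coincide.

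For the other implications the paper argues geometrically and through the inner class.
\begin{itemize}
\item If $r_i=\frac{\sqrt2}{2}$, then $\|c_i-y_j\|^2=\frac12$ for all $y_j$ with $j\neq i$, so $\angle y_j c_i y_k$ is a right angle. The point $2c_i-y_k$ can then be adjoined to $Y_k$ without breaking equidistance, and maximality puts it in $Y_k$. Hence $c_i\in A_k$ and $c_k=c_i$.
\item For non-coinciding centers, it takes the inner class and gets $r_I\ge\frac{\sqrt2}{2}$ from Corollary \ref{cor:nec-conditions-on-radius-for-maximal-equidistant-spacing}. It then excludes equality by the reflection step.
\end{itemize}

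You instead work only with the compatibility data $\|c_i-c_j\|^2=\lambda_i+\lambda_j$. Your two steps are really one computation: the Gram matrix of $\{c_k-c_j\}_{k\neq j}$ is $\lambda_j\mathbf{1}\mathbf{1}^T+\mathrm{diag}(\lambda_k)_{k\neq j}$. It is positive definite when $\lambda_j=0$, since it is then the diagonal of squared nonzero distances. It is also positive definite when all $\lambda_k>0$. Either case contradicts $\dim\tilde A=I-2$ from Corollary \ref{cor:dim-of-tilde-A}.

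What your route buys is independence from Corollary \ref{cor:nec-conditions-on-radius-for-maximal-equidistant-spacing}. The paper's proof of that corollary leans on the construction in Proposition \ref{prop:outer-inner-squash-stretch} together with an asserted, unproved converse of it. Your route also never needs to identify the inner class. It uses only two facts:
\begin{itemize}
\item the dimension count, equivalently affine independence of any $I-1$ centers (Corollary \ref{cor:leave-out-rank-and-max-spacing});
\item pairwise distinctness of the centers (Corollary \ref{cor:if-two-centers-coincide-in-a-maximal-spacing-they-all-do}).
\end{itemize}
Distinctness is what actually justifies ``nonzero'' in your orthogonality step. The split into coinciding and non-coinciding centers is just the definition. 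In exchange, the paper's reflection argument uses maximality directly rather than through a dimension formula. It also yields the extra fact that every other class is centrally symmetric about $c_i$.

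Since your fallback is complete, you can drop the opening of (b): the inner class, Corollary \ref{cor:nec-conditions-on-radius-for-maximal-equidistant-spacing}, and uniqueness of the $\lambda$-representation. That uniqueness is in any case immediate for $I\ge 3$, via $\lambda_i=\frac12\bigl(\|c_i-c_j\|^2+\|c_i-c_k\|^2-\|c_j-c_k\|^2\bigr)$.
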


\begin{figure}
    \centering
    \includegraphics[width=0.8\linewidth]{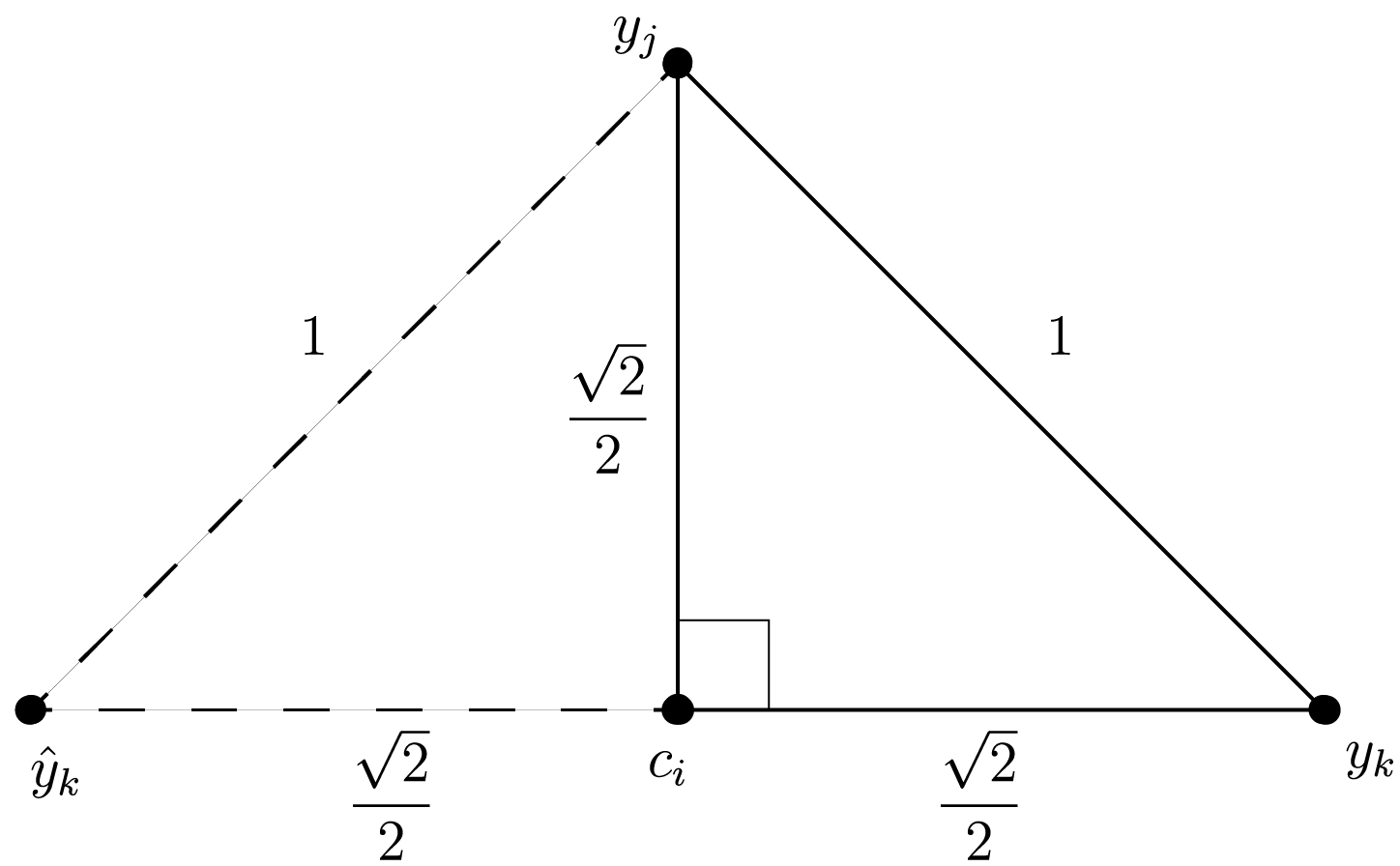}
    \caption{A sketch of the triangles $\bigtriangleup c_iy_jy_k$ and $\bigtriangleup c_iy_j\hat y_k$ used in the proof of Corollary \ref{cor:determination-of-center-coinciding-of-maximal-spacings}.}
    \label{fig:cor:determination-of-center-coinciding-of-maximal-spacings}
\end{figure}

\begin{proof}
    Let $i,j,k$ be distinct. Let $i$ be such that $r_i = \frac{\sqrt{2}}2$. Then $\norm{c_i - y_j} = \norm{c_i - y_k} = \frac{\sqrt{2}}2$. So, by $\norm{y_j - y_k} = 1$, the triangle $\bigtriangleup c_iy_jy_k$ is right. See Figure \ref{fig:cor:determination-of-center-coinciding-of-maximal-spacings}, for an illustration. If we define $\hat y_k \coloneqq 2c_i - y_k$ (the reflection of $y_k$ across the center $c_i$) then the triangle $\bigtriangleup c_iy_j\hat y_k$ is a congruent triangle, and so $\norm{\hat y_k - y_j} = 1$, and so $\hat y_k \in Y_k$ by maximality of $Y_k$. Therefore, $c_i \in \aff{Y_k}$, and so $c_k = c_i$ by Def. \ref{def:center-and-radius}. Therefore, all centers coincide.

    Let $\cY$ be such that all centers coincide at point $c$ and let $i,j,k$ be distinct. Then by orthogonality and Pythagorean's theorem, we have that the 
    \begin{align}
        \label{eqn:center-coinciding-matrix}
        \pmat{1&1&0\\1&0&1\\0&1&1} \pmat{\norm{y_i - c}^2\\\norm{y_j - c}^2\\\norm{y_k - c}^2} = \pmat{1\\1\\1}.
    \end{align}
    This system has only one solution, $\norm{y_i - c}^2 = \norm{y_j - c}^2 = \norm{y_k - c}^2 = \frac12$.

    Let $i$ be such that $r_i > \frac{\sqrt{2}}2$. Then the centers of $\cY$ must not coincide. If they did, this would yield a second solution to \ref{eqn:center-coinciding-matrix}.

    By applying Corollary \ref{cor:nec-conditions-on-radius-for-maximal-equidistant-spacing} to $\cY$, it must be that $r_i \neq \frac{\sqrt2}2$, otherwise the centers would coincide. Therefore, $r_i > \frac{\sqrt2}2$.
\end{proof}

\begin{corollary}[Characterization of Inner Class]
    \label{cor:characterization-of-inner-class}
    Let $\cY \in \mps_{\neq}$. Then the following are equivalent
    \begin{enumerate}
        \item Class $i$ is the inner class of $\cY$.
        \item Class $i$ satisfies $r_i > \frac{\sqrt{2}}2$.
        \item Class $i$ solves $i = \argmax_{j \in 1,\dots,I} r_j$.
    \end{enumerate}
\end{corollary}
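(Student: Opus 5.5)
The plan is to obtain all three equivalences from the two preceding corollaries, the existence lemma for the inner class, and the pairwise radius bound $r_i^2 + r_j^2 \leq 1$ of Corollary \ref{cor:properties-of-class-embeddings}. I will not use the $\lambda$-coordinates of orthocentric systems directly, although they give the same picture: $\lambda_i = \tfrac12 - r_i^2$ is negative for exactly one index. Throughout, $\cY \in \mps_{\neq}$, so $I > 2$ and Corollary \ref{cor:determination-of-center-coinciding-of-maximal-spacings} applies. (For $I = 2$ the two centers of a maximal spacing coincide, as implicit in the paper's constructions, so that case does not arise.)

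\emph{(1) $\Rightarrow$ (2).} Let $i$ be the inner class, which exists and is unique by Lemma \ref{lem:existence-of-inner-center}. Corollary \ref{cor:nec-conditions-on-radius-for-maximal-equidistant-spacing} gives $r_i \geq \frac{\sqrt2}{2}$. If equality held, Corollary \ref{cor:determination-of-center-coinciding-of-maximal-spacings} would force all centers to coincide, contradicting $\cY \in \mps_{\neq}$. Hence $r_i > \frac{\sqrt2}{2}$.

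\emph{(2) $\Rightarrow$ (1) and (2) $\Leftrightarrow$ (3).} The key observation is that at most one class can have radius exceeding $\frac{\sqrt2}{2}$. Indeed, if $r_i > \frac{\sqrt 2}{2}$ and $j \neq i$, then $r_j^2 \leq 1 - r_i^2 < \frac12$, so $r_j < \frac{\sqrt2}{2} < r_i$.

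For (2) $\Rightarrow$ (1): by the previous step the inner class has radius greater than $\frac{\sqrt2}{2}$. By the observation, it is the only such class, so any $i$ with $r_i > \frac{\sqrt2}{2}$ is the inner class.

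For (2) $\Rightarrow$ (3): the same inequality shows $r_i$ is strictly larger than every other radius, so $i$ is the unique argmax.

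For (3) $\Rightarrow$ (2): let $i$ be an argmax and let $k$ be the inner class. Then $r_i \geq r_k > \frac{\sqrt2}{2}$.

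The main obstacle is the strictness in (1) $\Rightarrow$ (2), namely excluding $r_i = \frac{\sqrt2}{2}$ for the inner class. This rests entirely on Corollary \ref{cor:determination-of-center-coinciding-of-maximal-spacings}, whose first part (radius $\frac{\sqrt2}{2}$ forces all centers to coincide, via the reflection argument) I take as given. If that felt shaky, I would fall back on the orthocentric-system description. Set $\lambda_j = \frac12 - r_j^2$, so that $\norm{c_j - c_k}^2 = \lambda_j + \lambda_k$ by Eqn. \ref{eqn:center-compatability-condition}. Proposition \ref{prop:centers-are-orthocentric-systems} and Proposition \ref{prop:equiv-conditions-for-orthocentricity} give $\sum_j 1/\lambda_j = 0$ with all $\lambda_j$ nonzero. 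Then some $\lambda_j$ must be negative, and that negative coordinate is exactly the one attached to the point lying in the convex hull of the others.
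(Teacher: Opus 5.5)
Your proposal is correct and follows essentially the paper's route. Your (1)$\Rightarrow$(2) is exactly the final step in the proof of Corollary \ref{cor:determination-of-center-coinciding-of-maximal-spacings}: Corollary \ref{cor:nec-conditions-on-radius-for-maximal-equidistant-spacing} combined with the fact that $r_i = \frac{\sqrt2}{2}$ forces the centers to coincide. Your observation that $r_i^2 + r_j^2 \leq 1$ allows at most one radius above $\frac{\sqrt2}{2}$ is the ``Pythagorean theorem'' step the paper cites for (2)$\Leftrightarrow$(3). You are in fact more careful than the paper, which leaves implicit that (2)$\Rightarrow$(1) needs this uniqueness together with the existence of the inner class; you state it explicitly.
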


\begin{proof}
    The proof of Corollary \ref{cor:determination-of-center-coinciding-of-maximal-spacings} shows that 1. and 2. are equivalent. The equivalence of points 2. and 3. follows from Pythagorean theorem.
\end{proof}

We now present two helper lemmas which will be useful in a theorem to come.

\begin{lemma}[Existence of Solution to Eqn. \ref{eqn:prop:outer-inner-squash-stretch:extent-condition}]
    \label{lem:existence-of-soln-to-extent-condition}
    For a given $\cY$ so that $\extent \cY \leq \frac{\sqrt 2}2$ let $r_{I-1} \in [0,\frac{\sqrt 2}2)$. Then there is a $r_I \in [\frac{\sqrt{2}}2,\sqrt{1 - r_{I-1}^2})$ so that Eqn. \ref{eqn:prop:outer-inner-squash-stretch:extent-condition} is satisfied.
\end{lemma}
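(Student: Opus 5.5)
The plan is a continuity argument: view the right-hand side of Eqn. \ref{eqn:prop:outer-inner-squash-stretch:extent-condition} as a function of $r_I$ and apply the intermediate value theorem on the allowed interval. Fix $r_{I-1} =: s \in [0,\frac{\sqrt2}2)$ and write $e \coloneqq \extent \cY$, so that $e^2 \in [0,\frac12]$. Define
\begin{align*}
    g(t) \coloneqq 1 - t^2 - \frac{(2t^2-1)^2}{4(1 - t^2 - s^2)}, \qquad t \in \left[\tfrac{\sqrt2}2, \sqrt{1-s^2}\right).
\end{align*}
It suffices to find a $t$ in this interval with $g(t) = e^2$. We then take $r_I \coloneqq t$.

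\emph{Step 1: $g$ is well defined and continuous.} On the half-open interval we have $t^2 < 1 - s^2$. Hence the denominator $4(1-t^2-s^2)$ is strictly positive, and $g$ is a rational function of $t$ with nonvanishing denominator. So $g$ is continuous on the whole interval. The interval is nonempty because $s < \frac{\sqrt2}2$ gives $\sqrt{1-s^2} > \frac{\sqrt2}2$.

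\emph{Step 2: value at the left endpoint.} At $t = \frac{\sqrt2}2$ the numerator $(2t^2-1)^2$ vanishes, so $g(\frac{\sqrt2}2) = \frac12 \geq e^2$. If $e^2 = \frac12$, we are already done with $r_I = \frac{\sqrt2}2$.

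\emph{Step 3: behaviour at the right endpoint.} As $t \to \sqrt{1-s^2}^-$, the denominator tends to $0^+$. The numerator tends to $(2(1-s^2)-1)^2 = (1-2s^2)^2$, which is strictly positive precisely because $s < \frac{\sqrt2}2$. The remaining term $1 - t^2$ stays bounded. Hence $g(t) \to -\infty$, and in particular $g(t_0) < 0 \le e^2$ for some $t_0$ in the interval.

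\emph{Step 4: conclusion.} By the intermediate value theorem on $[\frac{\sqrt2}2, t_0]$, there is a $t$ in $[\frac{\sqrt2}2, t_0] \subset [\frac{\sqrt2}2,\sqrt{1-s^2})$ with $g(t) = e^2$. This $r_I = t$ satisfies Eqn. \ref{eqn:prop:outer-inner-squash-stretch:extent-condition}.

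The only delicate point is Step 3. There one must use the strict inequality $r_{I-1} < \frac{\sqrt2}2$ to ensure the numerator does not also vanish at the endpoint, since the blow-up to $-\infty$ depends on it. This step is also where the hypothesis on $r_{I-1}$ is used; the hypothesis $\extent\cY \le \frac{\sqrt2}2$ is used only in Step 2.
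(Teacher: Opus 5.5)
Your proposal is correct and follows essentially the same route as the paper: both apply the intermediate value theorem to the right-hand side of the extent condition, using its value $\frac12 \ge (\extent \cY)^2$ at $r_I = \frac{\sqrt2}2$ and its blow-up to $-\infty$ as $r_I^2 \to 1 - r_{I-1}^2$. Your version is somewhat more careful than the paper's. It makes explicit the continuity on the half-open interval, and it notes that $r_{I-1} < \frac{\sqrt2}2$ is what keeps the numerator positive at the right endpoint.
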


\begin{proof}
    When $r_{I-1}^2 \in [\frac12,1 - r_I^2)$, the constraints on $r_I$ and $r_{I-1}$ are satisfied. 
    The idea is to regard $r_{I-1}$ as fixed and use the intermediate value theorem on the function 
    \begin{align}
        f(r) \coloneqq 1 - r^2 - \frac{\paren{2r^2 - 1}^2}{4\paren{1 - r^2 - r_{I-1}^2}} - \gamma
    \end{align}
    where $\gamma \coloneqq \paren{\extent \cY}^2 \leq \frac12$. Note that Eqn. \ref{eqn:prop:outer-inner-squash-stretch:extent-condition} is satisfied when $f(r) = 0$.
    When $r_I^2  = \frac 12$, then
    \begin{align}
        f(r) &= 1 - r^2 - \frac{\paren{2r^2 - 1}^2}{4\paren{1 - r^2 - r_{I-1}^2}} - \gamma \\
        &= 1 - \frac12 - \frac{\paren{2\frac12 - 1}^2}{4\paren{1 - \frac12 - r_{I-1}^2}} - \gamma\\
        &= \frac12 - \gamma \geq 0.
    \end{align}
    When sending $r^2 \to 1 - r_{I-1}^2$ yields 
    \begin{align}
        \lim_{r^2 \to 1 - r_{I-1}^2}f(r) &= \lim_{r^2 \to 1 - r_{I-1}^2} 1 - r^2 - \frac{\paren{2r^2 - 1}^2}{4\paren{1 - r^2 - r_{I-1}^2}}-\gamma\\
        &= 1 - \paren{1 - r_{I-1}^2} - \frac{\paren{2\paren{1 - r_{I-1}^2} - 1}^2}{\lim_{r^2 \to 1 - r_{I-1}^2} 4\paren{1 - r^2 - r_{I-1}^2}}-\gamma\\
        &= -\infty.
    \end{align}
    Hence, $f(r) = 0$ has a solution.
\end{proof}

\begin{corollary}[Calc. III]
    \label{cor:calc-3}
    Let $\cY$ have centers at $\set{c_i}_{i = 1}^k = E_{\ell,k}$ for some $k$, and $\ell^2 = 1 - 2r^2$ for some $r$. Then 
    \begin{align}
        \extent \cY^2 = \frac{2r^2 + k}{2(k+1)}.
    \end{align}
\end{corollary}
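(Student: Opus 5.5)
The plan is to find the glue site of $\cY$ explicitly and then compute the distance from it to an arbitrary point of $\abs\cY$ using the orthogonality results of Section~\ref{sec:equidistant-spacings-center-and-radii}. I read $E_{\ell,k}$ as the vertex set of an equilateral $k$-simplex with side length $\ell$, so $k+1$ vertices. This reading is consistent with Corollary~\ref{cor:extent-of-equidistant-spacing}: setting $r=0$, $\ell=1$, $k=n$ recovers $\frac{n}{2(n+1)}$. Note that the displayed formula holds in this reading only when the centers are indexed by $k+1$ classes, not the $k$ written in the statement.

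First I determine the radii. By Eqn.~\ref{eqn:center-compatability-condition}, $\ell^2 = \norm{c_i-c_j}^2 = 1-r_i^2-r_j^2$ for all $i\neq j$. With at least three centers, subtracting these relations pairwise forces all $r_i$ to be equal. The common value is then $r$, by the hypothesis $\ell^2 = 1-2r^2$.

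Next I identify the glue site. Let $c$ be the centroid (equivalently the circumcenter) of the simplex $E_{\ell,k}$, so $c\in\aff(c_1,\dots,c_{k+1})$. For $y\in Y_i$, Proposition~\ref{prop:tri-part-ortho} gives $y-c_i \perp \aff(c_1,\dots,c_{k+1})$, and in particular $y - c_i \perp c_i - c$. Pythagoras then yields
\begin{align*}
\norm{y-c}^2 = r^2 + \norm{c_i-c}^2 = r^2 + R^2,
\end{align*}
where $R$ is the circumradius of $E_{\ell,k}$. This quantity is independent of $i$ and of $y$, so $\abs\cY\subset S_{\rho}(c)$ with $\rho^2=r^2+R^2$, and $\abs\cY\in\ps(1)$.

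To see that $c$ is actually the glue site, that is, the center in the sense of Definition~\ref{def:center-and-radius}, I would use point 2 of Theorem~\ref{thm:projection-theorem}. It suffices that $c\in\aff\abs\cY$, which holds because each $c_i\in A_i\subset\aff\abs\cY$ and $c$ is an affine combination of the $c_i$. Uniqueness of the minimal-radius center then identifies $c$ with the glue site. I expect this to be the only delicate step, since one must be sure that the glue site is the minimal sphere center inside $\aff\abs\cY$ and not merely some center of a sphere containing $\abs\cY$.

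Finally I compute. The circumradius of an equilateral $k$-simplex of side $\ell$ satisfies $R^2=\frac{k}{2(k+1)}\ell^2$. Hence
\begin{align*}
\extent\cY^2 = r^2 + \frac{k(1-2r^2)}{2(k+1)} = \frac{2(k+1)r^2 + k - 2kr^2}{2(k+1)} = \frac{2r^2+k}{2(k+1)},
\end{align*}
which is the claimed formula.
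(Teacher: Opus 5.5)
Your proposal is correct and is essentially the paper's proof (equal radii, glue site at the circumcenter of the centers, Pythagoras via Proposition \ref{prop:tri-part-ortho}, and circumradius $R^2=\frac{k}{2(k+1)}\ell^2$), with two improvements: you justify via Theorem \ref{thm:projection-theorem} that the circumcenter really is the glue site, which the paper takes for granted, and you correctly observe that the formula needs $k+1$ centers, which is exactly the vertex count for which the value quoted from Proposition \ref{prop:circumcenter-of-equi-simplex} is right (with literally $k$ centers one gets $\frac{2r^2+k-1}{2k}$). One caveat remains, and the paper's assertion that ``the conditions imply that $r_i=r$'' shares it: with only two centers, $r_1^2+r_2^2=2r^2$ does not force $r_1=r_2$, and for $\ell>0$ the extent squared is then $\frac{1-4r_1^2r_2^2}{4(1-2r^2)}$, which equals $\frac{2r^2+1}{4}$ only when $r_1=r_2$, so equal radii must be assumed in that case.
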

\begin{proof}
    Note the conditions imply that $r_i = r$ is constant.

    We may apply Proposition \ref{prop:circumcenter-of-equi-simplex}, we have that $\norm{c_i - c^*}^2 = \ell^2\frac{k}{2(k+1)}$ where $c^*$ denotes the glue site of $\cY$. By Proposition \ref{prop:tri-part-ortho}, 
    \begin{align}
        \norm{y - c^*}^2 &= \norm{y - c_i}^2 + \norm{c_i - c^*}^2\\
        &= r^2 + (1 - 2r^2)\frac{k}{2(k+1)}\\
        &=\frac{2r^2 + k}{2(k+1)}.
    \end{align}
\end{proof}

\begin{lemma}[Squash and Stretches of Positive-Radius Equidistant Spacings with an Inner Class]
    \label{lem:squash-stretches-of-maximal-positive-radius-equidistant-spacings}
    Let $\cY \in \ps(I)$ have an inner class and positive radii. For any $r \in \paren{0,\frac{\sqrt{2}}2}$, there is a squash and stretch isometry $f$ of $\cY$ so that the radius of the inner class of $f\cY$ is 
    $\sqrt{\frac{I+1-2r^2}{2I}}$ 
    and the radii of all other classes of $f\cY$ is $r$.
\end{lemma}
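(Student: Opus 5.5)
The plan is to prescribe the target centers and radii directly, check them against the compatibility equation (Eqn. \ref{eqn:center-compatability-condition}), and then build $f$ class by class: send each old center to its new center and dilate each $V_i$ by the ratio of new to old radius. Corollary \ref{cor:characterization-of-equidistant-spacings}, via Theorem \ref{thm:radius-recipe}, then shows that $f\cY$ is an equidistant spacing.

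\textbf{Target configuration.} Label the classes so that $I$ is the inner class and $1,\dots,I-1$ are the outer ones. Set $\ell^2 \coloneqq 1-2r^2$, which is positive because $r<\frac{\sqrt2}2$. Put new outer centers $c_1',\dots,c_{I-1}'$ at the vertices of a regular simplex $E_{\ell,I-2}$ with edge $\ell$, inside the affine span $\tilde A\coloneqq\aff(c_1,\dots,c_I)$. Put the new inner center $c_I'$ at the circumcenter of that simplex. For the compatibility equation to hold we need:
\begin{enumerate}
\item $\norm{c_i'-c_j'}^2=1-2r^2$ for distinct outer classes, which holds by construction;
\item $\norm{c_I'-c_i'}^2 = 1-r^2-R^2$, where $R$ is the new inner radius.
\end{enumerate}
By Proposition \ref{prop:circumcenter-of-equi-simplex}, as already used in Corollary \ref{cor:calc-3}, the squared circumradius is $\ell^2$ times an explicit rational function of the number of vertices. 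Solving the second condition for $R^2$ should give $R^2=\frac{I+1-2r^2}{2I}$.

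A quick consistency check worries me here. If $\cY$ has $I$ classes in total, the outer centers form a simplex with $I-1$ vertices. Matching the stated formula instead seems to require $I$ outer vertices, i.e.\ $I$ counting the outer classes. I would first fix that indexing convention. The rest of the argument does not depend on it, since in either reading the radius is forced by the circumradius identity.

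Next I would check the side conditions. We have $R^2>\tfrac12$ and $r^2+R^2\le 1$, so $R\in[\frac{\sqrt2}2,\sqrt{1-r^2})$, matching Corollary \ref{cor:characterization-of-inner-class}. The centers $c_i'$ are pairwise distinct because $\ell>0$ and the circumcenter is not a vertex.

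\textbf{Dimensions and orthogonality.} The $V_i$ must remain orthogonal to the span of the new centers, so the new centers have to lie in $\tilde A$ (translated if necessary). The inner center lies in the convex hull of the outer ones, so the $I-1$ outer centers already affinely span $\tilde A$, and $\dim\tilde A=I-2$. The regular simplex $E_{\ell,I-2}$ has the same dimension, so it can be placed inside $\tilde A$. Hence $\aff(c_1',\dots,c_I')=\tilde A$. The original $V_i$ are pairwise orthogonal and orthogonal to $\tilde A$ by Corollary \ref{cor:properties-of-class-embeddings}, so hypothesis 3 of Theorem \ref{thm:radius-recipe} holds for the new centers with the same $V_i$.

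\textbf{The map $f$.} On each $A_i=c_i+V_i$ define
\begin{align*}
f(c_i+v)\coloneqq c_i'+\tfrac{r_i'}{r_i}v,
\end{align*}
where $r_i'$ is the new radius ($r$ or $R$), and extend $f$ arbitrarily off $\bigcup_i A_i$. Positivity of all radii is used exactly here: each $r_i>0$, so every $V_i$ is nontrivial and every dilation factor $r_i'/r_i$ is finite and nonzero. This matches part 2 of Definition \ref{def:squash-stretch}, and injectivity on the centers is part 1. Then $f(Y_i)\subset S^{n-1}_{r_i'}(c_i')\cap(c_i'+V_i)$, and the Pythagorean computation in the proof of Theorem \ref{thm:radius-recipe} gives $\norm{f(y_i)-f(y_j)}=1$. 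So $f\cY\in\ps$ with the prescribed radii.

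\textbf{Main obstacle.} I expect the only genuinely delicate step to be the circumradius bookkeeping that identifies $R$: the simplex-dimension convention in Proposition \ref{prop:circumcenter-of-equi-simplex} and the count of outer vertices. Placing the simplex inside $\tilde A$ is the other point needing care, and it relies on the inner center being in the convex hull so that the outer centers already span $\tilde A$.
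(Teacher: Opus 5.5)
Your indexing worry is justified, and you should resolve it rather than defer it. If $I$ counts all classes of $\cY$, there are $I-1$ outer centers. A regular simplex with $I-1$ vertices and edge $\ell$ has circumradius $\rho$ with $\rho^2=\ell^2\frac{I-2}{2(I-1)}$. Your compatibility condition then gives
\[
R^2 \;=\; 1-r^2-(1-2r^2)\frac{I-2}{2(I-1)} \;=\; \frac{I-2r^2}{2(I-1)} .
\]
This equals the stated value only after replacing $I$ by $I-1$, that is, when $I$ counts the \emph{outer} classes. As a check, take $I=3$. The inner center is the midpoint of the outer segment, so $R^2=1-r^2-\frac{1-2r^2}{4}=\frac{3-2r^2}{4}$, not $\frac{4-2r^2}{6}$.

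Be careful with Proposition \ref{prop:circumcenter-of-equi-simplex}. Its value $\ell^2\frac{k}{2(k+1)}$ is the circumradius of a simplex with $k+1$ vertices, so plugging in $k=I-1$ literally reproduces the off-by-one formula. The paper's own proof reaches the stated value the same way. It applies Corollary \ref{cor:extent-of-equidistant-spacing} with $n=I-1$, which concerns an $I$-vertex simplex, although only $I-1$ outer centers exist.

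One step needs a further hypothesis. The inner center lying in the convex hull of the outer ones only shows that the outer centers \emph{span} $\tilde A$. To get $\dim\tilde A=I-2$, so that $I-1$ mutually equidistant points fit inside $\tilde A$, you need the outer centers to be affinely independent. This requires pairwise distinct centers, and without some such assumption the lemma is false. If $c_1=\dots=c_I$, every class is inner, and any squash and stretch sends all centers to the single point $f(c_1)$. For $I\geq 3$, coinciding centers force every radius to be $\frac{\sqrt2}2$, so radius $r$ cannot be reached.

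The paper's proof makes the same tacit assumption (``because the centers of $\cY$ do not coincide''), and the intended setting is $\cY\in\mps_{\neq}$. There, $\dim\tilde A=I-2$ is Corollary \ref{cor:dim-of-tilde-A}. Distinct centers also make your piecewise $f$ well defined. A common point of $A_i$ and $A_j$ would put $c_i-c_j$ in $V_i\oplus V_j$, which is orthogonal to $\tilde A$, forcing $c_i=c_j$.

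With these repairs your argument is correct, and it takes a different route from the paper. The paper builds $f$ as a composition of $I-1$ inner--outer squash and stretches, one per outer class $j$:
\begin{enumerate}
\item It views $\cY$ as the gluing of the other outer classes with $Y_j\sqcup Y_I$.
\item It uses the intermediate-value argument of Lemma \ref{lem:existence-of-soln-to-extent-condition} to find an inner radius for which Proposition \ref{prop:outer-inner-squash-stretch} yields $r_j'=r$.
\item It moves $c_j$ and $c_I$ along the line through the glue site of the remaining classes.
\end{enumerate}
Only at the end does it observe that the outer centers form a regular simplex and read off the inner radius.

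You instead prescribe the final centers and radii in one shot and verify them with Theorem \ref{thm:radius-recipe}, keeping the $V_i$ fixed. This is legitimate, because Definition \ref{def:squash-stretch} only asks for injectivity on centers and a nonzero dilation on each $V_i$. Your route is shorter and fully explicit. It avoids the extent calculus and the bookkeeping of how each inner--outer move changes the inner class for the next one. The paper's route additionally exhibits the isometry as a product of inner--outer moves, which is the generating viewpoint it uses in Section \ref{sec:canonical-form}. Both end with the same computation: equal outer radii force a regular outer simplex, whose circumcenter is the inner center.
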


\begin{proof}
    The idea behind the proof is to give a sequence of inner-outer squash and stretches that each leave $Y_j$ with the correct radius and is the identity on other outer classes. The overall squash and stretch is then given by the composition of these inner-outer squash and stretches.

    W.l.o.g., let $I$ denote the center class of $\cY$ and $j \neq I$. By using the same trick as in Corollary \ref{cor:nec-conditions-on-radius-for-maximal-equidistant-spacing}, we may consider $\cY = \cY|_{1:j-1,j+1:I-1} \vee \cY|_j \vee \cY_I$ where $\cY|_{1:j-1,j+1:I-1}\coloneqq \sqcup_{i \in \set{1:j-1,j+1:I-1}} Y_i$, $\cY|_j \coloneqq \set{Y_j}$ and $\cY|_I \coloneqq \set{Y_I}$. By inner-ness of class $I$, we have that $c_I \in \overline{c_jc^*_j}$ where $c^*_j \coloneqq \proj_{\aff{\cY_{1:j-1,j+1:I-1}}} c_j$. 
    
    This calculation proceeds by explicitly constructing a squash and stretch $f_j$ of $Y_j$ and $Y_I$ so that $f_jY_j$ has the correct radius. In this calculation, $r_j$ and $r_I$ (resp. $c_j$ and $c_I$) denote the radii (resp. centers) of $\cY$ \emph{before} $f_j$ has been applied, and $r_j'$ and $r_I'$ (resp. $c_j'$ and $c_I'$) denote the radii (resp. centers) of $f_jY_j$, and $f_jY_I$. That is, after the squash and stretch has been applied. 
    
    We have that $\extent \cY|_{I-1,I} > \frac{\sqrt 2}2$, hence $\extent \cY|_{1:j-1,j+1:I-1} < \frac{\sqrt 2}2$. For a given $r'_i \in (0,\frac{\sqrt{2}}2)$, by Lemma \ref{lem:existence-of-soln-to-extent-condition}, there is a $r_I' \in [\frac{\sqrt 2}2 , \sqrt{1 - r_i'{}^2})$ so that Proposition \ref{prop:outer-inner-squash-stretch} applies where %
    \begin{align}
        c_j' \coloneqq c^* + \paren{\alpha +  \beta} e, \quad c_I' \coloneqq c^* + \beta e, \quad 
    \end{align}
    and $\alpha \coloneqq \sqrt{1 - r_I'{}^2 - r_i'{}^2}$, $\beta \coloneqq \frac{2r_I'{}^2 - 1}{2\sqrt{1 - r_I'{}^2 - r_i'{}^2}}$, $c^*$ is the glue site of $\cY|_{1:j-1,j+1:I-1}$, and $e = \frac{c_j - c_I}{\norm{c_j - c_I}}$. Notice that $\norm{c_j - c_I} \neq 0$ because the centers of $\cY$ do not coincide. 
    
    Let $f_{j}$ be the squash and stretch that leaves classes $Y_i$ fixed when $i \neq j, I$, and
    \begin{align}
        f_{j}c_j = c'_j, \quad f_{j}c_I = c'_I,
    \end{align}
    dilates class $j$ by factor $\frac{r}{r_j}$ and class $I$ by factor $\frac{r'_I}{r_I}$. $f_{j}$ is a squash and stretch by construction. Note that $f_jY_{j} \sqcup f_jY_I \in \ps$ follows from Proposition \ref{prop:outer-inner-squash-stretch} and the $j$'th class of $f_j\cY$ has radius $r$.

    Do this construction for each $j = 1,\dots,I_1$ and define
    \begin{align}
        f \coloneqq f_{1} f_{2}\dots f_{I-1}, \quad \cY' \coloneqq f \cY.
    \end{align}
    Then $f$ is a squash and stretch isometry by Proposition \ref{prop:prop-of-squash-stretch}. Further, class $i$ has radius $r$ by construction when $i \neq I$. 

    Calculating, we see that $\norm{c_i - c_j}^2 = 1 - 2r_j'{}^2$ when $i \neq j$, $i,j < I$, and so the centers of $f\cY$ form an equilateral simplex with edge length $\sqrt{1 - 2r_j'{}^2}$. So, Corollary \ref{cor:extent-of-equidistant-spacing} applies where $v = c_j$, $\ell = \sqrt{1 - 2r_j'{}^2}$, and $n = I-1$. Hence,
    \begin{align}   
        1 - r_I'{}^2 - r{'}_j^2 = \norm{c_j' - c_I'}^2 = \paren{1 - 2r'_j{}^2}\frac{I-1}{2(I - 1 + 1)}
    \end{align}
    Solving the above for $r_I'{}^2$, we have that
    \begin{align}
        r_I'{}^2 = r_j{'}^2 + \paren{1 - \frac{I-1}{2I}}\paren{1 - 2r'_j{}^2} = \frac{I+1-2r_j{'}^2}{2I}.
    \end{align}
    And so, $r_I' = \sqrt{\frac{I+1-2r'{}_j^2}{2I}}$. By construction of the squash and stretches, $r'_j = r$. 
\end{proof}

In the following definition, recall that $E_{\ell,k}$ denotes the equilateral simplex with $k$ vertices embedded in $\R^{k-1}$ with $k$ vertices and side length $\ell$.%

\begin{definition}[Equilateral Normal Form of Maximal Equidistant Spacings]
    \label{def:equilateral-normal-form}
    Let $\cY \in \mps$. We say that $\cY$ is in \emph{equilateral normal form} if there is an $r \in (0,\frac{\sqrt2}2)$ so that $r_i = r$ for all the non-inner positive-radius of $\cY$.%

\end{definition}
In Definition \ref{def:equilateral-normal-form}, if $\cY$ does not have an inner center, then all centers are considered non-inner. If $\cY$ has no positive-radius non-inner centers, then it is automatically in equilateral normal form. The following proposition shows why equilateral normal form is called equilateral. It also gives a geometric description for how spacings in equilateral normal form look.

\begin{proposition}[Characterization of Equilateral Normal Form]
\label{prop:characterization-of-equilateral-normal-form}
    Let $\cY \in \mps(I)$. Let $k'$ denote the number of classes with radius 0, and $k = I - k'$ the number of classes of with positive-radii. The following two are equivalent, up to translation/rotation.

    \begin{enumerate}
        \item $\cY$ is in equilateral normal form.
        \item One of the following two holds.
        \begin{enumerate}
            \item $\cY \in \mps_{=}$, the centers of $\cY$ lie at $\bszero^n$, and $r_i = \frac{\sqrt2}2$ for all $i$.
            \item $\cY \in \mps_{\neq}$, there is an $r \in (0,\frac{\sqrt{2}}2)$ so that the following holds
            \begin{enumerate}
                \item The centers with zero-radius lie at $E_{1,k'}\times \bszero^{n-k'-1} \times \alpha_0$ where 
                \begin{align}
                    \alpha_0 = \frac{1}{2(k'+1)}\sqrt{\frac{2k(k'+1)}{1 - 2r^2k'-2r^2 + k + k'}}.
                \end{align}
                \item All non-inner class of $\cY$ have radius $r$, and the centers lie at $\bszero^{k'-1}\times E_{\ell,k-1}\times- \alpha_{>0}$ where $\ell = \sqrt{1 - 2r^2}$ and
                \begin{align}
                    \alpha_{>0} &= \frac{1 - 2r^2}{2k}\sqrt{\frac{2k(k'+1)}{1 - 2r^2k'-2r^2 + k + k'}}.
                \end{align}
                \item The inner class of $\cY$ lies at $\bszero^n$ and has radius 
                \begin{align}
                    \sqrt{\frac12 + \frac{1 - 2r^2}{2(k+k'-2k'r^2 - 2r^2 + 1)}}.
                \end{align}
            \end{enumerate}
        \end{enumerate}
    \end{enumerate}
\end{proposition}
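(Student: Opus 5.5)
The plan is to split on the dichotomy of Corollary \ref{cor:if-two-centers-coincide-in-a-maximal-spacing-they-all-do}, treating $\mps_=$ and $\mps_{\neq}$ separately, and in each case to prove both directions.

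\textbf{Case $\cY \in \mps_=$.} Here there is no inner class in the sense relevant to the normal form, since every center is non-inner. By Corollary \ref{cor:determination-of-center-coinciding-of-maximal-spacings}, specifically the linear system (\ref{eqn:center-coinciding-matrix}), every radius equals $\frac{\sqrt2}2$ when $I>2$; for $I=2$ the same follows from Corollary \ref{cor:properties-of-class-embeddings} together with maximality. After a translation the common center is $\bszero^n$, which gives 2(a). Conversely, 2(a) places $\cY$ in $\mps_=$. Equilateral normal form then holds vacuously, or by convention, because no positive-radius non-inner class has radius in $(0,\frac{\sqrt2}2)$. I would remark that the definition must be read this way.

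\textbf{Case $\cY \in \mps_{\neq}$, direction $1\Rightarrow 2$.} By Lemma \ref{lem:existence-of-inner-center} and Corollary \ref{cor:characterization-of-inner-class} there is a unique inner class, say $I$, with $r_I>\frac{\sqrt2}2$; every other class has radius $<\frac{\sqrt2}2$. The normal form says that all positive-radius outer classes share a radius $r$, and that the $k'$ zero-radius classes have radius $0$.

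\begin{enumerate}
\item Eqn. \ref{eqn:center-compatability-condition} gives the pairwise distances among the outer centers. These are $1$ between two zero-radius centers, $\sqrt{1-r^2}$ between a zero-radius and a positive-radius center, and $\ell=\sqrt{1-2r^2}$ between two positive-radius outer centers.
\item The zero-radius centers therefore form $E_{1,k'}$ and the positive-radius outer centers form $E_{\ell,k-1}$.
\item By Proposition \ref{prop:tri-part-ortho} and Corollary \ref{cor:strong-ortho}, applied via the recoloring trick to the two groups, the affine hulls of the two groups are orthogonal. Their glue sites are joined by a segment orthogonal to both.
\item Rotate and translate so the two simplices sit in complementary coordinate blocks, with the segment between glue sites along the last axis.
\item The heights $\alpha_0$ and $-\alpha_{>0}$ come from requiring that the inner center lie at the origin on this segment. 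The inner center is the orthocenter by Proposition \ref{prop:centers-are-orthocentric-systems}, and by symmetry of the two equilateral groups it lies on the line joining their circumcenters.
\item Concretely, I would use the gluing calculus of Corollary \ref{cor:calc-3} for the extents of the two groups, then Proposition \ref{prop:calc-2} to locate the glue sites relative to the gluing's glue site. Finally I would solve $\norm{c_I-c_j}^2=1-r_I^2-r_j^2$ for $r_I$ and the position of $c_I$.
\end{enumerate}

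\textbf{Case $\cY \in \mps_{\neq}$, direction $2\Rightarrow 1$.} The reverse direction is immediate: under 2(b) every positive-radius non-inner class has radius $r\in(0,\frac{\sqrt2}2)$, which is the definition. One only needs that the listed class at the origin is indeed the inner class. This follows from Corollary \ref{cor:characterization-of-inner-class}, since its radius exceeds $\frac{\sqrt2}2$.

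The main obstacle is the explicit computation of $\alpha_0$, $\alpha_{>0}$ and $r_I$. I would set up unknown heights $a,b$ for the two circumcenters along the last axis, with the inner center at $0$. The equations $\norm{c_I-c_j}^2=1-r_I^2-r_j^2$ for a zero-radius $j$ and for a positive-radius $j$, together with the cross distance $\sqrt{1-r^2}$, then give three equations in $a,b,r_I^2$. Their circumradius terms are $\frac{k'-1}{2k'}$ and $\ell^2\frac{k-2}{2(k-1)}$ by Proposition \ref{prop:circumcenter-of-equi-simplex}. Subtracting pairs eliminates $r_I^2$ and yields $a,b$, after which $r_I$ follows. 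I expect some index bookkeeping issues, since the statement's formulas use $k$ and $k'$ in a way that suggests the inner class may be counted inside $k$. I would first verify the formulas on the small cases $k'=0$ and $k'=1$, using Lemma \ref{lem:squash-stretches-of-maximal-positive-radius-equidistant-spacings} as a check, before trusting the general algebra.
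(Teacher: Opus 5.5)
\textbf{The gap: the $\mps_{=}$ case with $I=2$.} Maximality together with Corollary~\ref{cor:properties-of-class-embeddings} gives only $r_1^2+r_2^2=1$. It does not force $r_1=r_2$. The paper's own square/rhombus pair (Example~\ref{examp:example-of-squash-stretch-isometries} and the remark after Lemma~\ref{lem:signatures-coincide-means-isometric-by-rotation}) gives two spacings in $\mps_{=}(2,\R^2)$ with unequal radii.

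The missing ingredient is the normal-form hypothesis itself. On $\mps_{=}$ the paper reads it as requiring all positive-radius classes to share one radius, ignoring the restriction $r<\frac{\sqrt2}2$. This is how Theorem~\ref{thm:maximal-equidistant-spacings-have-equilateral-normal-forms} uses it, producing $r_1=r_2$ by a squash and stretch. Then $r_1^2+r_2^2=1$ yields $r_1=r_2=\frac{\sqrt2}2$.

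So you cannot treat the normal form as vacuous on $\mps_{=}$. Under that reading the rhombus would be in normal form and $1\Rightarrow 2$(a) would be false. Note also that a unit sphere together with its center (signature $(1,(d_1))$, e.g.\ $\cY^{(3)}$ in Example~\ref{exp:perf-spacings}) lies in $\mps_{=}$ with radii $1$ and $0$. So 2(a) cannot hold for it, and it must be excluded explicitly. The paper's one-line treatment of $I=2$ tacitly assumes both radii are positive.

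\textbf{The $\mps_{\neq}$ case.} Here your plan is the paper's. You compute the extents of the zero-radius group and of the positive-radius outer group, place the glue sites via Proposition~\ref{prop:calc-2}, and then solve $1=r_I^2+\alpha_0^2+\extent^2\cY|_0$. Your symmetry argument for the collinearity of $c_I$ with the two glue sites is a fine substitute for the paper's appeal to Lemma~\ref{lem:recoloring-trick} and Corollary~\ref{cor:leave-out-rank-and-max-spacing}.

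Your index worry is justified, and its source is Proposition~\ref{prop:circumcenter-of-equi-simplex}. The circumradii you write, $\frac{k'-1}{2k'}$ and $\ell^2\frac{k-2}{2(k-1)}$, are correct, but they are not what that proposition states. Its value $\ell^2\frac{k}{2(k+1)}$ is the squared circumradius of a regular simplex with $k+1$ vertices. Corollary~\ref{cor:calc-3} and the paper's identities $\extent^2\cY|_0=\frac{k'}{2(k'+1)}$ and $\extent^2\cY|_{>0}=\frac{2r^2+k-1}{2k}$ inherit this shift.

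Done correctly, your computation reproduces the displayed $\alpha_0$, $\alpha_{>0}$ and $r_I$ only after substituting $k'\mapsto k'-1$ and $k\mapsto k-1$. For example, take two zero-radius classes, one outer class of radius $\frac12$, and the inner class. The centers are planar, and the orthocenter computation gives $r_I^2=\frac58$, whereas the displayed formula with $k'=k=2$ gives $\frac47$. So the explicit formulas cannot be proved as stated; your argument proves the shifted version.
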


\begin{proof}
    It is clear that $2$ implies $1$, as the locations of the zero-radius and non-inner positive-radii centers lie on equilateral simplices.

    In proving that $1$ implies $2$, there are two cases which correspond to condition $2.(a)$ and $2.(b)$.
    
    The $2.(a)$ case is when any centers of $\cY$ align. In that case, all centers lie at the origin by Corollary \ref{cor:if-two-centers-coincide-in-a-maximal-spacing-they-all-do}. The formula $r_i = \frac{\sqrt{2}}2$ follows from Corollary \ref{cor:determination-of-center-coinciding-of-maximal-spacings} when $I > 2$. If $I = 2$, then $r_1 = r_2$ implies $r_1 = r_2 = \frac{\sqrt2}2$.

    In the $2.(b)$ the claim follows from computing the extent of everything in sight and applying the various identities developed in Section \ref{sec:gluing-equidistant-spacings-together}.
    
    Let $\cY|_0$ denote the classes of $\cY$ that have zero-radii, $\cY|_{>0}$ the \emph{non-inner} classes of $\cY$ that have positive-radii, and $Y_I$ denote the inner classes of $\cY$. For the moment, suppose that $k \geq 1$, and $k' \geq 2$. That $\ell = \sqrt{1 - 2r^2}$ follows from $\ell^2 = \norm{c_i - c_j}^2 = 1-2r^2$ where $c_i,c_j$ are non-inner positive-radii centers. The fact that $c_I$ and the glue sites of $\cY|_0$, $\cY|_{>0}$ are colinear follows from Lemma \ref{lem:recoloring-trick} and Corollary \ref{cor:leave-out-rank-and-max-spacing}. The formula for $\alpha_0, \alpha_{>0}$ and $r_I$ follows from verifying the following identities.
    \begin{align}
        \label{eqn:proof:prop:characterization-of-equilateral-normal-form:1}
        \extent^2 \cY|_0 &= \frac{k'}{2(k'+1)}\\ \label{eqn:proof:prop:characterization-of-equilateral-normal-form:2}
        \extent^2{\cY|_{>0}} &= \frac{2r^2 + k - 1}{2k}\\\label{eqn:proof:prop:characterization-of-equilateral-normal-form:3}
        1 - \extent^2\cY|_0 - \extent^2\cY|_{>0}&=\frac{1 - 2r^2k' - 2r^2 + k + k'}{2k(k'+1)}.
    \end{align}
    Equations \ref{eqn:proof:prop:characterization-of-equilateral-normal-form:1} and \ref{eqn:proof:prop:characterization-of-equilateral-normal-form:2} follow from Corollary \ref{cor:calc-3}. Equation \ref{eqn:proof:prop:characterization-of-equilateral-normal-form:3} follows from direct computation. Then a direct computation verified the following equations
    \begin{align}
        \label{eqn:proof:prop:characterization-of-equilateral-normal-form:4}
        \alpha_0 &= \frac{1}{2(k'+1)}\sqrt{\frac{2k(k'+1)}{1 - 2r^2k'-2r^2 + k + k'}}\\
        \label{eqn:proof:prop:characterization-of-equilateral-normal-form:5}
        \alpha_{>0} &= \frac{1 - 2r^2}{2k}\sqrt{\frac{2k(k'+1)}{1 - 2r^2k'-2r^2 + k + k'}}\\
        \label{eqn:proof:prop:characterization-of-equilateral-normal-form:6}
        r_I &= \sqrt{\frac12 + \frac{1 - 2r^2}{2(k+k'-2k'r^2 - 2r^2 + 1)}}
    \end{align}
    where Equations \ref{eqn:proof:prop:characterization-of-equilateral-normal-form:4} and \ref{eqn:proof:prop:characterization-of-equilateral-normal-form:5} follow from Proposition \ref{prop:calc-2} and \ref{eqn:proof:prop:characterization-of-equilateral-normal-form:6} follow from solving 
    \begin{align}
        1 = r_I^2 + \alpha_0^2 + \extent^2\cY|_0
    \end{align}
    for $r_I$.

    Now we consider the case when $k' = 0$ or $k = 1$. In this case, the statement that the centers lie on an equilateral simplex is vacuous. We use the convention that a single point is an equilateral 0-dimensional simplex $E_{\ell,0}$ for any $\ell$. Obviously, $\extent E_{\ell,0} = \sqrt{1 - \ell^2}$. In other words, $\extent {\cY|_{0}} = 0$ when $\cY|_0$ contains one class and $\extent{\cY_{>0}} = r$ when $\cY|_{>0}$ contains one class. With this convention, we see that Equations \ref{eqn:proof:prop:characterization-of-equilateral-normal-form:1} and \ref{eqn:proof:prop:characterization-of-equilateral-normal-form:2} hold nevertheless, and so the rest of the proof follows.
\end{proof}

As is shown later in Example \ref{examp:enumeration-of-equidistant-spacings-in-r-3}, there are ten elements of $\mps(I,\R^n)$ that are non-isometric when $n \leq 3$. Only one has a degree of freedom. Equilateral normal form has one degree of freedom (the choice of $r$), and so all elements of $\mps(I,\R^n)$ are already in equilateral normal form when $n \leq 3$.

When $n \geq 4$, $\mps(I,\R^n)$ is much larger and `most' of its elements are not in equilateral normal form.

Proposition \ref{prop:characterization-of-equilateral-normal-form} is useful as it links the abstract geometric definition of equilateral normal form to the concrete algebra of the radius, centers, and extents. We now present a theorem that all maximal equidistant spacings may be put into equilateral normal form via a sequence of isometries.

\begin{theorem}[Maximal Equidistant Spacings have Equilateral Normal Forms]
    \label{thm:maximal-equidistant-spacings-have-equilateral-normal-forms}
    Let $\cY \in \mps$. Then, there is an isometry $f$ of $\cY$ so that $f\cY$ is in equilateral normal form.  
\end{theorem}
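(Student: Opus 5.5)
We split into the two cases of Definition \ref{def:maximal-coinciding-noncoinciding-centers}, since $\mps = \mps_= \cup \mps_{\neq}$ by Corollary \ref{cor:if-two-centers-coincide-in-a-maximal-spacing-they-all-do}.

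\textbf{Coinciding case.} If $\cY \in \mps_=$, take $f$ to be the identity. For $I > 2$, Corollary \ref{cor:determination-of-center-coinciding-of-maximal-spacings} gives $r_i = \frac{\sqrt 2}2$ for every $i$; for $I = 2$ this follows from $r_1^2 + r_2^2 = 1$ together with $c_1 = c_2$. Since there is no inner center distinct from the others, $\cY$ is already in the form of Proposition \ref{prop:characterization-of-equilateral-normal-form} 2(a), after translating the common center to $\bszero^n$ by a Euclidean isometry. Hence $\cY$ is in equilateral normal form.

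\textbf{Non-coinciding case.} Let $\cY \in \mps_{\neq}$. By Lemma \ref{lem:existence-of-inner-center} there is a unique inner class; relabel so that it is $I$. Corollary \ref{cor:characterization-of-inner-class} then gives $r_I > \frac{\sqrt2}2$ and $r_j < \frac{\sqrt2}2$ for $j \neq I$. If no non-inner class has positive radius, $\cY$ is automatically in equilateral normal form and we take $f = \mathrm{id}$.

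Otherwise, fix any $r \in (0,\frac{\sqrt2}2)$. We mimic Lemma \ref{lem:squash-stretches-of-maximal-positive-radius-equidistant-spacings}, but act only on the positive-radius non-inner classes. For each such $j$, write $\cY = \cY|_{\text{rest}} \vee (Y_j \sqcup Y_I)$. By Proposition \ref{prop:centers-are-orthocentric-systems}, $c_I$ lies on the segment from $c_j$ to its projection onto the affine hull of the remaining centers. Proposition \ref{prop:outer-inner-squash-stretch}, with Lemma \ref{lem:existence-of-soln-to-extent-condition} supplying the new inner radius $r_I'$, yields an outer-inner squash and stretch $f_j$. This $f_j$ fixes every other class, dilates $Y_j$ by $r/r_j$ and $Y_I$ by $r_I'/r_I$, and keeps the spacing equidistant. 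The hypothesis $\extent \cY|_{\text{rest}} \le \frac{\sqrt 2}2$ required by Lemma \ref{lem:existence-of-soln-to-extent-condition} follows from Proposition \ref{prop:extent-characterizes-gluablity}, since $\extent(Y_j \sqcup Y_I) \ge r_I > \frac{\sqrt2}2$. Zero-radius classes are left untouched, because a dilation cannot give them positive radius. The dimension requirements $\dim V \ge 2$ (resp.\ $\geq 3$) hold because $Y_j$ and $Y_I$ already span the needed directions orthogonal to $\aff|\cY|_{\text{rest}}|$. We then set $f = \prod_j f_j$, which is a squash and stretch by Proposition \ref{prop:prop-of-squash-stretch}.

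\textbf{Verification.} Finally we must check that $f\cY$ is again maximal, so that it lies in $\mps$ and the definition of equilateral normal form applies. The planned argument is a dimension count. The dilations preserve each $\dim A_i$, and the new centers still form a non-degenerate orthocentric system of dimension $I-2$. Hence $n = \dim\tilde A + \sum_i \dim A_i$ continues to hold. As in the proof of Theorem \ref{thm:small-orthocentric-systems-correspond-to-centers-in-mps}, this equality together with Corollary \ref{cor:dim-of-tilde-A} and Proposition \ref{prop:centers-are-orthocentric-systems} gives maximality, since each $Y_i$ is a full sphere in its $A_i$. By construction, every non-inner positive-radius class of $f\cY$ then has radius $r$.

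\textbf{Main obstacle.} The delicate step is the sequential composition. After applying $f_j$, the inner center and $r_I$ change. We must check that the hypotheses of Proposition \ref{prop:outer-inner-squash-stretch} still hold for the next class $j'$: the relevant extent must still be $\le \frac{\sqrt2}2$, and $c_I$ must remain inner. We would handle this by induction on the number of classes already normalized. At each stage, Corollary \ref{cor:characterization-of-inner-class} applied to the intermediate spacing, which is again maximal by the dimension count above, keeps class $I$ as the inner class and re-establishes the extent bound.
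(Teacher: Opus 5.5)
\textbf{The gap: the coinciding case with $I = 2$.} Your non-coinciding case follows the paper's argument: successive outer-inner squash and stretches, one for each positive-radius non-inner class, as in Lemma \ref{lem:squash-stretches-of-maximal-positive-radius-equidistant-spacings}. Your extra care about keeping class $I$ inner through the composition, and about maximality of $f\cY$, goes beyond what the paper checks.

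The problem is your claim that $I = 2$ with coinciding centers forces $r_1 = r_2 = \frac{\sqrt2}2$. There, $c_1 = c_2$ only yields $0 = \norm{c_1 - c_2}^2 = 1 - r_1^2 - r_2^2$, i.e.\ $r_1^2 + r_2^2 = 1$. This does not force equal radii. For instance, take $Y_1 = \set{(\pm a,0)}$ and $Y_2 = \set{(0,\pm b)}$ with $a^2+b^2 = 1$ and $a \neq b$. This spacing lies in $\mps_=(2,\R^2)$: a point at unit distance from both $(0,\pm b)$ must be $(\pm a,0)$, and symmetrically for the other class. Yet its radii are $a \neq b$; it is the rhombus of Figure \ref{fig:squash-stretch-iso-fig:fY1}. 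So with $f = \mathrm{id}$ you do not land in the form of Proposition \ref{prop:characterization-of-equilateral-normal-form}~2(a).

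\textbf{The repair.} The paper handles exactly this case with a squash and stretch that equalizes $r_1$ and $r_2$. Concretely, fix the common center $c$ and dilate $V_i$ by $\frac{\sqrt2}{2r_i}$. Since $V_1 \perp V_2$, points of different classes land at squared distance $\frac12+\frac12 = 1$, so the result is equidistant with both radii equal to $\frac{\sqrt2}2$.

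This repair needs $r_1, r_2 > 0$. If $r_1 = 0$, then $r_2 = 1$: a point together with the unit sphere of $V_2$, signature $(1,(n))$. No squash and stretch can change a zero radius, so that configuration must be accepted as its own normal form. The paper's proof also glosses over this subcase.

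\textbf{A smaller point in your verification step.} Corollary \ref{cor:dim-of-tilde-A} says that maximality \emph{implies} $n = \dim\tilde A + \sum_i \dim A_i$. It therefore cannot be cited for the converse; the paper makes the same slip in Theorem \ref{thm:small-orthocentric-systems-correspond-to-centers-in-mps}. A direct argument works instead:
\begin{itemize}
\item Suppose $f\cY \subseteq \cZ \in \ps$. Then $\aff Z_i \supseteq A_i$ for each $i$.
\item The direction spaces of the $\aff Z_i$ and of $\aff$ of the centers of $\cZ$ are mutually orthogonal subspaces of $\R^n$. So either $\aff Z_i = A_i$ for all $i$, or the centers of $\cZ$ span at most $I-3$ dimensions.
\item In the first case the centers and radii of $\cZ$ agree with those of $f\cY$. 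Since the classes of $f\cY$ are full spheres, $\cZ = f\cY$.
\item In the second case, two centers can be dropped without changing the affine hull. Theorem \ref{thm:leave-two-out-affine-rank-implies-max-by-concide-centers} then places $\cZ$ inside a spacing with coinciding centers. For $I \geq 3$ all its radii are $\frac{\sqrt2}2$, which contradicts $r_I > \frac{\sqrt2}2$.
\end{itemize}
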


\begin{proof}
    Consider the case when $\cY$ has (any two) centers coincide. Then all of them do, by Corollary \ref{cor:if-two-centers-coincide-in-a-maximal-spacing-they-all-do}. Let $T$ be the translation that puts the centers of $\cY$ at $\bszero$. If $I > 2$, then $T\cY$ is in equilateral normal form, as $r_i = r_j$ by Corollary \ref{cor:determination-of-center-coinciding-of-maximal-spacings}. If $I = 1$, $r_i = r_j$ holds vacuously. If $I = 2$, then there is a squash and stretch $f$ that puts $r_1 = r_2$. In any case, $fT\cY$ is in equilateral normal form.

    Suppose that no centers of $\cY$ coincide. The proof strategy is to give a sequence of isometries so that Proposition \ref{prop:characterization-of-equilateral-normal-form} point 2 holds.
    
    Let $r \in (0,\frac{\sqrt 2}2)$ and 
    \begin{align}
        \cY = \cY_{r=0} \vee \cY_{r>0}
    \end{align}
    where $\cY_{r=0}$ are classes of $\cY$ with radius zero, $\cY_{r>0}$ are the classes of $\cY$ with radius $> 0$. Let $k'$ be the number of classes of $\cY$ with radius 0, and $k$ the number of classes with radius $>0$. Note that $k+k'= I$.

    For ease of notation, let $1,\dots,k-1$ be the positive-radii classes of $\cY$, and $k$ the inner class. For each $i = 1,\dots,k-1$, we may apply Lemma \ref{lem:squash-stretches-of-maximal-positive-radius-equidistant-spacings} and obtain an inner-outer squash and stretch $f_i$ of $f_{i-1}\dots f_1\cY$ so that the radius of the $i$'th class of $f_if_{i-1}\dots f_1\cY$ is $r$. Let $f \coloneqq f_{k-1}\dots f_1\cY$.
    
    Let $c^*$ denote the $k$'th center of $f\cY$ and $T$ denote the translation operator $y \mapsto y - c^*$ and $\cY^{(1)}\coloneqq Tf\cY$. Let $A_{=0}\coloneqq \aff{\abs{\cY^{(1)}_{0}}}$ and $A_{>0}\coloneqq \aff{\abs{\cY^{(1)}_{>0}}}$ where $\cY^{(1)} = \cY^{(1)}_{0} \vee \cY^{(1)}_{>0}\vee \cY^{(1)}_{\mathrm{inner}}$ and $\cY^{(1)}_{0}$ are the radii zero classes, and $\cY^{(1)}_{>0}$ are the positive-radii classes (without the inner class), and $\cY^{(1)}_{\mathrm{inner}}$ is the inner class. Then by Lemma \ref{lem:recoloring-trick} and Proposition \ref{prop:tri-part-ortho}, we have that $A_{>0} \subset A_{0}^\perp$ and $A_{0} \subset A_{>0}^\perp$.  By orthogonality of $A_{0}$ and $A_{>0}$, there is a rotation $Q$ of $\R^n$ so that 
    \begin{align}
        QA_0 &\subset \R^{k'-1}\times \bszero^{n-k'-1} \times [0,+\infty)\\
        QA_{>0} &\subset \bszero^{k'-1}\times \bbR^{n-k'-1} \times (-\infty,0).
    \end{align}
    Let $\cY^{(2)}\coloneqq QTf\cY$. The claim is that $\cY^{(2)} \cong \cY$, and $\cY^{(2)}$ is in equilateral normal form.

    Clearly, $\cY^{(2)} \cong \cY$, as $\cY^{(2)} = QTf\cY$. Let $r^{(2)}_i$ and $c^{(2)}$ denote the radius and centers of $\cY^{(2)}$. By the construction of $f$, $r^{(2)}_1,\dots,r^{(2)}_{k-1} = r$. Hence, by Theorem \ref{thm:radius-recipe}, $\norm{c_i^{(2)} - c_j^{(2)}}^2 = 1 - 2r^2$, so the positive-radius, non-inner centers of $\cY^{(2)}$ lie on an equilateral simplex. Hence, $\cY^{(2)}$ is in equilateral normal form.

\end{proof}

Theorem \ref{thm:maximal-equidistant-spacings-have-equilateral-normal-forms} says that maximal equidistant spacings always have equilateral normal forms. It turns out, such forms are invariant under isometries, as shown by the following proposition. %

\begin{proposition}[Invariance of Equilateral Normal Form]
    \label{prop:invariance-of-equilateral-normal-form}
    Let $\cY,\cY'\in \ps$ be such that there is an isometry $f$ so that $f\cY = \cY'$. Then the equilateral normal forms of $\cY$ and $\cY'$ are the same.
\end{proposition}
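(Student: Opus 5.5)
The plan is to show that the equilateral normal form is determined by data that every isometry preserves. By Proposition \ref{prop:characterization-of-equilateral-normal-form}, a maximal spacing in normal form is fixed, up to rototranslation, by four things: whether it lies in $\mps_=$ or $\mps_{\neq}$; the number $k'$ of zero-radius classes; the number $k$ of positive-radius classes; and the common outer radius $r$. So the first step is to show that an isometry $f$ with $f\cY=\cY'$ preserves the first three of these. The fourth, $r$, is a free parameter, and it has to be handled separately.

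For the discrete data I would check each generator of isometries. Rototranslations and recolorings obviously preserve coincidence of centers and the multiset of zero and positive radii. For a squash and stretch $f$, Definition \ref{def:squash-stretch} acts on each $V_i$ by a dilation of nonzero factor. Hence $\dim A_i$ is preserved, and by Corollary \ref{cor:properties-of-class-embeddings} point 2(a) so is the property $r_i>0$. It follows that $k$ and $k'$ are invariant.

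Coincidence of centers needs one more argument. By Corollary \ref{cor:dim-of-tilde-A}, $n=\dim\tilde A+\sum_i\dim A_i$, and $\dim\tilde A$ is $0$ or $I-2$ according to whether $\cY\in\mps_=$ or $\mps_{\neq}$. Since $n$ and the $\dim A_i$ are preserved, $\dim\tilde A$ is preserved too, and with it membership in $\mps_=$ versus $\mps_{\neq}$. This uses that $f\cY$ is again maximal. I would justify that by applying Proposition \ref{prop:prop-of-squash-stretch} point 3: the inverse $g$ is also a squash and stretch, so any strict enlargement of $f\cY$ would pull back to a strict enlargement of $\cY$.

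I expect the main obstacle to be the parameter $r$ and the meaning of ``the same''. Lemma \ref{lem:squash-stretches-of-maximal-positive-radius-equidistant-spacings} shows that an outer-inner squash and stretch can move $r$ to any value in $(0,\tfrac{\sqrt2}2)$. So normal forms with different $r$ are themselves isometric, and the statement can only mean that the normal form is unique up to this one-parameter family, equivalently up to isometry. I would therefore prove it in that form. Let $N$ and $N'$ be normal forms of $\cY$ and $\cY'$, obtained from Theorem \ref{thm:maximal-equidistant-spacings-have-equilateral-normal-forms}. By the discrete invariance above they share the type and the counts $k,k'$.

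Next, apply Lemma \ref{lem:squash-stretches-of-maximal-positive-radius-equidistant-spacings} to $N'$ to move its outer radius to that of $N$. The explicit coordinates in Proposition \ref{prop:characterization-of-equilateral-normal-form} 2(b) then coincide, because they depend only on $(k,k',r)$. The two spacings are then related by a rototranslation. In the $\mps_=$ case there is no $r$ at all, since every radius is $\tfrac{\sqrt2}2$, and the coordinates agree immediately. The delicate point is confirming that the $r$-adjustment keeps the spacing in normal form. That holds because the lemma sets all outer positive radii equal at once.
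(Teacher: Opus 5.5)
Your arguments that isometries preserve $k$, $k'$ and the coincidence type are sound. They are essentially the paper's proof, which only counts zero- and positive-radius classes and concludes that the centers of the two normal forms agree. Your dimension count via Corollary~\ref{cor:dim-of-tilde-A} usefully supplies the $\mps_=$ versus $\mps_{\neq}$ point that the paper leaves implicit, and fixing a common $r$ is the right reading of ``the same.''

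The gap is your opening premise and the final sentence that rests on it. Proposition~\ref{prop:characterization-of-equilateral-normal-form} determines only the \emph{centers and radii} of a normal form from (type, $k$, $k'$, $r$). It says nothing about the affine hulls $A_i$, whose dimensions are independent data. For example, $(0,(3,1))$ and $(0,(2,2))$ in $\mps_=(2,\R^4)$ have identical centers and radii but are not congruent. So ``the coordinates coincide, hence the two spacings are related by a rototranslation'' does not follow.

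In the $\mps_=$ case you can repair this yourself. Use your remark that every generator preserves each $\dim A_i$ up to relabeling, then apply Lemma~\ref{lem:existence-of-rotation-that-aligns-subspaces}. In the $\mps_{\neq}$ case you would also need isometries to send the inner class to the inner class, and Definition~\ref{def:squash-stretch} does not force this.

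Here is a concrete counterexample in $\R^4$:
\begin{itemize}
\item $Y_1=Y_1'=\set{e_1/\sqrt3}$;
\item $Y_2=\set{x\in\Span{e_2,e_3}\colon \norm{x}=\sqrt{2/3}}$ and $Y_3=\set{-e_1/\sqrt{12}\pm e_4/2}$;
\item $Y_2'=-e_1/\sqrt{12}+\set{x\in\Span{e_2,e_3}\colon \norm{x}=\tfrac12}$ and $Y_3'=\set{\pm\sqrt{2/3}\,e_4}$.
\end{itemize}
One checks directly that both are maximal and in equilateral normal form with $r=\tfrac12$, and that they have the same centers and radii.

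Now define $f$ as follows: fix $e_1/\sqrt3$; send $x\in\Span{e_2,e_3}$ to $-e_1/\sqrt{12}+\sqrt{3/8}\,x$; send $-e_1/\sqrt{12}+te_4$ to $2\sqrt{2/3}\,te_4$. This map is well defined because the three $A_i$ are disjoint. It is injective on centers and a dilation on each $V_i$, so it is a squash and stretch with $fY_i=Y_i'$.

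Yet no rototranslation relates the two, even after recoloring. Such a map preserves radii, so it must match the inner classes (radius $\sqrt{2/3}$), and these have dimensions $2$ and $1$. So the congruence you assert is false under the paper's definitions, not merely unproved. What your argument actually establishes is the paper's conclusion: once $r$ is fixed, the normal forms have the same centers and radii.

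Two smaller points. First, maximality of $\cY'$ is part of the hypothesis, since normal forms are defined only on $\mps$. That is fortunate, because your pull-back argument does not work: $g$ is a squash and stretch of $f\cY$, not of an enlargement of it, and the enlargement's hulls and centers may differ. Second, Lemma~\ref{lem:squash-stretches-of-maximal-positive-radius-equidistant-spacings} as stated assumes all radii are positive. It is cleaner to choose the same $r$ for both spacings when you invoke Theorem~\ref{thm:maximal-equidistant-spacings-have-equilateral-normal-forms}.
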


\begin{proof}
    Clearly, all the isometries preserve the number of classes of a equidistant spacing. Also clearly, classes with radius $>0$ are mapped to classes with radius $>0$ via isometries. Therefore, two maximal equidistant spacings must have the same number of classes total and the same number of classes of radius 0 and radius $>0$. Hence the $k$ and $k'$ in the notation of Proposition \ref{prop:characterization-of-equilateral-normal-form} are the same. So the centers of their equilateral normal forms are the same.
\end{proof}

\subsection{Signatures of Equidistant Spacings}
\label{sec:signatures-of-equidistant-spacings}
Proposition \ref{prop:invariance-of-equilateral-normal-form} shows that normal forms are invariant under isometries. A natural question, is if this invariant characterizes isometry classes. That is, are two maximal equidistant spacings isometric if the centers of their equilateral normal forms coincide? This is true, subject to a natural dimension counting condition. To make the statement of the resulting theorem as clean as possible, we introduce the following definition. It is a combinatorial description of equidistant spacings which have a equilateral normal.

\begin{definition}[Signature of Equidistant Spacing]
    \label{def:signagure-of-equidistant-spacing}
    We call a pair of a natural number and list of natural numbers $(m,(d_1,\dots,d_k))$ where $m,d_1,\dots,d_k \in \bbN$ a \emph{signature} if $d_i \geq d_{i+1}$ for each $2 \leq i \leq k-1$. We denote the set of signatures by $\cS$. We call $m$ the \emph{zero-radius term} of the signature, $d_1,\dots,d_k$ the \emph{positive-radius terms} of the signature, and $d_1$ the \emph{inner term}.

    If $\cY \in \ps$ is in equilateral normal form, we say that the \emph{signature of $\cY$} denoted by $\sig \cY$ is given by 
    \begin{align}
        (\#(r = 0),\dim A_{1},\dim{A_2},\dots\dim A_{k})
    \end{align}
    where $\#(r = 0)$ is the number of zero-radii centers of $\cY$ and 
    \begin{enumerate}
        \item if $\cY$ has centers that coincide, $A_1,\dots,A_k$ are ordered so that $\dim{A_1} \geq \dots \geq \dim A_{k}$,
        \item if $\cY$ has centers that do not coincide, then $1$ is the inner class of $\cY$ and $A_2,\dots,A_k$ are ordered so that $\dim{A_2} \geq \dots \geq \dim A_{k}$.
    \end{enumerate}

    If $\cY \in \ps$ is isometric to a $\cY' \in \ps$ where $\cY'$ is in equilateral normal form, then we declare $\sig \cY \coloneqq \sig \cY'$.

    We define the map $\mathrm{sig} \colon \mps \to \cS$ by $\cY \mapsto \sig \cY$ as the map that associates equidistant spacings to signatures.

    We call $m + \qsum ik d_i$ the \emph{sum} of signature $s$.
\end{definition}

Note that Definition \ref{def:signagure-of-equidistant-spacing} only makes sense if signatures are invariant under isometries of equidistant spacings. This is the case for maximal equidistant spacings, which the following lemma shows.

\begin{lemma}[Signatures are Invariant Under Isometries]
    \label{lem:signatures-are-invariant-under-isometries}
    Let $\cY \in \mps$ and $f$ be an isometry. Then, $\sig \cY = \sig f\cY$.%
\end{lemma}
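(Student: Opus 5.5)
The plan is to show that every ingredient of the signature $(\#(r=0),\dim A_1,\dots,\dim A_k)$ is preserved by each of the three kinds of isometry: Euclidean rototranslations, recolorings, and squash and stretches. Since the isometry $f$ is a composition of these, it suffices to treat each generator separately. Once the data is known to be invariant, the well-definedness of $\sig$ follows: if $\cY$ is isometric to two spacings in equilateral normal form, both must produce the same tuple.

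The first step handles rototranslations and recolorings. A $U \in E(n)$ maps $Y_i$ to $UY_i$, so $\aff(UY_i) = U\aff(Y_i)$. Hence dimensions are preserved, radii are preserved, and coincidence of centers is preserved. A recoloring in the sense of an isometry only permutes labels, so it changes neither the multiset of dimensions nor the number of zero-radius classes. In the non-coinciding case the inner class is the class of maximal radius, by Corollary \ref{cor:characterization-of-inner-class}. The inner class is therefore determined by the radii alone and is carried to the inner class. The remaining dimensions $\dim A_2,\dots,\dim A_k$ are sorted decreasingly, so any relabeling yields the same sorted list.

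The second step handles a squash and stretch $f$. By Proposition \ref{prop:prop-of-squash-stretch} point 1, $f(A_i)$ is a translate of $A_i$ and $f|_{A_i}$ acts as a nonzero dilation. This gives $\dim \aff f(Y_i) = \dim A_i$ and $r_i' = |\alpha_i| r_i$. In particular $r_i = 0 \iff r'_i = 0$, so $\#(r=0)$ is preserved, as is the dimension list of the positive-radius classes; this is also consistent with Corollary \ref{cor:properties-of-class-embeddings} point 2(a). Maximality is preserved because $f^{-1}$ is again a squash and stretch (point 3 of the same proposition). If $f\cY$ admitted a strict enlargement, we would want to pull it back along $f^{-1}$ to a strict enlargement of $\cY$.

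The main obstacle is that pullback, and with it the invariance of the coincide/non-coincide dichotomy and of the inner class. For the pullback, I would avoid extending $f^{-1}$ beyond $f\cY$ and argue through dimensions instead. By Corollary \ref{cor:dim-of-tilde-A}, maximality is equivalent to $n = \dim\tilde A + \sum_i \dim A_i$ together with the structural constraints. The sum $\sum_i \dim A_i$ is already invariant, so the remaining task is to show that the type ($\mps_=$ versus $\mps_{\neq}$) is invariant. For this I would use Corollary \ref{cor:determination-of-center-coinciding-of-maximal-spacings}. For $I>2$, the centers coincide exactly when some radius equals $\frac{\sqrt2}2$, and then all radii equal $\frac{\sqrt2}2$. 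If the image were of the other type, Eqn. \ref{eqn:a-tilde-fomula} would change $\dim\tilde A$ from $0$ to $I-2$ (or the reverse) while $\sum_i\dim A_i$ stays fixed, contradicting $n = \dim\tilde A+\sum_i\dim A_i$ whenever $I>2$. The case $I\le 2$ is degenerate: the centers coincide exactly when $r_1^2+r_2^2=1$, a squash and stretch can move between the two cases, and I would handle this by a direct check of the definition. Finally, in the non-coinciding case, the inner class of $f\cY$ is the unique class with radius $>\frac{\sqrt2}2$. I would argue that it must be the image of the inner class of $\cY$, using the orthocentric structure of Proposition \ref{prop:centers-are-orthocentric-systems}: the dilation factors are constrained by Eqn. \ref{eqn:center-compatability-condition}, so that the unique negative $\lambda_i$ stays attached to the same label. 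With this in hand, the inner term $d_1$ and the sorted list of outer terms are preserved, which completes the proof.
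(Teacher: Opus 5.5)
\textbf{The inner-class step fails, and the lemma is false as stated.} You rightly single out the inner class as the crux, but your last step, that the negative $\lambda_i$ ``stays attached to the same label,'' cannot be carried out. The inner class is \emph{not} preserved by squash and stretches, so with the inner term $d_1$ singled out as in Definition~\ref{def:signagure-of-equidistant-spacing}, the lemma itself is false.

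Definition~\ref{def:squash-stretch} only asks $f$ to be injective on the centers and a nonzero dilation on each $V_i$. The centers may therefore be sent anywhere and every positive radius rescaled at will. Eqn.~\ref{eqn:center-compatability-condition} then constrains only the target spacing and ties no $\lambda_i$ to a label.

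Concretely, in $\R^4$ with orthonormal $e_0,\dots,e_3$, put $a=-\tfrac35e_0$, $b=\bszero$, $c=\tfrac{7}{30}e_0$, $V=\Span{e_1,e_2}$, $W=\Span{e_3}$, $\rho=\tfrac{\sqrt{11}}6$, $R=\tfrac45$, and define
\begin{align*}
\cY&:\ Y_1=\set{a},\quad Y_2=S^3_\rho(c)\cap(c+V),\quad Y_3=S^3_R(b)\cap(b+W),\\
\cY'&:\ Y'_1=\set{a},\quad Y'_2=S^3_R(b)\cap(b+V),\quad Y'_3=S^3_\rho(c)\cap(c+W).
\end{align*}
\begin{itemize}
\item \emph{Both are equidistant spacings.} We have $\norm{a-b}^2=\tfrac9{25}=1-R^2$, $\norm{a-c}^2=\tfrac{25}{36}=1-\rho^2$ and $\norm{b-c}^2=\tfrac{49}{900}=1-R^2-\rho^2$. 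So both lie in $\ps(3,\R^4)$ by Theorem~\ref{thm:radius-recipe}.
\item \emph{Both are maximal.} The $Y_i$ are full spheres in their $A_i$, and $\Span{e_0}\oplus V\oplus W=\R^4$. A strict enlargement would therefore have to enlarge some $A_i$, forcing $\dim\tilde A=0$. That means coinciding centers and all radii equal to $\tfrac{\sqrt2}2$. This is impossible, since each spacing has a class containing two points at distance $\tfrac85>\sqrt2$.
\item \emph{Both are in equilateral normal form and have different signatures.} Each has only one non-inner positive-radius class, so normal form is automatic. In $\cY$ the inner class is $3$ (since $b\in[a,c]$) and $\dim A_3=1$, so $\sig\cY=(1,(1,2))$. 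In $\cY'$ the inner class is $2$ and $\dim A'_2=2$, so $\sig\cY'=(1,(2,1))$. These are two distinct rows of Table~\ref{tab:cs-neq}.
\item \emph{They are squash-and-stretch isomorphic.} Set $f(a)=a$, $f(c+v)=b+\tfrac R\rho v$ on $c+V$, and $f(b+w)=c+\tfrac\rho R w$ on $b+W$. These three affine sets are disjoint, so $f$ is well defined there; extend it arbitrarily. Then $f$ is a squash and stretch and $f\cY=\cY'$.
\end{itemize}

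\textbf{What your argument does establish, and a secondary gap.} Your first two steps prove invariance of $m$, $k$ and the multiset $\set{d_1,\dots,d_k}$. That is all the paper's own argument gives too, since it only compares sorted dimension lists together with the counts of classes and of zero-radius classes. The statement holds only if the inner term is not distinguished.

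Your maximality/type step is also circular. It applies Corollaries~\ref{cor:dim-of-tilde-A} and \ref{cor:determination-of-center-coinciding-of-maximal-spacings} to $f\cY$, which presupposes $f\cY\in\mps$. Moreover, Corollary~\ref{cor:dim-of-tilde-A} gives only a necessary condition for maximality, not the equivalence you cite. So maximality of $f\cY$ remains unproved in your write-up.

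The type needs no such argument. By Proposition~\ref{prop:prop-of-squash-stretch}, the center of $fY_i$ is $f(c_i)$, and $f$ is a function that is injective on the centers, so coinciding centers stay coinciding and distinct ones stay distinct. For $I=2$ nothing can switch type, because maximal two-class spacings always have coinciding centers by Proposition~\ref{prop:any-ortho-proj-implies-non-maximality}.
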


\begin{proof}
    First, we prove a helper result that if $\cY, \cY' \in\mps$ are isometric and in equilateral normal form, then $\sig \cY = \sig \cY'$. This, combined with Theorem \ref{thm:maximal-equidistant-spacings-have-equilateral-normal-forms}, implies the lemma.
    
    Let $\cY, \cY' \in\mps$ and $\cY \cong \cY'$. Then $\cY$ and $\cY'$ have the same centers of their equilateral normal forms by Proposition \ref{prop:invariance-of-equilateral-normal-form}. By the same arguments as in the proof of Proposition \ref{prop:invariance-of-equilateral-normal-form}, the length and radius 0 terms of their signatures are also the same. 

    Note that if $f$ is a rotation or squash and stretch of $Y_i$, then $\dim Y_i = \dim fY_i$. Further, if $f$ is a recoloring of $\cY$, then the total number of classes $\cY$ and $f\cY$ are the same. Thus, after sorting the dimensions of the non-zero-radii classes of $\cY$, it must agree with $\cY'$. Hence, their signatures agree in the non-zero component as well. So $\sig \cY = \sig \cY'$.

\end{proof}

\begin{proposition}[Determination of Coinciding Centers from Signature]
    \label{prop:determination-of-coinciding-centers-from-signature}
    Let $\cY \in \mps(I,\R^n)$ and $s = \sig \cY$ where  $s = m;d_1,\dots,d_k$. Then $\cY \in \mps_=$ if and only if $n = \qsum ik d_i$.
\end{proposition}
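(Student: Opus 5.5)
The plan is to reduce everything to the dimension identity of Corollary \ref{cor:dim-of-tilde-A}. That corollary says that for $\cY \in \mps(I,\R^n)$ we have $n = \dim \tilde A + \qsum iI \dim A_i$. It also gives $\dim \tilde A = 0$ when $\cY \in \mps_=$ and $\dim \tilde A = I-2$ when $\cY \in \mps_{\neq}$.

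The first step is to relate this sum to the signature. Zero-radius classes have $\dim A_i = 0$ by Corollary \ref{cor:properties-of-class-embeddings} point 2(a), so they contribute nothing. The positive-radius classes contribute exactly $d_1,\dots,d_k$. Hence $\qsum iI \dim A_i = \qsum ik d_i$. These dimensions are preserved by rototranslations and squash and stretches: a squash and stretch acts on each $A_i$ by a translation composed with a nonzero dilation, by Proposition \ref{prop:prop-of-squash-stretch} point 1. So by Lemma \ref{lem:signatures-are-invariant-under-isometries} we may compute $\sig\cY$ directly from $\cY$ rather than from its normal form. Combining, $n = \dim\tilde A + \qsum ik d_i$ for every $\cY \in \mps(I,\R^n)$.

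The forward direction is now immediate. If $\cY \in \mps_=$, then $\dim \tilde A = 0$, so $n = \qsum ik d_i$.

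For the converse, suppose $n = \qsum ik d_i$. Then $\dim \tilde A = 0$, so all centers lie in a single point and $\cY \in \mps_=$. Equivalently, if $\cY \in \mps_{\neq}$, then $\dim\tilde A = I-2$, and we need $I-2 > 0$. The main obstacle is therefore the boundary case $I = 2$ with $\cY \in \mps_{\neq}$, where the formula $I-2$ would give $0$.

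To dispose of this case I would argue directly that it cannot occur. For $I=2$, the set $\tilde A$ is spanned by $c_1 - c_2$, so $\dim \tilde A = 1$ whenever $c_1 \neq c_2$. This contradicts the value $I-2 = 0$, and it also contradicts Corollary \ref{cor:leave-out-rank-and-max-spacing}, Eqn. \ref{eqn:leave-out-rank:1}: dropping one center leaves a single point, whose affine hull cannot equal $\aff(c_1,c_2)$. So $\mps_{\neq}(2)$ is empty, and for $I \geq 3$ the noncoinciding case has $\dim\tilde A = I-2 \geq 1$. In every case, $\cY \in \mps_{\neq}$ forces $n > \qsum ik d_i$, which completes the equivalence.
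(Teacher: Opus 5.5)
Your proposal is correct and follows the paper's proof. Both arguments combine the identity $n = \dim\tilde A + \sum_{i=1}^k d_i$ from Corollary \ref{cor:dim-of-tilde-A} with the fact that $\dim\tilde A = 0$ exactly when all centers coincide; you also explain, via Proposition \ref{prop:prop-of-squash-stretch}, why the class dimensions can be read off the spacing itself rather than its normal form, which the paper leaves implicit. Your closing discussion of the case $I=2$ is correct but unnecessary, since $\dim\tilde A = 0$ already forces $c_1=\dots=c_I$ without using the value $I-2$.
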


\begin{proof}

    Let $\cY \in \mps(I,\R^n)$. By Corollary \ref{cor:dim-of-tilde-A}
    \begin{align}
        n = \dim \tilde A + \qsum iI \dim A_i = \dim \tilde A + \qsum ik d_i.\label{eqn:prop:determination-of-coinciding-centers-from-signature}
    \end{align}

    If $\cY$ has centers that coincide, then $\dim \tilde A = \dim \aff{c_1,\dots,c_I} = 0$, so $n = \qsum ik  d_i$.

    If $\cY \in \mps(I,\R^n)$ and $n = \qsum ik d_i$, then Eqn. \ref{eqn:prop:determination-of-coinciding-centers-from-signature} implies that $0 = \dim \tilde A$, and so $c_1 = \dots = c_I$.
\end{proof}

As its name suggests, Proposition \ref{prop:determination-of-coinciding-centers-from-signature} shows that it may be determined if a maximal equidistant spacing has centers that coincide from their signatures alone. The following lemma is a consequence of Lemma \ref{lem:signatures-are-invariant-under-isometries} and is useful for subsequent calculations.

\begin{lemma}[$\sig \cY = \sig \cY'$ Implies the Equilateral Normal Form of $\cY$ and $\cY'$ are Isometric by Rotation and Recoloring]
    \label{lem:signatures-coincide-means-isometric-by-rotation}
    Let $\cY,\cY' \in \mps(\R^n,I)$, $\sig \cY = \sig \cY'$, both be in equilateral normal form. Then the following holds.
    \begin{enumerate}
        \item If either (i) $I \geq 3$ or (ii) $\cY \in \mps_{\neq}$, then $\cY$ and $\cY'$ are isometric by rotation and recoloring.
        \item If $I \leq 2$ and $\cY \in \mps_=$ and $\sig \cY = \sig \cY'$ then $\cY$ and $\cY'$ are isometric by rotation, recoloring and squash and stretch.
    \end{enumerate}
\end{lemma}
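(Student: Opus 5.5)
The plan is to reconstruct $\cY$ and $\cY'$ from the same data and compare the pieces one at a time. Since both are in equilateral normal form, Proposition \ref{prop:characterization-of-equilateral-normal-form} pins down, up to translation and rotation, where the centers sit and what the radii are. The common signature $s = (m;d_1,\dots,d_k)$ determines three things: the number $k' = m$ of zero-radius classes, the number $k$ of positive-radius classes, and, through Proposition \ref{prop:determination-of-coinciding-centers-from-signature}, whether the centers coincide. So $\cY$ and $\cY'$ are either both in $\mps_=$ or both in $\mps_{\neq}$.

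\textbf{Case (ii), $\cY \in \mps_{\neq}$.}
\begin{enumerate}
\item By Proposition \ref{prop:characterization-of-equilateral-normal-form} 2.(b), after a rigid motion the configuration is fixed. The zero-radius centers sit at the scaled simplex $E_{1,k'}$ at height $\alpha_0$. The non-inner positive-radius centers sit at $E_{\ell,k-1}$ at height $-\alpha_{>0}$. The inner center sits at the origin.
\item The only free parameter is $r$, the common non-inner radius. I expect it to be fixed by maximality: Corollary \ref{cor:dim-of-tilde-A} forces the positive-radius dimensions to sum to $n-(I-2)$. If $r$ is not fixed this way, I would first try to absorb a change in $r$ by composing with the squash and stretch of Lemma \ref{lem:squash-stretches-of-maximal-positive-radius-equidistant-spacings}, though that would land in part 2's conclusion rather than rotation and recoloring alone.
\item Recolor so that the inner classes correspond (inner class equals term $d_1$). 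Match the non-inner positive-radius classes in nonincreasing order of $\dim A_i$, as in Definition \ref{def:signagure-of-equidistant-spacing}. Match the zero-radius classes arbitrarily, which is allowed since $E_{1,k'}$ and $E_{\ell,k-1}$ are symmetric under vertex permutations.
\item After this recoloring, the centers $c_i$ and $c'_i$ differ by a rigid motion $U$ of $\aff(c_1,\dots,c_I)$, and the radii agree.
\item Extend $U$ to $\R^n$. By Corollary \ref{cor:characterization-of-equidistant-spacings}, the directions $V_i = A_i - c_i$ are pairwise orthogonal and orthogonal to $\tilde A$. Corollary \ref{cor:dim-of-tilde-A} shows they together with $\tilde A$ fill $\R^n$. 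Since $\dim V_i = \dim V'_i$, choose orthonormal bases of each $V_i$, $V'_i$ and send one to the other.
\item Maximality gives $Y_i = S^{n-1}_{r_i}(c_i)\cap A_i$. Indeed, any proper subset could be enlarged by Theorem \ref{thm:radius-recipe}. Hence the extended rigid motion carries $Y_i$ onto $Y'_i$.
\end{enumerate}

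\textbf{Case (i), $\cY \in \mps_=$ with $I \geq 3$.} All centers are at $\bszero$ and all radii equal $\tfrac{\sqrt2}2$, by Corollary \ref{cor:determination-of-center-coinciding-of-maximal-spacings}. Recolor by matching sorted dimensions, then repeat steps 5 and 6 above with $\tilde A = 0$.

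\textbf{Part 2, $I \le 2$ with coinciding centers.} Here radii are no longer forced: for $I=2$ only $r_1^2 + r_2^2 = 1$ holds. Normal form still requires equal radii for the relevant classes. If the radii differ, a squash and stretch fixing the common center and dilating each $A_i$ by $r'_i/r_i$ matches them. This is a valid squash and stretch: the map is injective on the single center, and the result stays in $\ps$ because $r_1'^2 + r_2'^2 = 1$. Then proceed as above. The case $I=1$ is trivial up to dilation-free rotation.

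\textbf{Main obstacle.} The hardest step is justifying that the normal-form parameter $r$ in case (ii) agrees for $\cY$ and $\cY'$, or handling it if it does not. The remaining concern, that maximal classes are full spheres in their affine hulls, is routine.
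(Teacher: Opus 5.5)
Your route is the paper's: decide coinciding versus non-coinciding centers from the signature via Proposition~\ref{prop:determination-of-coinciding-centers-from-signature}, use radii $\tfrac{\sqrt2}{2}$ when $I\ge 3$ and the centers coincide, recolor by sorted dimensions, align the mutually orthogonal difference spaces by an orthogonal map, use maximality to get $Y_i=S^{n-1}_{r_i}(c_i)\cap A_i$, and use a squash and stretch when $I=2$. Those pieces are fine.

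The gap is the one you flag, in steps~2 and~4 of case~(ii): nothing makes the radii agree. Maximality cannot supply this. Corollary~\ref{cor:dim-of-tilde-A} constrains only dimensions, while Definition~\ref{def:equilateral-normal-form} chooses $r\in(0,\tfrac{\sqrt2}{2})$ separately for each spacing. The construction in Proposition~\ref{prop:equidistant-spacings-from-signatures} works for every such $r$, and the paper itself remarks that the normal form has one degree of freedom. Rigid motions and recolorings preserve the multiset of radii, so part~1(ii) is actually false whenever $k\ge 2$.

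Concretely, take the signature $(1,(1,1))$ in $\R^3$. Set $Y_0=\set{\bszero}$, $Y_1=\set{c_1\pm r_1e_2}$, $Y_2=\set{c_2\pm r_2e_3}$, with $c_1=a e_1$, $c_2=(a+b)e_1$, $a=\frac{1}{2\sqrt{1-r_2^2}}$, $b=\frac{1-2r_2^2}{2\sqrt{1-r_2^2}}$, $r_1^2=1-a^2$, and $r_2\in(0,\tfrac{\sqrt2}{2})$ arbitrary. A direct check shows each such spacing is equidistant and maximal in $\R^3$ with inner class $1$. Each is automatically in equilateral normal form, since it has only one non-inner positive-radius class. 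Yet $r_2=\tfrac12$ gives radii $\set{0,\sqrt{2/3},\tfrac12}$ and $r_2=\tfrac{3}{10}$ gives $\set{0,\approx 0.852,0.3}$, so no rotation and recoloring relates them.

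So this step cannot be closed. The paper's proof has the same hole: it asserts $r_i=r'_{\sigma(i)}$ ``as both are in equilateral normal form.'' There are two honest repairs.

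\begin{enumerate}
\item Add the hypothesis that $\cY$ and $\cY'$ use the same parameter $r$, or equivalently fix $r$ once and for all in Definition~\ref{def:equilateral-normal-form}. Then Proposition~\ref{prop:characterization-of-equilateral-normal-form} gives equal radii and congruent center configurations after your recoloring, and your steps~3--6 complete the proof. Your step~5, which sends $c_i\mapsto c'_i$ and $V_i$ onto $V'_i$ simultaneously, is more careful than the paper's, which only aligns the difference spaces.
\item Allow $r\neq r'$ and weaken the conclusion to include a squash and stretch. Composing inner--outer squash and stretches (Proposition~\ref{prop:outer-inner-squash-stretch}, as in Lemma~\ref{lem:squash-stretches-of-maximal-positive-radius-equidistant-spacings}) moves every non-inner positive radius from $r$ to $r'$, after which the first repair applies.
\end{enumerate}

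Your fallback is this second route. It does not prove part~1 as stated, but it is all that Theorem~\ref{thm:maximal-equidistant-spacings-are-isometric-iff-same-signature} needs.
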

\begin{proof}
    There are two cases. We first consider when $\cY \in \mps_=$. Note that if the centers of $\cY$ coincide, then by $\sig \cY = \sig \cY'$ and Proposition \ref{prop:determination-of-coinciding-centers-from-signature} the centers of $\cY'$ coincide as well. The reverse is true by the same argument.
    
    First, suppose that $I \geq 3$. By Corollary \ref{cor:determination-of-center-coinciding-of-maximal-spacings}, $r_i = r'_i = \frac{\sqrt{2}}2$ for each $i$. By Lemma \ref{lem:ortho-theorem}, the $V_i$ (resp. $V_i'$) are pairwise orthogonal and direct sum. Further, because $\sig \cY = \sig \cY'$ we may via a recoloring $\sigma$, order the class of $\cY$ and $\cY'$ so that $\dim V_i = \dim V'_i$. So Lemma \ref{lem:existence-of-rotation-that-aligns-subspaces} applies, and there is a rotation so that $\cY = U\cY'$.

    Now we consider the case when $\cY \in \mps_{\neq}$. Define $V_1,\dots,V_I,V_{I+1}$ as follows. For $i = 1,\dots,I$, define $V_i$ as the space of differences of $A_i$ and $V_{I+1}$ the space of differences of $\tilde A$. Define $V'_1,\dots,V'_{I+1}$ likewise. Note that the $V_i$'s (resp. the $V_i'$s) direct sum by Proposition \ref{prop:tri-part-ortho}, and so Lemma \ref{lem:existence-of-rotation-that-aligns-subspaces} applies, and there is a rotation so that $V_i = UV_i$.
    By Theorem \ref{thm:radius-recipe} there is a recoloring (bijection) $\sigma \colon \set{1,\dots,k} \to \set{1,\dots,k}$ so that 
    \begin{align}
        S^{n-1}_{r_i}(c_i)\cap A_i = S^{n-1}_{r_{\sigma(i)}'}(c_{\sigma(i)}')\cap A_i'.
    \end{align} 
    By maximality, this means that $Y_i = Y_{\sigma(i)}'$
    Note that $r_i = r'_{\sigma(i)}$ as both are in equilateral normal form. Thus, $\Sigma U\cY = \cY'$.

    Finally, suppose that $I = 2$ and the centers of $\cY$ coincide. Then, Equation \ref{eqn:center-condition} in Theorem \ref{thm:radius-recipe} only has one case, when $i = 1$ and $j = 2$, in which case the equation reads
    \begin{align}
        \norm{c_1 - c_2} = 0 = \sqrt{1 - r_i^2 - r_j^2}. 
    \end{align}
    Hence, there is a squash and stretch of $\cY$ and $\cY'$ that puts $r_1 = r_2 = \frac{\sqrt{2}}2 = r_1' = r_2'$ directly. Then, apply the same arguments as in the case when $I > 2$ and the centers coincide.
\end{proof}

\begin{remark}
    Note that the condition that $I > 2$ or the centers of $\cY$ do not coincide in Lemma \ref{lem:signatures-coincide-means-isometric-by-rotation} is necessary. Consider the exemplary $\cY,\cY' \in \mps(2,\R^2)$ illustrated by Figures \ref{fig:squash-stretch-iso-fig:Y1} and \ref{fig:squash-stretch-iso-fig:fY1}. It is obvious there is no rotation that sends $\cY_1$ to $f\cY_1$.
\end{remark}

With the definition of signatures, we may prove the following theorem says that isometry classes of maximal equidistant spacings are given by their signatures.

\begin{theorem}[Maximal Equidistant Spacings are Isometric if and only if they have the same Signature]
    \label{thm:maximal-equidistant-spacings-are-isometric-iff-same-signature}
    Let $\cY,\cY' \in \mps$. Then, $\cY \cong \cY'$ if and only if $\sig \cY = \sig \cY'$.
\end{theorem}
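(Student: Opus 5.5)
The forward direction follows directly from Lemma \ref{lem:signatures-are-invariant-under-isometries}. Suppose $\cY \cong \cY'$ by a composition $f$ of rototranslations, recolorings and squash and stretches, so that $f\cY = \cY'$. The lemma gives $\sig \cY = \sig f\cY = \sig \cY'$. The only point to check is that $\sig$ is well defined on $\mps$, so that the definition through an isometric equilateral normal form does not depend on which normal form is chosen. Theorem \ref{thm:maximal-equidistant-spacings-have-equilateral-normal-forms} supplies such a normal form, and Lemma \ref{lem:signatures-are-invariant-under-isometries} shows the value is independent of the choice. So I would record this direction in a line or two.

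For the reverse direction, suppose $\sig \cY = \sig \cY'$. By Theorem \ref{thm:maximal-equidistant-spacings-have-equilateral-normal-forms} there are isometries $g, g'$ with $g\cY$ and $g'\cY'$ in equilateral normal form. By definition of the signature, $\sig g\cY = \sig \cY = \sig \cY' = \sig g'\cY'$. Since isometry is an equivalence relation (the inverse of a squash and stretch is one by Proposition \ref{prop:prop-of-squash-stretch} point 3, and compositions are fine), it suffices to show $g\cY \cong g'\cY'$.

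I would then split into cases using Proposition \ref{prop:determination-of-coinciding-centers-from-signature}. Because $n$ and the $d_i$ are read off the common signature, both spacings lie in $\mps_=$ or both lie in $\mps_{\neq}$.
\begin{enumerate}
\item If both lie in $\mps_{\neq}$, or $I \geq 3$, Lemma \ref{lem:signatures-coincide-means-isometric-by-rotation} point 1 gives a rotation and recoloring carrying $g\cY$ to $g'\cY'$.
\item If $I \le 2$ and both lie in $\mps_=$, point 2 of the same lemma gives an isometry by rotation, recoloring and squash and stretch.
\end{enumerate}
Composing, $\cY \cong g\cY \cong g'\cY' \cong \cY'$.

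I expect the main obstacle to be the $\mps_{\neq}$ case, where the equilateral normal form carries a free parameter $r \in (0,\frac{\sqrt2}2)$. Two normal forms with the same signature might have different values of $r$, and Lemma \ref{lem:signatures-coincide-means-isometric-by-rotation} tacitly assumes the radii agree. To close this gap, I would first apply Lemma \ref{lem:squash-stretches-of-maximal-positive-radius-equidistant-spacings} to $g'\cY'$ with target radius $r$ equal to the common outer radius of $g\cY$. This is an inner-outer squash and stretch that makes all non-inner positive radii equal to $r$, and it forces the inner radius to the corresponding value. After this step, Proposition \ref{prop:characterization-of-equilateral-normal-form} shows that both normal forms have identical centers and radii, given that $k, k'$ agree. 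Only then would I invoke the rotation-and-recoloring argument.
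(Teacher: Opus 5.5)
Your proof follows the paper's: the forward direction is Lemma \ref{lem:signatures-are-invariant-under-isometries}, and the reverse direction passes to equilateral normal forms and applies Lemma \ref{lem:signatures-coincide-means-isometric-by-rotation}. Your extra step of first matching the free outer radius $r$ makes explicit something the paper skips: its lemma asserts $r_i = r'_{\sigma(i)}$ only ``as both are in equilateral normal form''. Equivalently, the proof of Theorem \ref{thm:maximal-equidistant-spacings-have-equilateral-normal-forms} lets you prescribe the same $r$ for both spacings from the start.

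Two small cautions. First, Lemma \ref{lem:squash-stretches-of-maximal-positive-radius-equidistant-spacings} is stated only for spacings whose radii are all positive. When $m>0$ you should instead apply Proposition \ref{prop:outer-inner-squash-stretch} to one outer positive-radius class at a time. Second, your split into $\mps_=$ and $\mps_{\neq}$ needs $\cY,\cY'$ to lie in the same $\R^n$, because the signature alone does not determine $n$ (Remark \ref{rmk:hat-s-neq-and-hat-s-eq-overlap}).
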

\begin{proof}
    The forward direction follows immediately from Lemma \ref{lem:signatures-are-invariant-under-isometries}, which implies that $\sig \cY = \sig \cY'$.

    For the reverse direction, let $\sig \cY = \sig \cY'$, and let $f$ (resp. $f'$) be isometries that takes $\cY$ (resp. $\cY'$) to its equilateral normal form. Then, by Lemma \ref{lem:signatures-coincide-means-isometric-by-rotation}, there is a rotation $U$, recoloring $\sigma$ and squash and stretches $\tilde f$, $\tilde f'$ (which are the identity unless $I = 2$) so that $\tilde f \sigma Uf \cY = \tilde f' f'\cY'$, and so $f'{}^{-1}\tilde f'{}^{-1} \tilde f \sigma Uf\cY = \cY'$. Hence, $\cY \cong \cY'$.
\end{proof}

\begin{remark}[Geometric Interpretation of Signatures]
    \label{rmk:geometric-interpretation-of-signatures}
    In light of Theorem \ref{thm:maximal-equidistant-spacings-are-isometric-iff-same-signature}, it is clear that the signature of a maximal equidistant spacing is a combinatorial descriptor of its isometry class. It also has a straight forward geometric meaning. If $\sig \cY = (m,(d_1,\dots,d_k))$ then $m$ is the number of zero-radii centers of $\cY$, The number $k$ counts the number of classes of $\cY$ with radius $>0$, $d_1$ is the dimension of the inner class and $d_i$ is the dimension of class $Y_i$, once the non-inner classes have been ordered in decreasing order of dimension. 
    
    The reason the signatures are written as $(m,(d_1,\dots,d_k))$ rather than as a sequence $d_0,d_1,\dots,d_k$ is to make clear that $m$ has a different interpretation than the other $d_i$. $m$ counts the \emph{number} of classes, whereas $d_i$ counts the \emph{dimension} of individual classes.

    The geometric interpretation is especially useful, because it applies even if $\cY$ is not in equilateral normal form.
\end{remark}

\begin{proposition}[Properties of Signatures]
    \label{prop:properties-of-signatures}
    Let $\cY \in \mps(I,\R^n)$, $n \geq 1$ and $\sig \cY = (m,(d_1,\dots,d_k))$. Then the following holds.
    \begin{enumerate}
        \item $k \geq 1$.
        \item $I = m + k$.
        \item If $\cY \in \mps_=$ then $n = \qsum ikd_k$ and $m + k \geq 2$. Further, if $I = 2$ then $m \leq 1$ and if $I > 2$ then $m = 0$.
        \item If $\cY \in \mps_{\neq}$ then $m + k \geq 3$ and $n = m + k - 2 + \qsum ikd_k$.
    \end{enumerate}
\end{proposition}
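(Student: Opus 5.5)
The plan is to read each property off from the definition of the signature together with the dimension count of Corollary \ref{cor:dim-of-tilde-A}. By Lemma \ref{lem:signatures-are-invariant-under-isometries} and Theorem \ref{thm:maximal-equidistant-spacings-have-equilateral-normal-forms}, we may assume throughout that $\cY$ is in equilateral normal form. Then $m$ counts the zero-radius classes. The entries $d_1,\dots,d_k$ are the dimensions $\dim A_i$ of the positive-radius classes, and by Corollary \ref{cor:properties-of-class-embeddings} point 2(a) each of these is at least $1$.

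Point 2 is then immediate, since every class has either zero or positive radius. For point 1, suppose $k = 0$, so that all radii vanish. Then $\cY$ is an equilateral simplex, and the opening argument of Corollary \ref{cor:dim-of-tilde-A} shows it is not maximal, a contradiction. We also have $\sum_{i=1}^I \dim A_i = \sum_{i=1}^k d_i$, because zero-radius classes have $\dim A_i = 0$. Substituting this into $n = \dim\tilde A + \sum_i \dim A_i$ from Corollary \ref{cor:dim-of-tilde-A}, with $\dim \tilde A = 0$ or $I-2$, gives the dimension formulas in points 3 and 4 once $I = m+k$ is used.

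For the remaining constraints in point 3, the bound $m+k = I \ge 2$ follows because coinciding centers is only meaningful for $I\ge2$; one could also invoke the standing convention in the definition of $\mps_=$. If $I > 2$, Corollary \ref{cor:determination-of-center-coinciding-of-maximal-spacings} gives $r_i = \frac{\sqrt2}2$ for every $i$, so $m = 0$. If $I = 2$, then $\norm{c_1 - c_2}^2 = 0 = 1 - r_1^2 - r_2^2$, so $r_1$ and $r_2$ cannot both vanish, and hence $m \le 1$.

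For point 4, $I = m+k \ge 3$ should follow from $\dim \tilde A = I-2$ together with the observation that non-coinciding centers force $I \ge 2$. The case $I = 2$ with distinct centers has to be excluded. I expect this to be the main obstacle: $\dim\tilde A = I-2 = 0$ would force $c_1 = c_2$, which contradicts non-coincidence. That settles it, provided one accepts Corollary \ref{cor:dim-of-tilde-A} for $I=2$. If not, I would argue directly, using Proposition \ref{prop:any-ortho-proj-implies-non-maximality}: it gives $c_2 \in \aff(c_1) = \{c_1\}$, which is again a contradiction.
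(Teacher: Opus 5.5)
Your proposal is essentially the paper's proof: normal form, counting for points~1--2, Corollary~\ref{cor:dim-of-tilde-A} for the dimension formulas, Corollary~\ref{cor:determination-of-center-coinciding-of-maximal-spacings} for $m=0$ when $I>2$, and the leave-one-out rank condition of Proposition~\ref{prop:any-ortho-proj-implies-non-maximality} (Eqn.~\ref{eqn:leave-out-rank:1}) to exclude $I=2$ from $\mps_{\neq}$; you also fill in the $m\le 1$ case, which the paper leaves implicit. The one soft spot is $m+k\ge 2$ in point~3, which you get only by convention, and unavoidably so: for $I=1$ the unit sphere is maximal, its single center vacuously coincides, and its signature is $(0,(n))$ (cf.\ Table~\ref{tab:cs-eq}), so that part of the statement needs $I\ge 2$ built into $\mps_=$ --- the paper's proof glosses over the same issue by asserting that no $I=1$ spacing is maximal.
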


\begin{proof}
    Let $\cY$ be in equilateral normal form. 
    \begin{enumerate}
        \item If $I = 1$, $\cY$ is not maximal. If $I \geq 2$, then one of the classes must have radius $>0$, so $k > 0$.
        
        \item The total number of classes in $\cY$ are the radius $0$ classes, $m$, plus the positive-radius classes $k$.
        
        \item This is precisely Proposition \ref{prop:determination-of-coinciding-centers-from-signature} plus the information that $m = 0$ is necessary if $I > 2$. This point follows from Corollary \ref{cor:determination-of-center-coinciding-of-maximal-spacings}.
        \item First we show that $\cY \in \mps_{\neq}$ implies $m + k \geq 3$. This follows from Eqn. \ref{eqn:leave-out-rank:1} in Corollary \ref{cor:leave-out-rank-and-max-spacing}. By Corollary \ref{cor:dim-of-tilde-A},
        \begin{align}
            n &= \dim \tilde A + \qsum iI \dim A_i \\
            &= I-2 + \qsum ik \underbrace{\dim A_i}_{d_i} + \sum_{i = k+1}^I \underbrace{\dim A_i}_{ = 0}\\
            &= I - 2 + \qsum ikd_i,
        \end{align}
        where $k,k',E_{\ell,k}$, and $E_{1,k'}$ refer to the quantities in Definition \ref{def:equilateral-normal-form}.

    \end{enumerate}
\end{proof}

Proposition \ref{prop:properties-of-signatures} shows that signatures must satisfy some constraints to correspond to maximal equidistant spacings. The next proposition shows that these conditions exactly characterize signatures of maximal equidistant spacings.

\begin{proposition}[Equidistant Spacings from Signatures]
    \label{prop:equidistant-spacings-from-signatures}
    Let $s = (m,(d_1,\dots,d_k))$ be a signature, $n \geq 1$ and $k \geq 1$. Then the following holds.
    \begin{enumerate}
        \item If $m+k \geq 3$ there is a $\cY \in \mps_{\neq}(m + k,\R^{m+k-2+\qsum ik d_i})$ so that $\sig \cY = s$.
        \item If $m = 0$ and $k \geq 1$, then there is a $\cY \in \mps_=(k,\R^{\qsum ik d_i})$ so that $\sig \cY = s$.
        \item If $m = 1$ and $k \leq 1$, then there is a $\cY \in \mps_=(2,\R^{d_1})$ if $k = 1$ and $\cY \in \mps_=(1,\R^{0})$ if $k = 0$ so that $\sig \cY = s$.
    \end{enumerate}
\end{proposition}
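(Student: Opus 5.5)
The plan is to prove each of the three points by explicit construction through Theorem \ref{thm:radius-recipe}, and then to verify maximality and the signature. Throughout, we choose the affine spaces so that the dimension identity of Corollary \ref{cor:dim-of-tilde-A} holds with equality, so that no class can be enlarged. Each construction has two ingredients: radii and centers satisfying Eqn. \ref{eqn:center-compatability-condition}, and pairwise orthogonal subspaces $V_i$, orthogonal to $\tilde A$, with $\dim V_i = d_i$. By Corollary \ref{cor:feasibility-of-chosing-v}, the second ingredient exists as soon as $\dim\tilde A + \sum_i d_i \le n$. We pick $n$ to make this an equality, and put $Y_i \coloneqq S^{n-1}_{r_i}(c_i)\cap(c_i+V_i)$ as in Theorem \ref{thm:radius-recipe}.

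For point 1, with $m+k\ge 3$, we fix any $r\in(0,\frac{\sqrt2}2)$ and place the centers exactly as in Proposition \ref{prop:characterization-of-equilateral-normal-form} 2(b). There are $m$ zero-radius centers on a scaled $E_{1,m}$ at height $\alpha_0$, and $k-1$ outer positive-radius centers on $E_{\ell,k-1}$ with $\ell=\sqrt{1-2r^2}$ at height $-\alpha_{>0}$. The inner center sits at the origin with the radius $r_1$ given there. The compatibility conditions reduce to the extent identities already verified in that proof, namely Eqns. \ref{eqn:proof:prop:characterization-of-equilateral-normal-form:1}--\ref{eqn:proof:prop:characterization-of-equilateral-normal-form:3} and Proposition \ref{prop:calc-2}. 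Alternatively, one can build the spacing by gluing $\cY|_0$, $\cY|_{>0}$ and the inner class via Proposition \ref{prop:extent-characterizes-gluablity}, using Lemma \ref{lem:existence-of-soln-to-extent-condition} to get $r_1\ge\frac{\sqrt2}2$. These $m+k$ centers are affinely spanning in dimension $m+k-2$, since they form an orthocentric system with the inner center in the interior. Hence $\dim\tilde A=m+k-2$, and taking $n=m+k-2+\sum_i d_i$ gives the required $V_i$. We assign $\dim V_1=d_1$ to the inner class and $d_2\ge\dots\ge d_k$ to the outer ones.

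For point 2, with $m=0$, all centers are placed at $\bszero$ with $r_i=\frac{\sqrt2}2$. Compatibility holds since $1-\frac12-\frac12=0$. We take $V_i$ to be coordinate blocks of sizes $d_i$ in $\R^{\sum d_i}$, so $\tilde A=\{\bszero\}$. For point 3 with $k=1$, the zero-radius class is $\{\bszero\}$ and the other class is the sphere of radius $1$ in $\R^{d_1}$, as in Example \ref{exp:perf-spacings}. The case $k=0$, $I=1$, is the degenerate one-point configuration and is handled directly from Definition \ref{def:equidistantly-spaced}.

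The main obstacle is maximality, because the paper so far gives only necessary conditions such as Corollary \ref{cor:dim-of-tilde-A}. We plan to argue as follows. Suppose $\cY\subset\cY'\in\ps$. Each class $Y_i$ spans $c_i+V_i$, and when $d_i\ge1$ and $r_i>0$ it contains a full sphere there, so the centers and radii of $\cY'$ coincide with those of $\cY$ by the uniqueness in Theorem \ref{thm:projection-theorem}. Then Corollary \ref{cor:characterization-of-equidistant-spacings} forces $Y'_i\subset S_{r_i}(c_i)\cap A'_i$, where $A'_i\supset A_i$ is orthogonal to $\tilde A$ and to the other $A_j$. Dimension counting, using equality in $n$, then gives $A'_i=A_i$ and hence $Y'_i=Y_i$. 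The delicate subcase is a zero-dimensional positive-radius class, a two-point set, whose center must still be pinned down; we expect to handle it with the same projection argument, since $A_i$ is the line through the two points. Finally, $\sig\cY=s$ is read off from the construction, since $\cY$ is in equilateral normal form (Definition \ref{def:equilateral-normal-form}) and has the prescribed dimensions and number of zero-radius classes.
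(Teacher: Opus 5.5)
Your constructions are the same as the paper's. In point 1 you use the centers of Proposition~\ref{prop:characterization-of-equilateral-normal-form} 2(b). In point 2 you put all centers at $\bszero$ with radius $\frac{\sqrt2}2$ and use coordinate blocks for the $V_i$. In point 3 you take $\set{\bszero}$ together with the unit sphere. Throughout, $n$ is chosen so that $\R^n=\tilde A\oplus\bigoplus_i V_i$ orthogonally.

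The gap is in maximality, at the step ``$Y_i$ contains a full sphere in $c_i+V_i$, so the centers and radii of $\cY'$ coincide with those of $\cY$ by the uniqueness in Theorem~\ref{thm:projection-theorem}.'' That uniqueness concerns one fixed class. When $Y_i\subset Y_i'$, the hull $A_i'$ can be strictly larger than $A_i$. The minimal sphere of $Y_i'$ can then have a larger radius and a center off $A_i$: the points $\pm e_1$ lie on the circle of radius $\sqrt{1+t^2}$ about $te_2$ for every $t$. A full sphere in $c_i+V_i$ only forces $c_i'-c_i\perp V_i$. It says nothing about the component of $c_i'-c_i$ orthogonal to $A_i$, and that component is exactly what must be ruled out. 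So the difficulty is not confined to two-point classes, and zero-radius classes ($Y_i=\set{c_i}$) are not handled at all. Without pinned centers your dimension count does not close either, because $\tilde A'$ could differ from $\tilde A$.

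The missing ingredient is to use the other classes. Put $w_i\coloneqq c_i'-c_i\in V_i'$.
\begin{enumerate}
\item For $j\neq i$ and $y_j\in Y_j\subset Y_j'$, Theorem~\ref{thm:projection-theorem} gives $c_i'=\proj_{A_i'}y_j$ and $c_i=\proj_{A_i}y_j$. Since $A_i\subset A_i'$, this yields $\proj_{A_i}c_i'=c_i$, so $w_i\perp V_i$.
\item Lemma~\ref{lem:ortho-theorem} applied to $\cY'$ gives $w_i\perp V_j'\supset V_j$ for every $j\neq i$. By your orthogonal decomposition of $\R^n$, this puts $w_i\in\tilde A$.
\item Since $y_j-c_i'\perp V_i'$ and $y_j-c_j\in V_j\perp w_i$, you get $\innerprod{c_j-c_i}{w_i}=\norm{w_i}^2$ for all $j\neq i$.
\item Hence the affine function $x\mapsto\innerprod{x-c_i}{w_i}$ equals $\norm{w_i}^2$ on $\aff(c_j : j\neq i)$. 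In your configuration this set contains $c_i$, because the inner center lies in the interior of the simplex of the others. Evaluating at $c_i$ gives $w_i=\bszero$.
\item Now $r_i'=r_i$ and $\tilde A'=\tilde A$. Proposition~\ref{prop:tri-part-ortho} and Lemma~\ref{lem:ortho-theorem} then give $V_i'\subset V_i$, so $Y_i'\subset S^{n-1}_{r_i}(c_i)\cap A_i=Y_i$.
\end{enumerate}
When $\tilde A=\set{\bszero}$, as in points 2 and 3, step 2 already gives $w_i=\bszero$.

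The paper's own maximality paragraph is also terse here. It asserts that $V_i\subset V_i'$ places the centers of $\cY'$ in $\aff(c_1,\dots,c_{m+k})$ and hence makes them equal to the $c_i$. The argument above is what that step requires.
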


\begin{proof}
    Let $I \coloneqq m + k$. The idea for both parts of the proof is to construct a maximal equidistant spacing in equilateral normal form concretely by appealing to Proposition \ref{prop:characterization-of-equilateral-normal-form}.

    First, we show 1. Define $\cY$ a putative maximal equidistant spacing as follows. Let $r \in (0,\frac{\sqrt2}2)$ and define 
    \begin{align}
        \alpha_0 &= \frac{1}{2(m+1)}\sqrt{\frac{2k(m+1)}{1 - 2r^2k'-2r^2 + k + m}}\\
        \alpha_{>0} &= \frac{1 - 2r^2}{2k}\sqrt{\frac{2k(m+1)}{1 - 2r^2k'-2r^2 + k + m}}.
    \end{align}

    Let $n \coloneqq m+k-2 + \qsum ikd_k$. For each $i = 1,\dots,m+k$, define $n_i \coloneqq d_i$ if $i \leq k$ and $n_i \coloneqq 0$ if $i > k$. Let $e_1,\dots,e_{n}$. %
    For each $i = 1,\dots,m+k$, define 
    \begin{align}
        \label{eqn:proof:prop:equidistant-spacings-from-signatures:v-i-def}
        V_i \coloneqq \Span{e_{m+k + 1 + \qsum j{i-1} d_j},\dots,e_{m+k + \qsum j{i} d_j}}.
    \end{align}
    Note that $\Span{e_1,\dots,e_{m+k}} \bigoplus_{i = 1}^{m+k} V_i = \R^{m+k-2 + \qsum ik d_k}$, where the direct sum comes from orthonormality and spanning comes from dimension counting. Define $c_1,\dots,c_{k-1}$ to be the $k-1$ vertices of $E_{\ell,k-1}$ embedded in $\R^{k-2} \times \set{\bszero^{n + 1 - k}} \times \set{\alpha_{> 0}}$, and let $c_{k+1},\dots,c_{k+m}$ be the $m$ vertices of $E_{1,m}$ embedded in $\set{\bszero^{k-2}} \times \R^{m-1} \times \set{\bszero^{n + 2 - k - m}}  \times \set{\alpha_{ 0}}$, and put $c_k = \bszero^n$. For each $i = 1,\dots,m+k$, put
    \begin{align}
        A_i = c_i + V_i, \quad Y_i \coloneqq S^{n-1}_{r}(c_i) \cap A_i.
    \end{align}
    where 
    \begin{align}
        r_i \coloneqq \begin{cases}
            r & \text{if } i < k\\
            \sqrt{\frac12 + \frac{1 - 2r^2}{2(k+k'-2k'r^2 - 2r^2 + 1)}} &\text{if } i = k\\
            0 &\text{if } i > k
        \end{cases}.
    \end{align}

    We now show that $\cY\coloneqq \bigsqcup_{i = 1}^{m+k}Y_i \in \mps(m + k,\R^{m+k-2+\qsum ik d_k})$. First we show that $\cY \in \ps$ by verifying Corollary \ref{cor:characterization-of-equidistant-spacings}. Clearly, points 2. and 3. are satisfied. All that's left is to verify Eqn. \ref{eqn:center-compatability-condition}. There are five cases to consider. For these five cases, we use the shorthand that $c^*_{>0} \coloneqq \bszero^{n-1}\times \alpha_{>0}$ and $c^*_0 \coloneqq \bszero^{n-1} \times -\alpha_0$.
    \begin{enumerate}
        \item[Case 1.] $i,j$ correspond to positive-radii non-inner classes. Then
        \begin{align}
             \norm{c_i - c_j}^2 = 1 - 2r^2.
        \end{align}
        Note that classes $i$ and $j$ have radius $r$.
        \item [Case 2.] $i,j$ correspond to zero-radii classes. Then clearly
        \begin{align}
            \norm{c_i-c_j}^2 = 1.
        \end{align}
        \item[Case 3.] Classes $i,j$ are such that $i$ is a positive-radius class, and $j$ is a zero-radius class. Then a straightforward but involved calculation shows that 
        \begin{align}
            \norm{c_i - c_j}^2 &= \norm{c_i - c^*_{>0}}^2 + \norm{c^*_{>0} - c^*_{0}}^2 + \norm{c_j - c^*_{0}}^2\\
            &=\frac{m}{2(m+1)} + (\alpha_0 + \alpha_{>0})^2 + \frac{2r^2 + k - 1}{2k}\\
            &= 1 - r^2.
        \end{align}
        \item[Case 4.] Class $i$ is a positive-radius class and $j = k$ is the inner class. Then another  straightforward but involved calculation shows that
        \begin{align}
            \norm{c_i - c_k}^2 &= \norm{c_i - c^*_{>0}}^2 + \norm{c^*_{>0} - \bszero^n}^2 + \norm{\bszero^{n} - c_k}^2\\
            &= \frac{2r^2 + k - 1}{2k} + \alpha_1^2 + r_k^2 \\
            &= 1 - r^2 - r_k^2.
        \end{align}
        \item[Case 5.] Class $i$ is a zero-radius class and $j = k$, the inner class. Then
        \begin{align}
            \norm{c_i - c_k}^2 &= \norm{c_i - c^*_{0}}^2 + \norm{c^*_{0} - \bszero^n}^2 + \norm{\bszero^{n} - c_k}^2\\
            &= \frac{m}{2(m+1)} + \alpha_0^2 + r_k^2 \\
            &= 1 - r_k^2.
        \end{align}
    \end{enumerate}

    We now verify $\cY$ is maximal. Suppose that $\cY \subseteq \cY'$ so that $\cY'$ is maximal. By 
    \begin{align}
        \label{eqn:dim-maximality}
        \aff(c_1,\dots,c_{m+k}) \bigoplus_{i = 1}^{m+k} V_i = \R^{m+k-2 + \qsum ik d_k},
    \end{align}
    we have that $I' = m+k$. Clearly, $\cY' \in \mps_{\neq}$, and so $\cY'$ satisfies Proposition \ref{prop:the-ortho-reflection-lemma-produces-orthocentric-systems}. Hence, $c'_1,\dots,c'_I$ form a simplex plus orthocenter. By Eqn. \ref{eqn:dim-maximality}, we have that $V_i \subset V'_i$ and so $\aff(c_1',\dots,c_{m+k}') \subset \aff(c_1,\dots,c_{m+k})$. Hence, $c_i = c_i'$ for each $i$. Therefore, $\cY' \subset \cY$ and $\cY$ is maximal.

    Finally, what's left is to show that $\sig \cY = s$. $\cY$ has $m$ radius zero classes (one for each vertex of $E_{1,m}$), $k$ positive-radius classes (one for each vertex of $E_{\ell,k-1}$ plus one inner class), and $d_1,\dots,d_k$ are as promised from the definition of $V_i$ in Eqn. \ref{eqn:proof:prop:equidistant-spacings-from-signatures:v-i-def}.

    Now we verify $2.$. Put $c_1 = \dots = c_{k} = \bszero$. Let $e_1,\dots,e_{\qsum ikd_k}$ be an orthonormal basis for $\R^{\qsum ikd_k}$. Put 
    \begin{align}
        V_i \coloneqq \Span{e_{1 + \qsum j{k-1}d_j},\dots,e_{\qsum j{k}d_j}}
    \end{align}
    and define
    \begin{align}
        Y_i \coloneqq S^{n-1}_{\frac{\sqrt2}2}(\bszero) \cap V_i.
    \end{align}
    $\cY \coloneqq \sqcup^{m+k}_i Y_i$. We verify that $\cY \in \ps.$ If $i \neq j$, then 
    \begin{align}
        \norm{y_i - y_j}^2 &= \norm{y_i - c_i}^2 + \norm{c_i - c_j}^2 + \norm{y_j - c_j}^2\\
        &= \frac12 + 0 + \frac12 = 1.
    \end{align}
    Hence, $\cY \in \ps$. By counting, clearly $\cY \in \ps(k,\R^{\qsum ikd_i})$. Maximality follows from the same arguments as in the first part of this proof. Finally, the centers of $\cY$ coincide as defined, and so $\cY \in \mps_=(k,\R^{\qsum ikd_i})$.

    Now we show 3. If $m = 1$, $k = 0$, then we may consider the spacing given by $Y_1 \coloneqq \set{\bszero}$ This spacing is maximal in $\bbR^0 = \set{\bszero}$. If $m = 1, k = 1$ then put $Y_1 \coloneqq \set{\bszero}$ and $Y_2 \coloneqq S^{d_1-1}_{1}(\bszero)$.%

\end{proof}

Proposition \ref{prop:equidistant-spacings-from-signatures} motivates the following definition, which describes signatures that correspond to signatures in $\ps(I,\R^n)$. 
\begin{definition}[Valid Signatures]
    \label{def:valid-signatures}
    Let $I,n \in \bbN$ be given. We call the set of $I,n$-\emph{valid signatures} denoted by $\cS(I,n)$, as the set 
    \begin{align}
        \cS(I,n) \coloneqq \set{s \in \cS \colon k,m \geq 0,\quad  m + k \leq I ,\quad  m + k + \qsum ikd_k-2 \leq n}.
    \end{align}
    When $I,n$ are clear, we refer to $\cS(I,n)$ as simply the set of valid signatures. We define two subsets of $\cS$ by $\hat\cS_=$ and $\hat\cS_{\neq}$ by
    \begin{align}
        \label{eqn:hat-sc-neq-def}
        \hat\cS_{\neq}(I,n) &\coloneqq \set{s \in \cS(I,n) \colon  m + k = I > 2,\quad m+k +\qsum ikd_k = n+2},\\
        \label{eqn:hat-sc-eq-def}
        \hat\cS_=(I,n) &\coloneqq \set{s \in \cS(I,n) \colon m + k = I,\quad \qsum ikd_k = n, \quad    \twopartpiecewise{m=0}{I \geq 1}{m=1}{I \leq 2}}.
    \end{align}
\end{definition}

With these two definitions, several previous results may be phrased more simply.
\begin{corollary}[Signatures of $\mps$]
    \label{cor:signatures-of-mps}
    The following holds.
    \begin{enumerate}
        \item Theorem \ref{thm:maximal-equidistant-spacings-are-isometric-iff-same-signature}. Isometry classes of maximal equidistant spacings are characterized by their signatures.
        \item Proposition \ref{prop:properties-of-signatures} point 2. If $\cY \in \mps_=(I,\R^n)$ then $\sig \cY \in \hat\cS_=(I,n)$.
        \item Proposition \ref{prop:properties-of-signatures} point 3. If $\cY \in \mps_{\neq}(I,\R^n)$ then $\sig \cY \in \hat\cS_{\neq}(I,n)$.
        \item Proposition \ref{prop:equidistant-spacings-from-signatures} point 1. The map 
        \begin{align}
            \sig|_{\mps_{\neq}(I,\R^n)} \colon \mps_{\neq}(I,\R^n) \to \hat\cS_{\neq}(I,n)
        \end{align} is a surjection.
        \item Proposition \ref{prop:equidistant-spacings-from-signatures} points 2., 3. and 4.
        The map 
        \begin{align}
            \sig|_{\mps_{=}(I,\R^n)} \colon \mps_{=}(I,\R^n) \to \hat\cS_{=}(I,n)
        \end{align} is a surjection.
    \end{enumerate}
\end{corollary}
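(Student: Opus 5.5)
The statement is Corollary \ref{cor:signatures-of-mps}. It is a repackaging of earlier results in terms of $\hat\cS_=$ and $\hat\cS_{\neq}$, so the plan is to prove each of the five points by matching the hypotheses of the cited result with Definition \ref{def:valid-signatures}. Throughout, write $s=\sig\cY=(m,(d_1,\dots,d_k))$. This is well defined on $\mps$ by Theorem \ref{thm:maximal-equidistant-spacings-have-equilateral-normal-forms} together with Lemma \ref{lem:signatures-are-invariant-under-isometries}.

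\textbf{Point 1.} This is Theorem \ref{thm:maximal-equidistant-spacings-are-isometric-iff-same-signature} verbatim, so nothing needs to be done.

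\textbf{Points 2 and 3.} For $\cY\in\mps_=(I,\R^n)$ we use Proposition \ref{prop:properties-of-signatures}.
\begin{enumerate}
    \item Point 2 there gives $I=m+k$.
    \item Point 3 there gives $\qsum ik d_i=n$, together with $m=0$ if $I>2$ and $m\le 1$ if $I=2$.
\end{enumerate}
It remains to check $s\in\cS(I,n)$. We have $m+k\le I$, and
\[
m+k+\qsum ik d_i-2 = I-2+n \le n
\]
whenever $I\le 2$. When $I>2$ we have $m=0$, and the inequality follows the same way.

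The main obstacle is here. The piecewise side condition in \eqref{eqn:hat-sc-eq-def}, and the inequality $m+k+\sum d_i-2\le n$ in $\cS(I,n)$ for large $I$, are stated somewhat loosely. I would read the case condition as ``$m=0$ if $I>2$, and $m\le1$ if $I\le 2$''. I would likewise interpret the dimension constraint in $\cS(I,n)$ as the appropriate one for each branch. With that reading the containment is immediate.

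For $\cY\in\mps_{\neq}(I,\R^n)$, point 4 of Proposition \ref{prop:properties-of-signatures} gives $I=m+k\ge 3$ and $n=m+k-2+\qsum ik d_i$. These are exactly the defining equations of \eqref{eqn:hat-sc-neq-def}, and they also give the inequalities defining $\cS(I,n)$.

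\textbf{Points 4 and 5 (surjectivity).} Take $s\in\hat\cS_{\neq}(I,n)$. Then $m+k=I\ge3$ and $n=m+k-2+\sum d_i$, so Proposition \ref{prop:equidistant-spacings-from-signatures} point 1 produces $\cY\in\mps_{\neq}(I,\R^n)$ with $\sig\cY=s$. We need $k\ge1$ there. Since $\sum d_i=n-I+2$, we get $k\ge1$ whenever $n\ge I-1$. Otherwise we note that the signature has a designated inner term $d_1$.

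Now take $s\in\hat\cS_=(I,n)$. If $m=0$, point 2 of Proposition \ref{prop:equidistant-spacings-from-signatures} gives $\cY\in\mps_=(k,\R^{\sum d_i})=\mps_=(I,\R^n)$. If $m=1$, then $I\le2$ forces $k\le1$, and point 3 covers this case. In both cases the constructed spacing has signature $s$, which proves surjectivity.
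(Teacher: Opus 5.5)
Your case-by-case matching is the same route as the paper, whose proof is only ``unrolling the definitions.'' The problem is that Definition~\ref{def:valid-signatures}, read literally, does not support the corollary, and three of your steps assert things that are false rather than merely loosely phrased.

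\emph{Point 2.} The sentence ``When $I>2$ we have $m=0$, and the inequality follows the same way'' is wrong. With $m=0$, $k=I$ and $\sum_i d_i=n$ one gets $m+k+\sum_i d_i-2=n+I-2>n$. So, literally, $\hat\cS_=(I,n)=\emptyset$ for every $I\ge 3$. Yet the three orthogonal diameters $Y_i=\{\pm\frac{\sqrt2}{2}e_i\}$ in $\R^3$ form an element of $\mps_=(3,\R^3)$ with signature $(0,(1,1,1))$ (cf.\ Table~\ref{tab:cs-eq}). Your later ``interpret the dimension constraint as the appropriate one for each branch'' amends the statement rather than verifying it, and it contradicts the sentence just before it.

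\emph{Point 4.} The case $\sum_i d_i=n-I+2=0$ cannot be dismissed by appealing to ``a designated inner term $d_1$.'' Definition~\ref{def:valid-signatures} allows $k\ge 0$, and the paper itself uses $(1,())$. In fact $(I,())\in\hat\cS_{\neq}(I,I-2)$ for every $I\ge 3$, since every defining condition is satisfied. On the other hand $\ps(I,\R^{I-2})=\emptyset$: one point from each class gives $I$ pairwise unit-distance points, and these need $I-1$ dimensions. So the surjection claim is false as written.

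\emph{Point 5.} ``The constructed spacing has signature $s$'' fails when $s$ is unsorted. A signature only requires $d_2\ge\cdots\ge d_k$, so $(0,(1,2))\in\hat\cS_=(2,3)$. But on $\mps_=$ the map $\sig$ sorts all of $d_1,\dots,d_k$ (Definition~\ref{def:signagure-of-equidistant-spacing}, item 1). Hence the spacing from Proposition~\ref{prop:equidistant-spacings-from-signatures} point 2 has signature $(0,(2,1))$, and $(0,(1,2))$ is never attained.

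What is missing is to state up front the corrected definitions under which the corollary is true, and only then run your matching.
\begin{enumerate}
\item In $\cS(I,n)$ the dimension constraint should read $m+k-2+\sum_i d_i\le n$ on the non-coinciding branch and $\sum_i d_i\le n$ on the coinciding one.
\item $\hat\cS_{\neq}$ needs $k\ge 1$, and both sets need every $d_i\ge 1$. This is forced by Proposition~\ref{prop:properties-of-signatures} point 1 and Corollary~\ref{cor:properties-of-class-embeddings} point 2(a), and it matches the range $3\le I\le n+1$ used in Proposition~\ref{prop:counting-maximal-equidistant-spacings}.
\item $\hat\cS_=$ needs $d_1\ge d_2\ge\cdots\ge d_k$.
\end{enumerate}
Your reading of the piecewise $m$-condition is the right one, and your point 3 is correct as stated. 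With these amendments your verification of points 2, 4 and 5 goes through exactly as you wrote it. Without them, no unrolling of definitions, yours or the paper's, can prove those three points.
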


\begin{proof}
    Proof of all claims follow immediately from unrolling the definitions.
\end{proof}
In light of Corollary \ref{cor:signatures-of-mps}, it makes sense to pass through the quotient and regard $\sig$ as a map between isometry classes and signatures. This is proven in the following proposition.
\begin{proposition}[$\sig$ as a Map on Isometry Classes of $\mps$]
    \label{prop:sig-as-a-map-on-iso-classes-of-mps}
    Let $\frac{\mps(I,\R^n)}{\cong}$ denote the equivalence class of $\mps(I,\R^n)$ generated by rotations, translations, recolorings and squash and stretches. Define $\frac{\mps_=(I,\R^n)}{\cong}$ and $\frac{\mps_{\neq}(I,\R^n)}{\cong}$ likewise.
    We may regard
    \begin{align}
        \sig \colon \frac{\mps(I,\R^n)}{\cong} \to \cS(I,\R^n)
    \end{align}
    and likewise 
    \begin{align}
        \sig|_{\frac{\mps_=(I,\R^n)}{\cong}}& \frac{\mps_=(I,\R^n)}{\cong} \to \hat\cS_=(I,n),\\
        \sig|_{\frac{\mps_{\neq}(I,\R^n)}{\cong}}& \frac{\mps_{\neq}(I,\R^n)}{\cong} \to \hat\cS_{\neq}(I,n).
    \end{align}
\end{proposition}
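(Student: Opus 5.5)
The statement is a well-definedness claim: $\sig$, originally a map on $\mps$, descends to the quotient by $\cong$, and its restrictions land in the stated codomains. The plan has three steps, carried out in this order.

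\textbf{Step 1: $\sig$ is constant on equivalence classes.} The relation $\cong$ is generated by rotations, translations, recolorings and squash and stretches. Two elements of $\mps(I,\R^n)$ are therefore equivalent iff they are joined by a finite chain of such moves. By Lemma \ref{lem:signatures-are-invariant-under-isometries}, each single move $f$ satisfies $\sig \cY = \sig f\cY$. Induction on the length of the chain gives $\sig$ constant on each class. By the universal property of the quotient, $\sig$ then factors uniquely as $\mps(I,\R^n) \to \mps(I,\R^n)/\cong \to \cS$.

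One subtlety must be handled first: every intermediate spacing in the chain must itself lie in $\mps$, so that the lemma applies. For rotations, translations and recolorings this is clear, since they are bijections of $\ps(I,\R^n)$ preserving inclusion. For squash and stretches I will use Proposition \ref{prop:prop-of-squash-stretch} point 3, which says they are invertible by squash and stretches. If $f\cY \subsetneq \cY''$ with $\cY'' \in \ps$, the inverse would enlarge $\cY$, provided it extends to $\cY''$. I expect this extension step to be the \textbf{main obstacle}, because squash and stretches are only defined through centers and the affine pieces $A_i$. If it resists, the fallback is to define $\cong$ on $\mps$ as the relation generated by moves between maximal spacings. That is already the setting of Theorem \ref{thm:maximal-equidistant-spacings-are-isometric-iff-same-signature}.

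\textbf{Step 2: the values land in the stated codomains.} For $\cY \in \mps_=(I,\R^n)$, Corollary \ref{cor:signatures-of-mps} (via Proposition \ref{prop:properties-of-signatures}) gives $\sig \cY \in \hat\cS_=(I,n)$. For $\cY \in \mps_{\neq}$ it gives $\sig \cY \in \hat\cS_{\neq}(I,n)$. Both sets are contained in $\cS(I,n)$, so $\sig$ maps all of $\mps(I,\R^n)$ into $\cS(I,n)$.

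\textbf{Step 3: the restrictions are well defined on the quotient.} We need $\mps_=$ and $\mps_{\neq}$ to be unions of equivalence classes. Proposition \ref{prop:determination-of-coinciding-centers-from-signature} shows membership in $\mps_=$ is decided by $\sig$ alone, via $n = \qsum ik d_i$. Since $\sig$ is class-invariant by Step 1, these subsets are saturated. Hence $\mps_=/\cong$ and $\mps_{\neq}/\cong$ embed in $\mps/\cong$, and the restricted maps are well defined with the stated targets.
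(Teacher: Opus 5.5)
Your proposal is correct and follows the paper's route. The paper's proof simply cites Theorem \ref{thm:maximal-equidistant-spacings-are-isometric-iff-same-signature}, whose forward direction is exactly Lemma \ref{lem:signatures-are-invariant-under-isometries} propagated along a chain of moves, which is your Step 1; your Steps 2 and 3 (codomains from Corollary \ref{cor:signatures-of-mps}, saturation of $\mps_=$ and $\mps_{\neq}$ via Proposition \ref{prop:determination-of-coinciding-centers-from-signature}) and your fallback reading of $\cong$ as generated by moves between maximal spacings only make explicit what the paper leaves implicit.
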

\begin{proof}
    The only thing to check is that $\sig \cY = \sig \cY'$ when $\cY \cong \cY'$ $\sig$. This is proven in Theorem \ref{thm:maximal-equidistant-spacings-are-isometric-iff-same-signature}.
\end{proof}
For ease of notation, we sometimes omit the restriction and denote $\sig|_{\frac{\mps_{=}(I,\R^n)}{\cong}}$ as $\sig$ and likewise for $\sig|_{\frac{\mps_{\neq}(I,\R^n)}{\cong}}$ when it is clear from context. 

We may now state and prove our main result.%

\begin{theorem}[$\sig$ is a Bijection on Maximal Equidistant Spacings]
    \label{thm:sig-is-a-bijection-on-maximal-equidistant-spacings} 
    All of the following maps are bijections.
    \begin{align}
        \sig\colon \frac{\mps_=(I,n)}{\cong} &\to \hat\cS_=(I,n),\\
        \sig\colon \frac{\mps_{\neq}(I,n)}{\cong} &\to \hat\cS_{\neq}(I,n),\\
        \sig\colon \bigcup_{I,n \in \bbN}\frac{\mps_=(I,n)}{\cong} &\to \bigcup_{I,n \in \bbN}\hat\cS_=(I,n),\\
        \sig\colon\bigcup_{I,n \in \bbN} \frac{\mps_{\neq}(I,n)}{\cong} &\to \bigcup_{I,n \in \bbN}\hat\cS_{\neq}(I,n).
    \end{align}
    In short, the signature map is a bijection from isometry classes to the space of compatible signatures.
\end{theorem}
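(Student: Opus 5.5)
The plan is to verify three properties for the quotient map $\sig$ of Proposition \ref{prop:sig-as-a-map-on-iso-classes-of-mps}: it is well-defined into the stated codomain, it is injective, and it is surjective. Each of the four displayed maps is handled by assembling results already stated.

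\emph{Well-definedness and codomain.} Proposition \ref{prop:sig-as-a-map-on-iso-classes-of-mps}, together with the forward direction of Theorem \ref{thm:maximal-equidistant-spacings-are-isometric-iff-same-signature} (via Lemma \ref{lem:signatures-are-invariant-under-isometries}), shows that $\sig$ descends to isometry classes. For the codomain I will use Proposition \ref{prop:properties-of-signatures}:
\begin{itemize}
\item for $\cY \in \mps_{\neq}(I,\R^n)$ we have $I = m+k \geq 3$ and $n = m+k-2+\qsum ik d_i$, which are exactly the defining conditions of $\hat\cS_{\neq}(I,n)$ in Eqn. \ref{eqn:hat-sc-neq-def};
\item for $\cY \in \mps_=$ we have $\qsum ik d_i = n$, with $m = 0$ when $I>2$ and $m \leq 1$ when $I = 2$, which matches Eqn. \ref{eqn:hat-sc-eq-def}.
\end{itemize}
One more check is that an isometry never sends a class in $\mps_=$ to one in $\mps_{\neq}$. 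This follows from Proposition \ref{prop:determination-of-coinciding-centers-from-signature}, since coincidence of centers is read off from the signature.

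\emph{Injectivity.} This is the reverse direction of Theorem \ref{thm:maximal-equidistant-spacings-are-isometric-iff-same-signature}: equal signatures imply $\cY \cong \cY'$, so the induced maps on quotients are injective.

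\emph{Surjectivity.} Take $s \in \hat\cS_{\neq}(I,n)$. Then $m+k = I \geq 3$, and Proposition \ref{prop:equidistant-spacings-from-signatures} point 1 produces a $\cY \in \mps_{\neq}(m+k,\R^{m+k-2+\sum d_i}) = \mps_{\neq}(I,\R^n)$ with $\sig\cY = s$. For $s \in \hat\cS_=(I,n)$ I split on $m$:
\begin{itemize}
\item if $m = 0$, point 2 gives $\cY \in \mps_=(k,\R^{\sum d_i}) = \mps_=(I,\R^n)$;
\item if $m = 1$, which forces $I \leq 2$, point 3 covers the remaining cases $k \in \set{0,1}$.
\end{itemize}
Equivalently, these steps are the surjectivity items of Corollary \ref{cor:signatures-of-mps}.

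\emph{The unions over $I,n$.} These follow once I show the pieces are disjoint and the maps respect the grading. A signature determines $I = m+k$. It also determines $n$: by Proposition \ref{prop:determination-of-coinciding-centers-from-signature} the type ($=$ or $\neq$) fixes which dimension formula applies, giving $n = \sum d_i$ for $\mps_=$ and $n = I-2+\sum d_i$ for $\mps_{\neq}$. Hence the sets $\hat\cS_=(I,n)$ are pairwise disjoint for distinct $(I,n)$, and likewise for $\hat\cS_{\neq}(I,n)$. Isometry classes for distinct $(I,n)$ are also distinct, since the number of classes is preserved and $n$ is pinned by Corollary \ref{cor:dim-of-tilde-A}. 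A disjoint union of bijections between correspondingly indexed pieces is then a bijection.

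\emph{Main obstacle.} The delicate step is the degenerate $\mps_=$ cases with $I \leq 2$, where the side condition in Eqn. \ref{eqn:hat-sc-eq-def} is stated piecewise and overlaps at $I = 1,2$. I will check by hand that the signatures $(0,(d_1,d_2))$ and $(1,(d_1))$ for $I=2$, and $(1,())$ for $I = 1$, are each realized exactly once up to isometry. This uses the squash-and-stretch clause of Lemma \ref{lem:signatures-coincide-means-isometric-by-rotation} point 2 for injectivity, and Proposition \ref{prop:equidistant-spacings-from-signatures} point 3 for existence.
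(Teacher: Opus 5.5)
Your proposal is correct and follows essentially the same route as the paper: well-definedness from Proposition~\ref{prop:sig-as-a-map-on-iso-classes-of-mps}, injectivity from Theorem~\ref{thm:maximal-equidistant-spacings-are-isometric-iff-same-signature}, surjectivity from Proposition~\ref{prop:equidistant-spacings-from-signatures} (equivalently Corollary~\ref{cor:signatures-of-mps}, points 4 and 5), and the union statements from disjointness of the pieces indexed by $(I,n)$. One harmless slip: a signature alone does not determine $n$ across the two families (Remark~\ref{rmk:hat-s-neq-and-hat-s-eq-overlap}), so Proposition~\ref{prop:determination-of-coinciding-centers-from-signature} is not the right citation there; the disjointness you actually need, within each family, follows directly from the defining equations $\sum_i d_i = n$ for $\hat\cS_=(I,n)$ and $m+k+\sum_i d_i = n+2$ for $\hat\cS_{\neq}(I,n)$.
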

\begin{proof}
    Let $I,n$ be fixed. The proof that $\sig\colon \frac{\mps_=(I,n)}{\cong} \to \hat\cS_=(I,n)$ and $\sig\colon \frac{\mps_{\neq}(I,n)}{\cong} \to \hat\cS_{\neq}(I,n)$ are bijections is identical. The map is well-defined by Proposition \ref{prop:sig-as-a-map-on-iso-classes-of-mps}. The proof that it is a surjection is Corollary \ref{cor:signatures-of-mps} points 4. and 5. The proof that it is injective on equivalence classes is Theorem \ref{thm:maximal-equidistant-spacings-are-isometric-iff-same-signature}. This proves that these two maps are bijections.

    The proof that $\sig\colon \bigcup_{I,n \in \bbN}\frac{\mps_=(I,n)}{\cong} \to \bigcup_{I,n \in \bbN}\hat\cS_=(I,n)$ and $\sig\colon\bigcup_{I,n \in \bbN} \frac{\mps_{\neq}(I,n)}{\cong} \to \bigcup_{I,n \in \bbN}\hat\cS_{\neq}(I,n)$ follows from proving that 
    \begin{align}
        \label{eqn:proof:thm:signatures-are-in-bijection-with-isometry-classes-of-maximal-equidistant-signatures:1}
        \frac{\mps_=(I,n)}{\cong} \cap \frac{\mps_=(I',n')}{\cong} = \emptyset, \quad \frac{\mps_{\neq}(I,n)}{\cong} \cap \frac{\mps_{\neq}(I',n')}{\cong}= \emptyset
    \end{align}
    and 
    \begin{align}
        \label{eqn:proof:thm:signatures-are-in-bijection-with-isometry-classes-of-maximal-equidistant-signatures:2}
        \hat\cS_{=}(I,n) \cap \hat\cS_{=}(I',n') = \emptyset, \quad \hat\cS_{\neq}(I,n) \cap \hat\cS_{\neq}(I',n') = \emptyset.
    \end{align}
    whenever $I \neq I'$ or $n \neq n'$.

    To Prove Eqn. \ref{eqn:proof:thm:signatures-are-in-bijection-with-isometry-classes-of-maximal-equidistant-signatures:1}, if $I \neq I'$, then the number of elements of $\cY \in \ps(I)$ and $\cY' \in \ps(I')$ are not the same. If $n \neq n'$ then $\abs \cY$ and $\abs \cY'$ live in different spaces. 

    To prove Eqn. \ref{eqn:proof:thm:signatures-are-in-bijection-with-isometry-classes-of-maximal-equidistant-signatures:2}, if $s \in \hat\cS_{=}(I,n) \cap \hat\cS_{=}(I',n')$ then by $I = m + k = I'$ and $n = m+k-2 + \qsum ik d_k = n'$ by Corollary \ref{cor:signatures-of-mps} points 2 and 3. The same proof works when $s \in \hat\cS_{\neq}(I,n) \cap \hat\cS_{\neq}(I,n)$.%

\end{proof}

\begin{remark}[$\hat \cS_= \cap \hat \cS_{\neq} \neq \emptyset$]
    \label{rmk:hat-s-neq-and-hat-s-eq-overlap}
    A natural followup question in light of Theorem \ref{thm:sig-is-a-bijection-on-maximal-equidistant-spacings} is if 
    \begin{align}
        \label{eqn:rmk:hat-s-neq-and-hat-s-eq-overlap:1}
        \paren{\bigcup_{I,n \in \bbN}\hat\cS_{\neq}(I,n)} \cap \paren{\bigcup_{I,n \in \bbN}\hat\cS_{=}(I,n)} \stackrel?{=} \emptyset.
    \end{align}
    If Eqn. \ref{eqn:rmk:hat-s-neq-and-hat-s-eq-overlap:1} did hold, it would imply that $\sig \colon \bigcup_{n,I} \frac{\mps(n,I)}{\cong} \to \bigcup_{n,I}\paren{\cS_=(I,n) \cup \cS_{\neq}(I,n)}$ is a bijection. It is not so. It \emph{is} true that $\hat\cS_{=}(I,n) \cap \hat\cS_{\neq}(I,n) = \emptyset$, but a direct calculations shows that
    \begin{align}
        (0,(2,1,1)) \in \hat\cS_{=}(3,4) \cap \hat \cS_{\neq}(3,5).
    \end{align}
\end{remark}

Theorem \ref{thm:sig-is-a-bijection-on-maximal-equidistant-spacings} says that equidistant spacings are summarized by understanding their signatures. Proposition \ref{prop:equidistant-spacings-from-signatures} gives simple rules for generating all signatures. By combining the two, we may generate all elements of $\mps(I,\R^n)$, up to isometry.

\begin{example}[Enumeration of Equidistant spacings in $\R^1$, $\R^2$, and $\bbR^3$]
    \label{examp:enumeration-of-equidistant-spacings-in-r-3}
    In light of Theorem \ref{thm:sig-is-a-bijection-on-maximal-equidistant-spacings}, we can enumerate the $\ps(I,\R^n)$ for a given $n$ by computing \begin{align}
        \sig{}^{-1}\paren{\bigcup_{I \in \bbN}\hat \cS_{=}(I,n) \cup \hat \cS_{\neq}(I,n)}.
    \end{align}
    One straight-forward way to do this is to choose a target sum, $N$, and use Algorithm \ref{alg:generation-of-all-hat-sc-eq} and Algorithm \ref{alg:generation-of-all-hat-sc-neq} to generate all $\hat \cS_=$ and $\hat \cS_{\neq}$ that sum to $N$, respectively. Then, for each $s \in \hat \cS_= \cup \hat \cS_{\neq}$, we may use Eqns. \ref{eqn:hat-sc-neq-def} and \ref{eqn:hat-sc-eq-def} to compute the $I$ and $n$ so that $s \in \hat \cS_=(I,n)$ and $s \in \hat \cS_{\neq}(I,n)$.

    As an example of this idea in action, in Table \ref{tab:cs-eq}, we compute all signatures that sum to four or less in $\hat \cS_=$. In Table \ref{tab:cs-neq}, we compute all signatures that sum to four or less in $\hat \cS_{\neq}$. This gives us the signatures of all $\mps(I,\R^n)$ when $n = 0$ (see Figure \ref{fig:all-mps-n-leq-3:1sc}) $n = 1$ (see Figure \ref{fig:all-mps-n-eq-1}),  $n = 2$ (see Figure \ref{fig:all-mps-n-eq-2}), and $n = 3$ (see Figure \ref{fig:all-mps-n-eq-3}).
    
    \begin{table}[]
    \centering
    \begin{tabular}{c|c|c|c|c}
         Sum& Signatures & I & $n$ s.t. $s \in \hat \cS_=(I,n)$&Figure \\\hline\hline
    4&(1,(3))&2&3&\ref{fig:all-mps-n-leq-3:1sc3}\\
    4&(0,(4))&1&4&\\
    4&(0,(3,1))&2&4&\\
    4&(0,(2,2))&2&4&\\
    4&(0,(2,1,1))&3&4&\\
    4&(0,(1,1,1,1))&4&4&\\
    3&(1,(2))&2&2&\ref{fig:all-mps-n-leq-3:1sc2}\\
    3&(0,(3))&1&3&\ref{fig:all-mps-n-leq-3:0sc3}\\
    3&(0,(2,1))&2&3&\ref{fig:all-mps-n-leq-3:0sc21}\\
    3&(0,(1,1,1))&3&3&\ref{fig:all-mps-n-leq-3:0sc111}\\
    2&(1,(1))&2&1&\ref{fig:all-mps-n-leq-3:1sc1}\\
    2&(0,(2))&1&2&\ref{fig:all-mps-n-leq-3:0sc2}\\
    2&(0,(1,1))&2&2&\ref{fig:all-mps-n-leq-3:0sc11}\\
    1&(1,())&1&0&\ref{fig:all-mps-n-leq-3:1sc}\\
    1&(0,(1))&1&1&\ref{fig:all-mps-n-leq-3:0sc1}
    \end{tabular}
    \caption{All signatures in $\bigcup_{I,n \in \bbN}\hat \cS_=(I,n)$ which sum to four or less, together with their number of classes and ambient dimension for which they are maximal. If the ambient dimension for which $s$ is maximal is $3$ or less, then there is an entry in the Figure column referencing a figure that plot of $\sig^{-1} s$.}
    \label{tab:cs-eq}
    \end{table}

    \begin{table}[]
    \centering
    \begin{tabular}{c|c|c|c|c}
         Sum& Signature, $s$ & I & $n$ s.t. $s \in \hat \cS_{\neq}(I,n)$ &Figure\\\hline\hline
        4&(3,(1))&4&3&\ref{fig:all-mps-n-leq-3:3sc1}\\
        4&(2,(2))&3&3&\ref{fig:all-mps-n-leq-3:2sc2}\\
        4&(2,(1,1))&4&4&\\
        4&(1,(2,1))&3&4&\\
        4&(1,(1,2))&3&4&\\
        4&(1,(1,1,1))&4&5&\\
        4&(0,(2,1,1))&3&5&\\
        4&(0,(1,2,1))&3&5&\\
        4&(0,(1,1,1,1))&4&6&\\
        3&(2,(1))&3&2&\ref{fig:all-mps-n-leq-3:2sc1}\\
        3&(1,(1,1))&3&3&\ref{fig:all-mps-n-leq-3:1sc11}\\
        3&(0,(1,1,1))&3&4&\\
    \end{tabular}
    \caption{All signatures $s$ in $\bigcup_{I,n \in \bbN}\hat \cS_{\neq}(I,n)$ which sum to four or less, together with their number of classes and ambient dimension for which they are maximal. If the ambient dimension for which $s$ is maximal is $3$ or less, then there is an entry in the Figure column referencing a figure that plot of $\sig^{-1} s$.}
    \label{tab:cs-neq}
    \end{table}
\end{example}

\begin{figure}
    \centering
    \includegraphics[width=.32\linewidth]{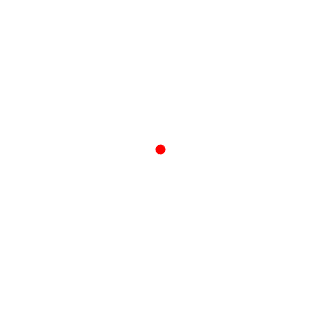}
    \caption{A plot of the only element of $\cY \in \mps$ embeddable in $\R^0$ with signature $(1,())$.}
    \label{fig:all-mps-n-leq-3:1sc}
\end{figure}

\begin{figure}
    \centering
    \begin{subfigure}{.32\linewidth} %
        \centering
        \includegraphics[width=1.\linewidth]{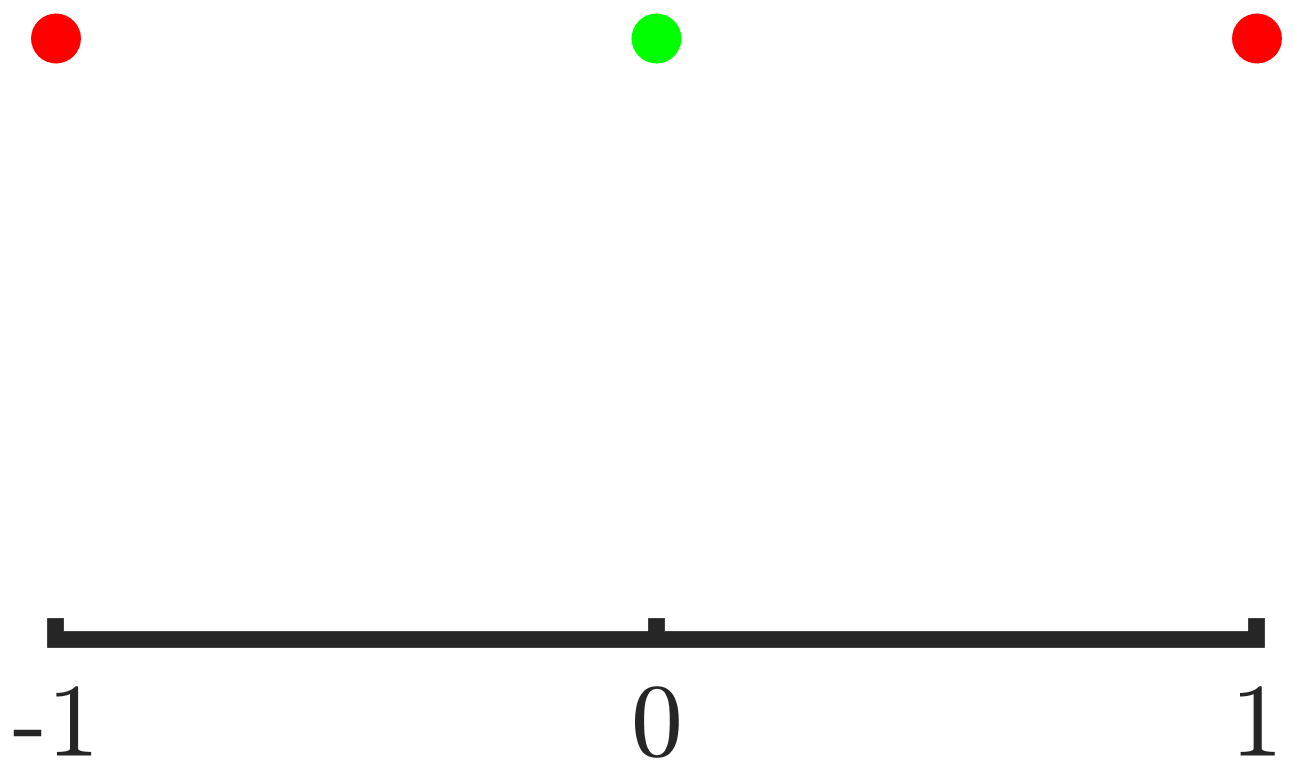}
        \subcaption{$(1,(1))$}
        \label{fig:all-mps-n-leq-3:1sc1}
    \end{subfigure}
    \begin{subfigure}{.32\linewidth} %
        \centering
        \includegraphics[width=1.\linewidth]{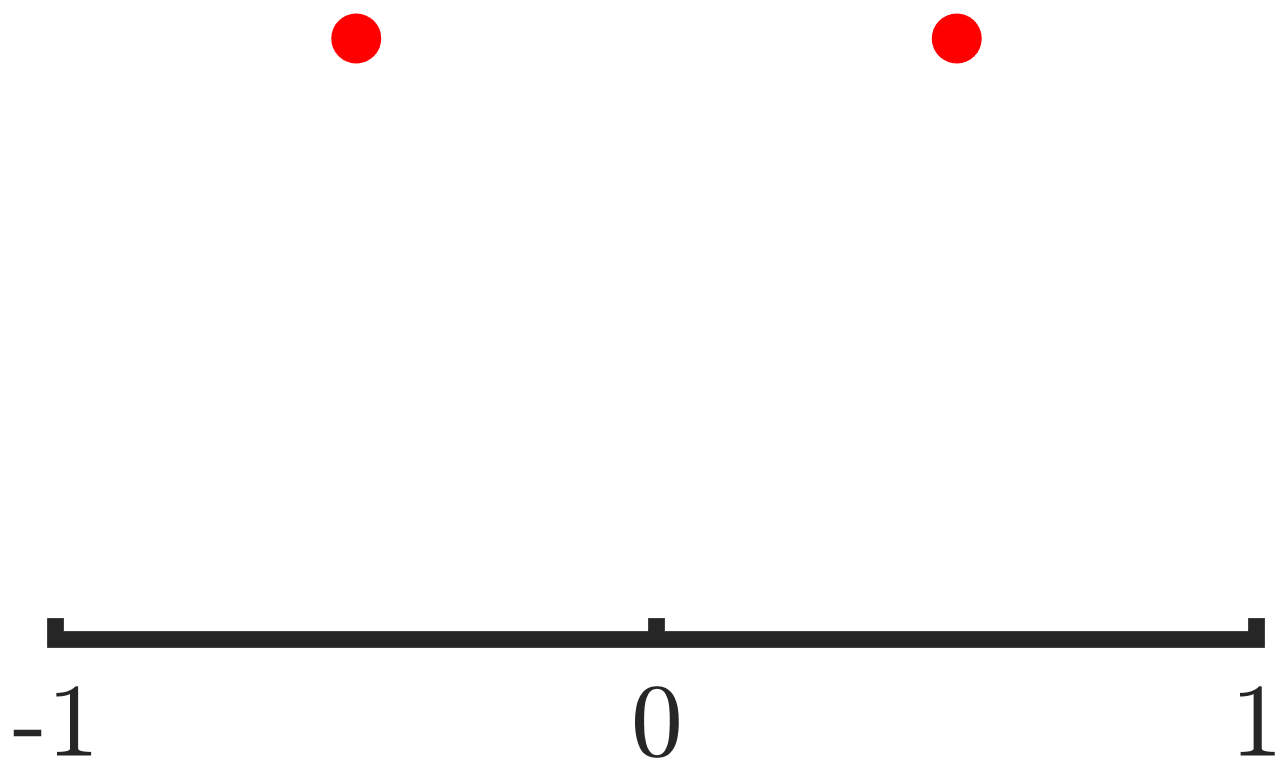}
        \subcaption{$(0,(1))$}
        \label{fig:all-mps-n-leq-3:0sc1}
    \end{subfigure}
    \caption{Plots of the two elements of $\cY \in \mps$ embeddable in $\R^1$.}
    \label{fig:all-mps-n-eq-1}
\end{figure}

\begin{figure}
    \centering
    \begin{subfigure}{.32\linewidth} %
        \centering
        \includegraphics[width=1.\linewidth]{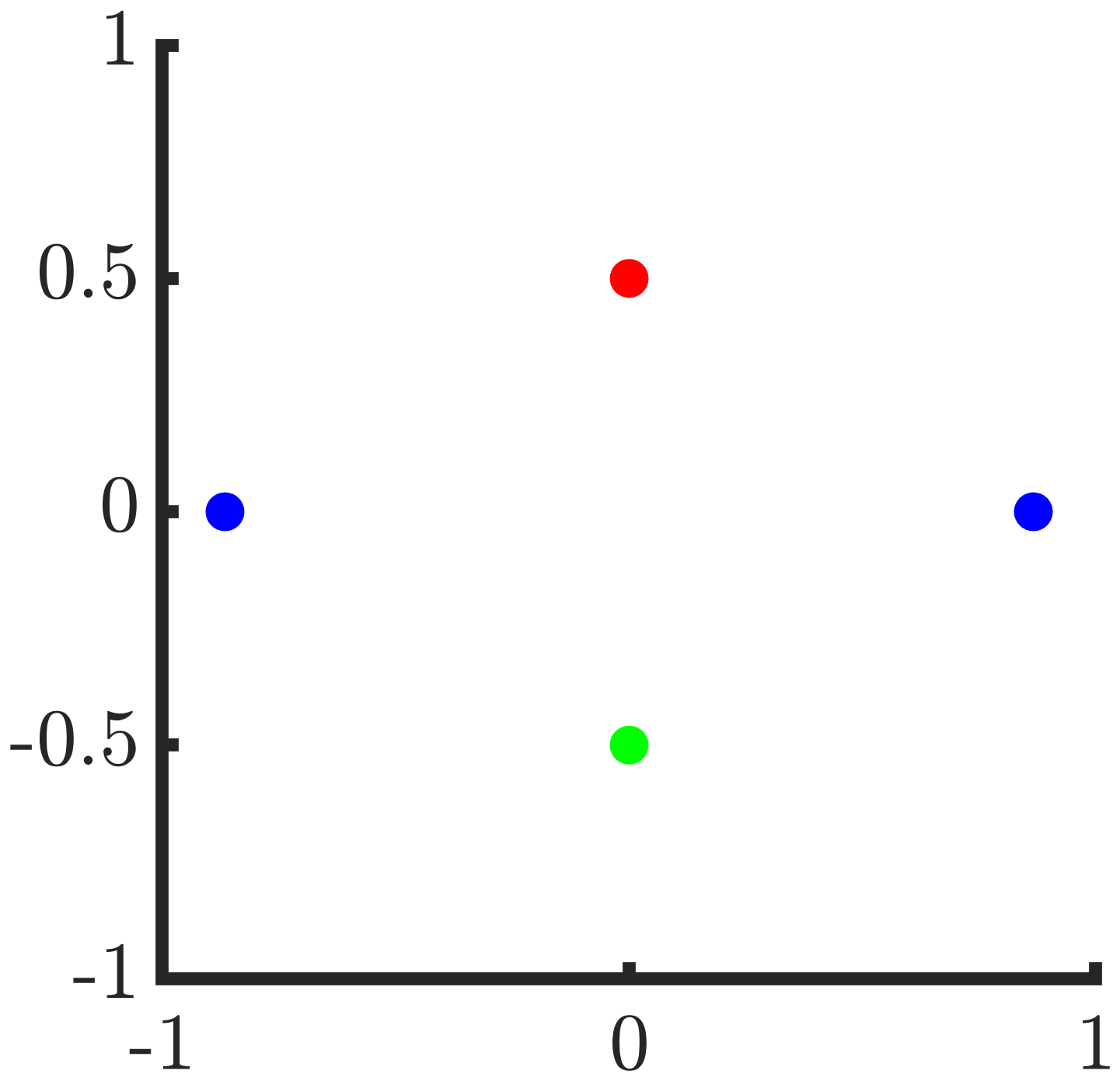}
        \subcaption{$(2,(1))$}
        \label{fig:all-mps-n-leq-3:2sc1}
    \end{subfigure}
    \begin{subfigure}{.32\linewidth} %
        \centering
        \includegraphics[width=1.\linewidth]{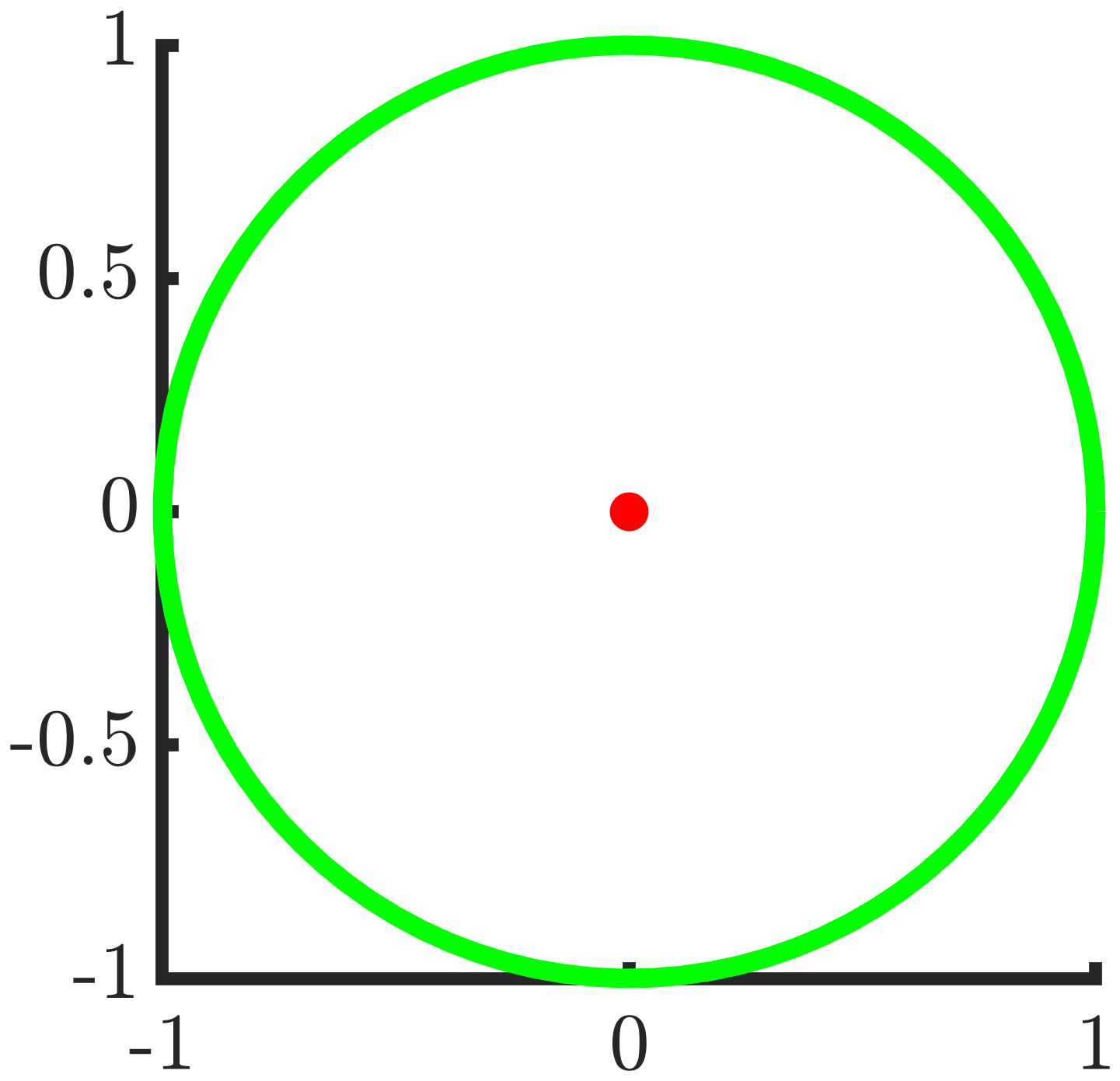}
        \subcaption{$(1,(2))$}
        \label{fig:all-mps-n-leq-3:1sc2}
    \end{subfigure}
    \begin{subfigure}{.32\linewidth} %
        \centering
        \includegraphics[width=1.\linewidth]{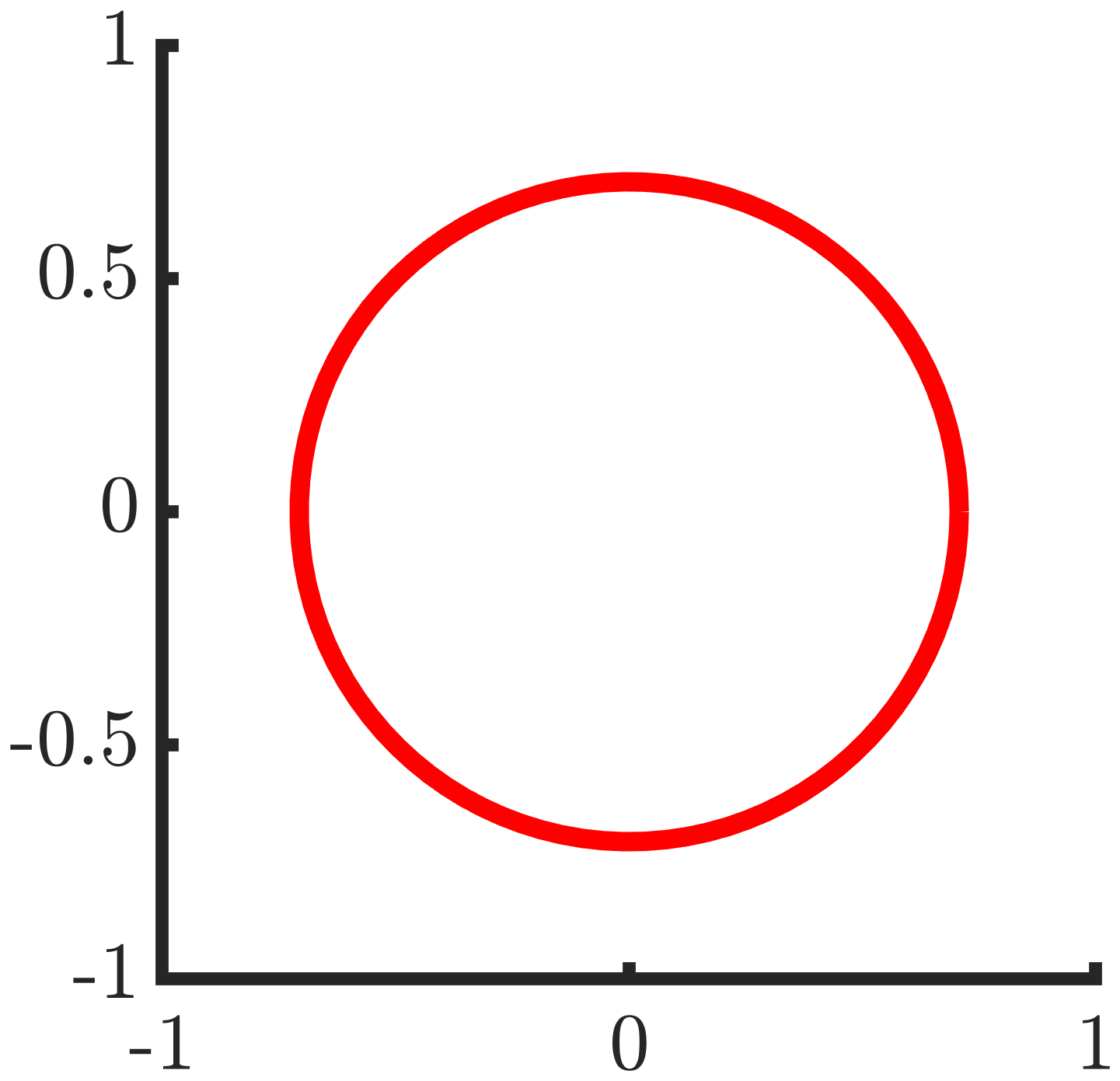}
        \subcaption{$(0,(2))$}
        \label{fig:all-mps-n-leq-3:0sc2}
    \end{subfigure}
    \begin{subfigure}{.32\linewidth} %
        \centering
        \includegraphics[width=1.\linewidth]{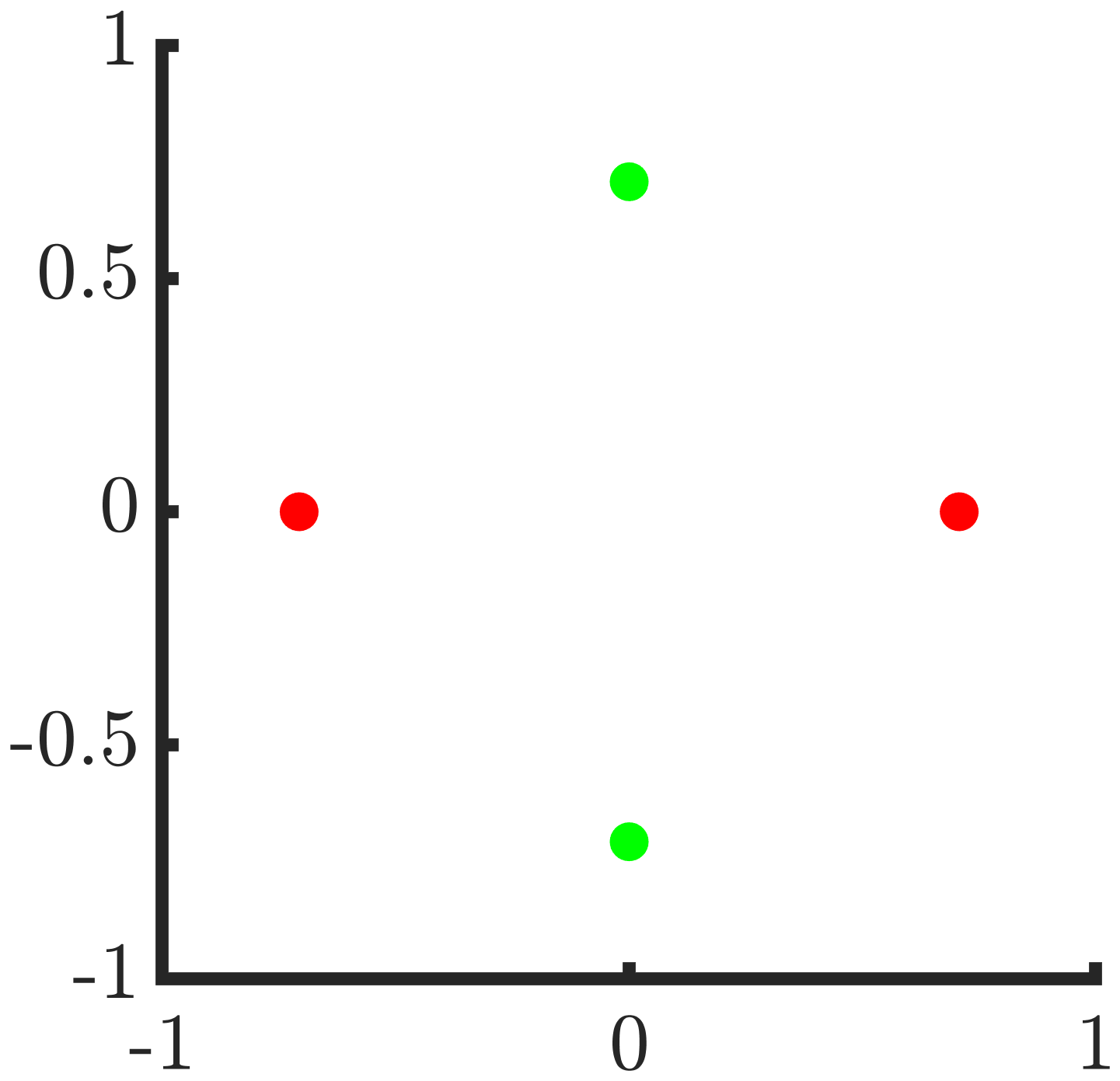}
        \subcaption{$(0,(1,1))$}
        \label{fig:all-mps-n-leq-3:0sc11}
    \end{subfigure}
    \caption{Plots of the four elements of $\cY \in \mps$ embeddable in $\R^2$.}
    \label{fig:all-mps-n-eq-2}
\end{figure}

\begin{figure}
    \centering
    \begin{subfigure}{.32\linewidth} %
        \centering
        \includegraphics[width=1.\linewidth]{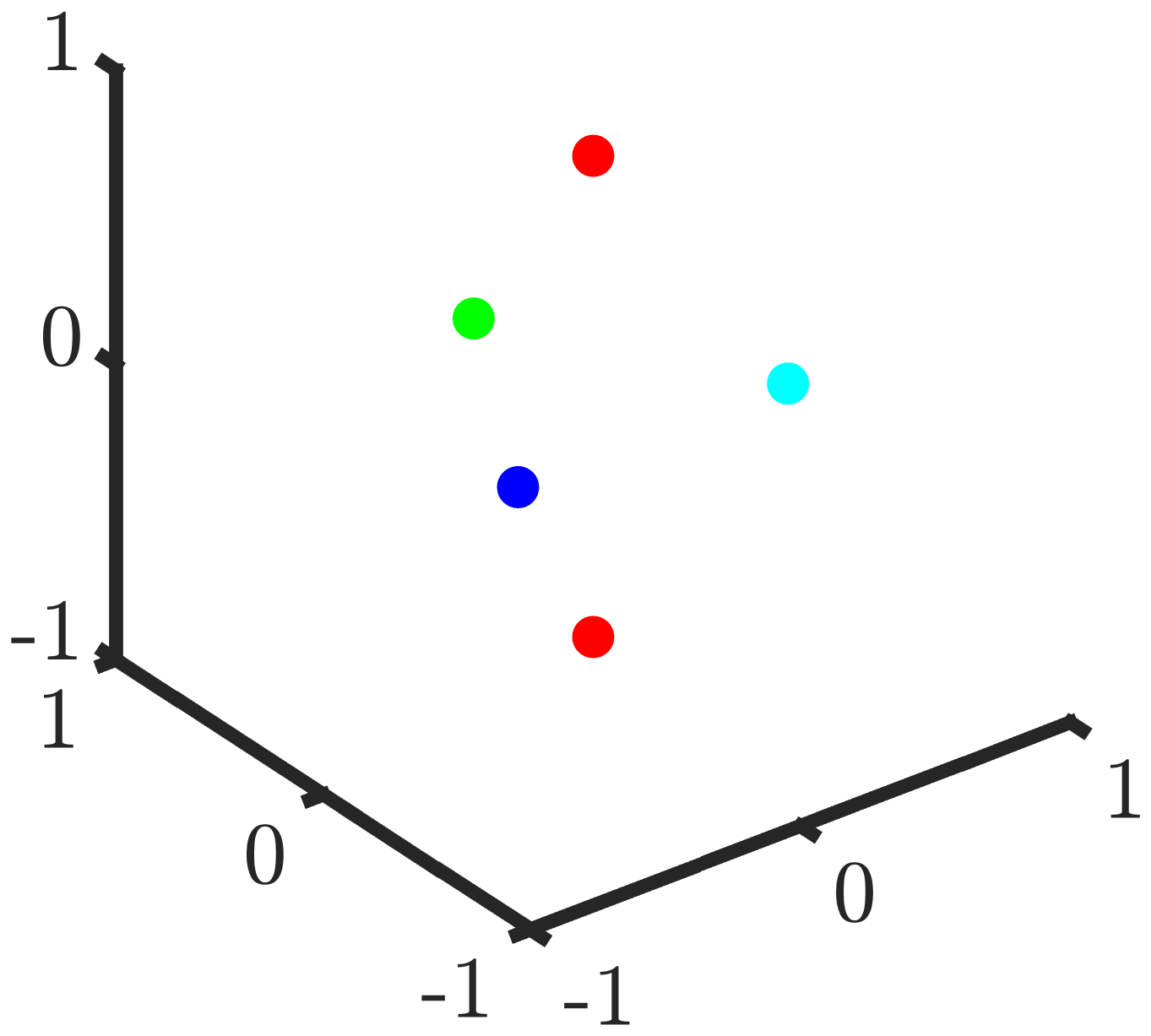}
        \subcaption{$(3,(1))$}
        \label{fig:all-mps-n-leq-3:3sc1}
    \end{subfigure}
    \begin{subfigure}{.32\linewidth} %
        \centering
        \includegraphics[width=1.\linewidth]{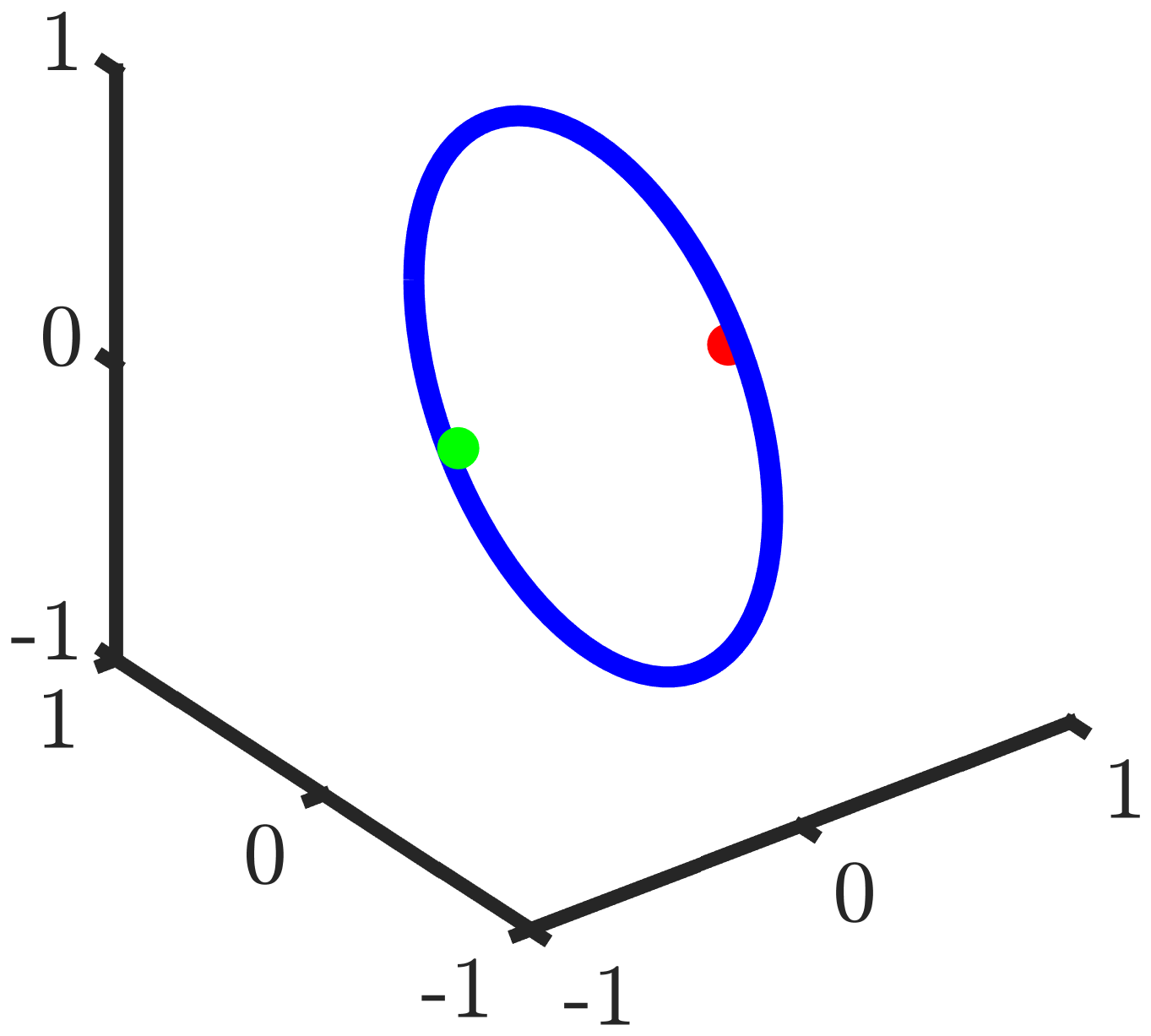}
        \subcaption{$(2,(2))$}
        \label{fig:all-mps-n-leq-3:2sc2}
    \end{subfigure}
    \begin{subfigure}{.32\linewidth} %
        \centering
        \includegraphics[width=1.\linewidth]{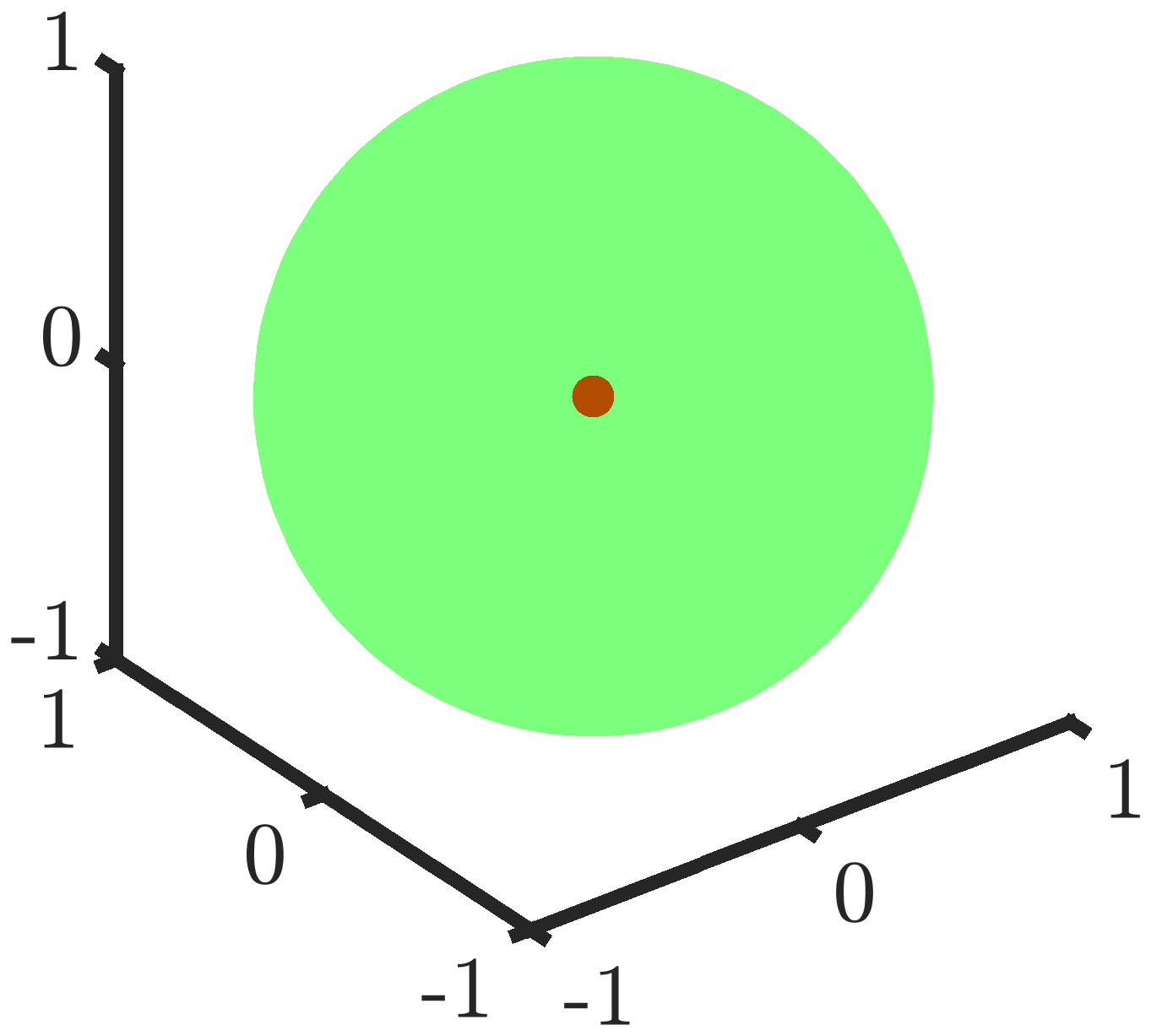}
        \subcaption{$(1,(3))$}
        \label{fig:all-mps-n-leq-3:1sc3}
    \end{subfigure}
    \begin{subfigure}{.32\linewidth} %
        \centering
        \includegraphics[width=1.\linewidth]{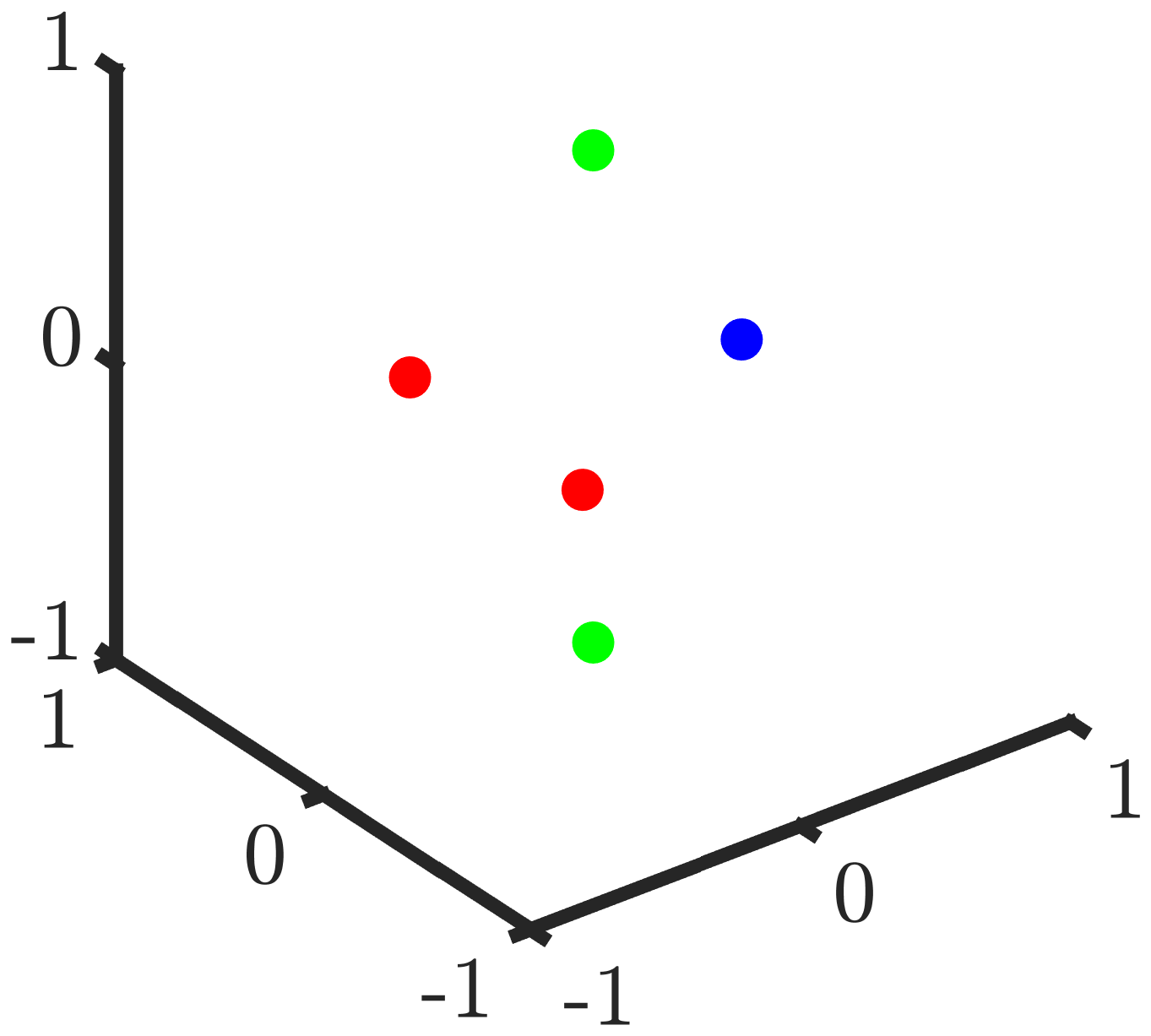}
        \subcaption{$(1,(1,1))$}
        \label{fig:all-mps-n-leq-3:1sc11}
    \end{subfigure}
    \begin{subfigure}{.32\linewidth} %
        \centering
        \includegraphics[width=1.\linewidth]{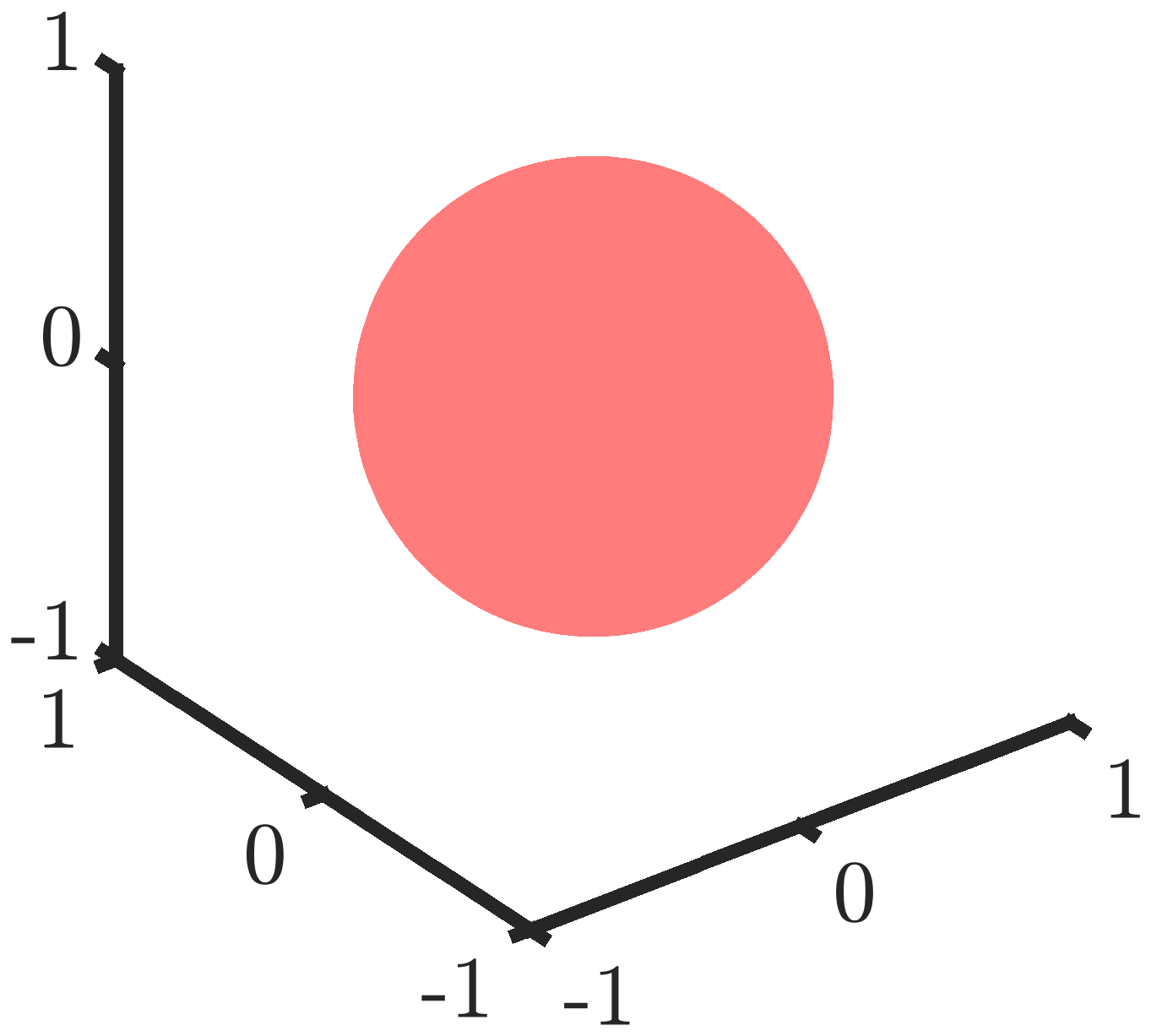}
        \subcaption{$(0,(3))$}
        \label{fig:all-mps-n-leq-3:0sc3}
    \end{subfigure}
    \begin{subfigure}{.32\linewidth} %
        \centering
        \includegraphics[width=1.\linewidth]{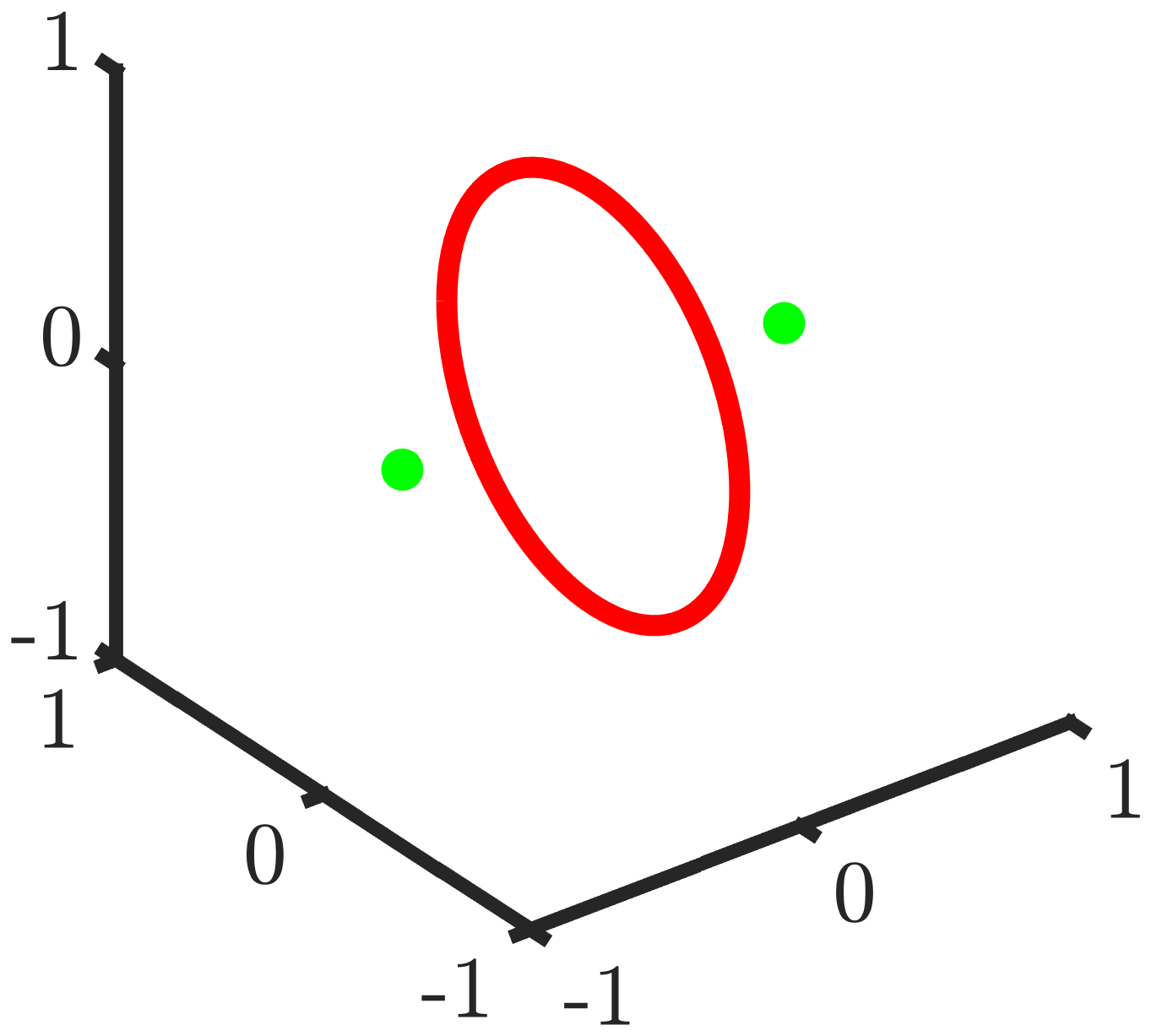}
        \subcaption{$(0,(2,1))$}
        \label{fig:all-mps-n-leq-3:0sc21}
    \end{subfigure}
    \begin{subfigure}{.32\linewidth} %
        \centering
        \includegraphics[width=1.\linewidth]{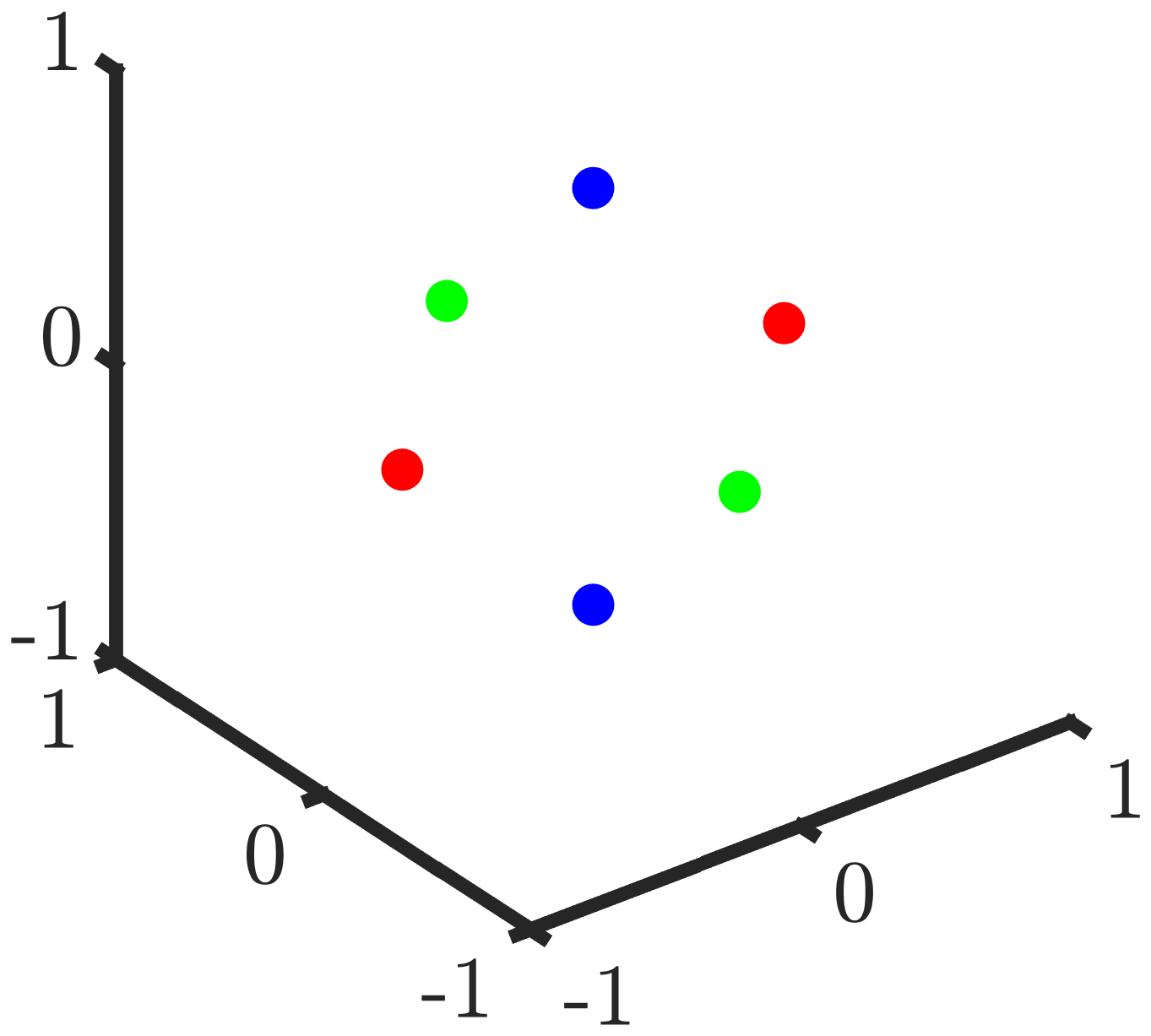}
        \subcaption{$(0,(1,1,1))$}
        \label{fig:all-mps-n-leq-3:0sc111}
    \end{subfigure}
    \caption{Plots of the seven elements of $\cY \in \mps$ embeddable in $\R^3$.}
    \label{fig:all-mps-n-eq-3}
\end{figure}

\begin{algorithm}
\caption{Generation of all $\hat \cS_=$ with sum $N$} \label{alg:generation-of-all-hat-sc-eq}
\begin{algorithmic}
\Require $N$, target sum \textbf{Result}: Set of all $\hat \cS_=$ with sum $N$
\State $\cS_= \gets \{\}$
\If{$N = 1$} \Comment{The case when $m = 1$}
\State $\cS_= \gets \{(1,())\}$
\Else%
\State $\cS_= \gets \{(1,(N-1))\}$
\EndIf
\For{$p \in \mathrm{partition}(N)$} \Comment{$\mathrm{partition}$ returns $p$, a tuple of non-zero integers that sum to $N$.}
\State $\cS_= \gets \cS_= \cup \set{(0,p)}$
\EndFor
\State \Return{$\cS_=$}
\end{algorithmic}
\end{algorithm}

\begin{algorithm}
\caption{Generation of $\hat \cS_{\neq}$ with sum $N$} \label{alg:generation-of-all-hat-sc-neq}
\begin{algorithmic}
\Require $N$, target sum \textbf{Result}: Set of all $\hat \cS_{\neq}$ with sum $N$
\State $\cS_{\neq} \gets \{\}$
\For{$m \in \set{0,N-1}$}
\For{$d_1 \in \set{1,N-m}$}
\For{$p \in \mathrm{partition}(N - m - d_1)$} \Comment{$\mathrm{partition}$ returns $p$, a tuple of non-zero integers that sum to $N-m$.}
\If{$m + \#(p) \geq 2$} \Comment{$\#(p)$ denotes the length of $p$}
\State $\cS_{\neq} \gets \cS_{\neq} \cup \set{s}$
\EndIf
\EndFor
\EndFor
\EndFor
\State \Return{$\cS_{\neq}$}
\end{algorithmic}
\end{algorithm}

We conclude the section on signatures by counting the number of elements in $\bigcup_{I = 1,\dots,\infty}\frac{\mps(I,n)}{\cong}$. In the following proposition, $p \colon \bbN \to \bbN$ denotes the partition function from number theory. That is, $p(n)$ is the number of distinct ways to write $n$ as the sum of natural numbers.

\begin{proposition}[Counting Maximal Equidistant Spacings]
    \label{prop:counting-maximal-equidistant-spacings}
    Let $n \in \bbN$ be given. Then the following holds.
    \begin{align}
        \label{eqn:prop:counting-maximal-equidistant-spacings:1}
        \#\paren{\bigcup_{I = 1,\dots,\infty}\frac{\mps_=(I,n)}{\cong}} &= 1 + p(n)\\
        \label{eqn:prop:counting-maximal-equidistant-spacings:2}
        \#\paren{\bigcup_{I = 1,\dots,\infty}\frac{\mps_{\neq}(I,n)}{\cong}} &= \sum_{i = 1}^{n-1}p(i)\\
        \label{eqn:prop:counting-maximal-equidistant-spacings:3}
        \#\paren{\bigcup_{I = 1,\dots,\infty}\frac{\mps(I,n)}{\cong}} &= \sum_{i = 0}^{n}p(i)
    \end{align}
\end{proposition}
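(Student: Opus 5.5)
The plan is to use Theorem \ref{thm:sig-is-a-bijection-on-maximal-equidistant-spacings} to turn each count into a count of valid signatures. For fixed $n$ the sets $\hat\cS_=(I,n)$ are pairwise disjoint in $I$, because $I = m+k$ is read off from the signature, and the same holds for $\hat\cS_{\neq}(I,n)$. The union over $I$ of $\mps_=(I,n)/\cong$ is therefore in bijection with $\bigcup_I \hat\cS_=(I,n)$, and the analogous statement holds for $\neq$. Both sides of each claimed identity thus become combinatorial counts, which I will compute directly from Definition \ref{def:valid-signatures} and Proposition \ref{prop:properties-of-signatures}.

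\textbf{Equation \ref{eqn:prop:counting-maximal-equidistant-spacings:1}.} Take $s\in\hat\cS_=(\cdot,n)$, so that $\sum_i d_i = n$, and split on the value of $m$.
\begin{enumerate}
\item If $m=0$, the positive-radius terms are ordered decreasingly, as in the coinciding case of Definition \ref{def:signagure-of-equidistant-spacing}, and all $d_i\geq 1$ since $r_i>0$. Such tuples $(d_1,\dots,d_k)$ are exactly the partitions of $n$, giving $p(n)$ signatures.
\item If $m=1$, Proposition \ref{prop:properties-of-signatures} point 3 forces $I\le 2$. This leaves the single signature $(1,(n))$ when $n\ge1$, or $(1,())$ when $n=0$.
\end{enumerate}
Together these give $1+p(n)$. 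I will check the edge case $n=0$ against Table \ref{tab:cs-eq} and Figure \ref{fig:all-mps-n-leq-3:1sc}.

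\textbf{Equation \ref{eqn:prop:counting-maximal-equidistant-spacings:2}.} Take $s\in\hat\cS_{\neq}$, so that $m+k\ge 3$ and $m+k-2+\sum d_i=n$. Write $D=\sum_{i\geq 2}d_i$, so $n = m+k-2+d_1+D$. The inner term $d_1\geq1$ is unrestricted. The tail $(d_2,\dots,d_k)$ is a partition of $D$ into $k-1$ parts, with $k\geq1$.

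My first attempt is to substitute $e_i=d_i+1$ for $i\geq2$, so that the tail $(e_2,\dots,e_k)$ becomes a partition of $D+k-1$ into parts $\geq2$. I then want to absorb the $m$ zero-radius classes and the inner dimension into this to build a single partition. The intended total is $\sum_{j=1}^{n-1}p(j)$, so the target is a bijection with pairs $(j,\pi)$ where $1\le j\le n-1$ and $\pi\vdash j$. A natural candidate is to let $j$ be the quantity left after $d_1$ absorbs some fixed amount, and to encode $m$ as a number of parts equal to $1$.

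The main obstacle is making this bijection honest under the constraint $m+k\ge3$. I also need care because Remark \ref{rmk:hat-s-neq-and-hat-s-eq-overlap} and Table \ref{tab:cs-neq} list orderings like $(1,(2,1))$ and $(1,(1,2))$ separately. The inner term is not sorted with the tail, so the count is really
\begin{align*}
\sum_{m,d_1} p_{\ge1}(\text{remaining}),
\end{align*}
taken with the constraint. I would first verify the formula numerically for $n\le 4$ using Table \ref{tab:cs-neq}: for instance, $n=2$ gives only $(2,(1))$, which should match $p(1)=1$, and $n=3$ gives $(3,(1))$, $(2,(2))$ and $(1,(1,1))$. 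Checking that last count against $p(1)+p(2)=3$ is the step I expect to be delicate; if the check fails, I will adjust the claimed summation range.

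\textbf{Equation \ref{eqn:prop:counting-maximal-equidistant-spacings:3}.} Since $\mps=\mps_=\sqcup\mps_{\neq}$ by Remark \ref{rmk:the-nature-of-equidistant-spacings-whose-centers-coincide}, and isometries preserve whether centers coincide (Proposition \ref{prop:determination-of-coinciding-centers-from-signature}), the two counts add. This gives $1+p(n)+\sum_{i=1}^{n-1}p(i)=\sum_{i=0}^{n}p(i)$, using $p(0)=1$.
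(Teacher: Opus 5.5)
Your reduction to signatures via Theorem~\ref{thm:sig-is-a-bijection-on-maximal-equidistant-spacings}, and your treatment of the first and third identities, match the paper's proof. The gap is the second identity. You give no argument for it, only a search for a bijection, and that search cannot succeed. Under the convention you correctly identify (the inner term $d_1$ is not sorted with the tail), the identity is false.

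Carry your own check one step further, to $n=4$. The signatures with $m+k\ge 3$ and $m+k-2+\sum_i d_i=4$ are $(2,(3))$, $(3,(2))$, $(4,(1))$, $(2,(1,1))$, $(1,(2,1))$, $(1,(1,2))$ and $(0,(1,1,1))$. That is seven, whereas $p(1)+p(2)+p(3)=6$. The third identity then fails as well, since $1+p(4)+7=13\neq 12$.

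Changing the summation range does not help. For fixed $I$, the signatures with $m+k=I$ correspond to a partition $\pi\vdash n+2-I$ with at most $I$ parts, together with a choice of which distinct part size is the inner term. The count is therefore
\[
\sum_{I=3}^{n+1}\ \sum_{\substack{\pi\vdash n+2-I\\ \ell(\pi)\le I}}\delta(\pi),
\]
where $\ell(\pi)$ is the number of parts and $\delta(\pi)$ the number of distinct part sizes. This gives $1,3,7,13$ for $n=2,3,4,5$, against $1,3,6,11$ from the claimed formula.

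The paper obtains $\sum_{i=1}^{n-1}p(i)$ by asserting that for each $I\ge 3$ there is exactly one signature per partition of $n+2-I$. That step drops exactly the two features you were worried about:
\begin{itemize}
\item A partition with several distinct part sizes gives several signatures. For example, $(2,1)\vdash 3$ gives both $(1,(2,1))$ and $(1,(1,2))$, which Table~\ref{tab:cs-neq} itself lists separately.
\item A partition with more than $I$ parts gives none, because $m=I-k\ge 0$. For example, $(1,1,1,1)\vdash 4$ cannot occur at $I=3$ when $n=5$. So even if you identified signatures differing only in the choice of inner term, you would still get $10\neq 11$ at $n=5$.
\end{itemize}
What needs fixing is the statement, not just your write-up.

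A smaller point of the same kind arises at $n=0$. There the $m=0$ branch would be the empty partition, i.e.\ $k=0$ and $I=0$, which is not a spacing. So the first count is $1$ at $n=0$, not $1+p(0)=2$. Likewise, your closing identity $1+p(n)+\sum_{i=1}^{n-1}p(i)=\sum_{i=0}^{n}p(i)$ holds only for $n\ge 1$.
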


\begin{proof}
    First we prove Eqn. \ref{eqn:prop:counting-maximal-equidistant-spacings:1}. By Theorem \ref{thm:sig-is-a-bijection-on-maximal-equidistant-spacings}, $\#\paren{\bigcup_{I = 1,\dots,\infty}\frac{\mps_=(I,n)}{\cong}} = \#\paren{\bigcup_{I = 1,\dots,\infty} \cS_=(I,n)}$. For a fixed $n$, Eqn \ref{eqn:hat-sc-eq-def} has two cases: when $m = 1$ and $I = 2$ and when $m = 0$ when $I \geq 2$. 
    
    If $m = 1$, then $k = 1$ and $d_1 = n$ (unless $n = 0$, in which case $k = 0$). Hence, there is one signature with value $(1,(n))$ or $(1,())$ when $n = 0$. 

    If $m = 0$, then the signatures are of the form $(0,(d_1,d_2,\dots,d_k))$. From the constraint $\qsum ik d_k = n$, there is one signature per partition of $n$.

    Summing these two cases, we have that $\#\paren{\bigcup_{I = 1,\dots,\infty} \cS_=(I,n)} = 1+p(n)$. This proves Eqn. \ref{eqn:prop:counting-maximal-equidistant-spacings:1}.

    Now we prove Eqn. \ref{eqn:prop:counting-maximal-equidistant-spacings:2}. Again, by Theorem \ref{thm:sig-is-a-bijection-on-maximal-equidistant-spacings}, $\#\paren{\bigcup_{I = 1,\dots,\infty}\frac{\mps_{\neq}(I,n)}{\cong}} = \#\paren{\bigcup_{I = 1,\dots,\infty} \cS_{\neq}(I,n)}$. We use Eqn. \ref{eqn:hat-sc-neq-def}. Rearranging terms and using $m+k = I$, we have that 
    \begin{align}
        \qsum ik d_k = n+2-I.
    \end{align}
    So, for each $I \geq 3$, there is one signature of the form $(I-k,(d_1,\dots,d_k))$ for each partition of $n+2-I$. Thus,
    \begin{align}
        \#\paren{\bigcup_{I = 1,\dots,\infty} \cS_{\neq}(I,n)} = \sum_{I = 3}^{n+1} p(n+2 - I) = \qsum i{n-1} p(i)
    \end{align}
    where the final equality comes from substituting $i \coloneqq n+2-I$. This proves Eqn. \ref{eqn:prop:counting-maximal-equidistant-spacings:2}.

    Eqn. \ref{eqn:prop:counting-maximal-equidistant-spacings:3} follows from Remark \ref{rmk:the-nature-of-equidistant-spacings-whose-centers-coincide} and Theorem \ref{thm:sig-is-a-bijection-on-maximal-equidistant-spacings}, so
    \begin{align}
        \#\paren{\bigcup_{I = 1,\dots,\infty}\frac{\mps(I,n)}{\cong}} = \#\paren{\bigcup_{I = 1,\dots,\infty}\frac{\mps_{\neq}(I,n)}{\cong}} + \#\paren{\bigcup_{I = 1,\dots,\infty}\frac{\mps_{=}(I,n)}{\cong}} = 1 + \qsum in p(i)
    \end{align}
\end{proof}

\subsection{Algorithm for Testing Equidistant Spacings}
\label{sec:algorithm-for-testing-equidistant-spacings}
In this section, we describe an algorithm for testing if a finite collection of points lie in a equidistant spacing, motivated by Corollary \ref{cor:characterization-of-equidistant-spacings}. It is a surprising fact that the runtime of the resulting algorithm is linear in the number of points to be tested, much faster than the quadratic runtime of the naive algorithm (i.e., directly compute $\norm{y_i - y_j}$ for each pair of differently colored points). The algorithm follows the following steps
\begin{enumerate}
    \item Use Gram-Schmidt orthonormalization to  compute $A_i$.
    \item Compute $c_i$ via orthogonal projection onto the $A_i$'s. 
    \item Verify that $\norm{c_i - y_i}$ is constant for each $y_i\in Y_i$ and $i$. 
    \item Check that the centers are correctly spaced. 
\end{enumerate}

The algorithm is given in \ref{alg:verification-of-equidistant-spacing}. The number $d_i \in \bbN$ is the dimension of the space $A_i$, and is defined implicitly from the output of the GS orthonormalization. 

\begin{algorithm}
\caption{Verification of Equidistant Spacing} \label{alg:verification-of-equidistant-spacing}
\begin{algorithmic}
\Require $I$, and family $\set{Y_i}_{i = 1}^I$, where $Y_i = \set{y^{(1)}_i,\dots,y^{(N_i)}_i}$
\For{$i \in I$} \Comment{Form $A_i$}
    \State $\set{e_i^{(1)},\dots,e_i^{(d_i)}} = \mathrm{GS}\paren{y^{(1)}_i - y^{(1)}_i ,\dots,y^{(N_i)}_i - y^{(1)}_i}$ \Comment{Gram-Schmidt Orthonomaliation}
\EndFor
\For{$i,j \in I\times I$} \Comment{Verify Orthogonality of $A_i$ and $V_i$}
    \For{$k_i, k_j \in \set{1,\dots,d_i} \times \set{1,\dots,d_j}$}
        \If{$\innerprod{e^{(k_i)}_i}{e^{(k_j)}_j}\neq 0$}
            \Return{{False}}
        \EndIf
    \EndFor
\EndFor
\For{$i \in {1,\dots,I}$} \Comment{Check that $\norm{y_i - c_i}$ is constant}
    \State $c_i \gets \proj_{A_i}y^{(1)}_{i + 1}$
    \State $r_i \gets \norm{c_i - y^{(1)}}$
    \For{$k \in \set{2,\dots,N_i}$}
        \If{$\norm{y^{(k)} - c_i} \neq r_i$}
            \State \Return{False}
        \EndIf
    \EndFor
\EndFor
\State $\set{\tilde e^{(1)}, \dots \tilde e^{(\tilde d)}} = GS\paren{c_1 - c_2,\dots,c_1 - c_I}$
\For{$i = 1,\dots,I$} \Comment{Verify $V_i$ is orthonormal to $\tilde V$.}
    \For{$k_i \in \set{1,\dots,d_i}, \tilde k \in \set{1,\dots,\tilde d}$}
        \If{$\innerprod{e^{(k_i)}_i}{\tilde e^{(\tilde k)}}\neq 0$}
            \State \Return{False}
        \EndIf
    \EndFor
\EndFor
\For{$i,j \in \set{1,\dots,I}^2$, $i \neq j$} \Comment{Check spacing of centers}
    \If{$\norm{c_i - c_j}^2 + r_i^2 + r_j^2 \neq 1$}
        \State \Return{False}
    \EndIf
\EndFor
\State \Return{True}
\end{algorithmic}
\end{algorithm}

\begin{theorem}[Correctness of Alg. \ref{alg:verification-of-equidistant-spacing}]
    \label{thm:correctness-alg-1}
    Let $\cY \coloneqq \set{Y_i}_{i = 1}^I$ and each $Y_i \subset \R^n$ be finite. Then if Algorithm \ref{alg:verification-of-equidistant-spacing} is run on $\cY$, then
    \begin{enumerate}
        \item Alg. 1 will run and output True or False in at most $O(n^2\#(\cY) + n^2I^2)$ operations.
        \item It will return True if $\cY \in \ps$ and False otherwise.
    \end{enumerate}
\end{theorem}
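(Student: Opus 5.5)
We prove the two claims separately: first a line-by-line operation count for each block of Algorithm \ref{alg:verification-of-equidistant-spacing}, then correctness by reducing the algorithm's checks to the conditions of Corollary \ref{cor:characterization-of-equidistant-spacings}. Throughout, $N \coloneqq \#(\cY) = \sum_i N_i$ and $d_i \leq n$.

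\emph{Runtime.} We bound each block in turn.
\begin{enumerate}
    \item Gram--Schmidt on the $N_i$ difference vectors. Each incoming vector is projected against at most $d_i \leq n$ basis vectors of length $n$, costing $O(n^2)$. Vectors found to be dependent are simply discarded, so this block costs $O(n^2 N_i)$, hence $O(n^2 N)$ summed over classes.
    \item Pairwise orthogonality check. The cost is $\sum_{i,j} d_i d_j n = n(\sum_i d_i)^2$. This is the delicate point. If the algorithm returns False as soon as the $A_i$ fail to be orthogonal, we may assume the directions of the $A_i$ are mutually orthogonal, so $\sum_i d_i \leq n$. I would therefore argue that the block can be organized incrementally: check class $i$ against the union of the previous bases, which has total size at most $n$, and abort on failure. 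This gives $O(n^2)$ per class, i.e.\ $O(n^2 I) \subseteq O(n^2 N)$. I expect this to be the main obstacle, since it requires reading the loop in this aborting fashion.
    \item Centers and radii. Projecting onto $A_i$ using the stored orthonormal basis costs $O(nd_i)$, and the radius comparisons cost $O(nN_i)$. Together this is $O(n^2 N)$.
    \item Orthogonality to the center space. Gram--Schmidt on the $I-1$ center differences costs $O(n^2 I)$. The check of each $V_i$ against $\tilde V$ costs $\sum_i d_i \tilde d\, n \leq n^3 \leq$ the budget whenever $I \geq n$ (otherwise bound it by $n^2 I$ analogously).
    \item Center spacing. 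The final double loop over pairs costs $O(nI^2)$.
\end{enumerate}
Summing the blocks gives $O(n^2 N + n^2 I^2)$.

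\emph{Correctness.} We compare the algorithm's checks with Corollary \ref{cor:characterization-of-equidistant-spacings}.
\begin{enumerate}
    \item By construction, $y_i^{(1)} + \Span{e_i^{(1)},\dots,e_i^{(d_i)}}$ is $\aff Y_i = A_i$.
    \item By Theorem \ref{thm:projection-theorem} point 3, if $\cY \in \ps$ then the center $c_i$ equals $\proj_{A_i} y_j$ for any $j \neq i$. This is what the algorithm computes, interpreting the index $i+1$ cyclically.
    \item The radius is constant on $Y_i$ if and only if $Y_i \subset S^{n-1}_{r_i}(c_i) \cap A_i$.
    \item The two orthogonality loops check conditions 2(b) and 2(c), and the last loop checks Eqn.~\ref{eqn:center-compatability-condition}.
\end{enumerate}

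\emph{If $\cY \in \ps$, the algorithm returns True.} Corollary \ref{cor:properties-of-class-embeddings} and Proposition \ref{prop:tri-part-ortho} guarantee that every check passes.

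\emph{If the algorithm returns True, then $\cY \in \ps$.} All hypotheses of Theorem \ref{thm:radius-recipe} hold with $V_i = A_i - c_i$, which gives the Pythagorean identity $\norm{y_i - y_j}^2 = r_i^2 + \norm{c_i - c_j}^2 + r_j^2 = 1$. Two subtleties need care here. First, the computed $c_i$ must actually lie in $A_i$, which holds because it is a projection onto $A_i$. Second, condition 2(a) is automatic: $d_i > 0$ if and only if some point differs from $c_i$.
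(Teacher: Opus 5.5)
Your proposal is correct and follows the paper's route. The paper likewise does a block-by-block operation count, then matches the algorithm's checks to the hypotheses of Theorem~\ref{thm:radius-recipe} for the reverse direction, and uses Theorem~\ref{thm:projection-theorem}, Corollary~\ref{cor:properties-of-class-embeddings} and Proposition~\ref{prop:tri-part-ortho} for the forward direction.

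Your early-termination argument for the pairwise-orthogonality block is more careful than the paper's count, which charges only $O(d_id_jI)$ per class and omits the $O(n)$ cost of each inner product. One small correction: checking class $i$ against the previously verified bases costs $O(n^2 d_i)$, not $O(n^2)$. This still sums to $O(n^2\#(\cY))$ because $\sum_i d_i \leq \#(\cY)$.
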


\begin{proof}
    First, we compute the runtime of each part of Alg. \ref{alg:verification-of-equidistant-spacing}. All times reported below \emph{per class} $i \in \set{1,\dots,I}$.
    \begin{enumerate}
        \item Formation of $A_i$. Gram-Schmidt orthonormalization takes $O\paren{N_i n d_i}$ operations.%
        \item Verification of orthogonality of $A_i$ takes $O(d_id_jI)$.
        \item Verifying that $\norm{y_i - c_i}$ is constant takes $O(N_in)$ operations.
        \item Verification of orthogonality of $V_i$ and $\tilde V$ takes $O(d_i\tilde d)$ where $\tilde d = \dim{\tilde V}$.
        \item The check spacing step takes $O(In)$ operations.
    \end{enumerate}
    Summing the above estimates yields
    \begin{align*}
        \qsum iI N_i n d_i + d_id_jI + + d_i\tilde d nN_i + nI 
        &\leq n^2\#(\cY) + n^2I^2 + n^2 + nI\#(\cY) + nI^2\\
        &\leq n^2\#(\cY) + n^2I^2 + n^2 + 2nI\#(\cY)\\
        &\leq 2\paren{n+1}^2\#(\cY) + n^2(I^2 + 1).
    \end{align*}
    This proves Point 1.

    Now we prove point 2. 
    It is apparent that if the algorithm is run and returns true, then this implies that the following holds.
    \begin{enumerate}
        \item The $V_i$'s are pairwise orthogonal.
        \item $r_i > 0$ if $A_i > 0$ (implicit in the construction of $V_i$).
        \item Each $A_i$ is orthogonal to $\tilde A$.
        \item $Y_i \subset S^{n-1}_{r_i}(c_i) \cap A_i$.
        \item Equation \ref{eqn:center-condition} holds.
    \end{enumerate}
    Hence, by Theorem \ref{thm:radius-recipe}, Algorithm \ref{alg:verification-of-equidistant-spacing} returns True if and only if $\cY \in \ps$.
\end{proof}

\section{Acknowledgments}
The author would like to offer their deep thanks to Matthew Biesecker and Greg Conner for their many comments on the manuscript. Especially for Matt's careful eye for edits and Greg's many narrative suggestions. The author received support from National Science Foundation Award \#2418909 LEAPS-MPS: Topological Machine Learning and Cryo-EM and also from CAPITAL Services Inc.

\bibliographystyle{plain}
\bibliography{imports/references.bib}

\appendix

\section{Notation}
\label{sec:notation}
In this section we summarize the notation used in this work. All notation is defined where it is first used. This section is a reference.

Strokes
\begin{enumerate}
    \item The set $\cY$ always denotes $\sqcup_{i = 1}^I Y_i$ for some $I$ where $Y_i \subset \R^n$.%
    \item The letters $i,j$ and $k$ denote a class label. It is always assumed that $i,j,k \in \set{1,\dots,I}$ for whatever $I$ is being considered.
    \item A \emph{class} refers to a $Y_i \in \cY$ for an $i = 1,\dots I,$. The index $i$ is called the \emph{class label} of the class $Y_i$.
    \item The set $A_i$ denotes $\aff{Y_i}$, see Def. \ref{def:smallest-affine-set}.
    \item The set $\ps(I,\R^n)$ denotes the set of equidistantly spaced points with $I$ classes in $\R^n$. See Def. \ref{def:equidistantly-spaced}. It is denoted by  $\ps$ when the $I$ and $\R^n$ are understood or immaterial.
    \item The letter $I$ denotes the number of classes.
    \item In figures, colors are used to denote classes. Points of the same color belong to the same class.
    \item The group $E(n)$ denotes the Euclidean group on $\R^n$. 
    \item The letter $V$ denotes a linear subspace. $V_i$ denotes the vector space of differences of $A_i$, see Def. \ref{def:smallest-affine-set}.
    \item The letters $c_i \in \R^n$ and $r_i \in \R$ denote the center and radius of $Y_i$, see Def. \ref{def:center-and-radius}.
    \item The set $S^m_r(c)$ denotes an $m$ sphere of radius $r$ with center $c$ embedded in $\R^{m+1}$.
    \item The notation $E_{\ell,n}$ is an embedded equilateral simplex with side length $\ell$ and $n$ vertices.
    \item The $\cS$ refers to signatures, see Def. \ref{def:signagure-of-equidistant-spacing}.
    \item $\bszero^n$ denotes the zero vector in $\R^n$. When $n$ is clear or unimportant it is suppressed.
\end{enumerate}
Operators
\begin{enumerate}
    \item The norm $\norm{\cdot}$ denotes the Euclidean norm over $\R^n$ for some $n$ and $\innerprod\cdot\cdot$ the Euclidean inner product over $\R^n$.
    \item The notation $\abs \cY \coloneqq \cup_{i = 1}^I Y_i$. That is, $\abs \cY$ refers to $\cY$ considered as a subset in $\R^n$.
    \item The operator $\aff{X}$ denotes the smallest affine subspace that contains $X \subset \R^n$.
    \item The $\proj_{X}$ denotes the orthogonal projection onto $X$ where $X$ is a affine subspace of $\R^n$.
    \item The notation $\bigtriangleup_{xyz\dots}$ refers to an embedded triangle or simplex with vertices $x,y,z,\dots$.
    \item The notation $\angle xyz$ refers to the angle formed by the lines $\overline{xy}$ and $\overline{yz}$.
    \item For a set $S$, $\#(S)$ denotes the cardinality of $S$.
\end{enumerate}
Diacritics, Modifiers
\begin{enumerate}
    \item For class-associated objects like centers, radii, etc. subscripts indicate which class they are associated with through a subscript. For example, $r_i$ is the radius associated with class $Y_i$, $A_i$ is the smallest affine space associated with class $Y_i$, etc.
    \item A prime attached to a variable indicates different elements when a choice w.l.o.g. was made. For example, if $\cY \in \ps(I,\R^n)$ is given, $\cY'$ denotes a different element of $\ps(I,\R^n)$. If $y_i \in Y_i$, then $y'_i$ denotes a different element of $Y_i$.
    \item Sometimes it is necessary to consider many different elements when a choice w.l.o.g. was made. In this case we avoid a preponderance of primes and instead use superscripts in parenthesis. For example, rather than using $\cY, \cY',\cY'',\dots$ we use $\cY^{(1)}, \cY^{(2)},\cY^{(3)},\dots$. 
    \item The diacritic $\widehat\cdot$ denotes a maximality under inclusion. Examples include how $\mps$ denotes the set of maximal equidistant spacings, $\widehat\cS$ denotes the maximal signatures, etc.
    \item The superscript $\cdot^*$ denotes the orthogonal projection of $\cdot$ onto some affine subspace. This is used in results that invoke Lemma \ref{lem:ortho-reflection-lemma}.
\end{enumerate}

\section{Background}

\begin{definition}[Affine Sets]
    \label{def:addine-set}
    Let $U$ be a vector space and $A \subset U$. We say that $A$ \emph{affine} if 
    \begin{align}
        V \coloneqq \set{u - v \colon u,v \in U}
    \end{align}
    is a linear subspace. We say that $V$ is the \emph{subspace of differences} of $A$.

    We say that two affine spaces are orthogonal if the subspace of their differences are orthogonal.

    Given a set of points $\set{x_i}_{i = 1}^n$, we define
    \begin{align}
        \aff(\set{x_i}_{i = 1}^n)
    \end{align}
    to be the smallest affine set that contains the points $\set{x_i}_{i = 1}$.
\end{definition}

We may characterize affine sets in a few different ways, all of which are useful.

\begin{proposition}[Characterization of $A$]
    \label{prop:characterization-of-a_i}
    Let $U$ be a subset of a vector space. The following are all equivalent.
    \begin{enumerate}
        \item $A$ is the minimal affine set that contains $U$.
        \item $A$ is the intersection of all affine sets that contain $U$.
        \item Let $m$ be the maximal number such that for the pairs $((y_i,y_i'))_{i = 1}^m$, the vectors $y_1 - y'_1,\dots,y_m - y'_m$ are linearly independent. Then for an arbitrary $y \in U$, $A$ is given by
        \begin{align}
            A = y + \Span{y_1 - y'_1,\dots,y_m - y'_m}.
        \end{align}
    \end{enumerate}
\end{proposition}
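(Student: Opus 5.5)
The statement to prove is Proposition \ref{prop:characterization-of-a_i}. The plan is to prove the cycle $2 \Rightarrow 1 \Rightarrow 3 \Rightarrow 2$. Throughout we use the standard fact that a nonempty set $B$ is affine if and only if $B = b + W$ for some (equivalently every) $b \in B$, where $W$ is its subspace of differences. We also use that affine sets are closed under affine combinations $\sum_k \lambda_k x_k$ with $\sum_k \lambda_k = 1$.

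First, $2 \Rightarrow 1$. Let $\{B_\alpha\}$ be the affine sets containing $U$ and $A = \bigcap_\alpha B_\alpha$. Fix $y \in U$ and write $B_\alpha = y + W_\alpha$. Then $A = y + \bigcap_\alpha W_\alpha$. An intersection of linear subspaces is a linear subspace, so $A$ is affine. Since $A$ contains $U$ and lies inside every affine set containing $U$, it is the minimal one. Conversely, a minimal affine set containing $U$ is one of the $B_\alpha$, and it contains their intersection. Hence minimality forces it to equal that intersection, which gives $1 \Leftrightarrow 2$ and, in particular, uniqueness of the minimal set.

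Next, $1 \Rightarrow 3$. Fix $y \in U$ and let $W = \Span{y_1 - y'_1,\dots,y_m - y'_m}$, with the pairs chosen as in point 3. The first step is to show $W = \Span{u - u' \colon u,u' \in U}$. Every difference $u - u'$ lies in $W$: otherwise we could adjoin the pair $(u,u')$ and obtain $m+1$ linearly independent differences, contradicting maximality of $m$. Also $m \leq \dim$ of the ambient space, and this finiteness is the one place where we implicitly need finite dimension. Now $y + W$ is affine and contains $U$, because $u = y + (u - y)$. Moreover, any affine set $B \ni y$ containing $U$ has a difference space containing every $u - u'$, hence containing $W$, so $y + W \subset B$. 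Thus $y + W$ is the minimal affine set containing $U$, which is $A$. The same argument shows that $y + W$ does not depend on the choice of $y$.

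Finally, $3 \Rightarrow 2$ is immediate from the previous step, since $y + W$ was shown to lie in every affine set containing $U$ and to be one of them. The main obstacle is the maximality-of-$m$ argument, in particular that the span of the chosen differences captures all differences. This needs the finite-dimensionality just noted, which holds in the paper's setting of $\R^n$. A secondary care point is that the paper's Definition \ref{def:addine-set} of difference space should be read as differences of elements of $A$, not of $U$.
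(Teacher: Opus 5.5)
Your argument is correct (for nonempty $U$ in a finite-dimensional space, with ``affine'' in its usual sense), and it takes a different and considerably more complete route than the paper.

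The paper's proof consists only of Figure \ref{fig:idea-behind-affine-space} and the remark that each edge of a simplex is the difference of two edges issuing from a common vertex, i.e.\ $u-u'=(u-y)-(u'-y)$. This is the same identity used in Proposition \ref{prop:affine-space-containing-points}. It addresses only the base-point independence in point 3: the span of all differences equals the span of the differences from one fixed $y\in U$. It leaves the equivalence of points 1 and 2, and the role of the maximal $m$, unargued.

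You instead prove $1\Leftrightarrow 2$ by writing each affine $B_\alpha\supseteq U$ as $y+W_\alpha$ and intersecting the $W_\alpha$. You get point 3 from maximality of $m$ (every difference already lies in $W$), together with the fact that $W$ sits inside the difference space of any affine $B\supseteq U$. Hence $y+W$ is the least such set, and the vertex identity is never needed.

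Your version gives an actual proof. It establishes uniqueness of the minimal set, shows $A$ is independent of $y$ and of the chosen maximal family of pairs, and notes explicitly where finite dimension enters. The paper's sketch gives only the geometric picture connecting this result to Proposition \ref{prop:affine-space-containing-points}.

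One caveat concerns your last remark. Reading Definition \ref{def:addine-set} with differences taken in $A$ still does not give the usual notion, since a closed half-line in $\R$ has difference set all of $\R$. Under that definition the proposition is false: for $U=\{0,1\}\subset\R$, point 3 yields $\R$, which strictly contains the ``affine'' set $[0,\infty)\supseteq U$. Your ``standard fact'' that $B$ is affine iff $B=b+W$ holds only for the usual definition (closure under affine combinations, equivalently being a translate of a linear subspace). You should state that this is the definition you adopt, rather than presenting it as a reading of the paper's.
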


\begin{figure}
    \centering
    \begin{subfigure}{.32\linewidth}
        \centering
        \includegraphics[width=1.\linewidth]{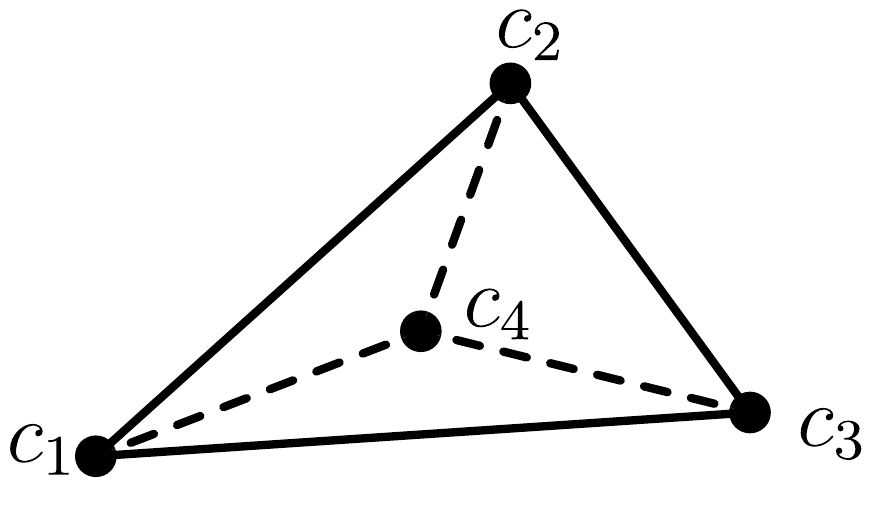}
        \subcaption{Simplex $\bigtriangleup$}
    \end{subfigure}
    \begin{subfigure}{.32\linewidth}
        \centering
        \includegraphics[width=1.\linewidth]{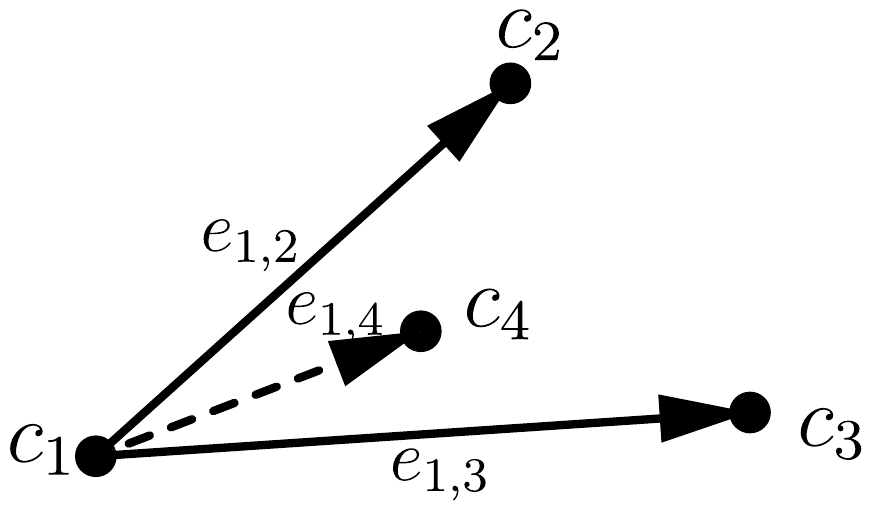}
        \subcaption{Edges $\set{e_{1,1},\dots,e_{1,4}}$}
    \end{subfigure}
    \begin{subfigure}{.32\linewidth}
        \centering
        \includegraphics[width=1.\linewidth]{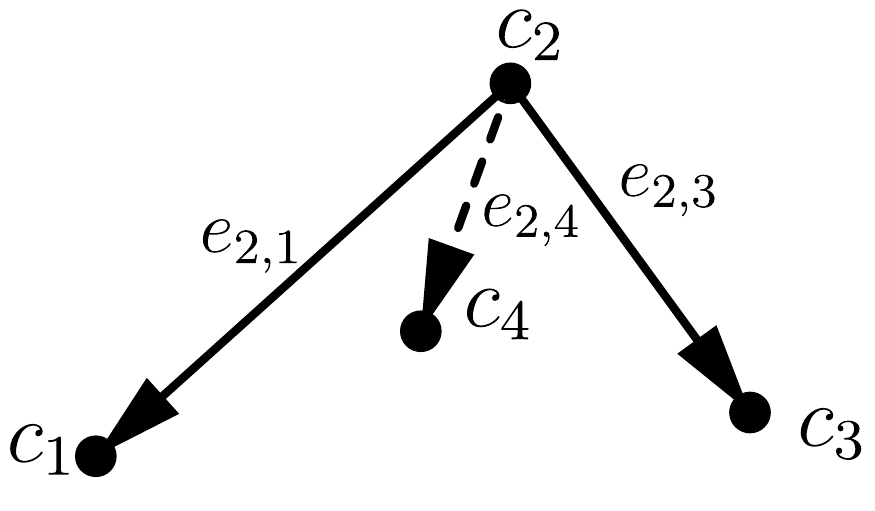}
        \subcaption{Edges $\set{e_{2,1},\dots,e_{2,4}}$}
    \end{subfigure}
    \caption{The idea behind Proposition \ref{prop:characterization-of-a_i}.}
    \label{fig:idea-behind-affine-space}
\end{figure}
\begin{proof}
    The proof is given in Figure \ref{fig:idea-behind-affine-space}. Clearly, each edge of $\bigtriangleup$ may be expressed as the vector difference between vectors that lie along the edges originating from one vertex of $\bigtriangleup$.
\end{proof}

\begin{definition}[Convex Hull]
    \label{def:convex-hull}
    Given a set $X \subset \R^n$, the \emph{convex hull} of $X$ is the smallest convex set that contains $X$.

    Equivalently, it is the intersection of all convex sets that contain $X$.
\end{definition}

\begin{definition}[Reflection Operator]
    \label{def:reflection-operator}
    Given a point $x \in \R^n$, and subset $X \subset \R^n$, the operator $\reflect_x X$ is the reflection of all point in $X$ about $x$.
\end{definition}

Next we must prove an abstract property about the geometry of simplices. This proof doesn't obviously related to equidistant spacings, but is necessary for later proofs.

\begin{proposition}[Affine Space Containing Points]
    \label{prop:affine-space-containing-points}
    Let $\bigtriangleup$ be $I$ points in $\R^n$. For any $i,j \in \set{1,\dots,I}$ define $e_{i,j} \coloneqq x_i - x_j$. Then the following holds.
    \begin{enumerate}
        \item For any $i,j$, 
        \begin{align}
            \label{eqn:adjacent-edges-span-the-same-space}
            \Span{e_{i,1}, \dots, e_{i,I}} = \Span{e_{j,1}, \dots, e_{j,I}}.
        \end{align}
        \item Define $\tilde V \coloneqq \Span{e_{i,1}, \dots, e_{i,I}}$, and let $V$ be a linear subspace of $\R^n$. Then the following statements are equivalent
        \begin{enumerate}
            \item $V$ and $\tilde V$ are orthogonal.
            \item There is an $i \in \set{1,\dots,I}$ so that $V$ is orthogonal to $e_{i,j}$ for each $j \in \set{1,\dots,I}$.
            \item $V$ is orthogonal to each $e_{i,j}$ for each $i,j \in \set{1,\dots,I}$.
        \end{enumerate}
    \end{enumerate}
\end{proposition}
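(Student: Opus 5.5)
The statement to prove is Proposition \ref{prop:affine-space-containing-points}, the elementary fact about the points $x_1,\dots,x_I$ and the difference vectors $e_{i,j}=x_i-x_j$. The plan is to reduce everything to the identity
\begin{align*}
e_{j,k} = e_{i,k} - e_{i,j},
\end{align*}
which holds for all $i,j,k$ because $x_j - x_k = (x_i - x_k) - (x_i - x_j)$. Every claim is then a statement about linear spans.

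For point 1, fix $i,j$. By the identity, each generator $e_{j,k}$ of the right-hand side of Eqn.~\ref{eqn:adjacent-edges-span-the-same-space} is a linear combination of $e_{i,k}$ and $e_{i,j}$. Both of these lie in $\Span{e_{i,1},\dots,e_{i,I}}$, the second because $j\in\set{1,\dots,I}$. Hence $\Span{e_{j,1},\dots,e_{j,I}}\subseteq\Span{e_{i,1},\dots,e_{i,I}}$. Exchanging the roles of $i$ and $j$ gives the reverse inclusion, so the spans are equal. In particular $\tilde V$ does not depend on the choice of $i$.

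For point 2, I would prove the cycle (a) $\Rightarrow$ (c) $\Rightarrow$ (b) $\Rightarrow$ (a).
\begin{enumerate}
\item[(a) $\Rightarrow$ (c)] By point 1, every $e_{k,j}$ lies in $\Span{e_{k,1},\dots,e_{k,I}}=\tilde V$. Orthogonality of $V$ to $\tilde V$ therefore gives orthogonality of $V$ to each $e_{k,j}$.
\item[(c) $\Rightarrow$ (b)] This is immediate: take any $i$.
\item[(b) $\Rightarrow$ (a)] Let $i$ be the index supplied by (b). Then $V$ is orthogonal to each generator $e_{i,1},\dots,e_{i,I}$. By linearity of the inner product, $V$ is orthogonal to their span. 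By point 1 this span is $\tilde V$, whatever index was used in its definition.
\end{enumerate}

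There is no real obstacle here. The only point needing care is the bookkeeping that $\tilde V$ is well defined independently of $i$. That is exactly point 1, so I would prove point 1 first and cite it in both (a) $\Rightarrow$ (c) and (b) $\Rightarrow$ (a).
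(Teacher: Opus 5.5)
Your proof is correct and takes essentially the same approach as the paper. Point 1 uses the identity $e_{i,k} = e_{i,j} + e_{j,k}$, which is equivalent to yours, plus a symmetric double inclusion, and point 2 rests on point 1 and linearity of the inner product; you merely run the cycle as (a)$\Rightarrow$(c)$\Rightarrow$(b)$\Rightarrow$(a) instead of the paper's (a)$\Rightarrow$(b)$\Rightarrow$(c)$\Rightarrow$(a), which is immaterial.
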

\begin{proof}
    First, we show 1. Define
    \begin{align}
        \tilde V_i \coloneqq \Span{e_{1,1},\dots,e_{1,I}}\quad \tilde V_j \coloneqq \Span{e_{j,1},\dots,e_{j,I}}.
    \end{align}
    We aim to show that $\tilde V_i = \tilde V_j$. Notice that for any $k$, 
    \begin{align}
        e_{i,k} = \underbrace{e_{i,j}}_{\in \tilde V_j} + \underbrace{e_{j,k}}_{\in \tilde V_j} \in \tilde V_j
    \end{align}
    and so $\tilde V_i \subset \tilde V_j$. Interchanging the roles of $i$ and $j$ in the above calculation yields $\tilde V_j \subset \tilde V_i$, hence $\tilde V_i \subset \tilde V_j$ showing 1.

    First, we show that 2.a implies 2.b. Let $V$ and $\tilde V$ be orthogonal, and let $i\in\set{1,\dots,I}$ be given. Then $V$ and $\Span{e_{1,1},\dots,e_{1,I}} = \Span{e_{i,1},\dots,e_{i,I}}$, so $V$ is orthogonal to each $e_{i,j}$.

    2.b implies 2.c directly from Eqn. \ref{eqn:adjacent-edges-span-the-same-space}.

    2.c implies 2.a, as being orthogonal to each $e_{1,1},\dots,e_{1,I}$ implies orthogonality to $\tilde V$.
\end{proof}

\begin{proposition}[Orthocentrality is Invariant Under Face Projection]
    \label{prop:orthocenter-inv-face-proj}
    The orthogonal projection of the orthocenter of a simplex to a face is the orthocenter of the face. 
    
    Let $\bigtriangleup$ be a simplex in $\R^n$ with vertices $x_1,\dots,x_k$, and $x^*$ be the orthocenter of $\bigtriangleup$. Let $J \subset \set{1,\dots,k}$, $\bigtriangleup_J$ be the simplex with vertices $\set{x_j}_{j \in J}$, and $A_J \coloneqq \aff(\set{x_j}_{j \in J})$. Then $\proj_{A_J}x^*$ is the orthocenter of $\bigtriangleup_J$.
\end{proposition}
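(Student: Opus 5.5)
The approach is a direct linear-algebra computation. I will not use the face-by-face inductive definition of orthocentricity. Instead I will use the characterization that the altitudes meet at $x^*$: for each vertex $x_i$, the vector $x^* - x_i$ is orthogonal to the affine span of the opposite facet. Equivalently, $\innerprod{x^* - x_i}{x_j - x_\ell} = 0$ for all $j,\ell \neq i$.

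Set $p \coloneqq \proj_{A_J} x^*$. I aim to show that for each $i \in J$, the vector $p - x_i$ is orthogonal to $x_j - x_\ell$ for all $j,\ell \in J\setminus\set{i}$. This says $p$ lies on every altitude of $\bigtriangleup_J$ taken inside $A_J$, so $p$ is the orthocenter of $\bigtriangleup_J$.

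The key step is the decomposition
\begin{align*}
p - x_i = (x^* - x_i) - (x^* - p).
\end{align*}
For the first term, if $j,\ell \in J\setminus\set{i}$, then $x_j - x_\ell$ is a difference of two vertices opposite $x_i$ in the full simplex $\bigtriangleup$. The altitude property of $x^*$ in $\bigtriangleup$ therefore gives $\innerprod{x^* - x_i}{x_j - x_\ell} = 0$. For the second term, $x^* - p$ is orthogonal to the difference space of $A_J$ by definition of the orthogonal projection. Since $x_j - x_\ell$ lies in that difference space, $\innerprod{x^* - p}{x_j - x_\ell} = 0$. Subtracting the two identities gives the claim.

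It remains to check that $p$ lies in $A_J$, so that it is a legitimate candidate for the orthocenter of $\bigtriangleup_J$ within its own affine hull; this holds by construction. I also need the altitude lines to be well defined, which holds because the vertices of a simplex are affinely independent. Uniqueness of the common point follows because two altitudes from distinct vertices already meet in at most one point.

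The degenerate cases need separate treatment. When $\#J = 1$, the orthocenter is the vertex itself and the statement is trivial. When $\#J = 2$, the only condition is $p \in \overline{x_jx_\ell}$, which is immediate; every condition over $J\setminus\set{i}$ is then vacuous or trivial.

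The main obstacle is fixing the definition of orthocenter consistently with Definition~\ref{def:orthocentric-simplex}. The definition there is phrased via altitudes meeting, so I should state explicitly the equivalence: $x^*$ lies on the altitude from $x_i$ if and only if $x^* - x_i \perp \aff(\set{x_j}_{j\neq i})$. This is the content of the perpendicularity used above, and I would verify it first. Everything after that is the two-line orthogonality computation.
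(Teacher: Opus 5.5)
Your argument is correct and is essentially the paper's: both split $x^*-x_i$ into the in-face part $\proj_{A_J}x^*-x_i$ and the normal part $x^*-\proj_{A_J}x^*$, and conclude that the in-face part is orthogonal to the face of $\bigtriangleup_J$ opposite $x_i$. Your version states the orthogonality correctly, namely against the differences $x_j-x_\ell$ with $j,\ell\in J\setminus\set{i}$. The paper's write-up instead has $\aff(x_1,\dots,x_k)^\perp$ and $A_J^\perp$, where the facet opposite $x_i$ and $\aff(\set{x_j}_{j\in J\setminus\set{i}})$ are meant.
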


\begin{proof}
    From the orthocentrality of $x^*$, we have that
    \begin{align}
        \label{eqn:prop:orthocenter-inv-face-proj:1}
        \overline{x_i x^*} \in \aff(x_1,\dots,x_k)^\perp \subset A_J^\perp.
    \end{align}
    Let $ i\in J$, and let $v \coloneqq x^* - x_i$, then we may write $v = v_\parallel + v_\perp$ where $v_\parallel \in A_J$ and $v_\perp \in A_J$. By linearity and $i \in J$, we have that 
    \begin{align}
        v_\parallel = (x^* - x_i)_\parallel = x^*_\parallel - x_i,\quad v_\perp = (x^* - x_i)_\perp = x^*_\perp.
    \end{align}
    Let $\alpha \in \R$, then from Eqn. \ref{eqn:prop:orthocenter-inv-face-proj:1},
    \begin{align}
        \alpha v = \alpha (x^*_\parallel - x_i) + \alpha x^*_\perp \in A_J^\perp.
    \end{align}
    Hence, by subspace-ness of $A_J^\perp$, $\alpha (x^*_\parallel - x_i) \in A_J^\perp$. But 
    \begin{align}
        \overline{x_i\proj_{A_J}x^*} = \set{\alpha (x^*_\parallel - x_i) \colon \alpha \in \R},
    \end{align}
    and so $\overline{x_i\proj_{A_J}x^*} \in A_J^\perp$.
\end{proof}

\begin{proposition}[Circumcenter of Equilateral Simplex]
    \label{prop:circumcenter-of-equi-simplex}
    Let $E_{\ell,k}$ be an embedded equilateral simplex with $k$ points, and side-length $\ell$. Let $c$ denote the circumcenter of $E_{\ell,k}$. Then $\norm{v - c} = \ell\sqrt{\frac{k}{2(k + 1)}}$, where $v$ denotes a vertex of $E_{\ell,k}$.
\end{proposition}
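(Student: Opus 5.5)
The plan is to compute everything in an explicit coordinate model of the equilateral simplex. Before starting, the vertex count has to be pinned down. The claimed value $\ell\sqrt{k/(2(k+1))}$ is the circumradius of an equilateral simplex with $k+1$ vertices (dimension $k$). It is not the value for $k$ vertices: with two vertices the answer must be $\ell/2$, which the formula only gives at $k=1$. This reading also matches how the proposition is used in Corollary \ref{cor:extent-of-equidistant-spacing}, where the $n+1$ vertices of $E_{1,n}$ give $\sqrt{n/(2(n+1))}$. So I will prove the statement for $k+1$ vertices and add a remark that for $k$ vertices the same computation gives $\ell\sqrt{(k-1)/(2k)}$.

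First I reduce to a model simplex. Any two equilateral simplices with the same number of vertices and the same side length are congruent by an element of $E(n)$, since their Gram data coincide. A rototranslation carries circumcenters to circumcenters and preserves distances. So it suffices to treat the model: in $\R^{k+1}$ take the points $x_i \coloneqq \frac{\ell}{\sqrt2}e_i$ for $i = 0,\dots,k$. For $i \neq j$ we have $\norm{x_i - x_j}^2 = \frac{\ell^2}{2}\cdot 2 = \ell^2$, so these form an equilateral simplex with $k+1$ vertices and side length $\ell$. Its affine hull is the hyperplane $\sum_i z_i = \ell/\sqrt2$.

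Next I identify the circumcenter, taken as in the proof of Theorem \ref{thm:projection-theorem}: the unique point of the affine hull equidistant from all vertices. The candidate is the centroid $c \coloneqq \frac{1}{k+1}\sum_i x_i$. It lies in the affine hull because its coordinates sum to $\ell/\sqrt2$. It is equidistant from all vertices because permuting coordinates permutes the vertices and fixes $c$. By uniqueness of circumcenters, $c$ is the circumcenter. The remaining computation is routine:
\begin{align*}
\norm{x_0 - c}^2 = \frac{\ell^2}{2}\norm{e_0 - \tfrac{1}{k+1}\textstyle\sum_i e_i}^2 = \frac{\ell^2}{2}\Big(1 - \frac{2}{k+1} + \frac{1}{k+1}\Big) = \ell^2\frac{k}{2(k+1)}.
\end{align*}
Taking square roots gives the claim.

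The only real obstacle is the indexing convention, which is why the proof opens by fixing it explicitly. The rest consists of a standard symmetry argument and a one-line norm computation.
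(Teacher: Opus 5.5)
Your proof is correct and takes essentially the same route as the paper. Both use the model with vertices $\frac{\ell}{\sqrt2}e_i$, use symmetry to put the center on the diagonal, and finish with a direct norm computation. The one difference is how the center is pinned down: you show the centroid lies in the affine hull and invoke uniqueness, while the paper minimizes $\norm{v_i - c}$ along the diagonal, which lands on the same point.

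You were right to fix the vertex count at $k+1$. The formula fails for $k$ literal vertices (e.g.\ $k=2$ gives $\ell/\sqrt3$ instead of $\ell/2$). Moreover, the paper's own computation, with its $k\alpha^2$ term, is actually carried out for $k+1$ vertices, even though it announces $k$ vertices in $\R^k$.
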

\begin{proof}
    We may embed $E_{\ell,k}$ into $\R^k$ by embedding its vertices $v_1,\dots,v_k$ via
    \begin{align}
        v_1 \coloneqq \paren{\frac{\sqrt{2}}2 \ell,0,\dots,0},\dots,v_k \coloneqq \paren{0,\dots,0,\frac{\sqrt{2}}2 \ell}.
    \end{align}
    By symmetry, $c \coloneqq \paren{\alpha,\dots,\alpha}$ for some $\alpha \in \R$. In particular, it is the $\alpha$ that minimizes the expression $\norm{v_i - c}$. Computing, we have
    \begin{align}
        \norm{v_i - c}^2 = \paren{\frac{\sqrt 2}2 \ell - \alpha}^2 + k \alpha^2 &= \frac{\ell^2}2 - \sqrt 2 \ell \alpha + \alpha^2 + k\alpha\\
        &=\frac{\ell^2}{2} - \sqrt 2 \ell \alpha + (k + 1)\alpha^2
    \end{align}
    This quadratic expression is minimized when
    \begin{align}
        \alpha = \frac{\sqrt 2 \ell}{2 \paren{k+1}}.
    \end{align}
    Hence we have that 
    \begin{align}
        \norm{v_i - c}^2 &= \frac{\ell^2}2 - \frac{2\ell^2}{2(k + 1)} + \frac{2(k+1)\ell^2}{4(k+1)^2}\\
        &=\frac{\ell^2}2 - \frac{\ell^2}{k + 1} + \frac{\ell^2}{2(k+1)}\\
        &=\frac{\ell^2}2 + \frac{\ell^2}{2(k+1)}\\
        &= \ell^2 \paren{\frac12 + \frac{1}{k+1}} = \ell^2 \frac k {2(k+1)}
    \end{align}
\end{proof}

\begin{lemma}[Existence of Rotation that Aligns Subspaces]
    \label{lem:existence-of-rotation-that-aligns-subspaces}
    Let $(V_1,\dots,V_k)$ and $(V'_1,\dots,V'_k)$ be $k$-tuples of vector subspaces of $\R^n$ so that $V_1 \oplus \dots \oplus V_k$ and $V_1' \oplus \dots \oplus V_k'$.

    If $\dim V_i = \dim V'_i$, then there is a rotation $U \in \cL(\R^n)$ so that 
    \begin{align}
        UV_i = V'_i
    \end{align}
    for each $i = 1,\dots,k$.
\end{lemma}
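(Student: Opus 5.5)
We prove Lemma~\ref{lem:existence-of-rotation-that-aligns-subspaces} by building two orthonormal bases adapted to the decompositions and sending one to the other. We read the hypothesis ``$V_1\oplus\dots\oplus V_k$'' as in its uses in Lemma~\ref{lem:signatures-coincide-means-isometric-by-rotation}: the $V_i$ are pairwise orthogonal, and likewise the $V_i'$. Pairwise orthogonality is needed. For example, two distinct lines in $\R^2$ at angle $\theta$ cannot be carried by a rotation onto two orthogonal lines. Set $d_i\coloneqq\dim V_i=\dim V_i'$ and $W\coloneqq V_1\oplus\dots\oplus V_k$, $W'\coloneqq V_1'\oplus\dots\oplus V_k'$. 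Then $\dim W=\sum_i d_i=\dim W'$, and so $\dim W^\perp=\dim W'^\perp=n-\sum_i d_i$.

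First, run Gram--Schmidt inside each $V_i$ to get an orthonormal basis $\{e^{(i)}_1,\dots,e^{(i)}_{d_i}\}$, and do the same in each $V_i'$ to get $\{f^{(i)}_1,\dots,f^{(i)}_{d_i}\}$. Since the $V_i$ are pairwise orthogonal, the union of the $e^{(i)}_a$ is an orthonormal basis of $W$, and similarly the $f^{(i)}_a$ form an orthonormal basis of $W'$. Next, complete these to orthonormal bases of $\R^n$. Choose an orthonormal basis $g_1,\dots,g_p$ of $W^\perp$ and $h_1,\dots,h_p$ of $W'^\perp$, where $p=n-\sum_i d_i$.

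Now define the linear map $U$ by $Ue^{(i)}_a\coloneqq f^{(i)}_a$ and $Ug_b\coloneqq h_b$. Because $U$ sends an orthonormal basis to an orthonormal basis, it is orthogonal. Each $V_i$ is spanned by the $e^{(i)}_a$, so $UV_i=\Span{f^{(i)}_1,\dots,f^{(i)}_{d_i}}=V_i'$ for every $i$.

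Finally, the word ``rotation'' may be meant to require $\det U=1$. If $\det U=-1$ and $p\ge1$, replace $h_1$ by $-h_1$. This flips the sign of the determinant and leaves every $UV_i$ unchanged. If $p=0$ but some $d_i\ge1$, replace $f^{(i)}_1$ by $-f^{(i)}_1$ instead. This is still an orthonormal basis of $V_i'$, so $UV_i=V_i'$ still holds. If every space is $\{0\}$, the identity works.

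The only real subtlety is the interpretation of the hypothesis, namely that pairwise orthogonality is what the proof requires. The determinant fix is routine.
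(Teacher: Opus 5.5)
Your proof is correct and follows essentially the paper's argument: orthonormal bases of each $V_i$ and $V_i'$, completed to orthonormal bases of $\R^n$, with $U$ sending one basis to the other. Two points the paper leaves implicit are handled soundly in your version. First, you read $\oplus$ as an orthogonal direct sum, which the paper tacitly needs when it asserts that the union of the bases is orthonormal. Second, you adjust the sign so that $\det U = 1$.
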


\begin{proof}
    For each $i = 1,\dots,k$ $n_i = \dim V_i = \dim V'_i$. Define $e^{(i)}_1,\dots,e^{(i)}_{n_i}$ and $f^{(i)}_1,\dots,f^{(i)}_{n_i}$ as orthogonal unit vectors so that
    \begin{align}
        V_i = \Span{e^{(i)}_1,\dots,e^{(i)}_{n_i}}, \quad V_i' = \Span{f^{(i)}_1,\dots,f^{(i)}_{n_i}}.
    \end{align}
    Note that by the condition that $V_1,\dots,V_k$ and $V_1',\dots,V_k'$ direct sum, the sets
    \begin{align}
        \bigcup_{i = 1}^k \set{e^{(i)}_1,\dots,e^{(i)}_{n_i}}, \quad \bigcup_{i = 1}^k \set{f^{(i)}_1,\dots,f^{(i)}_{n_i}}
    \end{align}
    are linearly independent and orthonormal. Completing these sets to an orthonormal basis of $\R^n$ if necessary, we may consider the linear map $U \in \cL(\R^n)$ so that $Ue^{(i)}_j = f^{(i)}_j$ for each $i \in 1,\dots,k'$ and $j = 1,\dots,\dim Y_i$. $U$ sends one orthonormal basis to another, and so is a rotation. Finally,
    \begin{align}
        UV_i &= U\Span{e^{(i)}_1,\dots,e^{(i)}_{n_i}}\\
        &= \Span{Ue^{(i)}_1,\dots,Ue^{(i)}_{n_i}}\\
        &= \Span{f^{(i)}_1,\dots,f^{(i)}_{n_i}}\\
        &= V'_i.
    \end{align}
\end{proof}

\end{document}

%% file: imports/common_packages.tex
\usepackage{amssymb}
\usepackage{amsmath}
\usepackage{textcomp}
\usepackage{graphicx}
\usepackage[mathscr]{euscript}
\usepackage{lineno}
\usepackage{subcaption}
\usepackage[utf8]{inputenc}
\usepackage{float}
\usepackage[normalem]{ulem}
\usepackage{multicol}
\usepackage{lmodern}
\usepackage{lipsum}
\usepackage{marvosym}
\usepackage{mathtools}
\usepackage{soul}
\usepackage{bm}
\usepackage{amsthm}
\usepackage{xargs}                      %
\usepackage{xcolor}
\usepackage{bbm}
\usepackage{hyperref}
\usepackage[title]{appendix}

%% file: imports/caam_501_commands.tex
\usepackage{calc}
\usepackage{tikz}
\usepackage{ifthen}

\newcounter{homeworkproblem}
\newcounter{qualpointvalue}
\newcounter{warmupproblem}
\newcommand{\warmupproblem}{\ifnum\thehomeworkproblem>1 {\par} \else {}\fi\noindent\large{\textbf{Problem \thehomeworkproblem}: \ul{Warm-up Problem \thewarmupproblem}\stepcounter{homeworkproblem}\stepcounter{warmupproblem}\\}}

\newcounter{trainingproblem}
\newcommand{\trainingproblem}{\ifnum\thehomeworkproblem>1 {\par} \else {}\fi\noindent\large{\textbf{Problem \thehomeworkproblem}: \ul{Training Problem \thetrainingproblem}\stepcounter{homeworkproblem}\stepcounter{trainingproblem}\addtocounter{qualpointvalue}{-5}} 10 points\\}

\newcounter{qualifyingproblem}
\newcommand{\bigspace}{\bigskip \bigskip \bigskip \bigskip \bigskip \bigskip \bigskip \bigskip}
\newcommand{\multispace}[1]{
\if #1 0
{ }
\else
\foreach \n in {1,...,#1}{\bigspace}
\fi
}

\ifx \handoutMacro \undefined

\else

\fi

\usepackage{yfonts}

\usepackage{xparse}

\ExplSyntaxOn

\NewDocumentCommand{\instringTF}{mmmm}
 {
  \oleks_instring:nnnn { #1 } { #2 } { #3 } { #4 }
 }

\tl_new:N \l__oleks_instring_test_tl

\cs_new_protected:Nn \oleks_instring:nnnn
 {
  \tl_set:Nn \l__oleks_instring_test_tl { #1 }
  \regex_match:nnTF { \u{l__oleks_instring_test_tl} } { #2 } { #3 } { #4 }
 }

\ExplSyntaxOff

\ifx\hiddenTextFlag\undefined

\else
    \if\hiddenTextFlag1
         
    \else
        \if\hiddenTextFlag2
        
        \else
        
        \fi
    \fi
\fi

%% file: imports/common_commands.tex
\usepackage{mathtools}

\newcommand{\argmax}{\operatornamewithlimits{argmax}}

\newcommand{\reflect}{\operatornamewithlimits{refl}}

\newcommand{\R}{\mathbb{R}}

\newcommand{\set}[1]{\left \{ #1\right \}}
\newcommand{\norm}[1]{\left\lVert#1\right\rVert}
\newcommand{\abs}[1]{\left\lvert#1\right\rvert}

\newcommand{\Span}[1]{\operatornamewithlimits{span}\set{#1}}
\newcommand{\innerprod}[2]{\left \langle#1,#2\right\rangle}

\newcommand{\proj}{\operatornamewithlimits{proj}}

\newcommand{\aff}{\operatornamewithlimits{Aff}}
\newcommand{\sig}{\operatornamewithlimits{sig}}

\newcommand{\pmat}[1]{\begin{pmatrix} #1 \end{pmatrix}}
\newcommand{\paren}[1]{\left ( #1\right)}

\makeatletter
\def\mathcolor#1#{\@mathcolor{#1}}
\def\@mathcolor#1#2#3{%
  \protect\leavevmode
  \begingroup
    \color#1{#2}#3%
  \endgroup
}
\makeatother

\renewcommand{\eqref}[1]{{Eq.~\ref{#1}}}

\newcommand\restr[2]{{
  \left.\kern-\nulldelimiterspace 
  #1 
  \vphantom{
  \big|
  } 
  \right|_{#2} 
  }}
  
\makeatletter
\@ifclassloaded{beamer}{}{
    \newtheorem{theorem}{Theorem}[subsection]
    \newtheorem{lemma}[theorem]{Lemma}
    \newtheorem{corollary}[theorem]{Corollary}
    \newtheorem{proposition}[theorem]{Proposition}
    
    \theoremstyle{definition}
    \newtheorem{definition}[theorem]{Definition}
    \newtheorem{example}[theorem]{Example}

    \theoremstyle{remark}
    \newtheorem{remark}[theorem]{Remark}
}
\makeatother

\def\bszero{\boldsymbol{0}}

\makeatletter
\renewcommand{\boxed}[1]{\text{\fboxsep=.2em\fbox{\m@th$\displaystyle#1$}}}
\makeatother

\newcommand{\twopartpiecewise}[4]{\begin{cases} #1 & \text{if } #2 \\  #3 & \text{if } #4 \end{cases}}

\newcommand{\qsum}[2]{\sum^{#2}_{#1 = 1}}


%% file: imports/common_colors.tex
\definecolor{orange}{RGB}{255,127,0}
\definecolor{blackchocolate}{HTML}{191102}
\definecolor{rosewood}{HTML}{510d0a}
\definecolor{rufous}{HTML}{A31B14}
\definecolor{citron}{HTML}{a29f15}
\definecolor{orangeyellow}{HTML}{f3b61f}
\definecolor{teagreen}{HTML}{bbd8b3}
\definecolor{gray}{RGB}{128,128,128}
\definecolor{lightcitron}{RGB}{189,187,91}
\definecolor{lightrufous}{RGB}{190,95,90}
\definecolor{steelblue}{HTML}{4C86A8}
\definecolor{darkblue}{RGB}{0,0,192}
\definecolor{darkpurple}{RGB}{96,0,96}
\definecolor{darkred}{RGB}{192,0,0}

%% file: imports/matti_commands.tex
\def \bfo {\begin {eqnarray*} }
\def \efo {\end {eqnarray*} }
\def \ba {\begin {eqnarray*} }
\def \ea {\end {eqnarray*} }
\def \beq {\begin {eqnarray}}
\def \eeq {\end {eqnarray}}